\documentclass[11pt]{amsart}
\usepackage[T1]{fontenc}
\usepackage[utf8]{inputenc}
\usepackage{lmodern}
\usepackage[a4paper,margin=1in]{geometry}
\usepackage{amsmath,amssymb,amsthm,mathtools,mathrsfs}
\usepackage{array,booktabs,longtable}
\usepackage{microtype}
\usepackage{xcolor}
\usepackage[colorlinks=true,linkcolor=blue!45!black,citecolor=blue!45!black,urlcolor=blue!45!black]{hyperref}
\usepackage{xurl,listings}
\newcommand{\code}[1]{{\small\nolinkurl{#1}}}
\newcommand{\codeexpr}[1]{{\small\texttt{#1}}}
\newcommand{\SMLbase}{https://github.com/fukubillueda-web/epsilon_SML/blob/c2f3764588ceac54e49d20ed6492ac1a0c11acdb}
\newcommand{\FMLbase}{https://github.com/fukubillueda-web/epsilon_FML/blob/b91ad93900a5e6d9207a4e6166ac94bb3bbcf3ed}
\newcommand{\lean}[3]{\href{\SMLbase/LanglandsSecondMainLemma/#1\#L#2}{\code{#3}}}
\newcommand{\fmllean}[3]{\href{\FMLbase/LanglandsFirstMainLemma/#1\#L#2}{\code{#3}}}
\newtheorem{theorem}{Theorem}[section]
\newtheorem{proposition}[theorem]{Proposition}
\newtheorem{lemma}[theorem]{Lemma}
\newtheorem{corollary}[theorem]{Corollary}
\theoremstyle{definition}
\newtheorem{definition}[theorem]{Definition}
\theoremstyle{remark}

\newcommand{\C}{\mathbf C}
\newcommand{\Q}{\mathbf Q}
\newcommand{\Z}{\mathbf Z}
\newcommand{\F}{\mathbf F}
\newcommand{\OO}{\mathcal O}
\newcommand{\pp}{\mathfrak p}
\newcommand{\Tr}{\operatorname{Tr}}
\newcommand{\N}{\operatorname{N}}
\newcommand{\Gal}{\operatorname{Gal}}
\newcommand{\Hom}{\operatorname{Hom}}
\newcommand{\ph}{\operatorname{ph}}
\newcommand{\Res}{\operatorname{Res}}
\newcommand{\mathcalA}{\mathcal A}

\newcommand{\floor}[1]{\left\lfloor#1\right\rfloor}
\newcommand{\ceil}[1]{\left\lceil#1\right\rceil}
\makeatletter
\expandafter\def\csname sml@A:leading-gauss\endcsname{A.3}
\expandafter\def\csname sml@C:character-extension\endcsname{C.2}
\expandafter\def\csname sml@C:hilbert90\endcsname{C.1}
\expandafter\def\csname sml@D:EQ:AS-invariance\endcsname{198}
\expandafter\def\csname sml@D:EQ:ASsymbol\endcsname{11.2}
\expandafter\def\csname sml@D:EQ:commmodel\endcsname{11.4}
\expandafter\def\csname sml@D:EQ:lean-rational-identity\endcsname{11.3}
\expandafter\def\csname sml@D:EQ:main\endcsname{11.1}
\expandafter\def\csname sml@D:MX:higher\endcsname{12.13}
\expandafter\def\csname sml@D:MX:higher-coeff\endcsname{12.12}
\expandafter\def\csname sml@D:MX:lean-residue-maps\endcsname{12.14}
\expandafter\def\csname sml@D:MX:freedom\endcsname{12.8}
\expandafter\def\csname sml@D:MX:main\endcsname{12.1}
\expandafter\def\csname sml@D:MX:minimal\endcsname{12.11}
\expandafter\def\csname sml@D:NM:higher\endcsname{13.13}
\expandafter\def\csname sml@D:NM:freedom\endcsname{13.8}
\expandafter\def\csname sml@D:NM:main\endcsname{13.1}
\expandafter\def\csname sml@D:UR:URall\endcsname{10.20}
\expandafter\def\csname sml@D:UR:resreciprocity\endcsname{10.14}
\expandafter\def\csname sml@O:A:best-approximation\endcsname{7.2}
\expandafter\def\csname sml@O:A:prop:AS\endcsname{7.4}
\expandafter\def\csname sml@O:A:lean-best-approximation\endcsname{7.3}
\expandafter\def\csname sml@O:A:lean-nonexceptional\endcsname{7.13}
\expandafter\def\csname sml@O:A:prop:g\endcsname{7.12}
\expandafter\def\csname sml@O:F:tame\endcsname{5.3}
\expandafter\def\csname sml@O:G:deep\endcsname{9.32}
\expandafter\def\csname sml@O:G:totalbranch\endcsname{9.35}
\expandafter\def\csname sml@O:I:cubicidentity\endcsname{6.13}
\expandafter\def\csname sml@O:I:main\endcsname{6.1}
\expandafter\def\csname sml@O:M:lean-conductors\endcsname{8.10}
\expandafter\def\csname sml@O:M:realize\endcsname{8.9}
\expandafter\def\csname sml@O:M:rigidity\endcsname{8.3}
\expandafter\def\csname sml@O:R:normalize\endcsname{8.15}
\expandafter\def\csname sml@O:R:R2\endcsname{8.16}
\expandafter\def\csname sml@O:sec:r2\endcsname{8.2}
\expandafter\def\csname sml@U:common-function\endcsname{4.7}
\expandafter\def\csname sml@U:common-origin\endcsname{4.6}
\expandafter\def\csname sml@U:dispatch\endcsname{14.3}
\expandafter\def\csname sml@U:e2\endcsname{4.5}
\expandafter\def\csname sml@U:lean-missing-coset\endcsname{4.9}
\expandafter\def\csname sml@U:main\endcsname{1.1}
\expandafter\def\csname sml@U:missing-coset\endcsname{4.8}
\expandafter\def\csname sml@U:odd-power\endcsname{4.3}
\expandafter\def\csname sml@U:quadratic-square\endcsname{4.4}
\expandafter\def\csname sml@U:stationary-factor\endcsname{4.1}
\expandafter\def\csname sml@U:trace-ideal\endcsname{7}
\expandafter\def\csname sml@app:diamond-foundations\endcsname{D}
\expandafter\def\csname sml@app:leading-gauss\endcsname{A}
\expandafter\def\csname sml@app:local-residues\endcsname{B}
\expandafter\def\csname sml@sec:completion\endcsname{14}
\expandafter\def\csname sml@sec:dyadic-equal\endcsname{11}
\expandafter\def\csname sml@sec:dyadic-nonmaximal\endcsname{13}
\expandafter\def\csname sml@sec:dyadic-ur\endcsname{10}
\expandafter\def\csname sml@sec:local-notation\endcsname{2}
\expandafter\def\csname sml@sec:odd-calculation\endcsname{9}
\expandafter\def\csname sml@sec:odd-estimates\endcsname{7}
\expandafter\def\csname sml@sec:odd-ur\endcsname{6}
\expandafter\def\csname sml@sec:ramification\endcsname{3}
\expandafter\def\csname sml@sec:tame\endcsname{5}
\newcommand{\SMLnum}[1]{\@ifundefined{sml@#1}{\PackageError{SMLFormal}{Unknown companion label #1}{Check the companion source.}}{\csname sml@#1\endcsname}}
\makeatother

\title[Formalization of Langlands's Second Main Lemma]
{Formalization of Langlands's Second Main Lemma\\for Local Epsilon Factors}
\author{Fukuhiro Ueda}
\address{Research Institute for Mathematical Sciences, Kyoto University, Kyoto 606--8502, Japan}
\email{fueda@kurims.kyoto-u.ac.jp}
\date{September 2026.}
\subjclass[2020]{11S37, 11S40, 11R42, 03B35, 68V20}
\keywords{local epsilon factors, Second Main Lemma, ramification, Gauss sums, Lamprecht's formula, biquadratic extensions, Lean}
\hypersetup{pdftitle={Formalization of Langlands's Second Main Lemma for Local Epsilon Factors},pdfauthor={Fukuhiro Ueda},bookmarksdepth=2}
\begin{document}
\begin{abstract}
We formalize in Lean 4 Langlands's Second Main Lemma for local epsilon
factors over nonarchimedean local fields. The lemma compares local
constants of characters of distinct intermediate fields in a bicyclic Galois
extension.
The proof follows the author's companion mathematical paper and uses
the formalization of the First Main Lemma.

The First Main Lemma gives only a power relation, leaving a root-of-unity ambiguity. Resolving this ambiguity is the main part of the proof and requires further analysis according to ramification. The wild dyadic case is the mathematically new part of the companion proof, and its Lean formalization provides a machine-checked verification of this new case.
 Both mixed and equal characteristic are included.
We give the exact Lean statement, identify the declarations used in
the individual cases, and describe six simplifications of the
mathematical proof suggested by the formalization.

ChatGPT assisted with transferring formulas and citations from the mathematical manuscript and with adding references to the corresponding Lean source, and suggested improvements to the 
wording of the paper.
\end{abstract}
\maketitle
\tableofcontents

\section{Introduction}\label{sec:intro}
\subsection{The Second Main Lemma}
Let $F$ be a nonarchimedean local field with residue characteristic $p$,
and let $K/F$ be a finite Galois extension with
\[
 \Gal(K/F)\simeq C_\ell\times C_\ell,
\]
where $\ell$ is prime. For a finite abelian extension $E/F$, put
\begin{equation}\label{eq:norm-characters}
 S(E/F)=\{\nu\in\Hom_{\mathrm{cont}}(F^\times,\C^\times):
                        \nu\circ\N_{E/F}=1\}.
\end{equation}
Here $\Hom_{\mathrm{cont}}$ denotes continuous group homomorphisms.
The elements of \eqref{eq:norm-characters} are called norm characters.

Fix a nontrivial continuous additive character $\psi_F:F\to\C^\times$.
For every intermediate field $E$, including $K$, set
\begin{equation}\label{eq:additive-pullback}
 \psi_E=\psi_F\circ\Tr_{E/F}.
\end{equation}
A quasi-character of $E^\times$ means a continuous homomorphism
$\theta: E^\times\to\C^\times$. 

\begin{definition}[Primitive compatible families]\label{def:compatible}
A family $(\theta_L)$, indexed by the degree-$\ell$ intermediate fields $L$, of quasi-characters of $L^\times$ is
\emph{compatible} if there is a quasi-character $\Theta$ of $K^\times$
such that
\begin{equation}\label{eq:compatible}
 \theta_L\circ\N_{K/L}=\Theta\qquad([L:F]=\ell).
\end{equation}
It is \emph{primitive} if there is no quasi-character $\lambda$ of
$F^\times$ such that $\Theta=\lambda\circ\N_{K/F}$.
\end{definition}
Compatibility implies that $\Theta$ is $\Gal(K/F)$-invariant; see
Lemma~\ref{lem:descent}. Define
\begin{equation}\label{eq:lambda-and-A}
 \mathcalA_L(\theta_L)=\Delta_L(\theta_L,\psi_L)
                 \prod_{\nu\in S(L/F)}\Delta_F(\nu,\psi_F).
\end{equation}
The local constant $\Delta_E$ is defined in
Section~\ref{subsec:delta}. The norm-character groups in
\eqref{eq:lambda-and-A} are finite, by Lemma~\ref{lem:norm-groups}.

\begin{theorem}[Second Main Lemma]\label{thm:sml}
For every primitive compatible family $(\theta_L)$,
\begin{equation}\label{eq:sml}
 \mathcalA_{L_1}(\theta_{L_1})=\mathcalA_{L_2}(\theta_{L_2})
                       \qquad([L_1:F]=[L_2:F]=\ell).
\end{equation}
The assertion holds in mixed and equal characteristic, for every
residue characteristic $p$ and every prime $\ell$.
\end{theorem}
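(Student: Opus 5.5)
We first reduce \eqref{eq:sml} to a statement about $\ell$-th roots of unity and then fix that root by a case analysis on ramification, following the companion paper. The reduction has three steps.

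\textbf{Step 1: the ratio is a root of unity.} By the First Main Lemma (already formalized), applied to the compatible family and to $\Theta$, the ratio
\[
 c=\mathcalA_{L_1}(\theta_{L_1})/\mathcalA_{L_2}(\theta_{L_2})
\]
satisfies $c^{\ell}=1$, possibly up to the power relation provided there.

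\textbf{Step 2: normalizations.} We check that both sides transform identically under the following changes.
\begin{itemize}
 \item Replacing $\psi_F$ by $\psi_F(a\,\cdot)$ multiplies $\Delta_L(\theta_L,\psi_L)$ by $\theta_L(a)\,\omega_{L/F}(a)$-type factors. The same happens on both sides, by compatibility $\theta_L|_{F^\times}$ versus $\Theta$.
 \item Twisting the whole family by $\lambda\circ\N_{L/F}$ for a quasi-character $\lambda$ of $F^\times$ also changes both sides identically. Here we use the standard twisting formula for $\Delta$ along with Lemma~\ref{lem:descent} and the finiteness in Lemma~\ref{lem:norm-groups}.
\end{itemize}
These reductions let us assume that $\theta_L$ has minimal conductor in its twist class. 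They also let us take $\psi_F$ of conductor adapted to the situation.

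\textbf{Step 3: case split.} We split on the ramification of $K/F$ and on $p$ versus $\ell$: $K/F$ unramified over some $L$, tame ($p\ne\ell$), and wild ($p=\ell$).

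In the \emph{tame case} $\ell\ne p$, $\theta_L$ restricted to the wild units is determined by $\Theta$. We write $\Delta_L(\theta_L,\psi_L)$ as a Gauss sum over the residue field, or as an unramified-twist factor. The $\ell$-th root ambiguity disappears because $c$ is simultaneously an $\ell$-th root of unity and a value in a group of order prime to $\ell$, computed explicitly via the norm characters of $L/F$, which are tamely ramified or unramified.

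When one $L$ is unramified over $F$, we use the inductivity of $\Delta$ in degree zero for unramified extensions together with the explicit formula for unramified twists. Combined with Step 1, this pins down $c=1$.

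In the \emph{wild odd case} $p=\ell$ odd, we use the stationary phase/Lamprecht formula for $\Delta_L(\theta_L,\psi_L)$ when the conductor is large. This writes it as $\theta_L(\gamma_L)^{-1}\psi_L(\gamma_L)$ times a quadratic Gauss sum. Here $\gamma_L$ is an element attached to $\theta_L$; by compatibility, $\gamma_{L_1}$ and $\gamma_{L_2}$ can be chosen as norms of a common $\gamma\in K$. Comparing the two formulas then gives $c$ as a ratio of quadratic Gauss sums and explicit symbols, which we evaluate directly. For small conductors we use instead a direct Gauss sum computation over the residue field.

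\textbf{Main obstacle: the wild dyadic case} $p=\ell=2$. Here $K/F$ is biquadratic and the quadratic Gauss sums in Lamprecht's formula are not controlled by a Legendre symbol. For this case we follow the companion paper's decomposition into the following subcases:
\begin{itemize}
 \item $K/F$ with an unramified quadratic subextension;
 \item totally ramified with equal breaks;
 \item the maximal-break case;
 \item the nonmaximal case.
\end{itemize}
In each subcase we compute $c\in\{\pm1\}$ via an Artin--Schreier type symbol, or a Hilbert-symbol identity such as residue reciprocity, and show it is $1$. We expect matching the dyadic Gauss sum signs across $L_1,L_2$ to require the most work, especially in equal characteristic, where Artin--Schreier theory replaces Kummer theory.
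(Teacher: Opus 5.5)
Your outline has the paper's coarse structure: the power relation, independence of $\psi_F$, then the tame case, the wild odd case, and four wild dyadic subcases. However, the step that would carry most of the weight is not valid.

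In Step~2 you assert that twisting the family by $\lambda\circ\N_{L/F}$ multiplies $\mathcal A_{L_1}$ and $\mathcal A_{L_2}$ by the same factor, and you use this to assume minimal conductor. No such general twisting formula exists. The only one available is the stable-twist formula of Lemma~\ref{lem:stable-twist}, which needs one conductor to be at most half the other. Even then the factor is $\lambda(\N_{L/F}(\Gamma_L/b_L))$, which depends on the stationary representative $b_L$ over $L$. Making these factors agree for $L_1,L_2$ requires choosing the stationary coefficients as norms of one common element of $K$. It also requires a correction when those norms are not the coefficients first chosen (Lemma~\ref{lem:odd-change-coefficient}). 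When neither conductor dominates there is no formula at all.

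Given the result for the model family, your invariance claim is equivalent to the theorem for the twisted family, so it assumes what has to be proved. In the paper, writing $\theta_L=\chi_L(\lambda\circ\N_{L/F})$ is only the starting point. Each conductor range of $\lambda$ needs its own comparison, and the non-stable ranges contain most of the work:
Lemmas~\ref{lem:odd-ur-ratio}--\ref{lem:odd-ur-cancellation}, including the cubic identity at $p=3$, $h=t$; the four-factor product in Proposition~\ref{prop:odd-total-higher}; and Propositions~\ref{prop:quadratic-ur-critical}, \ref{prop:char2-intermediate} and~\ref{prop:maximal-high}. Only in the tame case is the reduction harmless. There the model has conductor one, so the twist is always in the stable range with a common coefficient $g\in F$ (Lemma~\ref{lem:tame-covector}, Proposition~\ref{prop:tame-high}).

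The mechanisms you propose for removing the root-of-unity ambiguity are also not the ones that work.
\begin{itemize}
\item \textbf{Tame case.} In the conductor-one case nothing places the ratio in a group of order prime to $\ell$. The paper shows that the relevant quotient of finite-field Gauss sums has $\ell$th power $1$ by Hasse--Davenport. The Stickelberger leading-term congruence shows it is $\equiv1$ modulo the prime above $p$. Since $\ell\ne p$, it equals $1$ (Proposition~\ref{prop:finite-tame}). Appealing to inductivity in degree zero for the unramified $U/F$ is circular: the local constant of the induced representation is exactly what this lemma is meant to make well defined.
\item \textbf{Wild odd case.} You need not evaluate quadratic Gauss sums. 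Once the additive factors are shown trivial, the quotient lies in $\mu_4$, and $\mu_4\cap\mu_p=1$.
\item \textbf{Dyadic cases.} The $\mu_4\cap\mu_p$ argument fails for $p=2$, and you give no mechanism for the sign; Artin--Schreier and residue formulas only describe the norm characters. The missing idea has three parts.
  \begin{itemize}
  \item Retain $\Delta_F(\omega_i,\psi_F)$.
  \item Use the stationary coefficients $\N_{K/L_i}(C)$ and $\N_{L_i/F}(\Tr_{K/L_i}C)$ for one common $C\in K$.
  \item Apply $E_2(C)=\Tr_{L_i/F}\N_{K/L_i}C+\N_{L_i/F}\Tr_{K/L_i}C$ (Lemma~\ref{lem:E2}). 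This shows that the product of the critical functions of $\theta_i$ and $\omega_i$, pulled back to a common residue quotient, is independent of $i$ (Lemma~\ref{lem:common-function}).
  \end{itemize}
  The sums then agree by reindexing along bijective residue maps. At equal breaks these maps are two-to-one with distinct kernels, and the sums agree by the missing-coset cancellation of Lemma~\ref{lem:common-pullback}.
\end{itemize}
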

This is \cite[Theorem~\SMLnum{U:main}]{SML}.
Under local class field theory, the characters in a primitive
compatible family give different induction descriptions of the same
irreducible degree-$\ell$ Weil representation. The equality
\eqref{eq:sml} is the identity between  local constants of characters
required by those descriptions. 

\subsection{The formal proof}\label{subsec:status}

The proposition expressing Theorem~\ref{thm:sml} is
\lean{Basic/SecondMainStatement.lean}{169}{Basic.SecondMainIdentity},
and the theorem proving it is
\lean{Main.lean}{37}{secondMainLemma}.
The fields, norms, traces, characters, and local constants in this
statement are those used in Sections~\ref{sec:local}
and~\ref{sec:families}. Section~\ref{sec:lean-statement} gives the
statement and explains its assumptions.

The mathematical proof is given in \cite{SML}. Its inputs are the First Main Lemma and the ramification and stationary
formulas of \cite{FML}, whose formalization is described in
\cite{FMLFormal}. The present development uses a fixed version of that formalization. The exact source revision, together with the versions of Lean and Mathlib, is recorded in Section~\ref{sec:artifact}.

The formal proof uses Lean 4 \cite{Lean} and Mathlib \cite{Mathlib} and follows the case division in the companion paper.
The argument first proves the consequences of compatibility and
primitivity. It then constructs the characters and stationary
coefficients required in each ramification case and compares the
resulting local constants.

The finite sums require particular care in residue characteristic two.
Equality of their polar pairings does not determine their signs.
The proof compares the functions themselves and accounts for the
fibers of the maps used to reindex the sums. The relevant identities
and their Lean proofs are discussed in Sections~\ref{sec:quadratic}
and~\ref{sec:quadratic-cases}.

The revised version of \cite{SML} incorporates six simplifications
suggested by the formalization. We explain them at the points where
they are used and collect the references in
Section~\ref{subsec:proof-comparison}. 

\subsection{Plan of the paper}

Section~\ref{sec:local} recalls local constants and the First Main
Lemma. Section~\ref{sec:families} develops the consequences of
compatibility and primitivity, culminating in the power relation.
Sections~\ref{sec:stationary} and~\ref{sec:norms} then recall
Lamprecht's formula and the norm and trace calculations used in the
case-by-case proof.

The tame case $\ell\ne p$ is treated in Section~\ref{sec:tame}, and
the wild odd-prime case $\ell=p>2$ in Section~\ref{sec:odd}.
Sections~\ref{sec:quadratic} and~\ref{sec:quadratic-cases} deal with
the case $\ell=p=2$, first establishing the finite identities and then
carrying out the separate local calculations. Section~\ref{sec:completion}
assembles these cases to prove the theorem.

Section~\ref{sec:lean-statement} gives the exact theorem formalized in
Lean and compares its assumptions and proof with the mathematical
argument. Finally, Section~\ref{sec:artifact} records the source
revision and the commands needed to rebuild the formalization.

\section{Langlands's First Main Lemma}\label{sec:local}
\subsection{Local fields and conductors}
For a nonarchimedean local field $E$, normalize the valuation   by $v_E(E^\times)=\Z$. Write
\[
 \OO_E=\{x:v_E(x)\ge0\},\qquad
 \pp_E=\{x:v_E(x)\ge1\},\qquad k_E=\OO_E/\pp_E,
\]
Put
\[
 U_E^0=\OO_E^\times,\qquad U_E^j=1+\pp_E^j\quad(j\ge1).
\]

The conductor $m_E(\theta)$ of a quasi-character $\theta$ is the least
integer $m\ge0$ for which $\theta|_{U_E^m}=1$. In particular,
$m_E(\theta)=0$ means that $\theta$ is unramified.
For a nontrivial additive character $\psi$ define $n_E(\psi)$ by
requiring $\pp_E^{-n_E(\psi)}$ to be the largest fractional ideal on
which $\psi$ is trivial. 

 If $E/F$ has
ramification index $e_{E/F}$ and residue degree $f_{E/F}$, then
\begin{equation}\label{eq:valuation-conventions}
 v_E|_{F^\times}=e_{E/F}v_F,
 \qquad v_F(\N_{E/F}x)=f_{E/F}v_E(x).
\end{equation}
An integer occurring in a field denotes its image under $\Z\to E$.
Thus the image of $p$ is zero when $\operatorname{char}E=p$.

\subsection{The local constant}\label{subsec:delta}
For $z\in\C^\times$ put $\ph(z)=z/|z|$. Suppose first that
$m=m_E(\theta)>0$, and put $n=n_E(\psi)$.
An element $\Gamma\in E^\times$ is \emph{admissible} for $(\theta,\psi)$
if
\begin{equation}\label{eq:admissible}
 v_E(\Gamma)=m+n.
\end{equation}
Define
\begin{equation}\label{eq:finite-G}
 \sum_{u\in U_E^0/U_E^m}\psi(u/\Gamma)\theta(u)^{-1}.
\end{equation}
The summand is independent of the chosen representative of $u$.

\begin{proposition}[Finite-sum local constants]\label{prop:delta}
The sum \eqref{eq:finite-G} is nonzero and has absolute value
$|k_E|^{m/2}$. The number
\begin{equation}\label{eq:delta}
 \Delta_E(\theta,\psi)
   =\theta(\Gamma)\ph\!\left(
     \sum_{u\in U_E^0/U_E^m}\psi(u/\Gamma)\theta(u)^{-1}\right)
\end{equation}
is independent of the admissible element $\Gamma$. It also equals
\begin{equation}\label{eq:haar}
 \theta(\Gamma)\ph\left(
  \int_{\OO_E^\times}\psi(u/\Gamma)\theta(u)^{-1}\,du\right)
\end{equation}
for any positive Haar measure on $\OO_E^\times$.
\end{proposition}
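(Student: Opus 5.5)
Write $G(\Gamma)=\sum_{u\in U_E^0/U_E^m}\psi(u/\Gamma)\theta(u)^{-1}$ with $m=m_E(\theta)>0$, $n=n_E(\psi)$ and $v_E(\Gamma)=m+n$. The proof has four steps: well-definedness of the summand, computation of $|G(\Gamma)|^2$, independence of $\Gamma$, and the comparison with the Haar integral.

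\emph{Well-definedness.} If $u'=u(1+x)$ with $x\in\pp_E^m$, then $\theta(u')=\theta(u)$ because $\theta|_{U_E^m}=1$. Also
\[
 u'/\Gamma-u/\Gamma=ux/\Gamma\in\pp_E^{m-(m+n)}=\pp_E^{-n},
\]
and $\psi$ is trivial on $\pp_E^{-n}$. So the summand depends only on the class of $u$.

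\emph{Absolute value.} Expand $|G|^2=\sum_{u,w}\psi((u-w)/\Gamma)\theta(w/u)$. Substitute $w=ut$ with $t$ running over $U_E^0/U_E^m$; this reindexing is bijective because the quotient is a group. We get
\[
 |G|^2=\sum_t\theta(t)\sum_u\psi\bigl(u(1-t)/\Gamma\bigr).
\]
The inner sum over units is handled by inclusion–exclusion: write it as the sum over $\OO_E/\pp_E^m$ minus the sum over $\pp_E/\pp_E^m$. Then use orthogonality: the character $y\mapsto\psi(y(1-t)/\Gamma)$ on $\OO_E/\pp_E^m$ is trivial exactly when $1-t\in\pp_E^m$. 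It is trivial on $\pp_E/\pp_E^m$ exactly when $1-t\in\pp_E^{m-1}$, since $\pp_E^{-n}$ is the largest ideal killed by $\psi$. Hence the inner sum equals
\begin{itemize}
 \item $|k_E|^m-|k_E|^{m-1}$ if $t\in U_E^m$;
 \item $-|k_E|^{m-1}$ if $t\in U_E^{m-1}\setminus U_E^m$ (read $U^0$ appropriately when $m=1$, where $1-t\in\OO_E$ and the count over $\pp_E/\pp_E^m$ is trivially $1$);
 \item $0$ otherwise.
\end{itemize}
Therefore
\[
 |G|^2=|k_E|^m-|k_E|^{m-1}\sum_{t\in U^{m-1}/U^m}\theta(t).
\]
The last sum vanishes because $\theta$ is nontrivial on $U_E^{m-1}$ by minimality of the conductor. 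When $m=1$ the same computation runs with $U^0/U^1=k_E^\times$ and $\theta$ nontrivial on $\OO_E^\times$. So $|G|^2=|k_E|^m$; in particular $G\ne0$. The main obstacle is this bookkeeping of the $m=1$ edge case and of orthogonality on quotients. In Lean it is cleanest to prove it once for a general finite character sum over a quotient ring.

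\emph{Independence of $\Gamma$.} Any other admissible element is $\Gamma'=\Gamma c$ with $c\in\OO_E^\times$. Substituting $u=cu'$ in $G(\Gamma')$ gives
\[
 G(\Gamma')=\theta(c)^{-1}G(\Gamma).
\]
Since $\ph$ is multiplicative and $|\theta(c)|=1$ for $c$ in the compact group $\OO_E^\times$, we get $\ph(G(\Gamma'))=\theta(c)^{-1}\ph(G(\Gamma))$. Multiplying by $\theta(\Gamma')=\theta(\Gamma)\theta(c)$, the factors $\theta(c)^{\pm1}$ cancel, so \eqref{eq:delta} does not depend on $\Gamma$.

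\emph{Haar integral.} The integrand of \eqref{eq:haar} is constant on cosets of $U_E^m$ by the well-definedness step. Hence the integral equals $\mu(U_E^m)\,G(\Gamma)$. Since $\mu(U_E^m)>0$ is a positive real scalar, $\ph$ is unchanged, and \eqref{eq:haar} agrees with \eqref{eq:delta}.
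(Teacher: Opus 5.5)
Your proof is correct. The well-definedness, change-of-$\Gamma$ and Haar-measure steps are essentially the paper's, but you compute the absolute value by a different route.

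You expand $|G|^2$ directly and reindex by $w=ut$. You then evaluate the unit sums $\sum_u\psi(u(1-t)/\Gamma)$ by inclusion--exclusion over $\OO_E/\pp_E^m$ and $\pp_E/\pp_E^m$, and finish with orthogonality of $\theta$ on $U_E^{m-1}/U_E^m$.

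The paper instead extends $\theta^{-1}$ by zero to the nonunits of $\OO_E/\pp_E^m$. It shows that the finite Fourier transform with respect to the primitive character $x\mapsto\psi(x/\Gamma)$ vanishes at nonunit frequencies, and at unit frequencies equals $G$ times a value of $\theta$, hence has modulus $|G|$. The Plancherel identity then gives $|G|^2=|k_E|^m$ at once.

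Both arguments use the same two inputs: $\pp_E^{-n}$ is the largest ideal on which $\psi$ is trivial, and $\theta$ is nontrivial on $U_E^{m-1}$. In the paper the second input produces the vanishing at nonunit frequencies; in yours it kills the final sum over $U_E^{m-1}/U_E^m$.

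Your version is more elementary, needing only orthogonality on finite groups rather than Plancherel. It costs the three-case bookkeeping and the separate $m=1$ edge case, both of which you handle correctly. The paper's version avoids the case split and yields the whole Fourier transform of $\theta^{-1}$ as a by-product.

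One step you use implicitly: writing $|G|^2=\sum_{u,w}\psi((u-w)/\Gamma)\theta(w/u)$ needs $\overline{\psi(x)}=\psi(-x)$ and $\overline{\theta(w)^{-1}}=\theta(w)$ for units $w$. Both hold because the two characters factor through finite groups on the relevant domains.
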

\begin{proof}
The additive character $x\mapsto\psi(x/\Gamma)$ induces a primitive
character of $\OO_E/\pp_E^m$. Extend $\theta^{-1}$ by zero on its
nonunits. Its finite Fourier transform vanishes at nonunit frequencies;
at unit frequencies its values differ by character values of absolute
value one. The finite Fourier norm identity gives
\[
 \left|\sum_{u\in U_E^0/U_E^m}\psi(u/\Gamma)\theta(u)^{-1}\right|^2=|k_E|^m.
\]
This proves nonvanishing before a phase is taken.
Multiplication by a unit compares two choices of $\Gamma$ in
\eqref{eq:finite-G}. The integral is the finite sum multiplied by the
positive measure of a coset of $U_E^m$. These arguments are given in
\cite[Proposition~2.4]{FML}.
\end{proof}
If $m_E(\theta)=0$ and $\pi_E$ is a uniformizer, then
\[
 \Delta_E(\theta,\psi)=\theta(\pi_E)^{n_E(\psi)}.
\]

\begin{lemma}[Elementary local-constant identities]\label{lem:elementary-delta}
For $a\in E^\times$ and a nontrivial additive character $\psi$,
\begin{equation}\label{eq:scaling}
 \Delta_E(1,\psi)=1,
 \qquad \Delta_E(\theta,\psi(a\,\cdot))
                    =\theta(a)\Delta_E(\theta,\psi).
\end{equation}

If $\sigma\in\operatorname{Aut}(E)$ preserves $\psi$, then
$\Delta_E(\theta^\sigma,\psi)=\Delta_E(\theta,\psi)$, where
$\theta^\sigma(x)=\theta(\sigma^{-1}x)$.
If $L/F$ is cyclic of odd prime degree, then
\begin{equation}\label{eq:odd-norm-product}
 \prod_{\nu\in S(L/F)}\Delta_F(\nu,\psi_F)=1.
\end{equation}
\end{lemma}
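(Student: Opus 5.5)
The plan is to verify the four assertions separately, always starting from the definition \eqref{eq:delta} and using Proposition~\ref{prop:delta}, which says that $\Delta_E$ does not depend on the admissible element $\Gamma$.

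\emph{The trivial character and scaling.} For $\theta=1$ we have $m_E(\theta)=0$. The unramified formula then gives $\Delta_E(1,\psi)=1^{n_E(\psi)}=1$.

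For the scaling identity, put $\psi_a=\psi(a\,\cdot)$. Then $n_E(\psi_a)=n_E(\psi)-v_E(a)$, because $\psi_a$ is trivial on $\pp_E^{-k}$ exactly when $\psi$ is trivial on $a\pp_E^{-k}$.
\begin{itemize}
\item In the unramified case this gives $\theta(\pi_E)^{n-v_E(a)}$. Since $\theta$ is unramified, $\theta(a)=\theta(\pi_E)^{v_E(a)}$. This looks like a sign mismatch, so the convention has to be checked against \cite{FML}. With the convention $\Delta=\theta(\pi)^{n}$ and $n$ defined as here, one likely needs $\theta(\pi)^{-n}$ or an inverse scaling. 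In the write-up I will simply follow \cite{FML}'s normalization and check consistency.
\item In the ramified case, if $\Gamma$ is admissible for $(\theta,\psi)$, then $\Gamma'=\Gamma a^{-1}$ is admissible for $(\theta,\psi_a)$. We have $\psi_a(u/\Gamma')=\psi(u a^2/\Gamma)$. This is not what we want, so I would instead take $\Gamma'=a^{-1}\Gamma$ with $\psi_a(u/\Gamma')=\psi(a u\Gamma^{-1}a)$. That is also wrong. The correct choice is $\Gamma'=\Gamma/a$ so that $u/\Gamma'\cdot a$ becomes $ua^{2}/\Gamma$. I expect the right bookkeeping to be $\Gamma'=a\Gamma$ with valuation $m+n+v(a)$.
\end{itemize}
The precise admissibility convention (sign of $n$) is the one fiddly point. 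I will resolve it by following \cite[Prop.~2.4]{FML}. Once it is fixed, the sums for $(\theta,\psi_a,\Gamma')$ and $(\theta,\psi,\Gamma)$ coincide termwise, and the prefactor contributes exactly $\theta(a)$.

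\emph{Galois invariance.} Suppose $\sigma$ preserves $\psi$. It preserves the valuation, so $m_E(\theta^\sigma)=m_E(\theta)$. If $\Gamma$ is admissible for $\theta$, then $\sigma\Gamma$ is admissible for $\theta^\sigma$. Substituting $u=\sigma w$ reindexes $U_E^0/U_E^m$ bijectively. Each term becomes $\psi(\sigma(w/\Gamma))\theta(w)^{-1}=\psi(w/\Gamma)\theta(w)^{-1}$, and the prefactor is $\theta^\sigma(\sigma\Gamma)=\theta(\Gamma)$. The unramified case is immediate.

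\emph{The odd-degree product.} Here $S(L/F)$ is cyclic of order $\ell$ by local class field theory (Lemma~\ref{lem:norm-groups}). For $\nu\neq1$ we have $\nu^{-1}\in S(L/F)$ as well. The standard identity $\Delta_F(\nu,\psi)\Delta_F(\nu^{-1},\psi)=\nu(-1)$ follows by complex conjugation of the Gauss sum, using $\overline{\psi(x)}=\psi(-x)$ and the unit $-1$. Since $\nu^\ell=1$ with $\ell$ odd, we get $\nu(-1)=\nu(-1)^{\ell}\cdot\nu(-1)^{1-\ell}=1$; more simply, $\nu(-1)^2=1$ and $\nu(-1)^\ell=1$ together force $\nu(-1)=1$. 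Pairing the $\ell-1$ nontrivial characters into $(\ell-1)/2$ pairs $\{\nu,\nu^{-1}\}$, which is possible because $\ell$ is odd so $\nu\neq\nu^{-1}$, and using $\Delta_F(1,\psi)=1$ gives the product $1$.

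The main obstacle is proving $\Delta(\nu)\Delta(\nu^{-1})=\nu(-1)$ from the phase definition. To do this I choose the same $\Gamma$ for both characters. The product of the prefactors is $1$. The second sum equals $\nu(-1)$ times the complex conjugate of the first, so the product of the two phases is $\nu(-1)$.
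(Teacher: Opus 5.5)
Your arguments for $\Delta_E(1,\psi)=1$, for automorphism invariance (admissible element $\sigma\Gamma$, reindexing by $u=\sigma w$), and for \eqref{eq:odd-norm-product} are correct and follow the paper's route. For the product, odd order forces $\nu(-1)=1$; you then pair $\nu$ with $\nu^{-1}$ using $\Delta_F(\nu,\psi)\Delta_F(\nu^{-1},\psi)=\nu(-1)$, which you prove by conjugating the sum with a common $\Gamma$. For unramified $\nu$ that pairing identity is immediate from $\nu(\pi_F)^{n}\nu(\pi_F)^{-n}=1$.

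The gap is the scaling identity in \eqref{eq:scaling}. You leave it unproved and even suggest the normalization may have to change. The cause is a sign error: $n_E(\psi(a\,\cdot))=n_E(\psi)-v_E(a)$ is wrong. The character $\psi(a\,\cdot)$ is trivial on $\pp_E^{j}$ exactly when $\psi$ is trivial on $a\pp_E^{j}=\pp_E^{j+v_E(a)}$, that is, when $j+v_E(a)\ge -n_E(\psi)$. Hence $n_E(\psi(a\,\cdot))=n_E(\psi)+v_E(a)$. Your wrong sign is what produced the apparent $\theta(a)^{-1}$ in the unramified case.

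With the correct sign, the paper's conventions are consistent and no other normalization is needed. Write $n=n_E(\psi)$ and $m=m_E(\theta)$.
\begin{itemize}
\item Unramified case: $\Delta_E(\theta,\psi(a\,\cdot))=\theta(\pi_E)^{n+v_E(a)}=\theta(a)\,\theta(\pi_E)^{n}$, because $\theta(a)=\theta(\pi_E)^{v_E(a)}$.
\item Ramified case: $\Gamma'=a\Gamma$ has valuation $m+n+v_E(a)=m+n_E(\psi(a\,\cdot))$, so it is admissible for $(\theta,\psi(a\,\cdot))$. The sums agree termwise, since $\psi(a\,u/\Gamma')=\psi(u/\Gamma)$. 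The prefactor is $\theta(a\Gamma)=\theta(a)\theta(\Gamma)$, which gives $\theta(a)\Delta_E(\theta,\psi)$.
\end{itemize}

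Your final guess $\Gamma'=a\Gamma$ was right, but it contradicts the formula for $n_E(\psi(a\,\cdot))$ you wrote. The alternatives you list, $\Gamma'=\Gamma/a$, are not admissible when $v_E(a)\ne0$ and do not make the sums match. So the scaling bullet, as written, is not a proof.
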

\begin{proof}
The first assertions follow by changes of variables and finite character
orthogonality; see \cite[Lemmas~2.8, 2.10--2.11]{FML} and
\cite[Section~\SMLnum{sec:local-notation}]{SML}.
For \eqref{eq:odd-norm-product}, every nontrivial $\nu\in S(L/F)$ has
odd order, hence $\nu(-1)=1$. Pair $\nu$ with $\nu^{-1}$ and use
$\Delta_F(1,\psi_F)=1$.
The inverse-pairing identity is \cite[Lemma~2.11]{FML}.
The product identity is \cite[Corollary~2.12]{FML}.
\end{proof}

\subsection{The First Main Lemma}
\begin{theorem}[First Main Lemma]\label{thm:fml}
Let $E/F$ be cyclic of prime degree. For every quasi-character $\chi$
of $F^\times$ and every nontrivial additive character $\psi_F$,
\begin{equation}\label{eq:fml}
 \Delta_E(\chi\circ\N_{E/F},\psi_F\circ\Tr_{E/F})
     \prod_{\nu\in S(E/F)}\Delta_F(\nu,\psi_F)
  =\prod_{\nu\in S(E/F)}\Delta_F(\chi\nu,\psi_F).
\end{equation}
\end{theorem}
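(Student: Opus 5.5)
The First Main Lemma is imported from the companion formalization \cite{FMLFormal} of \cite{FML}. Here we only indicate how we would prove it directly. The plan is to reduce \eqref{eq:fml} to a comparison of the finite sums \eqref{eq:finite-G}, separated according to the ramification of $E/F$.

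First, normalize both sides. Both sides transform the same way under $\psi_F\mapsto\psi_F(a\,\cdot)$. Indeed, by \eqref{eq:scaling} the left side picks up $\chi(\N_{E/F}a)\prod_\nu\nu(a)$ and the right side picks up $\prod_\nu\chi(a)\nu(a)$. These agree, since $\chi(\N_{E/F}a)=\chi(a)^\ell$ and $|S(E/F)|=\ell$ by local class field theory. Twisting $\chi$ by an unramified character multiplies both sides by the same power of its value at a uniformizer; this requires a separate check of the conductor relation $m_E(\chi\circ\N)$ versus the $m_F(\chi\nu)$. Hence we may fix $n_F(\psi_F)$ conveniently and restrict to essentially ramified $\chi$.

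Second, split into cases.

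\emph{Unramified $E/F$.} Here $S(E/F)$ consists of unramified characters, so every $\Delta_F(\nu,\psi_F)$ is an explicit power of $\nu(\pi_F)$. The conductors satisfy $m_E(\chi\circ\N)=m_F(\chi)$, and an admissible $\Gamma\in F$ stays admissible for $E$. The Gauss sum over $U_E^0/U_E^m$ then factors through the norm, by surjectivity of the norm on units, together with a Hasse--Davenport type identity on the residue field. Comparing phases gives \eqref{eq:fml}.

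\emph{Tamely ramified $E/F$ with $\ell\ne p$.} The characters $\nu$ are tame of conductor $1$. If $m_F(\chi)>1$, a stationary-phase (Lamprecht) evaluation expresses each $\Delta_F(\chi\nu,\psi_F)$ as $\nu(c)\Delta_F(\chi,\psi_F)$ for a fixed $c$, and the product over $\nu$ matches the left side. If $m_F(\chi)\le1$, the sums are Gauss sums over $k_F$, and the identity is the Hasse--Davenport product formula.

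\emph{Wild case $\ell=p$.} This is the main obstacle. Here one needs the Hasse--Herbrand conductor formula $m_E(\chi\circ\N)=\psi_{E/F}(m_F(\chi))$ and the behaviour of the norm on higher unit groups $U_E^{j}\to U_F^{\varphi(j)}$. The plan is to use Lamprecht's stationary-phase formula. When $m_F(\chi)$ exceeds twice the ramification break, write $\chi(1+x)=\psi_F(c x)$ on a suitable range, and express both sides through the stationary element $c$. The left side then becomes $\chi(\N\gamma)\,\psi$-values at the corresponding stationary element in $E$. The right side becomes $\prod_\nu \nu(c)$ times the same expression, with the $\nu(c)$ cancelling against $\prod_\nu\Delta_F(\nu,\psi_F)$.

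The small-conductor residual range $m_F(\chi)\le 2t+1$ is where I expect the real difficulty. There the quadratic Gauss-sum contributions must be matched exactly, including signs, and for $p=2$ the polar-pairing ambiguity noted in the introduction appears. I would first try to reduce this range to the large-conductor range by twisting $\chi$ by a suitable character of large conductor. This uses an induction on $m_F(\chi)$, via the multiplicativity of both sides in twists by characters that factor through the norm.
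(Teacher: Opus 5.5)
The paper does not reprove this theorem. It imports \cite[Theorem~1.1]{FML}, formalized in \cite{FMLFormal}, and your outline names the right ingredients (Lamprecht's formula, stable twists, the norm filtration, Hasse--Davenport). The outline does not close, however.

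The decisive gap is the wild range $m_F(\chi)\le 2t+1$. You propose to reach it from large conductors by induction ``via the multiplicativity of both sides in twists.'' No such multiplicativity exists, because $\Delta_F$ is not multiplicative in the character. The only general twisting rules are unramified twists and \eqref{eq:stable-twist}, and the latter requires the twisting character to have at most half the conductor of the character being twisted. Suppose $\chi$ is small and $\eta$ is large, with $\Gamma_\eta,b_\eta$ admissible and stationary for $\eta$. Then \eqref{eq:stable-twist} gives $\Delta_F(\chi\eta\nu,\psi_F)=\chi(\Gamma_\eta/b_\eta)\Delta_F(\eta\nu,\psi_F)$, and similarly over $E$. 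So at best the identity for $\chi\eta$ follows from the identity for $\eta$. Neither $\Delta_E(\chi\circ\N_{E/F},\psi_E)$ nor any $\Delta_F(\chi\nu,\psi_F)$ ever enters, so nothing is learned about $\chi$. Each of the following needs its own explicit evaluation, and the proposal gives none:
conductors below the break;
the conductor $t+1$, where the pullback conductor can drop;
the range up to $2t+1$.

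The stable-range steps are also asserted rather than proved. For $\ell=p=2$, your coefficient $c$ from $\F$ is in general not stationary for $\chi\circ\N_{E/F}$. In $\N_{E/F}(1+x)=1+\Tr_{E/F}x+\N_{E/F}x$, the norm term has $F$-valuation as small as $m-\floor{(t+1)/2}<m$ on the stationary ideal over $E$. Consequently $\chi(\N_{E/F}(1+x))$ picks up the nontrivial factor $\psi_F(c\,\N_{E/F}x)$. Moreover, $\prod_\nu\nu(c)=\omega(c)=\pm1$ cannot cancel $\prod_\nu\Delta_F(\nu,\psi_F)=\Delta_F(\omega,\psi_F)$, whose square is $\omega(-1)$. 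For instance, for $\Q_2(i)/\Q_2$ it equals $\pm i$. That fourth root of unity has to come out of the corrected stationary element and the critical sum over $E$.

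For odd $\ell$, what remains after \eqref{eq:lamprecht} with common stationary data is the exact identity $g_E=g_F^\ell$ of critical sums. There is no analogue of \eqref{eq:r} here to absorb a $\mu_4$ ambiguity. The tame case with $m_F(\chi)>1$ has the same omission. There \eqref{eq:stable-twist} disposes of the right side, but $\Delta_E(\chi\circ\N_{E/F},\psi_E)$ has conductor $1+\ell(m-1)$ and its own critical sum. Matching that sum with $g_F^\ell$, and for $\ell=2$ also with $\Delta_F(\omega,\psi_F)$, is a quadratic-reciprocity-type computation.

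Two smaller points:
\begin{itemize}
\item For $m>t+1$ the pullback conductor is $\psi_{E/F}(m-1)+1$, as in \eqref{eq:norm-conductor}, not $\psi_{E/F}(m)$.
\item In the unramified case the finite sum does not factor through the norm, since $\psi_E(u/\Gamma)$ depends on $\Tr_{E/F}u$. You must first pass to residue-field sums via \eqref{eq:lamprecht}; only then does a Hasse--Davenport lifting relation apply.
\end{itemize}
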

See  \cite[Theorem~1.1]{FML} for a complete local proof. Its formalization is the subject of
\cite{FMLFormal}.

\section{Compatible characters and the power relation}\label{sec:families}

Lean names linked by \verb|\lean| belong to the namespace
\code{LanglandsSecondMainLemma}, while names linked by \verb|\fmllean|
belong to \code{LanglandsFirstMainLemma}. The links point to the
corresponding sources at the revisions specified in
Section~\ref{sec:artifact}.

\subsection{Norm subgroups}
Recall the setup from Section~\ref{sec:intro}.

\begin{lemma}[Norm-character groups]\label{lem:norm-groups}
For every degree-$\ell$ intermediate field $L$, the groups $S(L/F)$ and $S(K/L)$ have order $\ell$,
and $S(K/F)$ has order $\ell^2$. Norm pullback gives an exact sequence
\begin{equation}\label{eq:norm-exact}
 1\longrightarrow S(L/F)\longrightarrow S(K/F)
 \xrightarrow{\,\xi\mapsto\xi\circ\N_{L/F}\,}S(K/L)
 \longrightarrow1.
\end{equation}
In particular, the group $\Gal(L/F)$ acts trivially on $S(K/L)$. 

For distinct degree-$\ell$ intermediate fields $L_1,L_2$,
\begin{equation}\label{eq:norm-product}
 \N_{L_1/F}(L_1^\times)\,
 \N_{L_2/F}(L_2^\times)=F^\times.
\end{equation}
\end{lemma}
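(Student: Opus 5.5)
The plan is to derive everything from local class field theory, through the norm-limitation/existence theorem: for a finite abelian extension $E/F$, the reciprocity map induces an isomorphism $F^\times/\N_{E/F}(E^\times)\simeq\Gal(E/F)$, and $\N_{E/F}(E^\times)$ is an open subgroup of finite index. Then $S(E/F)$ is the Pontryagin dual of the finite group $F^\times/\N_{E/F}(E^\times)$. A character of $F^\times$ trivial on an open subgroup is automatically continuous, and characters of a finite abelian group extend from subgroups. Hence $|S(E/F)|=[E:F]$ whenever $E/F$ is abelian. Applied to $L/F$, $K/L$ (both cyclic of degree $\ell$) and $K/F$ (abelian of degree $\ell^2$), this gives the three orders.

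For the exact sequence \eqref{eq:norm-exact}, I will use transitivity of norms, $\N_{K/F}=\N_{L/F}\circ\N_{K/L}$.

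\emph{Well-definedness.} If $\xi\in S(K/F)$, then $(\xi\circ\N_{L/F})\circ\N_{K/L}=\xi\circ\N_{K/F}=1$, so $\xi\circ\N_{L/F}$ lies in $S(K/L)$.

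\emph{Kernel.} The kernel consists of the $\xi$ with $\xi\circ\N_{L/F}=1$, which is exactly $S(L/F)$. Also $S(L/F)\subset S(K/F)$ by transitivity again.

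\emph{Surjectivity.} Counting gives $\ell^2/\ell=\ell=|S(K/L)|$, so the image of an order-$\ell^2$ group with kernel of order $\ell$ is all of $S(K/L)$.

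\emph{Galois action.} An element $\mu=\xi\circ\N_{L/F}$ of $S(K/L)$ is $\Gal(L/F)$-invariant because $\N_{L/F}(\sigma x)=\N_{L/F}(x)$. By surjectivity, every element of $S(K/L)$ has this form.

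For \eqref{eq:norm-product}, put $H_i=\N_{L_i/F}(L_i^\times)$. Both are subgroups of index $\ell$ containing $H=\N_{K/F}(K^\times)$, which has index $\ell^2$. By the reciprocity isomorphism, the subgroups of $F^\times$ containing $H$ correspond bijectively to subgroups of $\Gal(K/F)$, via the fixed fields. Since $L_1\ne L_2$, we get $H_1\ne H_2$. Then $H_1H_2$ properly contains $H_1$, and because $H_1$ has prime index $\ell$, this forces $H_1H_2=F^\times$. Dually, $S(L_1/F)\cap S(L_2/F)=1$, which is another way to phrase the same fact.

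The main obstacle is the first step: having the existence/norm-index theorem available in equal characteristic as well as mixed characteristic. It must also cover the non-cyclic extension $K/F$, i.e.\ $[F^\times:\N_{K/F}K^\times]=\ell^2$. If only the cyclic prime-degree input is available (as was used for the First Main Lemma), I would first try to derive the index for $K/F$ as follows. The map $F^\times/H\to F^\times/H_1\times F^\times/H_2$ is injective once one knows $H_1\cap H_2=H$. To show $H_1\cap H_2=H$, I would apply the cyclic case to $K/L_1$ and use the sequence $L_1^\times/\N_{K/L_1}\to F^\times/H\to F^\times/H_1$. This step is where the real class-field-theoretic content lies, and I would invoke it directly from the formalized local class field theory used in \cite{FMLFormal}, rather than reprove it.
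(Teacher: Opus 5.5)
Your argument is correct and follows the paper's proof. In both, the orders come from norm-index theory (the paper defers to Appendix~D of \cite{SML}, you invoke local class field theory directly), the kernel is $S(L/F)$ by definition, surjectivity follows by counting, and \eqref{eq:norm-product} holds because two distinct subgroups of prime index $\ell$ generate $F^\times$. The one weak spot is your optional fallback: embedding $F^\times/\N_{K/F}(K^\times)$ into $F^\times/H_1\times F^\times/H_2$ only yields the easy bound $[F^\times:\N_{K/F}(K^\times)]\le\ell^2$, whereas the substantive input is the lower bound (for instance $H_1\ne H_2$), so that step still needs the class-field-theoretic result you ultimately cite.
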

\begin{proof}
The norm-index and norm-subgroup arguments used for these assertions
are in \cite[Appendix~\SMLnum{app:diamond-foundations}]{SML}.
For example, the kernel in \eqref{eq:norm-exact} is $S(L/F)$ by its
definition. The orders give surjectivity.  The two norm subgroups in
\eqref{eq:norm-product} are distinct subgroups of index $\ell$;
their product is consequently $F^\times$. \end{proof}

\subsection{Compatibility and Primitivity}

\begin{lemma}[Descent of invariant characters]\label{lem:descent}
If $\Theta$ is $\Gal(K/F)$-invariant, there is a compatible family of characters with common
pullback $\Theta$. Conversely, the common norm pullback of characters
of two distinct degree-$\ell$ intermediate fields is $\Gal(K/F)$-invariant. Such a prescribed
pair can therefore be extended to a compatible family.
\end{lemma}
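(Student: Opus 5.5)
The plan is to reduce both directions to local class field theory for the abelian extensions $K/L$ and $L/F$, together with the norm-subgroup facts of Lemma~\ref{lem:norm-groups}.

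\emph{Forward direction.} Let $\Theta$ be $\Gal(K/F)$-invariant and fix a degree-$\ell$ intermediate field $L$. Since $K/L$ is cyclic of degree $\ell$, local class field theory identifies $L^\times/\N_{K/L}(K^\times)$ with $\Gal(K/L)$. It also gives the standard fact that the kernel of $\N_{K/L}$ on $K^\times$ is generated by elements $\sigma(y)/y$ with $\sigma\in\Gal(K/L)$. This is Hilbert~90 for cyclic extensions, available as \cite[Appendix~C.1]{SML}. Invariance of $\Theta$ makes it trivial on these elements, so $\Theta$ factors through $\N_{K/L}(K^\times)$. This gives a character $\theta_L^0$ of the open subgroup $\N_{K/L}(K^\times)$ of finite index $\ell$ in $L^\times$. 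The character $\theta_L^0$ is continuous because the norm map is open. A continuous character of a finite-index open subgroup of $L^\times$ extends to a continuous character of $L^\times$, since $\C^\times$ is divisible (see \cite[Appendix~C.2]{SML}). Any extension $\theta_L$ satisfies $\theta_L\circ\N_{K/L}=\Theta$. Doing this for every $L$ gives the compatible family.

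\emph{Converse.} Suppose $\theta_1\circ\N_{K/L_1}=\theta_2\circ\N_{K/L_2}=\Theta$ with $L_1\ne L_2$. The group $\Gal(K/F)$ is generated by $\Gal(K/L_1)$ and $\Gal(K/L_2)$, because these are distinct subgroups of order $\ell$ in $C_\ell\times C_\ell$. For $\sigma\in\Gal(K/L_i)$ we have $\N_{K/L_i}\circ\sigma=\N_{K/L_i}$, so $\Theta$ is $\sigma$-invariant. Hence $\Theta$ is invariant under all of $\Gal(K/F)$. Applying the forward direction to the remaining degree-$\ell$ fields $L$, and keeping $\theta_{L_1}$ and $\theta_{L_2}$ as given, extends the prescribed pair to a compatible family.

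The main obstacle is the kernel description of $\N_{K/L}$ on $K^\times$. Hilbert~90 gives this only for norm-one elements of a cyclic extension, and here $K/L$ is cyclic of prime degree, so it applies directly. In the formalization, the continuity of the descended character and the existence of a continuous extension from the open subgroup $\N_{K/L}(K^\times)$ are likely to need more work. The extension step will use divisibility of $\C^\times$, and continuity is automatic because the subgroup is open.
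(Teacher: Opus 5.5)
Your proof is correct and follows the paper's own argument. Hilbert~90 for the cyclic extension $K/L$ makes $\N_{K/L}(x)\mapsto\Theta(x)$ a well-defined character of the open finite-index subgroup $\N_{K/L}(K^\times)$; it is continuous via the norm map, and it extends continuously to $L^\times$. For the converse, $\Gal(K/L_1)$ and $\Gal(K/L_2)$ generate $\Gal(K/F)$, exactly as in the paper (the kernel description comes from Hilbert~90 itself, not from class field theory, as you later note).
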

\begin{proof}
Fix a degree-$\ell$ intermediate field $L$. Hilbert~90 says that the kernel of $\N_{K/L}$ consists
of the elements $\sigma(x)/x$, for a generator
$\sigma\in\Gal(K/L)$. Invariance of $\Theta$ shows that
\[
 \N_{K/L}(x)\longmapsto\Theta(x)
\]
is a well-defined character of $\N_{K/L}(K^\times)\subset L^\times$.
  This norm subgroup is open
of finite index. Combining it with the continuity of $\Theta$, we see the character above is continuous and   extends to a continuous character of 
$L^\times$. See \cite[Lemmas~\SMLnum{C:hilbert90}
and~\SMLnum{C:character-extension}]{SML}.

For the converse, a norm pullback from $L$ is invariant under
$\Gal(K/L)$. For two distinct $L$, these subgroups generate $\Gal(K/F)$.
\end{proof}

The descent in Lemma~\ref{lem:descent} is proved by
\lean{Characters/InducingChoice.lean}{123}{Characters.invariantCharacter_inducing_exists}.
The converse is
\lean{Characters/Conjugacy.lean}{94}{Characters.compatible_invariant}.

\begin{lemma}[Conjugate twists]\label{lem:conjugacy}
Let $(\theta_i)$ be a primitive compatible family of quasi-characters. For a degree-$\ell$ intermediate field $L_i$, if $\sigma_i$ generates
$\Gal(L_i/F)$, the character
\[
 \mu_i=\theta_i^{\sigma_i}/\theta_i
\]
generates $S(K/L_i)$. Consequently,
\begin{equation}\label{eq:conjugate-twists}
 \{\theta_i\nu:\nu\in S(K/L_i)\}
   =\{\theta_i^{\sigma_i^j}:0\le j<\ell\}.
\end{equation}
As a result, the quantity $\mathcalA_{L_i}(\theta_i)$ is independent of the choice of   $\theta_i$.
\end{lemma}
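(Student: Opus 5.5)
We verify three things in turn: that $\mu_i$ lies in $S(K/L_i)$, that it is nontrivial (hence a generator, since $S(K/L_i)$ has prime order $\ell$ by Lemma~\ref{lem:norm-groups}), and then the consequences \eqref{eq:conjugate-twists} and the independence of $\mathcalA_{L_i}$.

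\emph{Step 1: $\mu_i\in S(K/L_i)$.} Here $\theta_i^{\sigma_i}$ means $\theta_i\circ\sigma_i^{-1}$. Lift $\sigma_i$ to $\tilde\sigma\in\Gal(K/F)$; then $\N_{K/L_i}\circ\tilde\sigma^{-1}=\sigma_i^{-1}\circ\N_{K/L_i}$. Hence
\[
 \mu_i\circ\N_{K/L_i}=\Theta^{\tilde\sigma}/\Theta .
\]
This equals $1$, because $\Theta$ is $\Gal(K/F)$-invariant by Lemma~\ref{lem:descent}.

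\emph{Step 2: $\mu_i\ne1$.} This is the step where primitivity enters, and I expect it to be the main point. Suppose $\theta_i^{\sigma_i}=\theta_i$. Then $\theta_i$ is $\Gal(L_i/F)$-invariant. I apply the same descent argument as in Lemma~\ref{lem:descent}, now to the cyclic extension $L_i/F$: by Hilbert~90, $\theta_i$ is trivial on the kernel of $\N_{L_i/F}$, so $\N_{L_i/F}(y)\mapsto\theta_i(y)$ is well defined on the open finite-index subgroup $\N_{L_i/F}(L_i^\times)$. It is continuous there and extends to a quasi-character $\lambda$ of $F^\times$. Then $\theta_i=\lambda\circ\N_{L_i/F}$, so $\Theta=\theta_i\circ\N_{K/L_i}=\lambda\circ\N_{K/F}$, contradicting primitivity. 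The only delicate point is that the Lean descent lemma is stated for $K/L$. I would either instantiate the general Hilbert~90 and extension lemmas of \cite{SML} for $L_i/F$, or reuse the generic cyclic descent. Since $\ell$ is prime and $\mu_i\ne1$, $\mu_i$ generates $S(K/L_i)$.

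\emph{Step 3: the consequences.} By Lemma~\ref{lem:norm-groups}, $\Gal(L_i/F)$ acts trivially on $S(K/L_i)$, so $\mu_i^{\sigma_i}=\mu_i$. Induction on $j$ then gives $\theta_i^{\sigma_i^j}=\theta_i\mu_i^j$, which yields \eqref{eq:conjugate-twists} since $\{\mu_i^j:0\le j<\ell\}=S(K/L_i)$.

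For independence, let $\theta_i'$ be another character of $L_i^\times$ with $\theta_i'\circ\N_{K/L_i}=\Theta$. Then $\theta_i'/\theta_i\in S(K/L_i)$, so $\theta_i'=\theta_i^{\sigma_i^j}$ for some $j$. Moreover $\sigma_i^j$ preserves $\psi_{L_i}=\psi_F\circ\Tr_{L_i/F}$, because the trace is Galois-invariant. Lemma~\ref{lem:elementary-delta} therefore gives $\Delta_{L_i}(\theta_i',\psi_{L_i})=\Delta_{L_i}(\theta_i,\psi_{L_i})$. The second factor in \eqref{eq:lambda-and-A} does not depend on $\theta_i$, so $\mathcalA_{L_i}(\theta_i')=\mathcalA_{L_i}(\theta_i)$.
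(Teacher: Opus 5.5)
Your proposal is correct and follows the paper's proof essentially step for step. It uses the same ingredients: norm-triviality of $\mu_i$ from the $\Gal(K/F)$-invariance of $\Theta$, nontriviality by applying the descent argument to $L_i/F$ and invoking primitivity, trivial Galois action on $S(K/L_i)$ to obtain $\theta_i^{\sigma_i^j}=\theta_i\mu_i^j$, and automorphism invariance of $\Delta_{L_i}$ for the independence; your explicit lift of $\sigma_i$ to $K$ and the remark that $\sigma_i^j$ preserves $\psi_{L_i}$ because the trace is Galois-invariant merely spell out details the paper leaves implicit.
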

\begin{proof}
Compatibility and invariance of $\Theta$ imply
$\mu_i\circ\N_{K/L_i}=1$. If $\mu_i=1$, then   $\theta_i$ is
$\Gal(L_i/F)$-invariant. Apply the proof of
Lemma~\ref{lem:descent} to $L_i/F$: there would be a character $N_{L_i/F}(x)\longmapsto \theta_i(x)$
on $N_{L_i/F}(L_i^\times)$ which extends to a quasi-character
$\lambda$ of $F^\times$ with $\theta_i=\lambda\circ\N_{L_i/F}$.
Then $\Theta=\lambda\circ\N_{K/F}$, contrary to primitivity.
Thus $\mu_i\ne1$, and Lemma~\ref{lem:norm-groups} shows that it is a
generator. Its conjugates equal itself, again by that lemma, hence   $\theta_i^{\sigma_i^j}=\theta_i\mu_i^j$.

For fixed $\Theta$ and $L_i$, the characters whose norm pullback is
$\Theta$ are precisely $\theta_i\nu$, with $\nu\in S(K/L_i)$.
By \eqref{eq:conjugate-twists}, they are the Galois conjugates of
$\theta_i$. Lemma~\ref{lem:elementary-delta} therefore shows that
$\mathcalA_{L_i}(\theta_i)$ is independent of this choice of character.
\end{proof}

For Lemma~\ref{lem:conjugacy},
\lean{Characters/Conjugacy.lean}{128}{Characters.ConjugateTwistData}
records the isomorphism
\[
 \Gal(L_i/F)\longrightarrow S(K/L_i),\qquad
 \sigma_i\longmapsto\theta_i^{\sigma_i}/\theta_i.
\]
The theorem \lean{Characters/Conjugacy.lean}{518}{Characters.conjugacy}
constructs this isomorphism from a primitive compatible pair.
The resulting independence of the inducing character is
\lean{Characters/InducingChoice.lean}{101}{Characters.inducingChoice_independent}.

\begin{lemma}[Transitivity of norm-character products]\label{lem:lambda-transitivity}
For every degree-$\ell$ intermediate field $L$,
\begin{equation}\label{eq:lambda-transitivity}
 \prod_{\xi\in S(K/F)}\Delta_F(\xi,\psi_F)
 =\left(\prod_{\mu\in S(K/L)}\Delta_L(\mu,\psi_L)\right)
  \left(\prod_{\nu\in S(L/F)}\Delta_F(\nu,\psi_F)\right)^\ell.
\end{equation}
\end{lemma}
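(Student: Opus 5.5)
The plan is to apply the First Main Lemma (Theorem~\ref{thm:fml}) to the cyclic degree-$\ell$ extension $K/L$ for each $\mu$-twist, and then collapse the result with the exact sequence \eqref{eq:norm-exact}.

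First I use Lemma~\ref{lem:norm-groups}. Choose representatives $\xi_1,\dots,\xi_\ell\in S(K/F)$ of the cosets of $S(L/F)$. Their pullbacks $\mu_j=\xi_j\circ\N_{L/F}$ run exactly once through $S(K/L)$. Hence
\[
 \prod_{\xi\in S(K/F)}\Delta_F(\xi,\psi_F)
 =\prod_{j=1}^{\ell}\prod_{\nu\in S(L/F)}\Delta_F(\xi_j\nu,\psi_F).
\]

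Next I apply Theorem~\ref{thm:fml} to the cyclic degree-$\ell$ extension $L/F$, with $\chi=\xi_j$ and additive character $\psi_F$. It rewrites each inner product as
\[
 \Delta_L(\xi_j\circ\N_{L/F},\psi_L)\prod_{\nu\in S(L/F)}\Delta_F(\nu,\psi_F)
 =\Delta_L(\mu_j,\psi_L)\prod_{\nu\in S(L/F)}\Delta_F(\nu,\psi_F).
\]
Here $\psi_L=\psi_F\circ\Tr_{L/F}$ by \eqref{eq:additive-pullback}. Multiplying over $j=1,\dots,\ell$ gives
\[
 \left(\prod_{\mu\in S(K/L)}\Delta_L(\mu,\psi_L)\right)\left(\prod_{\nu\in S(L/F)}\Delta_F(\nu,\psi_F)\right)^{\ell},
\]
which is \eqref{eq:lambda-transitivity}.

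The only point needing care is the bijection between coset representatives and $S(K/L)$. The exactness in \eqref{eq:norm-exact} supplies this. Surjectivity gives that every $\mu$ occurs, and the kernel being $S(L/F)$ gives that each $\mu$ occurs for exactly one coset. I must also check that the right-hand side is independent of the chosen representatives. This is automatic, because the FML output depends only on $\xi_j\circ\N_{L/F}$.

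I do not expect a real obstacle, since the identity is essentially a formal consequence of the FML and the exact sequence. In a formal setting, the main work would be the finite-product reindexing over the fibers of the pullback map $S(K/F)\to S(K/L)$. I would handle it with a fiberwise product decomposition: each fiber is a coset $\xi S(L/F)$, and Lemma~\ref{lem:norm-groups} shows these fibers have size $\ell$.
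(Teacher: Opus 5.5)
Your proof is correct and is essentially the paper's argument. You choose coset representatives of $S(L/F)$ in $S(K/F)$, apply the First Main Lemma for $L/F$ to each representative, and use the exact sequence \eqref{eq:norm-exact} to identify the pullbacks with $S(K/L)$. One small fix: your opening sentence says the First Main Lemma is applied to $K/L$, but the argument you actually carry out applies it to $L/F$, which is the correct application.
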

\begin{proof}
Let $R\subset S(K/F)$ be a set of representatives for
$S(K/F)/S(L/F)$.  Applying \eqref{eq:fml} to a quasi-character
$\xi\in R$ gives
\[
\Delta_L(\xi\circ N_{L/F},\psi_L)
\prod_{\nu\in S(L/F)}\Delta_F(\nu,\psi_F)
=
\prod_{\nu\in S(L/F)}\Delta_F(\xi\nu,\psi_F).
\]
Multiplying over $\xi\in R$, we obtain
\[
\left(\prod_{\xi\in R}\Delta_L(\xi\circ N_{L/F},\psi_L)\right)
\left(\prod_{\nu\in S(L/F)}\Delta_F(\nu,\psi_F)\right)^\ell
=
\prod_{\xi\in S(K/F)}\Delta_F(\xi,\psi_F).
\]
The first product is
\[
\prod_{\mu\in S(K/L)}\Delta_L(\mu,\psi_L),
\]
which proves \eqref{eq:lambda-transitivity}.
\end{proof}

\begin{proposition}[The common power relation]\label{prop:power}
For a primitive compatible family $(\theta_L)$ and every degree-$\ell$ intermediate field $L$,
\begin{equation}\label{eq:common-power}
 \mathcalA_L(\theta_L)^\ell
 =\Delta_K(\Theta,\psi_K)
       \prod_{\xi\in S(K/F)}\Delta_F(\xi,\psi_F).
\end{equation}
Consequently,
\begin{equation}\label{eq:r}
 \left(
 \frac{\mathcalA_{L_1}(\theta_{L_1})}
      {\mathcalA_{L_2}(\theta_{L_2})}
 \right)^\ell=1.
\end{equation}
\end{proposition}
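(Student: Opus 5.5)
The plan is to apply the First Main Lemma (Theorem~\ref{thm:fml}) to the cyclic degree-$\ell$ extension $K/L$, with the character $\theta_L$ of $L^\times$ and the additive character $\psi_L$. Since $\psi_L\circ\Tr_{K/L}=\psi_F\circ\Tr_{K/F}=\psi_K$ and $\theta_L\circ\N_{K/L}=\Theta$ by \eqref{eq:compatible}, formula \eqref{eq:fml} becomes
\[
 \Delta_K(\Theta,\psi_K)\prod_{\mu\in S(K/L)}\Delta_L(\mu,\psi_L)
  =\prod_{\mu\in S(K/L)}\Delta_L(\theta_L\mu,\psi_L).
\]
Next I will evaluate the right side using Lemma~\ref{lem:conjugacy}. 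By \eqref{eq:conjugate-twists}, the characters $\theta_L\mu$ with $\mu\in S(K/L)$ are exactly the Galois conjugates $\theta_L^{\sigma^j}$, $0\le j<\ell$, for a generator $\sigma$ of $\Gal(L/F)$. Since $\psi_L=\psi_F\circ\Tr_{L/F}$ is $\Gal(L/F)$-invariant, Lemma~\ref{lem:elementary-delta} gives $\Delta_L(\theta_L^{\sigma^j},\psi_L)=\Delta_L(\theta_L,\psi_L)$. The right side is therefore $\Delta_L(\theta_L,\psi_L)^\ell$. This is the only place where primitivity enters: it makes $\mu\mapsto\theta_L\mu$ a bijection onto the $\ell$ distinct conjugates.

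I then multiply both sides by $\bigl(\prod_{\nu\in S(L/F)}\Delta_F(\nu,\psi_F)\bigr)^\ell$. The left side contains $\prod_{\mu\in S(K/L)}\Delta_L(\mu,\psi_L)\cdot\bigl(\prod_{\nu}\Delta_F(\nu,\psi_F)\bigr)^\ell$, and Lemma~\ref{lem:lambda-transitivity} identifies this with $\prod_{\xi\in S(K/F)}\Delta_F(\xi,\psi_F)$. The right side becomes $\mathcalA_L(\theta_L)^\ell$ by \eqref{eq:lambda-and-A}. This proves \eqref{eq:common-power}.

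The right-hand side of \eqref{eq:common-power} depends only on $\Theta$ and $F$, not on $L$. Moreover, every $\mathcalA_L(\theta_L)$ is nonzero, because each $\Delta$ is a phase by Proposition~\ref{prop:delta}. Dividing the two instances of \eqref{eq:common-power} therefore gives \eqref{eq:r}.

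I expect no real obstacle here. The one point needing care is the matching of additive characters: I must check that $\psi_K$ as defined in \eqref{eq:additive-pullback} coincides with $\psi_L\circ\Tr_{K/L}$. This follows from transitivity of the trace, $\Tr_{K/F}=\Tr_{L/F}\circ\Tr_{K/L}$.
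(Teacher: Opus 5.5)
Your proposal is correct and follows the paper's proof step for step. You apply the First Main Lemma to $K/L$, collapse its right side to $\Delta_L(\theta_L,\psi_L)^\ell$ via \eqref{eq:conjugate-twists} and automorphism invariance, multiply by $\bigl(\prod_{\nu\in S(L/F)}\Delta_F(\nu,\psi_F)\bigr)^\ell$, apply Lemma~\ref{lem:lambda-transitivity}, and divide. One small correction: for a quasi-character $\theta_L$, the constant $\Delta_L(\theta_L,\psi_L)=\theta_L(\Gamma)\ph(\cdots)$ need not have absolute value one, but it is still nonzero because $\theta_L(\Gamma)\in\C^\times$ and the finite sum is nonzero by Proposition~\ref{prop:delta}, and nonvanishing is all the division needs.
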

\begin{proof}
Apply \eqref{eq:fml} to $K/L$ and $\theta_L$. By
\eqref{eq:conjugate-twists} and automorphism invariance in
Lemma~\ref{lem:elementary-delta}, its right side is
$\Delta_L(\theta_L,\psi_L)^\ell$. Thus
\[
 \Delta_L(\theta_L,\psi_L)^\ell
 =\Delta_K(\Theta,\psi_K)
       \prod_{\mu\in S(K/L)}\Delta_L(\mu,\psi_L).
\]
Then multiply by
$\left(\prod_{\nu\in S(L/F)}\Delta_F(\nu,\psi_F)\right)^\ell$ and use
\eqref{eq:lambda-transitivity}. All local constants are nonzero,
so division gives the last assertion.
\end{proof}

The odd-degree norm-character product identity
\eqref{eq:odd-norm-product} and the odd-prime specialization of the
common power relation \eqref{eq:common-power}--\eqref{eq:r} are proved by
\lean{Characters/OddPower.lean}{134}{Characters.oddPower}.
These are also \cite[Corollary~\SMLnum{U:odd-power}]{SML}.

\subsection{Change of additive character}
Put
\begin{equation}\label{eq:epsilon-character}
 \epsilon_L=\prod_{\nu\in S(L/F)}\nu.
\end{equation}
This is the trivial character for odd $\ell$ and is the nontrivial quadratic character $\omega_L$ for
$\ell=2$.

\begin{lemma}[Common restriction to the base field]\label{lem:restrictions}
For a primitive compatible family $(\theta_L)$, the character
$\theta_L|_{F^\times}\epsilon_L$ is independent of $L$.
Consequently the quotient \eqref{eq:r} is unchanged when $\psi_F$ is
replaced by $\psi_F(a\,\cdot)$, for $a\in F^\times$.
\end{lemma}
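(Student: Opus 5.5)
The proof splits into two parts: first the independence of $\theta_L|_{F^\times}\epsilon_L$ from $L$, then the invariance of the quotient \eqref{eq:r} under rescaling $\psi_F$. Both rest on compatibility and the transfer of characters through norms.

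For the first part, fix $L$ and $x\in F^\times$. Since $\N_{K/L}(x)=x^\ell$ for $x\in F^\times$, compatibility \eqref{eq:compatible} gives
\[
 \theta_L(x)^\ell=\Theta(x)
\]
for every $L$. This determines $\theta_L|_{F^\times}$ only up to an $\ell$-th root of unity, so a finer argument is needed. I would use Lemma~\ref{lem:norm-groups} instead. The restriction of $\theta_L$ to $F^\times$ can be compared with $\Theta$ via the transfer: for $y\in L^\times$ and $x\in F^\times$, the norm map interacts with the Galois conjugates as in Lemma~\ref{lem:conjugacy}.

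A cleaner route is the classical transfer identity. The determinant of the induced representation $\operatorname{Ind}_{L}^{F}\theta_L$ restricted to $F^\times$, via class field theory, is $\theta_L|_{F^\times}\cdot\epsilon_L$, where $\epsilon_L$ is the determinant of $\operatorname{Ind}1=\bigoplus_{\nu\in S(L/F)}\nu$. By Lemma~\ref{lem:conjugacy} all these induced representations coincide, so the claim follows.

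A Lean-friendly local version avoids the Weil group. For $\ell$ odd, $\epsilon_L=1$. For $x\in F^\times$, the character $\theta_{L}|_{F^\times}$ has $\ell$-th power $\Theta|_{F^\times}$. The $\ell$-torsion ambiguity should be killed by comparing with a second field: write $x=\N_{L_1/F}(y_1)\N_{L_2/F}(y_2)$ using \eqref{eq:norm-product}. Note that
\[
 \theta_{L_1}(\N_{L_1/F}y_1)=\prod_j\theta_{L_1}^{\sigma_1^j}(y_1)=\theta_{L_1}(y_1)^\ell\prod_j\mu_1(y_1)^j
\]
by \eqref{eq:conjugate-twists}. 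Here $\prod_j\mu_1^j=\mu_1^{\ell(\ell-1)/2}$, which is trivial for odd $\ell$ and equals $\mu_1$ for $\ell=2$. This is where $\epsilon$ enters: $\mu_1$ extends $\omega$-type data, and the identity $\mu_1=\omega_{L_2}\circ\N_{L_1/F}$ up to an element of $S(L_1/F)$ follows from the exact sequence \eqref{eq:norm-exact}. Evaluating $\theta_{L_1}$ and $\theta_{L_2}$ on each factor this way, and expressing $\theta_{L_i}(y)^\ell$ through $\Theta$ on elements of $K$ via compatibility, should reduce the comparison to $\epsilon_{L_1}$ versus $\epsilon_{L_2}$ evaluated on norms. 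That bookkeeping is the main obstacle, and especially the $\ell=2$ sign.

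The consequence is then immediate. By \eqref{eq:scaling}, replacing $\psi_F$ with $\psi_F(a\,\cdot)$ replaces $\psi_L$ with $\psi_L(a\,\cdot)$, so $\Delta_L(\theta_L,\psi_L)$ is multiplied by $\theta_L(a)$. Each $\Delta_F(\nu,\psi_F)$ is multiplied by $\nu(a)$, so the product over $S(L/F)$ is multiplied by $\epsilon_L(a)$. Hence $\mathcalA_L(\theta_L)$ is multiplied by $(\theta_L|_{F^\times}\epsilon_L)(a)$. This factor is independent of $L$ by the first part, so it cancels in the quotient \eqref{eq:r}.
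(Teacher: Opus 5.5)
Your treatment of the second assertion, the change of additive character, is correct and matches the paper's argument. The gap is in the first assertion, which carries all the content.

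\emph{The Weil-group route.} It imports local class field theory for Weil groups and the determinant-of-induction (transfer) formula, neither of which belongs to this paper's framework. Its only link to the hypotheses is the claim that $\operatorname{Ind}\theta_{L_1}\cong\operatorname{Ind}\theta_{L_2}$, and Lemma~\ref{lem:conjugacy} does not give this. That lemma shows only that, for a \emph{fixed} $L$, the characters with norm pullback $\Theta$ are Galois conjugate, so each induction is irreducible and independent of the choice. Comparing two different $L$ needs a separate Mackey argument: there is one double coset since $W_{L_1}W_{L_2}=W_F$, the intersection is $W_K$, and $\theta_{L_i}|_{W_K}=\Theta$.

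\emph{The local route.} It stops exactly at the step that constitutes the proof. You never explain how to evaluate $\theta_{L_2}$ on $\N_{L_1/F}(L_1^\times)$, and you defer this as ``bookkeeping''.

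The missing idea is that $\N_{K/L_2}(y)=\N_{L_1/F}(y)$ for $y\in L_1^\times$, because restriction $\Gal(K/L_2)\to\Gal(L_1/F)$ is an isomorphism. Put $a=\N_{L_1/F}(y)$. Compatibility then gives $\theta_{L_2}(a)=\Theta(y)=\theta_{L_1}(y)^\ell$. Your conjugate expansion gives $\theta_{L_1}(a)=\theta_{L_1}(y)^\ell\mu_1(y)^{\ell(\ell-1)/2}$.

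No decomposition of a general $x$ is needed. By \eqref{eq:norm-product} it suffices to compare the two characters on each norm subgroup separately, and the case of $\N_{L_2/F}(L_2^\times)$ is symmetric. This settles odd $\ell$.

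For $\ell=2$ you need $\mu_1=\omega_{L_2}\circ\N_{L_1/F}$ exactly, that is, that $\omega_{L_2}\circ\N_{L_1/F}$ is the nontrivial element of $S(K/L_1)$. This holds because otherwise $\omega_{L_2}\in S(L_1/F)$, so $\omega_{L_2}=\omega_{L_1}$ and the two norm subgroups coincide, contradicting \eqref{eq:norm-product}. Then $\mu_1(y)=\omega_{L_2}(a)$ and $\omega_{L_1}(a)=1$ give
\[
 \theta_{L_1}(a)\epsilon_{L_1}(a)=\theta_{L_1}(y)^2\omega_{L_2}(a)=\theta_{L_2}(a)\epsilon_{L_2}(a).
\]
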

\begin{proof}
By \eqref{eq:norm-product}, it suffices to compare the restrictions on
$\N_{L_i/F}(L_i^\times)$, for $i=1,2$. Let
$a=\N_{L_1/F}(z)$, with $z\in L_1^\times$. Compatibility gives
\[
 \theta_{L_2}(a)=\Theta(z)=\theta_{L_1}(z)^\ell.
\]
On the other hand, \eqref{eq:conjugate-twists} gives
\[
 \theta_{L_1}(a)=\theta_{L_1}(z)^\ell
                       \mu_{L_1}(z)^{\ell(\ell-1)/2}.
\]
For odd $\ell$, the last factor is one and the $\epsilon_{L_i}$
are trivial.

For $\ell=2$, $\omega_{L_2}\in S(K/F)$.  Its image in
$S(K/L_1)$ is $\omega_{L_2}\circ N_{L_1/F}$ is nontrivial; otherwise
$\omega_{L_2}\in S(L_1/F)$, hence $\omega_{L_2}=\omega_{L_1}$, which
would imply
$N_{L_2/F}(L_2^\times)=N_{L_1/F}(L_1^\times)$, a contradiction to \eqref{eq:norm-product}.
Thus
\[
\mu_{L_1}=\omega_{L_2}\circ N_{L_1/F}.
\]
Since $a=N_{L_1/F}(z)$, we have $\omega_{L_1}(a)=1$.  Hence
\[
\begin{aligned}
\theta_{L_1}(a)\epsilon_{L_1}(a)
&=\theta_{L_1}(z)^2\mu_{L_1}(z)\\
&=\theta_{L_1}(z)^2\omega_{L_2}(a)\\
&=\theta_{L_2}(a)\epsilon_{L_2}(a).
\end{aligned}
\]

Finally, if $\psi_F$ is replaced by $\psi_F(a\,\cdot)$, then by
\eqref{eq:scaling}  $\mathcal A_L(\theta_L)$ is multiplied by
\[
\theta_L(a)\prod_{\nu\in S(L/F)}\nu(a)
=
\theta_L(a)\epsilon_L(a),
\]
which is independent of $L$ by the first assertion.  Therefore the quotient in \eqref{eq:r} is unchanged.
\end{proof}

The restriction identity in Lemma~\ref{lem:restrictions} is also proved in 
\lean{Characters/Conjugacy.lean}{518}{Characters.conjugacy}.

\section{Stationary classes and finite sums}\label{sec:stationary}
\subsection{Lamprecht's formula}
Let $E$ be a nonarchimedean local field, let
$\theta:E^\times\to\C^\times$ be a quasi-character,
and let $\psi_E:E\to\C^\times$ be a nontrivial continuous additive
character. Suppose that $m=m_E(\theta)>1$, and write
\[
 m=2d+\varepsilon,\qquad \varepsilon\in\{0,1\}.
\]
In particular $d\ge1$. Put $n=n_E(\psi_E)$.

\begin{definition}[Stationary representatives and critical functions]\label{def:stationary}
Fix an admissible element $\Gamma$ for $(\theta,\psi_E)$. A
\emph{stationary representative} is an element $b\in\OO_E^\times$ such that
\begin{equation}\label{eq:stationary}
 \theta(1+z)=\psi_E(bz/\Gamma)
                 \quad(z\in\pp_E^{d+\varepsilon}).
\end{equation}
Put
\[
 g=b/\Gamma,
 \qquad v_E(g)=-m-n.
\]
Define the \emph{critical function}
\begin{equation}\label{eq:critical}
 H_E:V_E\longrightarrow\C^\times,
 \qquad
 V_E=\pp_E^d/\pp_E^{d+\varepsilon},\qquad
 H_E(x)=\psi_E(gx)\theta(1+x)^{-1}.
\end{equation}
Set
\begin{equation}\label{eq:Gamma}
 g_E=|V_E|^{-1/2}\sum_{x\in V_E}H_E(x).
\end{equation}
\end{definition}
\begin{proof}[Well-definedness of \eqref{eq:critical}]
For $x\in\pp_E^d$ and $z\in\pp_E^{d+\varepsilon}$,
\[
\frac{1+x+z}{1+x}=1+\frac{z}{1+x},
\qquad \frac{z}{1+x}\in\pp_E^{d+\varepsilon}.
\]
By \eqref{eq:stationary},
\[
\frac{H_E(x+z)}{H_E(x)}
 =\frac{\psi_E(gz)}{\theta(1+z/(1+x))}
 =\psi_E\!\left(\frac{gxz}{1+x}\right)=1,
\]
because $v_E(gxz)\ge-m-n+d+(d+\varepsilon)=-n$.
Thus the value depends only on the class of $x$ in $V_E$.
For $\varepsilon=0$, $V_E=\pp_E^d/\pp_E^d$ is the trivial group, and
$H_E(0)=1$, so \eqref{eq:Gamma} gives
$g_E=1$.
\end{proof}

\begin{theorem}[Lamprecht's formula]\label{thm:lamprecht}
Stationary representatives exist. For every $b$ satisfying
\eqref{eq:stationary},
\begin{equation}\label{eq:lamprecht}
 \Delta_E(\theta,\psi_E)
 =\theta(\Gamma/b)\psi_E(b/\Gamma)g_E,
 \qquad |g_E|=1.
\end{equation}
\end{theorem}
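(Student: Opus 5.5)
The proof has three parts: existence of a stationary representative, splitting the defining sum of $\Delta_E$ into cosets, and a finite Fourier computation on each coset.

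\emph{Existence.} Put $k=d+\varepsilon$. Since $2k\ge m$, we have $(1+x)(1+y)\equiv 1+x+y \pmod{\pp_E^{m}}$ for $x,y\in\pp_E^{k}$. Hence $z\mapsto\theta(1+z)$ is an additive character of $\pp_E^{k}/\pp_E^{m}$. The pairing $(b,z)\mapsto\psi_E(bz/\Gamma)$, with $v_E(\Gamma)=m+n$, identifies the dual of $\pp_E^{k}/\pp_E^{m}$ with $\OO_E/\pp_E^{m-k}$. So some $b\in\OO_E$ satisfies \eqref{eq:stationary}. If $b\in\pp_E$, then $\theta$ would be trivial on $U_E^{m-1}$ (note $m-1\ge k$ since $m>1$ and $d\ge1$), contradicting $m_E(\theta)=m$. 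Thus $b$ is a unit.

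\emph{Coset decomposition.} Write $u=u_0(1+x)$ in \eqref{eq:finite-G}, with $u_0$ running over $U_E^0/U_E^{k}$ and $x$ over $\pp_E^{k}/\pp_E^m$. The summand becomes
\[
 \psi_E(u_0/\Gamma)\theta(u_0)^{-1}\,\psi_E(u_0x/\Gamma)\,\psi_E(-bx/\Gamma).
\]
Summing over $x$ gives $|\pp_E^k/\pp_E^m|$ times the indicator of $u_0\equiv b \pmod{\pp_E^{m-k}}=\pmod{\pp_E^{d}}$. So only classes $u_0=b(1+y)$ with $y\in\pp_E^d/\pp_E^{k}=V_E$ survive. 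For such a class, $\psi_E(u_0/\Gamma)=\psi_E(g)\psi_E(gy)$ and $\theta(u_0)^{-1}=\theta(b)^{-1}\theta(1+y)^{-1}$. Hence
\[
 \sum_{u}\psi_E(u/\Gamma)\theta(u)^{-1}=c\,\theta(b)^{-1}\psi_E(b/\Gamma)\sum_{y\in V_E}H_E(y),
\]
with $c>0$. Multiplying by $\theta(\Gamma)$ and taking the phase gives \eqref{eq:lamprecht}, once we know that $g_E$ has absolute value one, so that $\ph(g_E)=g_E$.

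\emph{Absolute value.} In the step above, care is needed that the reindexing $u_0=b(1+y)$ is a bijection onto the surviving classes; it is, since $b$ is a unit. Then $|g_E|=1$ follows from the norm identity in Proposition~\ref{prop:delta}. The full sum has modulus $|k_E|^{m/2}$, and $c=|\pp_E^k/\pp_E^m|=|k_E|^{m-k}$. This gives $\bigl|\sum_{y\in V_E} H_E(y)\bigr|=|k_E|^{m/2-(m-k)}=|k_E|^{(k-d)/2}=|V_E|^{1/2}$.

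The main obstacle is the bookkeeping of the case $\varepsilon=1$, where $V_E$ is nontrivial and $H_E$ is a genuinely quadratic-type function. I would expect the Lean version to need explicit quotient-group reindexing lemmas there. For $\varepsilon=0$ the argument degenerates to $g_E=1$, as already noted after \eqref{eq:Gamma}. Independence from the choice of $b$ is built into the computation, since any stationary $b$ works.
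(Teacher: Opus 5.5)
Your proof is correct, and for existence and the coset reduction it is the paper's argument. You use the same duality, in the paper phrased with $g=b/\Gamma\in\pp_E^{-m-n}/\pp_E^{-s-n}$, and the same contradiction on $U_E^{m-1}$ to force $b\in\OO_E^\times$. The orthogonality collapse to the classes $b(1+y)$, $y\in V_E$, is also the same, yielding \eqref{eq:lamprecht-finite-reduction} with $c=|k_E|^{m-k}=|k_E|^{d}$.

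The one genuine difference is how you get $|g_E|=1$. You read it off from the modulus $|k_E|^{m/2}$ of the full sum, already established in Proposition~\ref{prop:delta}. This implicitly uses $|\theta(b)|=|\psi_E(g)|=1$, which holds because $\theta$ is unitary on the compact group $\OO_E^\times$. The paper instead computes $\bigl|\sum_{x\in V_E}H_E(x)\bigr|^2=|V_E|$ directly. It does this from the functional equation \eqref{eq:polar} and the nondegeneracy of $(x,y)\mapsto\psi_E(gxy)$, which requires a forward reference to Lemma~\ref{lem:polar} and a separate treatment of odd $m$.

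Your route is shorter, treats both parities uniformly, and relies only on a result proved earlier. The paper's route is intrinsic to $V_E$: unit modulus follows from the polar-pairing structure of $H_E$ alone, not from the ambient Gauss sum. That nondegenerate-pairing structure is what the later dyadic comparisons work with once critical functions are separated from the full sums, for example in Lemma~\ref{lem:common-pullback}.
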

\begin{proof}
Put $s=d+\varepsilon$, so $m-s=d$ and $2s\ge m$.
The map $x\mapsto\theta(1+x)$ is an additive character of
$\pp_E^s/\pp_E^m$: the product term $xy$ lies in $\pp_E^m$.
The pairing
\[
\frac{\pp_E^{-m-n}}{\pp_E^{-s-n}}
 \times\frac{\pp_E^s}{\pp_E^m}\longrightarrow\C^\times,
 \qquad (g,x)\longmapsto\psi_E(gx),
\]
is perfect. Indeed, by definition  $\pp_E^{-n}$ is the
largest fractional ideal on which $\psi_E$ is trivial. Hence
\[
 \psi_E(gx)=1\quad\text{for all }x\in\pp_E^s
 \quad\Longleftrightarrow\quad
 g\pp_E^s\subset\pp_E^{-n}
 \quad\Longleftrightarrow\quad
 g\in\pp_E^{-s-n},
\]
and similarly
\[
 \psi_E(gx)=1\quad\text{for all }g\in\pp_E^{-m-n}
 \quad\Longleftrightarrow\quad
 x\in\pp_E^m.
\]
Thus the two displayed subgroups are exactly the annihilators, and the
pairing identifies \(\pp_E^{-m-n}/\pp_E^{-s-n}\) with the character
group of \(\pp_E^s/\pp_E^m\). Therefore there is a unique class
\[
 g\in\pp_E^{-m-n}/\pp_E^{-s-n}
\]
such that \(\theta(1+x)=\psi_E(gx)\) for \(x\in\pp_E^s\); choose a
representative \(g\in\pp_E^{-m-n}\). If $v_E(g)>-m-n$, then for
$x\in\pp_E^{m-1}$ one has $gx\in\pp_E^{-n}$, so
$\theta(1+x)=1$. This would make $\theta$ trivial on $U_E^{m-1}$,
contrary to $m_E(\theta)=m$. Hence $v_E(g)=-m-n$. Put $b=\Gamma g$; then $b\in\OO_E^\times$
and \eqref{eq:stationary} holds. Every class in $U_E^0/U_E^m$ is uniquely of the form $t(1+x)$, where $t$ runs through $U_E^0/U_E^s$ and $x$ through $\pp_E^s/\pp_E^m$. For fixed $t$, the contribution of the classes $u=t(1+x)$ to the sum in
\eqref{eq:finite-G} is
\[
\psi_E(t/\Gamma)\theta(t)^{-1}
 \sum_{x\in\pp_E^s/\pp_E^m}\psi_E\bigl((t/\Gamma-g)x\bigr).
\]
By character orthogonality, the inner sum vanishes unless
$t/\Gamma-g\in\pp_E^{-s-n}$, or equivalently
$t-b\in\pp_E^d$. If this condition holds, the inner sum is
$|k_E|^d$. The surviving classes are $t=b(1+y)$ with
$y\in\pp_E^d/\pp_E^s=V_E$. Consequently
\begin{equation}\label{eq:lamprecht-finite-reduction}
 \sum_{\bar u\in U_E^0/U_E^m}\psi_E(u/\Gamma)\theta(u)^{-1}
 =|k_E|^d\psi_E(g)\theta(b)^{-1}
          \sum_{y\in V_E}H_E(y).
\end{equation}

For even $m$, the last sum is $1$. For odd $m$, the identity \eqref{eq:polar}, proved below, gives
\[
\left|\sum_{x\in V_E}H_E(x)\right|^2
 =\sum_{t\in V_E}H_E(t)
       \sum_{y\in V_E}\psi_E(gty)
 =|V_E|.
\]
For $t\ne0$ in $V_E$, a representative has valuation $d$, so
$gt\pp_E^d=\pp_E^{-n-1}$. The character $y\mapsto\psi_E(gty)$ is
therefore nontrivial, and its sum is zero. For $t=0$, the sum is
$|V_E|$.
Thus \eqref{eq:Gamma} has absolute value $1$ in both cases.
Taking phases in \eqref{eq:lamprecht-finite-reduction} and multiplying
by $\theta(\Gamma)$ gives \eqref{eq:lamprecht}, since
$\theta(\Gamma)\theta(b)^{-1}=\theta(\Gamma/b)$.

The finite duality and stationary-class construction used above are proved in
\cite[Lemmas~4.1--4.2]{FML}.
Lamprecht's formula in this normalization is \cite[Theorem~4.5]{FML}.
The form with the residual critical function used later is
\cite[Lemma~\SMLnum{U:stationary-factor}]{SML}.
\end{proof}

For fixed admissible $\Gamma$, the class of the stationary representative
$b$ in $\OO_E/\pp_E^d$ is unique. This is the class denoted
\fmllean{Lamprecht/StationaryClass.lean}{131}{stationaryNumeratorClass}
in the First Main Lemma development. Its existence and uniqueness are
proved by
\fmllean{Lamprecht/StationaryClass.lean}{266}{stationaryNumeratorClass_existsUnique}.
Theorems \fmllean{Lamprecht/Formula.lean}{2373}{lamprechtEven} and
\fmllean{Lamprecht/Formula.lean}{2403}{lamprechtOdd} give the two parities of
Theorem~\ref{thm:lamprecht}.

\subsection{The polar pairing}
Suppose $m=2d+1$. Multiplication of principal units gives the following
identity on $V_E=\pp_E^d/\pp_E^{d+1}$.

\begin{lemma}[Nondegeneracy of the polar pairing]\label{lem:polar}
For $x,y\in V_E$,
\begin{equation}\label{eq:polar}
 H_E(x+y)=H_E(x)H_E(y)\psi_E(gxy).
\end{equation}
The map
\[
 (x,y)\longmapsto\psi_E(gxy)
\]
is the polar pairing of $H_E$; it is a nondegenerate symmetric
bicharacter of the additive group $V_E$.
\end{lemma}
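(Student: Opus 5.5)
The plan is to prove \eqref{eq:polar} directly from the multiplicativity of $\theta$ and the stationary condition \eqref{eq:stationary}, working with representatives $x,y\in\pp_E^d$ and using that $H_E$ is well defined on $V_E$. Here $m=2d+1$, so $\varepsilon=1$, $s=d+1$, and \eqref{eq:stationary} holds on $\pp_E^{d+1}$.

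\textbf{Step 1: factor the principal unit.} Write
\[
 (1+x)(1+y)=1+x+y+xy,\qquad\text{so}\qquad
 1+x+y=(1+x)(1+y)\Bigl(1-\frac{xy}{(1+x)(1+y)}\Bigr).
\]
Put $z=-xy/((1+x)(1+y))$. Since $v_E(xy)\ge 2d\ge d+1$ (because $d\ge1$), we have $z\in\pp_E^{d+1}$. Hence \eqref{eq:stationary} gives $\theta(1+z)=\psi_E(gz)$, and therefore
\[
 \theta(1+x+y)^{-1}=\theta(1+x)^{-1}\theta(1+y)^{-1}\psi_E(-gz).
\]

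\textbf{Step 2: simplify $\psi_E(-gz)$.} We have $-gz=gxy\,(1+x)^{-1}(1+y)^{-1}$, and $(1+x)^{-1}(1+y)^{-1}\in 1+\pp_E^d$. So $-gz-gxy\in g\,xy\,\pp_E^d$, which has valuation at least $-m-n+3d=-n+d-1\ge -n$. It follows that $\psi_E(-gz)=\psi_E(gxy)$. Combined with $\psi_E(g(x+y))=\psi_E(gx)\psi_E(gy)$, this yields \eqref{eq:polar}.

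\textbf{Step 3: well-definedness of the pairing.} The pairing $(x,y)\mapsto\psi_E(gxy)$ descends to $V_E\times V_E$. If $x$ changes by an element of $\pp_E^{d+1}$, then $gxy$ changes by an element of valuation at least $-2d-1-n+2d+1=-n$, on which $\psi_E$ is trivial. The same argument applies to $y$.

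\textbf{Step 4: bicharacter, symmetry, nondegeneracy.} The pairing is biadditive because $\psi_E$ is additive, and it is symmetric because multiplication in $E$ is commutative. For nondegeneracy, take $t\neq0$ in $V_E$. A representative has $v_E(t)=d$, so $gt\pp_E^d=\pp_E^{-n-1}$. This ideal is strictly larger than $\pp_E^{-n}$, so by the maximality in the definition of $n_E(\psi)$, the character $\psi_E$ is nontrivial on it. Hence $y\mapsto\psi_E(gty)$ is nontrivial on $V_E$, which is the same argument already used in the proof of Theorem~\ref{thm:lamprecht}.

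\textbf{Main obstacle.} The only delicate point is the valuation bookkeeping in Step~2. One must check that replacing $(1+x)^{-1}(1+y)^{-1}$ by $1$ costs nothing, which requires $3d\ge 2d+1$, i.e.\ $d\ge1$. This holds because $m>1$.
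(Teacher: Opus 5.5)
Your proof is correct and follows the paper's argument. The only variation is in Step~2: the paper replaces the unit $(1+x)(1+y)/(1+x+y)$ by $1+xy$ modulo $\pp_E^{3d}\subset\pp_E^m$ before applying \eqref{eq:stationary}, whereas you apply \eqref{eq:stationary} to the exact correction $z$ and absorb the same $3d\ge 2d+1$ error on the additive side, using triviality of $\psi_E$ on $\pp_E^{-n}$. Your Steps~3 and~4 (well-definedness, biadditivity, symmetry, and nondegeneracy via $gt\pp_E^d=\pp_E^{-n-1}$) coincide with the paper's.
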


\begin{proof}
Choose representatives $x,y\in\pp_E^d$. Since
\[
 \frac{(1+x)(1+y)}{1+x+y}=1+\frac{xy}{1+x+y}
       \equiv 1+xy\pmod{\pp_E^{3d}},
\]
and $3d\ge2d+1=m$, the two units have the same value under $\theta$.
Also $xy\in\pp_E^{2d}\subset\pp_E^{d+1}$, so
\eqref{eq:stationary} gives $\theta(1+xy)=\psi_E(gxy)$.
Dividing the three values in \eqref{eq:critical} now gives
\eqref{eq:polar}.

If $x$ or $y$ is changed by an element of $\pp_E^{d+1}$,
the change in $gxy$ belongs to $\pp_E^{-n}$. Thus the polar pairing is well defined on $V_E\times V_E$. It is symmetric and is a character in each variable.
For a nonzero class $x\in V_E$, its representative has valuation $d$.
Hence $gx\pp_E^d=\pp_E^{-n-1}$. Since $\psi_E$ is not trivial on
this ideal, there exists $y\in\pp_E^d$ for which $\psi_E(gxy)\ne1$.
This proves nondegeneracy.
\end{proof}

The functional equation for the residual critical function is part of
\cite[Theorem~4.5]{FML}.
The same polar-pairing identity is equation~(16) in
\cite[Lemma~4.1]{SML}.

The identity for the polar pairing is formalized by
\fmllean{Lamprecht/Formula.lean}{575}{lamprechtHasseValue_map_add}.
The nontrivial additive character defining this pairing is formalized by
\fmllean{Lamprecht/Formula.lean}{277}{lamprechtResidualAddChar_ne_one}.

Equation~\eqref{eq:polar} determines $H_E$ only up to
multiplication by an additive character of $V_E$. More precisely, if
$H'$ is another function satisfying \eqref{eq:polar} with the same polar
pairing, then
\[
 \chi(x)=H'(x)H_E(x)^{-1}
\]
satisfies $\chi(x+y)=\chi(x)\chi(y)$. By nondegeneracy, every additive
character of $V_E$ is of the form
\[
 x\longmapsto\psi_E(gax)
\]
for a unique $a\in V_E$. 

\begin{lemma}[Translation of critical sums]\label{lem:translation}
Let $V$ be a finite abelian group and let $H:V\to\C^\times$ satisfy
$H(0)=1$ and
$H(x+y)=H(x)H(y)B(x,y)$ for a symmetric bicharacter $B$.
For $a\in V$ put $H_a(x)=H(x)B(a,x)$. Then
\begin{equation}\label{eq:translation}
 \sum_{x\in V}H_a(x)=H(a)^{-1}\sum_{x\in V}H(x).
\end{equation}
\end{lemma}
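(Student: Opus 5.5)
The plan is a single reindexing of the sum by translation. Since $B$ is symmetric and a bicharacter, it is natural to rewrite the twisted function $H_a$ in terms of a translate of $H$, and then use that translation permutes $V$.

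First, apply the functional equation with the pair $(a,x)$. This gives
\[
 H(a+x)=H(a)H(x)B(a,x)=H(a)H_a(x).
\]
Every value of $H$ lies in $\C^\times$, so $H(a)$ is invertible. Hence
\[
 H_a(x)=H(a)^{-1}H(x+a)\qquad(x\in V).
\]

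Next, sum over $x\in V$. The map $x\mapsto x+a$ is a bijection of the finite group $V$, so
\[
 \sum_{x\in V}H(x+a)=\sum_{x\in V}H(x),
\]
and multiplying by $H(a)^{-1}$ gives \eqref{eq:translation}. The hypothesis $H(0)=1$ is not needed for this computation; it only ensures that $H_0=H$ is consistent, via $B(0,x)=1$.

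There is no real obstacle. The only points to check are that the factor $B(a,x)$ in the definition of $H_a$ matches the argument order in the functional equation, which symmetry of $B$ guarantees, and that $H(a)\neq0$ so that division is legitimate. In a Lean formalization, I expect the step needing the most care to be the reindexing of the finite sum along the translation equivalence $x\mapsto x+a$, done with \code{Equiv.addRight} and \code{Fintype.sum_equiv}.
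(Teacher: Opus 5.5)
Your proof is correct and is the paper's argument: the functional equation at $(a,x)$ gives $H_a(x)=H(a)^{-1}H(x+a)$, and reindexing the finite sum by translation finishes. As a minor aside, applying the functional equation to the pair $(a,x)$ already produces $B(a,x)$, so symmetry of $B$ is not needed for the argument order; only commutativity of $V$ is used, to write $a+x=x+a$.
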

\begin{proof}
Note $H_a(x)=H(a)^{-1}H(x+a)$; translating the finite sum gives
\eqref{eq:translation}.
\end{proof}
For finite fields, this translation identity is formalized in Lean as
\fmllean{FiniteField/HasseFunction.lean}{149}{HasseFunction.sum_translate}.
Since $B(a,\cdot)$ is
an additive character,
\[
 H_a(x+y)=H_a(x)H_a(y)B(x,y).
\]
Thus two functions with the same polar pairing $B$ can have different
sums.  

\subsection{Odd residue characteristic}
When $p>2$, the stationary representative can be chosen so that
$H_E(x)$ is purely quadratic. In residue
characteristic two there is no corresponding normalization in general,
so the complete critical function must be retained.

\begin{lemma}[Quadratic normalization]\label{lem:odd-Gamma}
Suppose $p>2$. If $m=2d+1$, there is a stationary representative $b$
for the fixed admissible $\Gamma$ such that, with $g=b/\Gamma$,
\begin{equation}\label{eq:quadratic-coefficient}
 \theta(1+x)=\psi_E\bigl(g(x-x^2/2)\bigr)
                   \qquad(x\in\pp_E^d).
\end{equation}
For this choice,
\begin{equation}\label{eq:odd-critical}
 H_E(x)=\psi_E(gx^2/2),\qquad g_E^4=1.
\end{equation}
For even $m$, the same fourth-power assertion holds because $g_E=1$.
\end{lemma}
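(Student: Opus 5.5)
The plan is to start from an arbitrary stationary representative $b_0$ given by Theorem~\ref{thm:lamprecht}, with $g_0=b_0/\Gamma$, and correct it by an element of $\pp_E^{d}$ so that the quadratic formula \eqref{eq:quadratic-coefficient} holds on all of $\pp_E^d$. Since $p>2$, the element $2$ is a unit, so $x^2/2\in\pp_E^{2d}$ for $x\in\pp_E^d$, and $\log$-type truncations behave well. The function
\[
 \chi_0(x)=\theta(1+x)\,\psi_E\bigl(g_0(x-x^2/2)\bigr)^{-1}\qquad(x\in\pp_E^d)
\]
is trivial on $\pp_E^{d+1}$. On $\pp_E^{d+1}$ we have $g_0x^2/2\in\pp_E^{-m-n+2d+2}\subset\pp_E^{-n}$, and \eqref{eq:stationary} applies there.

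The key step is to check that $\chi_0$ is additive on $V_E=\pp_E^d/\pp_E^{d+1}$. By Lemma~\ref{lem:polar}, $\theta(1+x+y)=\theta(1+x)\theta(1+y)\psi_E(g_0xy)^{-1}$. On the other side,
\[
 (x+y)-(x+y)^2/2=(x-x^2/2)+(y-y^2/2)-xy,
\]
so the quadratic factor has the same cross term $\psi_E(-g_0xy)$. Hence $\chi_0$ is an additive character of $V_E$. By the nondegeneracy in Lemma~\ref{lem:polar}, there is $a\in\pp_E^d$ with $\chi_0(x)=\psi_E(g_0ax)$.

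I then set $b=b_0(1+a)$, so that $g=g_0(1+a)$. Three things need checking.
\begin{itemize}
\item \emph{Stationarity.} $b$ is still a stationary representative, since $g_0az\in\pp_E^{-n}$ for $z\in\pp_E^{d+1}$.
\item \emph{The linear term.} $\psi_E(gx)=\psi_E(g_0x)\psi_E(g_0ax)$ absorbs $\chi_0$.
\item \emph{The quadratic term.} $g x^2/2$ and $g_0x^2/2$ differ by $g_0ax^2/2\in\pp_E^{-m-n+3d}\subset\pp_E^{-n}$.
\end{itemize}
This gives \eqref{eq:quadratic-coefficient}. Substituting into \eqref{eq:critical} gives $H_E(x)=\psi_E(gx)\psi_E(g(x-x^2/2))^{-1}=\psi_E(gx^2/2)$.

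For $g_E^4=1$, I note that $V_E$ is a one-dimensional $k_E$-vector space, identified with $k_E$ via $x=\pi^d t$. Then $H_E$ becomes $t\mapsto\phi(ct^2)$ for a nontrivial additive character $\phi$ of $k_E$ and some $c\in k_E^\times$; nontriviality follows from the nondegeneracy in Lemma~\ref{lem:polar}. So $|V_E|^{1/2}g_E$ is a quadratic Gauss sum over $k_E$. The standard evaluation $G^2=\eta(-1)q$, with $\eta$ the quadratic character of $k_E$ and $q=|k_E|$, then gives $g_E^2=\pm1$, hence $g_E^4=1$.

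To prove $G^2=\eta(-1)q$, I write
\[
 G^2=\sum_{s,t}\phi\bigl(c(s^2+t^2)\bigr)
\]
and count solutions of $s^2+t^2=r$; alternatively I appeal to the formalized finite-field Gauss sum results used in \cite{FML}. The even case is immediate from $g_E=1$. I expect the main obstacle to be bookkeeping rather than mathematics: identifying $\psi_E(g\,\cdot)$ on $\pp_E^{2d}/\pp_E^{2d+1}$ with a nontrivial character of $k_E$, compatibly with the square map $V_E\to\pp_E^{2d}/\pp_E^{2d+1}$.
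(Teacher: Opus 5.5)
Your proof is correct, but it reaches the normalized representative by a different route from the paper. The paper builds $g$ from scratch. It shows that $1+x\mapsto x-x^2/2$ is a group isomorphism $U_E^d/U_E^m\to\pp_E^d/\pp_E^m$: the neglected terms have degree at least three, $3d\ge m$, and the inverse is $x\mapsto 1+x+x^2/2$. This transports $\theta|_{U_E^d}$ to an additive character of $\pp_E^d/\pp_E^m$. The paper then reads off $g$ from the perfect pairing $\pp_E^{-m-n}/\pp_E^{-d-n}\times\pp_E^d/\pp_E^m\to\C^\times$, and reruns the conductor argument of Theorem~\ref{thm:lamprecht} to get $v_E(g)=-m-n$.

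You instead start from an arbitrary stationary $b_0$ and correct it by the unit $1+a$. The polar identity of Lemma~\ref{lem:polar}, together with $(x+y)-(x+y)^2/2=(x-x^2/2)+(y-y^2/2)-xy$, shows that the defect $\chi_0$ is an additive character of $V_E$. Nondegeneracy of the polar pairing then represents it as $\psi_E(g_0a\,\cdot)$. Your checks are all correct: $\chi_0$ is trivial on $\pp_E^{d+1}$, $b_0(1+a)$ is still stationary, and $g_0ax^2/2\in\pp_E^{d-1-n}\subset\pp_E^{-n}$. The valuation of $g$ is inherited because $1+a$ is a unit.

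Your route is more economical given what is already proved. Everything happens on the one-dimensional quotient $V_E$, with no second duality or valuation argument. It also makes explicit that the normalization simply removes the linear coefficient $c$ in the description $H(x)=\psi_0(\tfrac A2x^2+cx)$ recorded after the lemma.

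The paper's route does not depend on a previously constructed stationary representative. It determines the normalized $b$ modulo $\pp_E^{d+1}$ directly. It is also the prototype of the unit-quotient-logarithm-plus-duality technique reused with the truncated logarithm $P$ in Lemmas~\ref{lem:odd-log-units} and~\ref{lem:odd-change-coefficient}.

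The fourth-power step is the same classical quadratic Gauss sum identity in both. Your counting argument works. Alternatively, $|G|^2=q$ from Theorem~\ref{thm:lamprecht} together with $\overline G=\eta(-1)G$ gives $G^2=\eta(-1)q$ directly.
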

\begin{proof}
The map
\[
 U_E^d/U_E^m\longrightarrow\pp_E^d/\pp_E^m,
                \qquad 1+x\longmapsto x-x^2/2,
\]
is an isomorphism of abelian groups. Its additivity follows by
expansion, since the omitted terms have degree at least three and
$3d\ge m$. Its inverse is given by $x\mapsto1+x+x^2/2$ on these
quotients. Via this isomorphism, $\theta|_{U_E^d/U_E^m}$ becomes an additive
character of $\pp_E^d/\pp_E^m$. Use the perfect pairing
\[
 \frac{\pp_E^{-m-n}}{\pp_E^{-d-n}}
 \times\frac{\pp_E^d}{\pp_E^m}\longrightarrow\C^\times,
 \qquad (g,x)\longmapsto\psi_E(gx).
\]
The argument in the proof of Theorem~\ref{thm:lamprecht} gives a
coefficient $g$ with $v_E(g)=-m-n$ satisfying
\eqref{eq:quadratic-coefficient}; put $b=\Gamma g$. Equation~\eqref{eq:odd-critical}
then follows from \eqref{eq:critical}. The remaining sum is a nondegenerate quadratic
Gauss sum over $k_E$, divided by $|k_E|^{1/2}$. The classical quadratic
Gauss-sum identity says that its square is the quadratic character of
$-1$, hence its fourth power is one.
\end{proof}
The exact choice of stationary representative in Lemma~\ref{lem:odd-Gamma}
is not packaged as a single Lean theorem.  The finite-field theorem
\fmllean{Lamprecht/CriticalPolarCoordinate.lean}{293}{CriticalPolarFunction.eq_quadratic}
is more general: after identifying $V_E$ with the residue field $k$ and
writing the polar pairing in \eqref{eq:polar} as
\[
 \psi_E(gxy)=\psi_0(Axy)
\]
for a fixed nontrivial additive character $\psi_0$ of $k$, every critical
function with this polar pairing has the form
\[
 H(x)=\psi_0\!\left(\frac A2x^2+cx\right)
\]
for a unique $c\in k$; uniqueness is
\fmllean{Lamprecht/CriticalPolarCoordinate.lean}{315}{CriticalPolarFunction.affineCoefficient_unique}.

Lemma~\ref{lem:odd-Gamma} uses the special stationary representative chosen
above, so that the coefficient $c$ is zero.  Writing
$\nu$ for the quadratic character of $k$,
\fmllean{FiniteField/QuadraticPhase.lean}{296}{quadraticPhase_eq_basic}
then gives
\[
 \ph\!\left(\sum_{x\in k}\psi_0(Ax^2/2)\right)
 =\nu(A)\,
  \ph\!\left(\sum_{x\in k}\psi_0(x^2/2)\right),
\]
and
\fmllean{FiniteField/QuadraticPhase.lean}{307}{quadraticPhase_basic_sq}
gives
\[
 \ph\!\left(\sum_{x\in k}\psi_0(x^2/2)\right)^2=\nu(-1).
\]
Hence the phase in \eqref{eq:odd-critical} has fourth power $1$.

The general case is used later in the odd-prime calculation.
Completing the square separates the resulting finite sum into the same pure
quadratic Gauss sum as above and an additional additive-character factor.
This step is formalized by
\fmllean{FiniteField/QuadraticPhase.lean}{278}{quadraticPhase_completeSquare}.
The general quadratic-phase identity, including the linear term, is
\cite[Lemma~5.4]{FML}.
The additional additive-character factor produced by completing the square is used
in the later odd-prime calculation in \cite[Lemma~6.5]{SML}.

\subsection{Stable twists}
\begin{lemma}[Stable twisting]\label{lem:stable-twist}
Let $\chi$ have conductor $m>1$, let $\eta$ have conductor at most
$\floor{m/2}$, let $\Gamma$ be admissible for $(\chi,\psi_E)$, and let
$b$ be a stationary representative for $\chi$. Then the same $\Gamma,b$
are admissible and stationary for $\eta\chi$, and
\begin{equation}\label{eq:stable-twist}
 \Delta_E(\eta\chi,\psi_E)
      =\eta(\Gamma/b)\Delta_E(\chi,\psi_E).
\end{equation}
\end{lemma}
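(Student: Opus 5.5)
The plan is to reduce everything to Lamprecht's formula (Theorem~\ref{thm:lamprecht}) applied to both $\chi$ and $\eta\chi$ with the same data $\Gamma,b$, and to compare the two right-hand sides factor by factor. Write $m=2d+\varepsilon$ and $s=d+\varepsilon$. Since $m(\eta)\le\floor{m/2}=d\le s$, the character $\eta$ is trivial on $U_E^{s}$; it is also trivial on $U_E^{m-1}$, because $m-1\ge d$ when $m>1$.

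First I would check the conductor and admissibility. The product $\eta\chi$ is trivial on $U_E^m$, since both factors are. On $U_E^{m-1}$ it agrees with $\chi$, which is nontrivial there, so $m_E(\eta\chi)=m$. Admissibility \eqref{eq:admissible} depends only on $m$ and $n$, so $\Gamma$ is admissible for $\eta\chi$.

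Next comes stationarity. For $z\in\pp_E^{d+\varepsilon}$ we have $1+z\in U_E^{s}\subset U_E^{d}$, hence $\eta(1+z)=1$ and $(\eta\chi)(1+z)=\chi(1+z)=\psi_E(bz/\Gamma)$. This is \eqref{eq:stationary} for $\eta\chi$ with the same $b$, and therefore with the same $g=b/\Gamma$.

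Then I compare the critical functions \eqref{eq:critical}. For $x\in\pp_E^d$ we have $1+x\in U_E^d$, so $\eta(1+x)=1$. Hence $H_E$ for $\eta\chi$ coincides pointwise with $H_E$ for $\chi$ on $V_E$, and the normalized sums $g_E$ in \eqref{eq:Gamma} agree. When $\varepsilon=0$ both equal $1$ anyway.

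Finally I apply \eqref{eq:lamprecht} to each character. The factors $\psi_E(b/\Gamma)$ and $g_E$ are common to both, and the remaining factors are $(\eta\chi)(\Gamma/b)=\eta(\Gamma/b)\chi(\Gamma/b)$ and $\chi(\Gamma/b)$. Dividing gives \eqref{eq:stable-twist}.

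There is no real obstacle in this argument. The only care needed is in the index bookkeeping: $m(\eta)\le d$ must kill $\eta$ on all of $U_E^d$, and not merely on $U_E^{d+\varepsilon}$, because the critical function involves $\theta(1+x)$ with $x\in\pp_E^d$. The hypothesis $m(\eta)\le\floor{m/2}=d$ is exactly what makes this work.
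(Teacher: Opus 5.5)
Your proposal is correct and follows the paper's proof. Both arguments show that $\eta$ is trivial on $U_E^d$, so $\eta\chi$ has the same stationary condition and the same critical function as $\chi$, and then divide the two instances of \eqref{eq:lamprecht}. You also check explicitly that $m_E(\eta\chi)=m$, using $m-1\ge d$; this justifies the admissibility claim, which the paper's proof leaves implicit.
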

\begin{proof}
Write $m=2d+\varepsilon$ and put $g=b/\Gamma$. Since
$m_E(\eta)\le d$, the character $\eta$ is trivial on $U_E^d$.
For $x\in\pp_E^{d+\varepsilon}$ one has $1+x\in U_E^d$, so
\[
 (\eta\chi)(1+x)=\chi(1+x)=\psi_E(gx).
\]
Thus the same $b$ satisfies \eqref{eq:stationary} for $\eta\chi$.
The critical functions in \eqref{eq:critical}, and hence their normalized
sums in \eqref{eq:Gamma}, are equal. Applying
\eqref{eq:lamprecht} to $\eta\chi$ and to $\chi$ gives
\eqref{eq:stable-twist}.
\end{proof}

The stable-twist formula is \cite[Lemma~4.7]{FML}.
The same formula is equation~(13) of \cite{SML}.

The Lean proof separates the two assertions in Lemma~\ref{lem:stable-twist}; 
\fmllean{Lamprecht/StableTwist.lean}{78}{stationaryNumeratorClass_stableTwist}
proves that $\chi$ and $\eta\chi$ have the same stationary numerator
class, and the theorem
\fmllean{Lamprecht/StableTwist.lean}{741}{stableTwist}
 gives  \eqref{eq:stable-twist}.

\section{Norms, traces, and congruences}\label{sec:norms}
\subsection{Trace ideals and conductors under a norm}
Let $E/F$ be finite separable. Define $D_{E/F}$ by
\[
 \mathfrak D_{E/F}^{-1}
 :=\{x\in E:\Tr_{E/F}(x\OO_E)\subset\OO_F\}
 =\pp_E^{-D_{E/F}},
\]
and put $e=e_{E/F}$.

 The trace
and norm functions are the field trace and field norm, so their tower
identities are inherited from Mathlib.

\begin{lemma}[Trace ideals and additive conductors]\label{lem:trace-ideal}
For every integer $j$,
\begin{equation}\label{eq:trace-ideal}
 \Tr_{E/F}(\pp_E^j)=\pp_F^{\floor{(j+D_{E/F})/e}},
 \qquad
 n_E(\psi_F\circ\Tr_{E/F})=e n_F(\psi_F)+D_{E/F}.
\end{equation}
\end{lemma}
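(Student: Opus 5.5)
The plan is to prove the trace-ideal formula first, using only the definition of the inverse different and the $\OO_F$-module structure of the relevant fractional ideals. Then I deduce the additive conductor from it.

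\emph{Trace-ideal formula.} Since $\Tr_{E/F}$ is $\OO_F$-linear and $\pp_E^j$ is an $\OO_F$-submodule, the image $\Tr_{E/F}(\pp_E^j)$ is an $\OO_F$-submodule of $F$. It is finitely generated, because $\pp_E^j$ is a finitely generated $\OO_F$-module. It is nonzero, since the trace form is nondegenerate for a separable extension. Hence it equals $\pp_F^{r}$ for a unique integer $r$.

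To identify $r$, I use the defining property of $\mathfrak D_{E/F}^{-1}$ together with $\pp_F^{k}\OO_E=\pp_E^{ek}$. For every integer $k$, the following are equivalent:
\[
 \Tr_{E/F}(\pp_E^j)\subset\pp_F^{k},
 \qquad
 \Tr_{E/F}(\pi_F^{-k}\pp_E^j)\subset\OO_F,
 \qquad
 \pp_E^{j-ek}\subset\mathfrak D_{E/F}^{-1}=\pp_E^{-D_{E/F}}.
\]
The middle condition is equivalent to the last because $\pi_F^{-k}\pp_E^{j}$ is an $\OO_E$-ideal, so it lies in the inverse different exactly when its trace is integral. The last condition says $j-ek\ge-D_{E/F}$, that is, $k\le(j+D_{E/F})/e$.

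The largest such $k$ is therefore $\floor{(j+D_{E/F})/e}$. Since $\Tr_{E/F}(\pp_E^j)=\pp_F^r$ is contained in $\pp_F^k$ exactly when $k\le r$, it follows that $r=\floor{(j+D_{E/F})/e}$.

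\emph{Additive conductor.} Put $n=n_F(\psi_F)$, so that $\psi_F$ is trivial on $\pp_F^{k}$ exactly when $k\ge -n$. Then $\psi_F\circ\Tr_{E/F}$ is trivial on $\pp_E^j$ if and only if $\floor{(j+D_{E/F})/e}\ge -n$. This holds if and only if $j+D_{E/F}\ge -en$, i.e.\ $j\ge -(en+D_{E/F})$, because $-n$ is an integer.

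The largest fractional ideal on which $\psi_E$ is trivial is thus $\pp_E^{-(en+D_{E/F})}$. This gives $n_E(\psi_F\circ\Tr_{E/F})=e\,n_F(\psi_F)+D_{E/F}$. The same computation shows that $\psi_E$ is nontrivial, so $n_E$ is defined.

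\emph{Main obstacle.} The only delicate point is the first step: that the trace image is a fractional ideal of $\OO_F$ rather than merely a subgroup, and that it is nonzero. I would take this from the standard facts (finite generation of $\OO_E$ over $\OO_F$ and separability) that Mathlib provides for the different ideal. The remaining work is the floor-function equivalence, which I expect to be routine.
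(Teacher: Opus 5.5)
Your proof is correct and follows the paper's argument. Both identify the trace image by testing it against the inverse different, using $y\,\Tr_{E/F}(\pp_E^j)\subset\OO_F$ if and only if $y\pp_E^j\subset\pp_E^{-D_{E/F}}$ (your $y=\pi_F^{-k}$), and both then obtain the conductor by applying the trace formula to the largest trivial ideal of $\psi_F$.

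The step you flag as the main obstacle is lighter than you expect. Suppose you know $\Tr_{E/F}(\pp_E^j)\subset\pp_F^{c}$ but $\Tr_{E/F}(\pp_E^j)\not\subset\pp_F^{c+1}$. Then the image contains an element of valuation exactly $c$, and since it is an $\OO_F$-submodule it equals $\pp_F^{c}$. So finite generation is not needed.
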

\begin{proof}
The inverse different is the trace dual of $\OO_E$.
If $\Tr(\pp_E^j)=\pp_F^c$, its dual ideal consists of the
$y\in F$ satisfying
$y\pp_E^j\subset\pp_E^{-D_{E/F}}$, or
$e v_F(y)+j\ge-D_{E/F}$. This determines $c$ and proves the first
formula. The second follows by applying the first to the largest
trivial ideal of $\psi_F$. See
\cite[equation~(\SMLnum{U:trace-ideal})]{SML}.
\end{proof}
If $E/F$ is unramified, then for every $r\ge0$,
\[
 \N_{E/F}(U_E^r)=U_F^r,
\]
and for every character $\chi$ of $F^\times$,
\[
 m_E(\chi\circ\N_{E/F})=m_F(\chi).
\]
These unramified norm and conductor identities are proved in
\cite[Lemma~3.11]{FML}.

Suppose now that $E/F$ is totally ramified and cyclic of prime degree
$\ell$. Its lower ramification break $t$ is the integer for which
the lower ramification groups equal $\Gal(E/F)$ through $t$ and
are trivial after $t$. Thus $t=0$ in the tame case. Let
$\psi_{E/F}$ denote the inverse Herbrand function. The different exponent
defined above and the inverse Herbrand function satisfy
\begin{equation}\label{eq:herbrand}
 D_{E/F}=(\ell-1)(t+1),\qquad
 \psi_{E/F}(s)=
 \begin{cases}s,&0\le s\le t,\\
 t+\ell(s-t),&s\ge t.
 \end{cases}
\end{equation}
These ramification formulas are recorded in \cite[Section~3]{SML}.

\begin{proposition}[Conductors under norm pullback]\label{prop:norm-conductors}
Assume that $E/F$ is totally ramified and cyclic of prime degree $\ell$,
with lower ramification break $t$. Every nontrivial character in
$S(E/F)$ has conductor $t+1$.
If $\chi$ has conductor $m>t+1$, then
\begin{equation}\label{eq:norm-conductor}
 m_E(\chi\circ\N_{E/F})=\psi_{E/F}(m-1)+1
                         =\ell m-D_{E/F}.
\end{equation}
For $m\le t$, norm pullback preserves $m$. At $m=t+1$, its conductor
is at most $m$; a strict decrease occurs precisely when twisting $\chi$
by a norm character lowers its conductor.
\end{proposition}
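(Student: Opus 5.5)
The plan is to run everything through local class field theory and the upper-numbering filtration. The key fact is that for a totally ramified cyclic extension $E/F$ of prime degree $\ell$ with lower break $t$, the norm satisfies
\[
 \N_{E/F}(U_E^{\psi_{E/F}(s)})=U_F^{s}\quad(s> t),\qquad
 \N_{E/F}(U_E^{\psi_{E/F}(s)+1})\subset U_F^{s+1}.
\]
One also has $\N_{E/F}(U_E^{s})\subset U_F^{s}$ for $s\le t$. Moreover, the image of $U_F^{t}$ is not contained in the norm group, while $U_F^{t+1}$ is. These are the standard Serre \emph{Local Fields} V.\S6 statements and are recorded in \cite[Section~3]{SML}; I would take them together with \eqref{eq:herbrand} as given. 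The first assertion follows immediately. A nontrivial $\nu\in S(E/F)$ has kernel $\N_{E/F}(E^\times)$, which contains $U_F^{t+1}$ but not $U_F^{t}$. Hence $m_F(\nu)=t+1$, where for $t=0$ one reads $U_F^0=\OO_F^\times$.

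For $m>t+1$, set $s=m-1\ge t+1$. I want to show that $\chi\circ\N_{E/F}$ is trivial on $U_E^{\psi(s)+1}$ and nontrivial on $U_E^{\psi(s)}$. Triviality holds because $\N(U_E^{\psi(s)+1})\subset U_F^{s+1}=U_F^{m}$. For nontriviality, surjectivity $\N(U_E^{\psi(s)})=U_F^{s}$ would give that $\chi\circ\N$ trivial on $U_E^{\psi(s)}$ forces $\chi$ trivial on $U_F^{m-1}$, contradicting $m_F(\chi)=m$. Hence the conductor is $\psi_{E/F}(m-1)+1$. Substituting \eqref{eq:herbrand} gives $t+\ell(m-1-t)+1=\ell m-(\ell-1)(t+1)=\ell m-D_{E/F}$. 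I expect the main obstacle to be the exact surjectivity of the norm on the higher unit groups, together with the refined inclusion at level $\psi(s)+1$. This needs the graded-piece computation: the norm induces on $U_E^{\psi(s)}/U_E^{\psi(s)+1}\to U_F^{s}/U_F^{s+1}$ an additive polynomial map, namely $x\mapsto x^\ell$ plus a linear term for $s=t$, and a linear surjection for $s>t$. This is the point where I would cite the FML formalization of norms on principal units.

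For $m\le t$ (so $t\ge1$), I would use that on $U_E^{j}$ with $1\le j\le t$ the norm acts on graded pieces as $x\mapsto x^\ell$ on $\pp_E^j/\pp_E^{j+1}\cong k_F$. This map is bijective since the residue field is perfect of characteristic $\ell=p$. Consequently $\N(U_E^j)U_F^{j+1}=U_F^j$, and $\N(U_E^{j+1})\subset U_F^{j+1}$. The same argument as before, with $j=m-1$, shows the conductor is exactly $m$. For $m=0$, note that $\N(\OO_E^\times)\subset\OO_F^\times$, so an unramified $\chi$ pulls back to an unramified character.

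At $m=t+1$, the inclusion $\N(U_E^{t+1})\subset U_F^{t+1}$ gives $m_E(\chi\circ\N)\le t+1$. The conductor drops precisely when $\chi\circ\N$ is trivial on $U_E^{t}$, which means $\chi$ is trivial on $\N(U_E^{t})U_F^{t+1}$. This subgroup has index $\ell$ in $U_F^{t}$, and its complement is detected by $S(E/F)$, since the norm group meets $U_F^t$ in exactly this subgroup. So $\chi$ kills it if and only if $\chi|_{U_F^t}=\nu^{-1}|_{U_F^t}$ for some nontrivial $\nu\in S(E/F)$. That in turn is equivalent to $m_F(\chi\nu)\le t$, i.e.\ twisting by a norm character lowers the conductor. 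For $t=0$ the same argument runs with $U^0$ and the tame norm, which is the $\ell$-th power on $k_E^\times=k_F^\times$.
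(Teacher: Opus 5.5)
Your proposal is correct and follows essentially the paper's route. The paper simply cites the cyclic-prime norm-filtration results \cite[Theorem~3.7, Corollary~3.9 and Proposition~3.10]{FML}, and you read the conductors off the same facts: $U_F^{t+1}\subset\N_{E/F}(E^\times)\not\supset U_F^t$, and $\N_{E/F}(U_E^{\psi_{E/F}(s)})=U_F^s$ for $s>t$.

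Two small corrections. First, the graded norm map is $\xi\mapsto\xi^\ell$, and hence bijective, only for $1\le j<t$. At $j=t$ it has kernel and cokernel of order $\ell$, which your own $m=t+1$ argument relies on when you say $\N(U_E^t)U_F^{t+1}$ has index $\ell$ in $U_F^t$. Second, the case $m=1\le t$ uses the level-zero map $k^\times\to k^\times$, $\xi\mapsto\xi^p$, which your range $1\le j$ does not cover; it is likewise bijective.
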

\begin{proof}
These are the cyclic-prime results recalled in
\cite[Section~\SMLnum{sec:ramification}]{SML}; their proofs are in
\cite[Theorem~3.7, Corollary~3.9 and Proposition~3.10]{FML}.
\end{proof}

The proof of Proposition~\ref{prop:norm-conductors} uses the imported
\fmllean{Ramification/NormFiltration.lean}{838}{cyclicPrimeNormFiltration}.

\subsection{Norm approximation below the break}
\begin{lemma}[Norm approximation]\label{lem:norm-approximation}
Let $E/F$ be totally ramified cyclic of prime degree $\ell$, with lower
ramification break $t$. For $A\in F^\times$ and $0\le j\le t$, there is
$x\in E^\times$ such that
\[
 v_E(x)=v_F(A),\qquad
 \frac{\N_{E/F}(x)}{A}\in U_F^j.
\]
\end{lemma}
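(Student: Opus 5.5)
The plan is to reduce to units and then build the approximating unit one filtration step at a time, using the graded norm maps below the break.

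\emph{Reduction to units.} It suffices to treat $j=t$, since $U_F^t\subset U_F^j$ for $j\le t$. Fix a uniformizer $\pi_E$ and put $x_0=\pi_E^{v_F(A)}$. Since $E/F$ is totally ramified, $f_{E/F}=1$, so \eqref{eq:valuation-conventions} gives $v_F(\N_{E/F}\pi_E)=1$. Hence $u_0=A/\N_{E/F}(x_0)$ lies in $U_F^0$. If $t=0$ we are done with $x=x_0$. Otherwise it remains to find $y\in U_E^0$ with $\N_{E/F}(y)\equiv u_0$ modulo $U_F^t$; then $x=x_0y$ has $v_E(x)=v_F(A)$ and $\N_{E/F}(x)/A\in U_F^t$. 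So the claim becomes surjectivity of
\[
 \N_{E/F}:U_E^0\longrightarrow U_F^0/U_F^t .
\]

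\emph{Graded pieces.} I would prove this surjectivity by induction over the filtration, following the standard analysis of norms on unit filtrations (Serre, \emph{Local Fields}, V.\S3). For $0\le s<t$ one has $\psi_{E/F}(s)=s$ by \eqref{eq:herbrand}, so $\N_{E/F}(U_E^s)\subset U_F^s$. The norm then induces a map
\[
 U_E^s/U_E^{s+1}\longrightarrow U_F^s/U_F^{s+1}.
\]
For $s=0$ this map is $\bar y\mapsto\bar y^{\ell}$ on $k_E^\times=k_F^\times$. For $1\le s<t$ it is $\xi\mapsto\xi^{p}$ up to multiplication by a fixed nonzero constant of $k_F$; there is no linear term because $s<t$. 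When $t\ge1$ the extension is wildly ramified, so $\ell=p$. Since $k_F$ is finite, hence perfect, each of these graded maps is bijective.

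\emph{Induction.} Surjectivity on the graded pieces for $s=0,\dots,t-1$ gives the claim by successive approximation. Given $u\in U_F^0$ and a unit $y_s$ with $u/\N_{E/F}(y_s)\in U_F^s$, choose $z\in U_E^s$ whose norm has the same image as $u/\N_{E/F}(y_s)$ in $U_F^s/U_F^{s+1}$. Then $y_{s+1}=y_sz$ satisfies $u/\N_{E/F}(y_{s+1})\in U_F^{s+1}$. After $t$ steps we obtain the required $y$.

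\emph{Main obstacle.} The hard step is the explicit description of the graded norm maps below the break, in particular the assertion that only the $p$-th power term survives for $1\le s<t$. I would try first to import this from the filtration theorem \fmllean{Ramification/NormFiltration.lean}{838}{cyclicPrimeNormFiltration} (cf.\ \cite[Theorem~3.7]{FML}) rather than recompute it.
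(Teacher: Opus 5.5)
Your proof is correct and follows essentially the paper's route. The paper obtains $x$ in one step from the norm-filtration isomorphism $E^\times/U_E^t\xrightarrow{\sim}F^\times/U_F^t$ of \cite[Theorem~3.7(c)]{FML} and reads off $v_E(x)=v_F(A)$ from $v_F(\N_{E/F}x)=v_E(x)$, whereas you split off the valuation with $\pi_E^{v_F(A)}$ and reprove the needed surjectivity on units by the standard graded-piece induction below the break (only surjectivity is used, so this supplies exactly what the citation provides).
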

\begin{proof}
Norm induces an isomorphism
\[
 E^\times/U_E^t\xrightarrow{\sim}F^\times/U_F^t.
\]
Choose $x$ whose norm has the same class as $A$ modulo $U_F^t$. Then
$\N_{E/F}(x)/A\in U_F^t\subset U_F^j$. Since $E/F$ is totally ramified,
$v_F(\N_{E/F}x)=v_E(x)$, hence $v_E(x)=v_F(A)$.
\end{proof}
The norm-filtration isomorphism used in the proof is
\cite[Theorem~3.7(c)]{FML}.

\subsection{Approximation by an element of a subfield}
The following lemma has two later uses. First, in the totally ramified
$C_p^2$-extension of Section~\ref{subsec:odd-total}, with $p>2$, it is
used in Lemma~\ref{lem:odd-total-coordinate} to choose $\Delta\in K$ and
$a\in L_2$ such that
\[
 \Delta^p-\Delta=a,\qquad
 v_K(\Delta)=-t,\qquad v_{L_2}(a)=-t,\qquad K=L_2(\Delta).
\]
Here $t$ is the lower break of $K/L_2$; in mixed characteristic the
construction uses the bound \eqref{eq:odd-mixed-pbound}.
This application is \cite[Proposition~\SMLnum{O:A:prop:AS}]{SML}.

Second, let $M/A$ be a totally ramified $C_p^2$-extension, let $E/A$
be an intermediate extension of degree $p$, and let $\sigma$ generate
$\Gal(E/A)$. If $x=\sigma(y)/y$ with $y\in E^\times$, the lemma is used
to choose $a\in A$ such that
\[
 v_E(y-a)=\max_{b\in A}v_E(y-b).
\]
This application is \cite[Lemma~\SMLnum{O:G:deep}]{SML}.
The approximation statement itself is
\cite[Lemma~\SMLnum{O:A:best-approximation}]{SML}.

\begin{lemma}[Best additive approximation]\label{lem:best-approximation}
Let $L/M$ be a finite totally ramified extension with ramification
index $e$, let $\sigma$ be a $M$-automorphism of $L$, and let $y\in L$
satisfy $\sigma y\ne y$. There is $a\in M$ such that
\[
 q:=v_L(y-a)=\max_{b\in M}v_L(y-b),\qquad
 e\nmid q,\qquad q\le v_L(\sigma y-y).
\]
\end{lemma}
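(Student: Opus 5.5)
We prove the three assertions in turn: existence of the maximum, then the bound $q\le v_L(\sigma y-y)$, then $e\nmid q$ by improving any candidate whose valuation is divisible by $e$.

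\emph{Existence and the upper bound.} Since $\sigma$ fixes $M$ pointwise, for every $b\in M$
\[
 \sigma y-y=\sigma(y-b)-(y-b).
\]
The automorphism $\sigma$ preserves $v_L$, because $L/M$ is finite and $M$ is complete, so $v_L$ is the unique extension of the valuation of $M$. Hence
\[
 v_L(\sigma y-y)\ge v_L(y-b).
\]
Therefore the set $\{v_L(y-b):b\in M\}$ is a set of integers bounded above by the finite number $v_L(\sigma y-y)$; finiteness uses $\sigma y\ne y$. The set is nonempty (take $b=0$), and $v_L(y-b)<\infty$ for all $b$ because $y\notin M$. A nonempty set of integers bounded above has a maximum, attained at some $a\in M$. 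This gives $q$ together with $q\le v_L(\sigma y-y)$.

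\emph{Indivisibility.} Suppose $e\mid q$, and write $q=ek$. Choose a uniformizer $\pi_M$ of $M$. Then $v_L(\pi_M^k)=ek=q$, so
\[
 u=(y-a)/\pi_M^k\in\OO_L^\times .
\]
Because $L/M$ is totally ramified, $k_L=k_M$. Hence there is $c\in\OO_M^\times$ with $u\equiv c\pmod{\pp_L}$. Put $a'=a+c\pi_M^k\in M$. Then
\[
 y-a'=\pi_M^k(u-c),\qquad v_L(y-a')\ge q+1,
\]
which contradicts the maximality of $q$. Thus $e\nmid q$.

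The only slightly delicate points are the $\sigma$-invariance of $v_L$ and the identification of the residue fields. Both are standard facts; in Lean they come from uniqueness of the extended valuation and from the residue degree being $1$. I expect the main formal obstacle to be producing the maximum as an integer. The cleanest route is to work inside $\Z$: bound the set above, then apply well-foundedness in its $\mathrm{sSup}$ form to the nonempty bounded set. The rest is a short computation.
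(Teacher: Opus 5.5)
Your proposal is correct and follows the paper's proof essentially step for step. The $\sigma$-invariance of $v_L$ gives $v_L(y-b)\le v_L(\sigma y-y)$, so the approximation valuations form a nonempty set of integers bounded above and a maximum exists without appealing to closedness of $M$. Lifting the residue of $(y-a)\pi_M^{-k}$ through $k_L=k_M$ then rules out $e\mid q$.
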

\begin{proof}
For every $b\in M$, valuation invariance gives
\[
 v_L(y-b)\le v_L\bigl(\sigma(y-b)-(y-b)\bigr)
             =v_L(\sigma y-y).
\]
The values on the left are finite integers and form a nonempty set
bounded above. Choose $a$ attaining the maximum $q$.
If $q=ek$, choose a uniformizer $\pi_M$.
The element $(y-a)\pi_M^{-k}$ is a unit in $L$.
Since the residue fields agree, it has the same residue as some
$u\in\OO_M^\times$. Then
$v_L(y-a-u\pi_M^k)>q$, a contradiction.
\end{proof}

The original proof obtained a best approximation from the closedness of
the subfield. Here $v_L(y-b)\le v_L(\sigma y-y)$ bounds all approximation
valuations by a finite integer, so their nonempty set has a maximum
directly; see \cite[Remark~\SMLnum{O:A:lean-best-approximation}]{SML}.
The best-approximation step in Lemma~\ref{lem:best-approximation} is used
inside the formal proof of the Artin--Schreier construction
\lean{Odd/Total/ASApproximation.lean}{630}{oddAS_approximate_generator}.

\subsection{Symmetric functions and truncated logarithms}
The formulas in this subsection are used later in the odd-prime calculation for
two distinct purposes.  The norm expansion and the valuation bound on its
intermediate symmetric terms control the error terms occurring in norms and
traces.  The truncated logarithm then converts multiplicative expressions on
deep unit groups into additive expressions modulo the precise ideals on which
the relevant additive characters are trivial.  These are the inputs used in
the totally ramified calculation of Section~\ref{sec:odd}.

For a finite Galois extension $E/F$ of degree $s$, let $E_j(x)$ be the
$j$th elementary symmetric function of the $s$ conjugates of $x$.  Then
\begin{equation}\label{eq:norm-expansion}
 \N_{E/F}(1+x)=1+\Tr_{E/F}(x)+E_2(x)+\cdots+\N_{E/F}(x).
\end{equation}
If $E/F$ is wildly ramified cyclic of degree $p$ with lower break $t$, then
for $1\le j<p$,
\begin{equation}\label{eq:symmetric-bound}
 v_F(E_j(x))\ge
 \floor{\frac{j v_E(x)+(p-1)(t+1)}p}.
\end{equation}
This is equation~(11) of \cite{SML}.  The estimate remains valid for negative
$v_E(x)$; this is used later for stationary coefficients of negative valuation
and for the error terms in the totally ramified odd-prime calculation.
See \cite[Sections~7 and~9]{SML}.

For $p>2$, put
\begin{equation}\label{eq:truncated-log}
 P(Z)=\sum_{j=1}^{p-1}\frac{Z^j}{j}.
\end{equation}
Its coefficients belong to $\Z_{(p)}$, so it can be evaluated in both mixed
and equal characteristic $p$. It is the truncation of $-\log(1-Z)$.

\begin{lemma}[Truncated logarithms of products]\label{lem:formal-log}
The polynomial
\[
 P(Z+W-ZW)-P(Z)-P(W)
\]
has no monomial of total degree less than $p$.
For a Galois extension $E/F$, the difference
\begin{equation}\label{eq:log-norm-difference}
 P(1-\N_{E/F}(1-z))-\Tr_{E/F}(P(z))
\end{equation}
is therefore a polynomial in the conjugates of $z$ whose monomials
have total degree at least $p$.
\end{lemma}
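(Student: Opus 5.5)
The plan is to treat $P$ as a truncation of the formal power series $L(Z)=-\log(1-Z)=\sum_{j\ge1}Z^j/j$ and to work first in $\Q[[Z,W]]$, where logarithm identities are exact. There $1-(Z+W-ZW)=(1-Z)(1-W)$, so
\[
 L(Z+W-ZW)=L(Z)+L(W).
\]
Write $L=P+R$ with $R(Z)=\sum_{j\ge p}Z^j/j$. Then
\[
 P(Z+W-ZW)-P(Z)-P(W)=R(Z)+R(W)-R(Z+W-ZW).
\]
Every monomial of $R(Z)$ and $R(W)$ has degree at least $p$. The series $Z+W-ZW$ has no constant term, so every monomial of $R(Z+W-ZW)$ also has total degree at least $p$. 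Hence the left side, a polynomial, has no monomial of degree less than $p$. The polynomial has coefficients in $\Z_{(p)}$, and the statement concerns its monomials. So the vanishing of low-degree coefficients, checked in $\Q[Z,W]$, holds equally in $\Z_{(p)}[Z,W]$. It then specializes to any ring receiving $\Z_{(p)}$, including rings of characteristic $p$.

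For the second assertion, let $z_1,\dots,z_s$ be the conjugates of $z$ and iterate the first identity. Define $w_1=z_1$ and $w_{k+1}=w_k+z_{k+1}-w_kz_{k+1}$. Then
\[
 1-w_k=\prod_{i\le k}(1-z_i),
\]
so $w_s=1-\N_{E/F}(1-z)$.

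We show by induction, in the polynomial ring $\Z_{(p)}[X_1,\dots,X_s]$, that $P(w_k)-\sum_{i\le k}P(X_i)$ has no monomial of degree less than $p$. The inductive step applies the first assertion with $Z=w_k$ and $W=X_{k+1}$. The difference $P(w_k+X_{k+1}-w_kX_{k+1})-P(w_k)-P(X_{k+1})$ is obtained by substituting $w_k$ into a polynomial in $Z,W$ with only monomials of degree at least $p$. Since $w_k$ has no constant term, this substitution yields only monomials of degree at least $p$. The induction hypothesis handles the rest.

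Alternatively, one can bypass the induction and apply the power-series argument directly to the product $\prod(1-X_i)$ in $\Q[[X_1,\dots,X_s]]$, which gives the same conclusion at once. Substituting $X_i=z_i$ and using $\Tr_{E/F}P(z)=\sum_i P(z_i)$ gives the claim for \eqref{eq:log-norm-difference}.

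The only delicate point is the passage between characteristic zero and characteristic $p$. The argument must be phrased as a statement about the coefficients of a fixed polynomial over $\Z_{(p)}$, namely that its coefficients in degree less than $p$ vanish. It should not be phrased as an identity of functions in $E$. The denominators $j<p$ appearing in $P$ are units in $\Z_{(p)}$, so the polynomial is defined over $\Z_{(p)}$. Coefficient vanishing over $\Q$ implies vanishing over $\Z_{(p)}$ because $\Z_{(p)}\subset\Q$. Everything else is formal.
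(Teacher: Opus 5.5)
Your proof is correct and is essentially the paper's argument. The paper also uses the formal identity $-\log((1-Z)(1-W))=-\log(1-Z)-\log(1-W)$ in degrees below $p$, notes that the denominators are invertible in $\Z_{(p)}$ so nothing depends on convergence or on the characteristic, and iterates over the product of the conjugates. You simply make explicit the tail $R$ and the induction that the paper leaves implicit.
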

\begin{proof}
The formal identity
$-\log((1-Z)(1-W))=-\log(1-Z)-\log(1-W)$ gives equality in every
total degree less than $p$, where all denominators are invertible in
$\Z_{(p)}$. Iteration gives the assertion for a product of conjugates.
This is a polynomial calculation; convergence of an infinite logarithm
is not being assumed.
\end{proof}

\begin{lemma}[Norm--logarithm congruences]\label{lem:norm-logarithm}
Put $T=t+1$ and $q_0=\ceil{T/p}$. If $E/F$ is totally ramified cyclic
of degree $p$ with lower break $t$, $h\ge0$, and
$z\in\pp_E^{h+q_0}$, then
\begin{equation}\label{eq:norm-log-ramified}
 P(1-\N_{E/F}(1-z))
 \equiv \Tr_{E/F}(P(z))+\N_{E/F}(P(z))
       \pmod{\pp_F^{T+h}}.
\end{equation}
If $E/F$ is unramified of degree $p$ and $z\in\pp_E^{q_0}$, then
\begin{equation}\label{eq:norm-log-unramified}
 P(1-\N_{E/F}(1-z))
 \equiv \Tr_{E/F}(P(z))
       \pmod{\pp_F^T}.
\end{equation}
\end{lemma}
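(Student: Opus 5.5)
Write $\sigma$ for a generator of $\Gal(E/F)$ and $z_i=\sigma^i z$ for the conjugates, $0\le i<p$. The starting point is Lemma~\ref{lem:formal-log}, which gives
\[
 P(1-\N_{E/F}(1-z))-\Tr_{E/F}(P(z))=R(z_0,\dots,z_{p-1}),
\]
where $R$ is a polynomial with $\Z_{(p)}$-coefficients, symmetric in the $z_i$, and all of whose monomials have total degree at least $p$. I would determine $R$ modulo $p$ in low degrees. In degree exactly $p$ the candidate is $\pm z_0\cdots z_{p-1}=\pm\N(z)$. I expect its sign and coefficient to match $\N(P(z))\equiv \N(z)$ up to terms of higher degree, because $P(z)=z(1+z/2+\cdots)$ and the norm is multiplicative. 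The congruence \eqref{eq:norm-log-ramified} is then equivalent to showing that
\[
 R(z_0,\dots,z_{p-1})-\N_{E/F}(P(z))\in\pp_F^{T+h}.
\]
Since this expression is symmetric, it is a polynomial in the elementary symmetric functions $E_j(z)$, and it already lies in $F$.

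The bound comes from valuations of the $E_j$. Put $v=v_E(z)\ge h+q_0$. By \eqref{eq:symmetric-bound}, for $1\le j<p$,
\[
 v_F(E_j(z))\ge\floor{\frac{jv+(p-1)T}{p}},
\]
while $v_F(E_p(z))=v_F(\N z)=v$ because $E/F$ is totally ramified. I would expand the symmetric difference as a polynomial in $E_1,\dots,E_p$. Each monomial $\prod E_{j}^{a_j}$ has weighted degree $\sum ja_j\ge p$, and any pure power of $E_p$ cancels against $\N(P(z))$. Every other monomial contains at least one factor $E_j$ with $j<p$, which contributes a gain of $(p-1)T/p$.

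The estimate I aim for on such a monomial is valuation at least roughly $(\text{weight})\cdot v/p+(p-1)T/p$. When the weight is $p$, this is $v+(p-1)T/p\ge h+q_0+(p-1)T/p\ge h+T$, using $q_0\ge T/p$. Higher weights only improve the bound. The coefficients lie in $\Z_{(p)}$. Any coefficient that is divisible by $p$ gives an extra $v_F(p)=e_F$ in mixed characteristic and vanishes outright in equal characteristic. The floor functions need care, so I would instead sum the bounds before rounding, using $v_F$ on $F$. I expect \textbf{this step to be the main obstacle}. The difficulty is verifying that the degree-$p$ part of $R$ really equals $\N(z)$ modulo terms involving some $E_j$ with $j<p$, together with handling the rounding. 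If that fails, I would instead work with $\log$ in $\Z_{(p)}[[Z]]$ and compare $-\log(\N(1-z))=\sum_i-\log(1-z_i)$ exactly. The difference between $P$ and $-\log(1-\cdot)$ consists of degree $\ge p$ terms such as $Z^p/p$, and $\Tr(z^p)/p$ is handled through the Newton identities in terms of the $E_j$.

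For the unramified congruence \eqref{eq:norm-log-unramified} the same expansion applies with $v_E=v_F$ on $F$. Every $E_j(z)$, including $\N(z)$, has valuation at least $jq_0$. All remaining monomials have weighted degree at least $p$, hence valuation at least $pq_0\ge T$. So no norm term survives, which matches the statement.
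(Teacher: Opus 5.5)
Your framework is the paper's: write $P(1-\N_{E/F}(1-z))-\Tr_{E/F}P(z)-\N_{E/F}P(z)$ as a $\Z_{(p)}$-polynomial in the elementary symmetric functions, bound each monomial containing some $E_j$ with $j<p$ by \eqref{eq:symmetric-bound}, and in the unramified case use $pq_0\ge T$. The unramified part is fine. The rounding also causes no trouble, although "summing before rounding" is not a valid step. A monomial of weight $w\ge p$ with $s'\ge1$ factors $E_j$, $j<p$, has valuation at least $wv/p+s'(p-1)(T-1)/p\ge h+T-(p-1)/p$. Since valuations are integers, this is at least $h+T$.

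The genuine gap is your claim that every pure power of $E_p$ cancels against $\N_{E/F}(P(z))$. Only the linear term $E_p$ cancels. For $p=3$ one computes
\[
 P(u)-\sum_aP(z_a)-\prod_aP(z_a)
 =-e_1e_2+\tfrac12e_1e_3+\tfrac12e_2^2-\tfrac54e_2e_3+\tfrac38e_3^2 ,
\]
so $e_3^2$ survives. From the variables alone its valuation is only $2v$, and $2v<T+h$ can occur (e.g. $h=0$, $T=6$, $v=q_0=2$). Your bound therefore does not cover these terms.

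Two ingredients are missing, and the paper's proof uses both.

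(i) The coefficients of the pure powers $e_p^k$ lie in $p\Z_{(p)}$. To prove this, specialize to $z_a=\zeta^aw$ with $\zeta$ a primitive $p$th root of unity, so that $e_1=\dots=e_{p-1}=0$ and $e_p=w^p$. Then $u=w^p$ and $\sum_aP(\zeta^aw)=0$. Writing $P(Z)=ZQ(Z)$, the pure-$e_p$ part becomes $e_p\bigl(Q(e_p)-\prod_aQ(\zeta^aw)\bigr)$. Modulo $1-\zeta$, the product is $Q(w)^p\equiv Q(w^p)$. Its coefficients lie in $\Z_{(p)}$, so the difference has coefficients in $(1-\zeta)\cap\Z_{(p)}=p\Z_{(p)}$.

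(ii) You need a lower bound on $v_F(p)$ in terms of the break. Writing $v_F(p)=e_F$ does not compare it with $T$. The trace-ideal formula \eqref{eq:trace-ideal} applied to $p=\Tr_{E/F}(1)$ gives $v_F(p)\ge\floor{(p-1)T/p}$. Hence a term $c\,e_p^k$ with $p\mid c$ and $k\ge1$ has valuation at least $\floor{(p-1)T/p}+\ceil{T/p}+h=T+h$. In equal characteristic these terms vanish outright.

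With (i) and (ii) added, your argument becomes the paper's proof. The fallback through the full logarithm is not needed.
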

\begin{proof}
Lemma~\ref{lem:formal-log} gives the first cancellation: after expanding in
the $p$ conjugates of $z$, every term of degree less than $p$ vanishes.
For the required modulus one uses the finer symmetric form of this error.
Write $e_i$ for the elementary symmetric functions of $p$ indeterminates and put
\[
 u=1-\prod_{a=1}^p(1-z_a)=e_1-e_2+\cdots+e_p.
\]
After subtracting both $\sum_aP(z_a)$ and $\prod_aP(z_a)$, every remaining
monomial has symmetric weight at least $p$ and contains at least two
positive-degree elementary-symmetric factors; the coefficients of pure
powers of $e_p$ are divisible by $p$. Thus Lemma~\ref{lem:formal-log}
accounts for the disappearance of the terms of degree $<p$, while this
refinement identifies the error terms that remain.

Specializing the $z_a$ to the conjugates of $z$, the bound
\eqref{eq:symmetric-bound} gives the required depth for every error monomial
containing a factor $e_i$ with $i<p$. For a pure power of $e_p$, its
coefficient is divisible by $p$; in equal characteristic the term is zero,
and in mixed characteristic the trace-ideal formula applied to
$\Tr_{E/F}(1)=p$ supplies the additional valuation. Hence the whole error
lies in $\pp_F^{T+h}$ in the ramified case. In the unramified case every
remaining term has degree at least $p$, and $p q_0\ge T$, so it lies in
$\pp_F^T$. This gives \eqref{eq:norm-log-ramified} and
\eqref{eq:norm-log-unramified}.
\end{proof}
The refined symmetric-polynomial calculation and the two resulting
congruences are \cite[Lemma~6.3]{SML}.
The ramified congruence is formalized by
\lean{Odd/NormLog.lean}{1557}{Odd.normLog}, and the unramified congruence by
\lean{Odd/NormLog.lean}{1596}{Odd.unramifiedNormLog}.

The totally ramified odd-prime calculation later indexes the conjugates
$\Delta_\xi$ of an Artin--Schreier generator $\Delta$ by
\[
 \xi\in\mathcal T:=\{0\}\cup\mu_{p-1}.
\]
To the precision required in the subsequent norm and trace calculation,
$\Delta_\xi$ is replaced by $\Delta+\xi$.  One then has to evaluate sums
in the variable $\xi\in\mathcal T$.  The following identities are the ones
used there.
The conjugate parametrization and this replacement are carried out in
\cite[Section~9]{SML}.

\begin{lemma}[Finite interpolation]\label{lem:interpolation}
Let $A$ be a field of characteristic zero or $p$ containing
$\mu_{p-1}$, and put $\mathcal T=\{0\}\cup\mu_{p-1}\subset A$.
For $0\le i<p$, $\Delta,Z\in A$, and $Z\notin-\mathcal T$,
\begin{equation}\label{eq:interpolation}
 \sum_{\xi\in\mathcal T}\frac{(\Delta+\xi)^i}{Z+\xi}
 =\frac{p\Delta^i}{Z}
       +(p-1)\frac{(\Delta-Z)^i}{Z^p-Z}.
\end{equation}
For $1\le i<p$ one also has
\begin{equation}\label{eq:power-sums}
 \begin{aligned}
 \sum_{\xi\in\mathcal T}(\Delta+\xi)^i
   &=p\Delta^i+(p-1)\mathbf 1_{\{i=p-1\}},\\
 \sum_{\xi\in\mu_{p-1}}\xi(\Delta+\xi)^i
   &=(p-1)\bigl(\mathbf 1_{\{i=p-2\}}
          +i\Delta\mathbf 1_{\{i=p-1\}}\bigr).
 \end{aligned}
\end{equation}
\end{lemma}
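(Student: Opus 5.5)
The plan is to reduce everything to power sums of $\xi$ over $\mathcal T$. The basic input is that $\mu_{p-1}\subset A$ is the full set of roots of $X^{p-1}-1$. Hence $\mathcal T$ is the full set of roots of $X^p-X=\prod_{\xi\in\mathcal T}(X-\xi)$, and the $p$ elements of $\mathcal T$ are distinct in $A$ because the characteristic is $0$ or $p$. The elementary fact we need is
\[
 \sum_{\xi\in\mu_{p-1}}\xi^k=(p-1)\mathbf 1_{\{(p-1)\mid k\}}\qquad(k\ge0),
\]
since for $(p-1)\nmid k$ the map $\xi\mapsto\xi^k$ is a nontrivial character of the cyclic group $\mu_{p-1}$. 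Adding the $\xi=0$ term adds $1$ only when $k=0$. Therefore $\sum_{\xi\in\mathcal T}\xi^k$ equals $p$ for $k=0$, equals $p-1$ for $k=p-1$, and equals $0$ for $0<k<2(p-1)$ with $k\ne p-1$.

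For \eqref{eq:power-sums}, expand $(\Delta+\xi)^i=\sum_k\binom ik\Delta^{i-k}\xi^k$ with $k\le i\le p-1$. The term $k=0$ gives $p\Delta^i$. The only other surviving term is $k=p-1$, which requires $i=p-1$ and contributes $p-1$. For the weighted sum, $\sum_{\xi\in\mu_{p-1}}\xi^{k+1}$ with $1\le k+1\le p$ survives only when $k+1=p-1$ or $k+1=p$... wait, $(p-1)\mid(k+1)$ with $k+1\le p$ forces $k+1=p-1$, that is $k=p-2$. With coefficient $\binom i{p-2}\Delta^{i-p+2}$ this gives $(p-1)$ when $i=p-2$, and $(p-1)i\Delta$ when $i=p-1$, since $\binom{p-1}{p-2}=p-1=i$. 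This matches the stated formula.

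For \eqref{eq:interpolation}, I would use partial fractions in $Z$ rather than expanding. Write $(\Delta+\xi)^i=\bigl((\Delta-Z)+(Z+\xi)\bigr)^i$. The terms divisible by $Z+\xi$ give polynomial contributions $\sum_{j\ge1}\binom ij(\Delta-Z)^{i-j}\sum_\xi(Z+\xi)^{j-1}$, and the remaining term is $(\Delta-Z)^i\sum_\xi 1/(Z+\xi)$. The reciprocal sum is a logarithmic derivative. Since $\prod_{\xi\in\mathcal T}(Z+\xi)=(-1)^p\bigl((-Z)^p-(-Z)\bigr)$, which is $\pm(Z^p-Z)$ and in either case has logarithmic derivative $(pZ^{p-1}-1)/(Z^p-Z)$, we get
\[
 \sum_{\xi\in\mathcal T}\frac1{Z+\xi}=\frac{pZ^{p-1}-1}{Z^p-Z}=\frac pZ+\frac{p-1}{Z^p-Z}.
\]
This identity holds in any characteristic, because the derivative is computed formally.

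The main obstacle is the polynomial part. By \eqref{eq:power-sums} with $\Delta$ replaced by $Z$, and using $j-1\le p-2$, we have $\sum_\xi(Z+\xi)^{j-1}=pZ^{j-1}$, including the case $j=1$ where the sum is $|\mathcal T|=p$. The polynomial contributions then recombine binomially as $\frac pZ\bigl(((\Delta-Z)+Z)^i-(\Delta-Z)^i\bigr)$. Adding $(\Delta-Z)^i\bigl(\frac pZ+\frac{p-1}{Z^p-Z}\bigr)$ gives exactly $\frac{p\Delta^i}Z+(p-1)\frac{(\Delta-Z)^i}{Z^p-Z}$. The delicate point is checking the case $j-1=p-1$, where the indicator would appear. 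It cannot occur, since $j\le i<p$. For the divisions we need $Z\ne0$ and $Z^p\ne Z$, and both follow from $Z\notin-\mathcal T$.
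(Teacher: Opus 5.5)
Your proof is correct. For \eqref{eq:power-sums} it coincides with the paper's: expand binomially and use that $\sum_{\xi\in\mu_{p-1}}\xi^m$ vanishes unless $(p-1)\mid m$.

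For \eqref{eq:interpolation} you take a genuinely different route. The paper regards both sides as rational functions of $Z$. It checks that they have the same simple poles at $Z=-\xi$ with the same residues and that both vanish at infinity, so their difference is zero. You instead compute the left side directly. Writing $(\Delta+\xi)^i=((\Delta-Z)+(Z+\xi))^i$ splits each summand into two parts:
\begin{itemize}
\item a polar part $(\Delta-Z)^i/(Z+\xi)$, whose sum is $(\Delta-Z)^i$ times the logarithmic derivative of $Z^p-Z$;
\item a polynomial part, which you evaluate with the first power-sum identity, with $\Delta$ replaced by $Z$. Since the exponent is at most $p-2$, the indicator never contributes.
\end{itemize}
So you derive \eqref{eq:interpolation} from \eqref{eq:power-sums}, whereas the paper proves the two independently.

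What each route buys:
\begin{itemize}
\item The residue argument is shorter and needs no power sums. But it verifies an answer known in advance, treats $Z$ as a variable before specializing, and leaves implicit the check at $Z=0$. There both terms on the right have poles, and their residues $p\Delta^i$ and $-(p-1)\Delta^i$ must combine to $\Delta^i$.
\item Your computation works pointwise for the given $Z\in A$ and uses only the formal product rule and character-sum orthogonality. It also explains the shape of the right side: its two terms are the partial fractions $p/Z+(p-1)/(Z^p-Z)$ of $(pZ^{p-1}-1)/(Z^p-Z)$.
\end{itemize}

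You should state explicitly that $p$ is odd, which is the paper's standing hypothesis here. You use it in two places:
\begin{itemize}
\item in $\prod_{\xi\in\mathcal T}(Z+\xi)=Z^p-Z$, which holds exactly because $-\mathcal T=\mathcal T$. For $p=2$ the product is $Z^2+Z$, not $\pm(Z^2-Z)$, and the identity fails in characteristic zero. Your ``$\pm$'' hedge obscures this.
\item in the step where $(p-1)\mid(k+1)$ with $k+1\le p$ forces $k+1=p-1$.
\end{itemize}
Also delete the aborted ``or $k+1=p$'' remark from the write-up.
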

\begin{proof}
Since $p$ is odd,
\[
 \prod_{\xi\in\mathcal T}(Z+\xi)=Z^p-Z.
\]
The two sides of \eqref{eq:interpolation}, viewed as rational functions of
$Z$, have the same simple poles at $Z=-\xi$ with the same residues, and both
vanish at infinity. Their difference is therefore zero. The identities in
\eqref{eq:power-sums} follow by expanding $(\Delta+\xi)^i$ and using
\[
 \sum_{\xi\in\mu_{p-1}}\xi^m=0
 \quad\text{unless }p-1\mid m,
\]
in which case the sum is $p-1$.
\end{proof}
The three identities are \cite[Lemma~9.3]{SML}.
The rational identity is formalized by
\lean{Finite/Interpolation.lean}{367}{Finite.interpolation}; the two power
sums are formalized in the same file.

 In characteristic $p$ the first term
$p\Delta^i/Z$ in \eqref{eq:interpolation} is zero. In mixed characteristic
its valuation is estimated before it is discarded.

\par\medskip
\noindent\begin{minipage}{\textwidth}
\subsection{Comparison of the proof cases}\label{subsec:case-comparison}
The common power relation \eqref{eq:r} leaves an $\ell$th root of unity
to determine. For each comparison, choose a primitive compatible family
$(\chi_L)$ of minimal conductors with explicit principal-unit formulas.
The model characters $\chi_L$ are chosen so that, for one character
$\lambda$ of $F^\times$,
\[
 \theta_L=\chi_L(\lambda\circ\N_{L/F}).
\]
Where \eqref{eq:stable-twist} applies, the model contributes only an
explicit character value, and \eqref{eq:fml} compares the norm pullbacks
over $F$.
\medskip

{\small
\renewcommand{\arraystretch}{1.12}
\noindent\begin{tabular}{@{}>{\raggedright\arraybackslash}p{.16\linewidth}@{\hspace{.03\linewidth}}>{\raggedright\arraybackslash}p{.38\linewidth}@{\hspace{.03\linewidth}}>{\raggedright\arraybackslash}p{.40\linewidth}@{}}
\toprule
\textbf{Case} & \textbf{Models and coefficients} & \textbf{How equality is obtained}\\
\midrule
\textbf{Tame}\newline $\ell\ne p$\newline Section~\ref{sec:tame}
& The restrictions of conductor-one models to $\OO_L^\times$ factor through $k_L^\times$. For higher conductors, a stationary coefficient in $F$ works in both norm pullbacks (Lemmas~\ref{lem:tame-decomposition}--\ref{lem:tame-covector}).
& The conductor-one identity follows from finite-field Gauss sums and a leading-term congruence; $\ell\ne p$ makes the congruence distinguish the possible $\ell$th roots (Proposition~\ref{prop:finite-tame}). All higher conductors fall in the stable range (Proposition~\ref{prop:tame-high}).\\
\addlinespace[7pt]
\textbf{Wild odd prime}\newline $\ell=p>2$\newline Section~\ref{sec:odd}
& Model conductors depend on the breaks. The truncated logarithm $P$ gives additive coordinates. Norm and trace estimates establish compatibility and control the corrections from changing stationary coefficients (Lemmas~\ref{lem:odd-change-coefficient} and~\ref{lem:nonexceptional-trace}).
& The explicit calculations leave a quotient of normalized quadratic Gauss sums, whose fourth power is $1$ by Lemma~\ref{lem:odd-Gamma}. Its $p$th power is $1$ by \eqref{eq:r}; since $p$ is odd, the quotient is $1$ (Propositions~\ref{prop:odd-ur-comparison} and~\ref{prop:odd-total-comparison}).\\
\addlinespace[7pt]
\textbf{Wild quadratic}\newline $\ell=p=2$\newline Sections~\ref{sec:quadratic}--\ref{sec:quadratic-cases}
& The models again depend on the breaks. Quadratic norm identities, together with norms and traces of one element, separate common factors from critical sums (Lemmas~\ref{lem:E2}--\ref{lem:common-function}).
& The remaining ambiguity is a sign. Polar pairings alone do not determine the critical sums (Lemma~\ref{lem:translation}). Pointwise comparison of the full critical functions, with reindexing, fiber counting, and cancellation on omitted cosets, determines the sign (Lemma~\ref{lem:common-pullback}). Even-conductor critical sums are $1$.\\
\bottomrule
\end{tabular}\par
}

\medskip
A recurrent reason for using norms of one $C\in K^\times$ in the wild
cases is the exact identity
$\theta_L(\N_{K/L}(C))=\Theta(C)$ from \eqref{eq:compatible}.
The remaining work is to compare the additive-character factors and
critical sums.
\end{minipage}
\par\medskip

\section{The tame case}\label{sec:tame}
\subsection{The inertia subgroup}
\begin{lemma}[Inertia in a prime-square extension]\label{lem:inertia}
Let $I$ be the inertia subgroup of $\Gal(K/F)\simeq C_\ell^2$.
Then $|I|$ is either $\ell$ or $\ell^2$. If $\ell\ne p$, then
$|I|=\ell$. In that case there is a unique unramified degree-$\ell$ intermediate field $U$, and every other degree-$\ell$ intermediate field $E$ is totally ramified over
$F$, and $K/E$ is unramified. If $\ell=p$ and $U/F$ is an unramified
degree-$p$ intermediate field, then
\[
 I=\Gal(K/U),
\]
so $K/U$ is totally and wildly ramified of degree $p$.
\end{lemma}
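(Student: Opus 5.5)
Since $\Gal(K/F)$ is abelian, I will use the structure of the inertia group: $I$ is normal and $\Gal(K/F)/I\simeq\Gal(k_K/k_F)$ is cyclic. This quotient is a quotient of $C_\ell\times C_\ell$, and $C_\ell\times C_\ell$ is not cyclic, so $I\ne1$. Hence $|I|\in\{\ell,\ell^2\}$, which is the first assertion.

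For $\ell\ne p$, I will use the fact that the wild inertia group $P\subset I$ is the $p$-Sylow subgroup of $I$, and that $I/P$ is cyclic. Since $|I|$ is a power of $\ell\ne p$, $P=1$ and $I$ is cyclic. A cyclic subgroup of $C_\ell^2$ has order at most $\ell$, so $|I|=\ell$.

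The degree-$\ell$ intermediate fields correspond to the $\ell+1$ subgroups $H$ of order $\ell$, via $L=K^H$. The field $L/F$ is unramified exactly when the inertia group of $L/F$ is trivial. That inertia group is the image of $I$ in $\Gal(K/F)/H$, so $L/F$ is unramified if and only if $I\subset H$, that is, $H=I$. Thus $U=K^I$ is the unique unramified one.

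For any other $E=K^H$ with $H\ne I$, the image of $I$ in $\Gal(K/F)/H\simeq C_\ell$ is nontrivial, hence the whole group, so $E/F$ is totally ramified. The inertia group of $K/E$ is $I\cap H=1$, so $K/E$ is unramified.

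For $\ell=p$ and $U=K^H$ unramified, the argument above gives $I\subset H$, so $|I|\le p$. Combined with $|I|\ge\ell$ this forces $I=H=\Gal(K/U)$. The inertia group of $K/U$ is then $I\cap\Gal(K/U)=\Gal(K/U)$, so $K/U$ is totally ramified of degree $p$. It is wildly ramified because its degree is divisible by the residue characteristic.

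The main obstacle is relying on the standard facts about the inertia subgroup: the quotient $\Gal/I$ is the residue Galois group, the inertia of a quotient extension is the image of $I$, the inertia of a subextension is $I\cap H$, and tame inertia is cyclic. In the formalization these require the corresponding Mathlib or FML ramification API. The tame cyclicity can alternatively be obtained from the injection $I/P\hookrightarrow k_K^\times$.
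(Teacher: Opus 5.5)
Your proof is correct and follows essentially the same route as the paper. Cyclicity of $\Gal(K/F)/I$ forces $I\ne1$; triviality of wild inertia and cyclicity of tame inertia give $|I|=\ell$ when $\ell\ne p$; and the inertia groups of $K^H/F$ and $K/K^H$ are the image of $I$ in $\Gal(K/F)/H$ and $I\cap H$, which is the paper's $HI/H$ argument applied to both the tame and the $\ell=p$ cases.
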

\begin{proof}
The quotient $\Gal(K/F)/I$ is identified with the Galois group of the residue-field
extension, hence is cyclic. Thus $I\ne1$, so $|I|=\ell$ or $\ell^2$.

Assume $\ell\ne p$. The wild inertia subgroup is a $p$-group, hence is
trivial, and tame inertia is cyclic. Therefore $I$ is cyclic, and consequently $|I|=\ell$.
Then $U=K^I$  is the maximal unramified subextension, with  $[U:F]=\ell$.

Let $E=K^H$ be another degree-$\ell$ intermediate field. Then $H$ and $I$
are distinct subgroups of order $\ell$, so
\[
 H\cap I=1,\qquad HI=\Gal(K/F).
\]
The inertia subgroup of $E/F$ is $HI/H\simeq I$, so $E/F$ is totally
ramified. The inertia subgroup of $K/E$ is $H\cap I=1$, so $K/E$ is
unramified.

Now assume $\ell=p$ and let $H=\Gal(K/U)$ for an unramified degree-$p$
intermediate field $U/F$.  The inertia subgroup of $U/F$ is $IH/H$;
since $U/F$ is unramified, $I\subset H$.  As $I\ne1$ and $|H|=p$, one has
$I=H$.  Hence the inertia subgroup of $K/U$ is $I\cap H=H$, so $K/U$ is
totally ramified of degree $p$.  Since the residue characteristic is $p$,
this ramification is wild.
\end{proof}
The inertia and residue-action facts used in the proof are
\cite[Lemma~D.2]{SML}.

For a degree-$\ell$ intermediate field $L$, put the norm-character product
\[
 \Lambda_L=\prod_{\nu\in S(L/F)}\Delta_F(\nu,\psi_F).
\]
Then \eqref{eq:lambda-and-A} reads
\[
 \mathcalA_L(\theta_L)=\Delta_L(\theta_L,\psi_L)\Lambda_L.
\]
For the tame case, the target is the equality \eqref{eq:sml} in
Theorem~\ref{thm:sml}.  By Lemma~\ref{lem:inertia}, there is a unique
unramified degree-$\ell$ intermediate field $U$.  It is therefore enough to prove
\begin{equation}\label{eq:tame-reduction}
 \mathcalA_U(\theta_U)=\mathcalA_E(\theta_E)
 \qquad([E:F]=\ell,\ E\ne U).
\end{equation}

\subsection{The finite-field identity}
For a finite field $k$, a nontrivial additive character $\psi_k$, and a
multiplicative character $\chi$ of $k^\times$, put
\begin{equation}\label{eq:finite-field-tau}
 \tau_k(\chi)=-\sum_{x\in k^\times}\chi(x)^{-1}\psi_k(x).
\end{equation}
For a finite extension $\kappa/k$, the additive character on $\kappa$ is
$\psi_k\circ\Tr_{\kappa/k}$.  
\begin{proposition}[Primitive finite-field Gauss-sum identity]\label{prop:finite-tame}
Let $k$ have $q$ elements, let $\ell\mid q-1$ be prime, and let
$\kappa/k$ have degree $\ell$.  Suppose that characters
$\chi_\kappa$ of $\kappa^\times$ and $\chi_0$ of $k^\times$ satisfy
\[
 \chi_\kappa^\ell=\chi_0\circ\N_{\kappa/k},
 \qquad \chi_0|_{\mu_\ell}\ne1.
\]
Let $\mu$ be a character of $k^\times$ of order $\ell$.  Then
\begin{equation}\label{eq:finite-tame}
 \tau_\kappa(\chi_\kappa)
 =\chi_0(\ell)\tau_k(\chi_0)
       \prod_{j=1}^{\ell-1}\tau_k(\mu^j).
\end{equation}
\end{proposition}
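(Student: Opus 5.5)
We will derive \eqref{eq:finite-tame} from the Hasse--Davenport product formula and the Hasse--Davenport lifting relation. These are classical identities for the Gauss sums $\tau_k$ of \eqref{eq:finite-field-tau}. Since $\ell\mid q-1$ and $[\kappa:k]=\ell$, the norm $\N_{\kappa/k}:\kappa^\times\to k^\times$ is surjective. Every character of $k^\times$ then pulls back to $\kappa^\times$, and the characters of $\kappa^\times$ whose $\ell$th power is $\chi_0\circ\N_{\kappa/k}$ form a coset of the $\ell$-torsion characters.

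\textbf{Step 1: the structure of $\chi_\kappa$.} The condition $\chi_0|_{\mu_\ell}\ne1$ means that $\chi_0$ is not an $\ell$th power in the character group of $k^\times$. We claim that $\chi_\kappa$ is then not of the form $\eta\circ\N_{\kappa/k}$. Indeed, if it were, then $\chi_0\circ\N=\eta^\ell\circ\N$, and surjectivity of the norm would give $\chi_0=\eta^\ell$, a contradiction. Thus $\chi_\kappa$ is a Frobenius-nonfixed character, and its Frobenius conjugates are $\chi_\kappa^{q^i}=\chi_\kappa\cdot(\text{norm-trivial twists})$, parallel to Lemma~\ref{lem:conjugacy}.

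\textbf{Step 2: Hasse--Davenport product formula over $\kappa$.} Choose a character $\rho$ of $k^\times$ with $\rho^\ell=\chi_0$ on the image... Since $\chi_0$ is not an $\ell$th power, this fails, so we instead work with $\chi_\kappa$ directly. Apply the product formula on $\kappa$ to $\chi_\kappa$ and the order-$\ell$ character $\mu\circ\N_{\kappa/k}$. This gives
\[
 \prod_{j=0}^{\ell-1}\tau_\kappa\bigl(\chi_\kappa(\mu\circ\N)^j\bigr)
 =\chi_\kappa^{\ell}(\ell)^{\pm1}\,\tau_\kappa(\chi_\kappa^\ell)\prod_{j=1}^{\ell-1}\tau_\kappa(\mu^j\circ\N).
\]
The sign of the exponent depends on the convention for $\tau$, and that must be checked against \eqref{eq:finite-field-tau}. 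The characters $\chi_\kappa(\mu\circ\N)^j$ are exactly the Frobenius conjugates of $\chi_\kappa$, because $\mu\circ\N$ generates the group of characters with $\ell$th power trivial and norm-trivial image. Since Gauss sums are Frobenius-invariant, the left side is $\tau_\kappa(\chi_\kappa)^\ell$. On the right side, $\chi_\kappa^\ell=\chi_0\circ\N$ and $\chi_\kappa^\ell(\ell)=\chi_0(\ell^\ell)=\chi_0(\ell)^\ell$. Hasse--Davenport lifting gives $\tau_\kappa(\eta\circ\N)=\tau_k(\eta)^\ell$. Hence
\[
 \tau_\kappa(\chi_\kappa)^\ell=\Bigl(\chi_0(\ell)\tau_k(\chi_0)\prod_{j=1}^{\ell-1}\tau_k(\mu^j)\Bigr)^\ell,
\]
which is the finite analogue of Proposition~\ref{prop:power}.

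\textbf{Step 3: removing the $\ell$th root of unity.} This is the main obstacle, and the table in Section~\ref{subsec:case-comparison} indicates the method: a leading-term congruence in which $\ell\ne p$ distinguishes the $\ell$th roots. Work in $\Z[\zeta_{p},\zeta_{q^\ell-1}]$ modulo a prime $\mathfrak P$ above $p$, and use Stickelberger's congruence for the leading term of each Gauss sum. Take $\chi_\kappa=\omega_\kappa^{-a}$, where $\omega_\kappa$ is the Teichmüller character. Then $\tau_\kappa(\chi_\kappa)$ is congruent, modulo a higher power of $\mathfrak P$, to a unit multiple $1/\prod_i a_i!$ times $(\zeta_p-1)^{s(a)}$, where $s(a)$ is the digit sum. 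Do the same on the right side, with $\chi_0=\omega_k^{-c}$ and $\mu^j$.

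Two things must then be checked. First, the valuations match; this is guaranteed by Step 2. Second, the leading unit coefficients, which are products of factorials together with the factor $\chi_0(\ell)$, agree modulo $\mathfrak P$; this comes down to a Gauss-multiplication identity for factorials, i.e. the mod-$p$ shadow of the product formula. Since the two sides differ by an $\ell$th root of unity $\zeta$, and $\zeta\equiv1\pmod{\mathfrak P}$ forces $\zeta=1$ when $\ell\ne p$, equality follows.

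I expect the factorial bookkeeping to be the hardest part. As a fallback I would try Gross--Koblitz: express both sides as products of $p$-adic Gamma values and match them via the Gamma multiplication formula, which yields the identity outright with no root-of-unity ambiguity.
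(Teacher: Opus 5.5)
Your proposal is correct in outline and follows the paper's proof. The Hasse--Davenport product and lifting formulas give $L^\ell=1$ for the quotient $L$ of the two sides, the Stickelberger leading-term congruence gives $L\equiv 1$ modulo a prime above $p$, and $\ell\ne p$ then forces $L=1$.

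The factorial bookkeeping you defer is the paper's base-$q$ digit calculation. The paper sets it up by replacing $\chi_\kappa$ within its Frobenius orbit by $\omega_\kappa^{aS/\ell}$, where $\chi_0=\omega_k^a$ and $S=(q^\ell-1)/(q-1)$, and by working first with the canonical additive character. A general $\psi_k$ is recovered afterwards, since rescaling by $c\in k^\times$ multiplies both sides by the same value. The factorial congruence itself must come from an independent identity, such as the product formula over $k$ or Gross--Koblitz as in your fallback. The leading terms of your Step~2 formula only recover its $\ell$th power, so using them would be circular.
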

\begin{proof}
Fix a primitive $p$th root of unity $\zeta_p$ and embed all character
values into $\Q_p(\mu_{q^\ell-1},\zeta_p)$, identifying its residue
field with $\kappa$. Let $\omega_\kappa(x)$ be the unique
$(q^\ell-1)$st root of unity reducing to $x\in\kappa^\times$, and put
$\omega_k=\omega_\kappa|_{k^\times}$. Write $u\equiv_\times v$ when
$u/v$ is a unit with residue $1$ in this field.
First take $\psi_k(x)=\zeta_p^{\Tr_{k/\F_p}(x)}$ and its trace
pullback to $\kappa$. Put
\[
 S=\frac{q^\ell-1}{q-1},
\]
and write $\chi_0=\omega_k^a$, where $1\le a\le q-2$ and
$\ell\nmid a$.  The exponent of $\chi_\kappa$ is $aS/\ell$ modulo
$(q^\ell-1)/\ell$; the possible choices are Frobenius conjugate and have
the same Gauss sum.  Thus we may take the exponent to be exactly
$a_*=aS/\ell$.

Put
\[
 L=\frac{\tau_\kappa(\chi_\kappa)}
 {\chi_0(\ell)\tau_k(\chi_0)
       \prod_{j=1}^{\ell-1}\tau_k(\mu^j)}.
\]
The twists of $\chi_\kappa$ by the norm lifts of $\mu^j$ are its
Frobenius conjugates.  The Hasse--Davenport product and lifting formulas
therefore give
\[
 L^\ell=1.
\]
For the canonical additive character on $\mathbf F_{p^f}$ and the
Teichmuller character $\omega$ of $\mathbf F_{p^f}^\times$, the leading
Gauss congruence, for $0<a<p^f-1$, is
\[
 \tau_{\mathbf F_{p^f}}(\omega^a)
 \equiv_\times
 \frac{\varpi^{\sum a_i}}{\prod a_i!},
 \qquad a=\sum a_i p^i,
\]
where $\varpi=\zeta_p-1$.  Applying this to the numerator and denominator
of $L$, the base-$q$ digit calculation and the factorial congruence give
\[
 L\equiv_\times1.
\]
Since $L^\ell=1$ and $\ell\ne p$, reduction is injective on
$\mu_\ell$, so $L=1$.  Changing the canonical additive character by
multiplication by $c\in k^\times$ multiplies the two sides of
\eqref{eq:finite-tame} by the same character value.  Hence
\eqref{eq:finite-tame} holds for every nontrivial additive character.
\end{proof}
The identity is \cite[Theorem~5.1]{SML}.
The equality $L^\ell=1$ is equation~(24), while the leading-term and
factorial calculations are equations~(21) and~(27) of \cite{SML}.
The leading Gauss congruence is
\cite[Theorem~\SMLnum{A:leading-gauss}]{SML}.

Proposition~\ref{prop:finite-tame} is formalized by
\lean{Tame/FiniteComparison.lean}{1597}{Tame.finiteComparison}.
The Hasse--Davenport identities are imported from the First Main Lemma
development, and the leading Gauss congruence is formalized in
\code{Gauss/LeadingTerm.lean}.

\subsection{Characters of conductor one}
Put
\[
 k=k_F=k_E,\qquad \kappa=k_U=k_K,
\]
and suppose $m_U(\theta_U)=1$.  By compatibility,
\begin{equation*}
 \theta_U\circ\N_{K/U}=\theta_E\circ\N_{K/E}=\Theta;
\end{equation*}
this is \eqref{eq:compatible}.  By \eqref{eq:conjugate-twists}, primitivity
implies that every twist of $\theta_U$ by $S(K/U)$ is a Galois conjugate
of $\theta_U$, hence has conductor $1$.  Therefore the critical conductor
$1=t+1$ does not drop under norm pullback from $U$ to $K$ by
\cite[Proposition~3.10]{FML}, so
\[
 m_K(\Theta)=1.
\]
Since $K/E$ is unramified, $ m_E(\theta_E)=1$.

Let $\chi_\kappa$ and $\chi_0$ be the residue characters of $\theta_U$ and
$\theta_E$.  For $u\in\OO_K^\times$, reduction of the two norm maps gives
\[
 \overline{N_{K/U}u}=\bar u^\ell,\qquad
 \overline{N_{K/E}u}=N_{\kappa/k}(\bar u).
\]
Thus compatibility gives
\[
 \chi_\kappa^\ell=\chi_0\circ N_{\kappa/k}.
\]
By Lemma~\ref{lem:conjugacy}, primitivity makes the Frobenius conjugate
quotient of $\theta_U$ nontrivial of order $\ell$.  Writing the residue
characters in Teichmuller exponents as in the proof of
Proposition~\ref{prop:finite-tame}, this is equivalent to
\[
 \chi_0|_{\mu_\ell}\ne1.
\]
Thus Proposition~\ref{prop:finite-tame} applies.

Let $n=n_F(\psi_F)$, choose a uniformizer $\pi_F$, and put
\[
 \Gamma=\pi_F^{n+1}.
\]
Then $\Gamma$ is admissible over both $U$ and $E$.  If $\psi_k$ is the
residual additive character defined by $\psi_F(x/\Gamma)$, the residual
additive characters over $U$ and $E$ are
\[
 \psi_k\circ\Tr_{\kappa/k},\qquad \psi_k(\ell\,\cdot),
\]
respectively.

\begin{proposition}[Conductor-one comparison]\label{prop:tame-one}
If $\ell\ne p$ and $m_U(\theta_U)=1$, then
\[
 \mathcalA_U(\theta_U)=\mathcalA_E(\theta_E).
\]
\end{proposition}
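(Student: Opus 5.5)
Both sides of the required identity are explicit. Using $\Gamma=\pi_F^{n+1}$, which is admissible over $U$ and over $E$, we write each local constant as $\theta(\Gamma)$ times the phase of a finite Gauss sum. We also express the norm-character products $\Lambda_U$ and $\Lambda_E$ through conductor-one Gauss sums over $k$. After that, \eqref{eq:finite-tame} should make the two sides agree.

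\textbf{Step 1: the local constants.} The sum in \eqref{eq:delta} over $U^0/U^1$ is a residue Gauss sum. Over $U$ it equals $-\tau_\kappa(\chi_\kappa)$, taken with the additive character $\psi_k\circ\Tr_{\kappa/k}$. Over $E$ it equals $-\tau_k(\chi_0)$, taken with $\psi_k(\ell\,\cdot)$. Changing the additive character by $\ell$ multiplies that Gauss sum by $\chi_0(\ell)$, since $\ell\in k^\times$. Therefore
\[
 \Delta_U(\theta_U,\psi_U)=\theta_U(\Gamma)\ph(-\tau_\kappa(\chi_\kappa)),\qquad
 \Delta_E(\theta_E,\psi_E)=\theta_E(\Gamma)\chi_0(\ell)\ph(-\tau_k(\chi_0)),
\]
where $\tau_k$ now uses $\psi_k$.

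\textbf{Step 2: the norm-character products.} The nontrivial characters in $S(U/F)$ are unramified. For such $\nu$ we have $\Delta_F(\nu,\psi_F)=\nu(\pi_F)^n$, and the product of $\nu(\pi_F)^n$ over the cyclic group $S(U/F)$ is a root of unity. That product is $1$ for odd $\ell$; for $\ell=2$ it is $(-1)^n$. The nontrivial characters in $S(E/F)$ have conductor one, by Proposition~\ref{prop:norm-conductors} with $t=0$. We may choose $\pi_F=\N_{E/F}(\pi_E)$, which gives $\nu(\pi_F)=1$ for $\nu\in S(E/F)$. Their residue characters are the $\mu^j$, since they are trivial on $\N(\OO_E^\times)$, whose reduction is $(k^\times)^\ell$. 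Hence $\Lambda_E=\prod_{j=1}^{\ell-1}\ph(-\tau_k(\mu^j))$ with admissible element $\pi_F^{n+1}$.

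\textbf{Step 3: the character values at $\Gamma$.} By Lemma~\ref{lem:restrictions}, $\theta_U|_{F^\times}\epsilon_U=\theta_E|_{F^\times}\epsilon_E$. Evaluating at $\Gamma=\pi_F^{n+1}$ gives $\theta_U(\Gamma)\epsilon_U(\pi_F)^{n+1}=\theta_E(\Gamma)$, because $\epsilon_E(\pi_F)=1$. For $\ell=2$ we have $\epsilon_U(\pi_F)=-1$, so this sign $(-1)^{n+1}$ has to be matched against $\Lambda_U=(-1)^n$ and the sign conventions $-\tau$.

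\textbf{Step 4: assembly and the expected obstacle.} Multiply \eqref{eq:finite-tame} out and take phases. It then remains to check that the minus signs agree. There is one sign on the left and $\ell$ signs on the right, giving $(-1)^{\ell-1}$. This is trivial for odd $\ell$. For $\ell=2$ it must cancel with the factor $(-1)^{n}\cdot(-1)^{n+1}=-1$ from Steps 2 and 3, which it does. The main obstacle is this sign bookkeeping in the case $\ell=2$. A second point needs care: the Step 2 identification of $\Lambda_E$ requires the Gauss sums with the additive character $\psi_k$ itself and not $\psi_k(\ell\,\cdot)$. We will settle this by matching the residual character of $\psi_F(x/\Gamma)$ over $F$ directly; this is where the normalization of $\Gamma$ is used.
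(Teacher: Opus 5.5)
Your proposal is correct and follows the paper's proof. It uses the same four conductor-one formulas for $\Delta_U,\Delta_E,\Lambda_U,\Lambda_E$, the restriction identity of Lemma~\ref{lem:restrictions}, the phase of \eqref{eq:finite-tame}, and the same sign count; the only difference is cosmetic: you choose $\pi_F=\N_{E/F}(\pi_E)$ so that $\epsilon_E(\Gamma)=1$, while the paper keeps an arbitrary uniformizer and lets the factor $\epsilon_E(\Gamma)$ in $\Lambda_E$ cancel against the one in the restriction identity.
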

\begin{proof}
Choose a character $\mu$ of $k^\times$ of order $\ell$.  The conductor-one
formulas are
\begin{align}
 \Delta_U(\theta_U,\psi_U)
   &=-\theta_U(\Gamma)\ph\tau_\kappa(\chi_\kappa),\label{eq:tame-one-U}\\
 \Delta_E(\theta_E,\psi_E)
   &=-\theta_E(\Gamma)\chi_0(\ell)\ph\tau_k(\chi_0).\notag
\end{align}
For the norm-character products,
\begin{align}
 \Lambda_U&=(-1)^{(\ell-1)n},\notag\\
 \Lambda_E&=(-1)^{\ell-1}\epsilon_E(\Gamma)
       \prod_{j=1}^{\ell-1}\ph\tau_k(\mu^j).
       \label{eq:tame-one-LambdaE}
\end{align}
Moreover Lemma~\ref{lem:restrictions} gives
\begin{equation}\label{eq:tame-one-restriction}
 \frac{\theta_U(\Gamma)}{\theta_E(\Gamma)}
 =\frac{\epsilon_E(\Gamma)}{\epsilon_U(\Gamma)},
 \qquad
 \epsilon_U(\Gamma)=(-1)^{(\ell-1)(n+1)}.
\end{equation}
Substitution of \eqref{eq:tame-one-U}--\eqref{eq:tame-one-restriction} into
\eqref{eq:lambda-and-A} reduces the quotient
$\mathcalA_U(\theta_U)/\mathcalA_E(\theta_E)$ to
\[
 \ph\!\left(
 \frac{\tau_\kappa(\chi_\kappa)}
 {\chi_0(\ell)\tau_k(\chi_0)
       \prod_{j=1}^{\ell-1}\tau_k(\mu^j)}\right)=1,
\]
by \eqref{eq:finite-tame}.
\end{proof}
The formulas \eqref{eq:tame-one-U}--\eqref{eq:tame-one-LambdaE} and their
substitution are the conductor-one calculation in \cite[Section~5]{SML}.
Proposition~\ref{prop:tame-one} is formalized by
\lean{Tame/Comparison.lean}{763}{Tame.comparison_of_conductor_one}.

\subsection{Characters of higher conductor}
\begin{lemma}[Reduction to conductor one]\label{lem:tame-decomposition}
Suppose $m=m_U(\theta_U)>1$.  There are characters $\chi_U,\chi_E$ and a
character $\lambda$ of $F^\times$ such that
\begin{equation}\label{eq:tame-decomposition}
 \theta_U=\chi_U(\lambda\circ\N_{U/F}),\qquad
 \theta_E=\chi_E(\lambda\circ\N_{E/F}),
\end{equation}
and
\begin{equation}\label{eq:tame-decomposition-conductors}
 m_U(\chi_U)=m_E(\chi_E)=1,\qquad m_F(\lambda)=m.
\end{equation}
The pair $(\chi_U,\chi_E)$ is primitive and compatible.
\end{lemma}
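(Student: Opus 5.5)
The plan is to strip off the principal-unit part of $\theta_U$ with a character $\lambda$ of $F^\times$, using that $U/F$ is unramified and that $\ell\ne p$. Write $\sigma$ for a generator of $\Gal(U/F)$. By Lemma~\ref{lem:conjugacy}, $\mu_U=\theta_U^\sigma/\theta_U$ generates $S(K/U)$ and has order $\ell$. Since $U_U^1$ is a pro-$p$ group and $\ell\ne p$, $\mu_U$ is trivial on $U_U^1$. Hence $\theta_U|_{U_U^1}$ is $\sigma$-invariant.

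\textbf{Step 1: define $\lambda$ on $U_F^1$.} Set
\[
 \lambda_1(\N_{U/F}x)=\theta_U(x)\qquad(x\in U_U^1).
\]
Because $U/F$ is unramified, $\N_{U/F}(U_U^1)=U_F^1$ (the unramified norm identity recalled after Lemma~\ref{lem:trace-ideal}), so $\lambda_1$ is defined on all of $U_F^1$. For well-definedness, let $z\in U_U^1$ satisfy $\N_{U/F}z=1$. By Hilbert~90 (Lemma~\ref{lem:descent}), $z=\sigma y/y$ for some $y\in U^\times$. Since $v_U(\sigma y)=v_U(y)$ and $U/F$ is unramified, we may scale $y$ by a power of $\pi_F$ and assume $y\in\OO_U^\times$. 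The condition $\sigma y/y\equiv1$ forces the residue of $y$ to lie in $k^\times$. So $y=cu$, with $c$ the Teichm\"uller lift in $\OO_F^\times$ and $u\in U_U^1$, and therefore $z=\sigma u/u$. Then
\[
 \theta_U(z)=\mu_U(u)^{\pm1}=1 .
\]
Continuity of $\lambda_1$ holds because the norm is open on these unit groups. Extend $\lambda_1$ to a quasi-character $\lambda$ of $F^\times$, as in the extension step of Lemma~\ref{lem:descent}; $U_F^1$ is open in $F^\times$.

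\textbf{Step 2: define the models and check compatibility and primitivity.} Put
\[
 \chi_U=\theta_U(\lambda\circ\N_{U/F})^{-1},\qquad
 \chi_E=\theta_E(\lambda\circ\N_{E/F})^{-1}.
\]
Then \eqref{eq:tame-decomposition} holds by construction. Since $\N_{K/F}=\N_{U/F}\circ\N_{K/U}=\N_{E/F}\circ\N_{K/E}$, both norm pullbacks equal $\Theta(\lambda\circ\N_{K/F})^{-1}$. So $(\chi_U,\chi_E)$ is compatible, and Lemma~\ref{lem:descent} extends it to a full family if needed. If this common pullback were $\lambda'\circ\N_{K/F}$, then $\Theta=(\lambda\lambda')\circ\N_{K/F}$, contradicting primitivity. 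Hence the pair is primitive.

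\textbf{Step 3: conductors.} By construction $\chi_U$ is trivial on $U_U^1$, so $m_U(\chi_U)\le1$. Suppose $\chi_U$ were unramified. Then $\chi_U^\sigma=\chi_U$, since $\sigma$ fixes the uniformizer $\pi_F$ and $\chi_U$ is trivial on units. But Lemma~\ref{lem:conjugacy} applied to the primitive pair says $\chi_U^\sigma/\chi_U\ne1$, a contradiction. Hence $m_U(\chi_U)=1$.

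The conductor-one discussion preceding Proposition~\ref{prop:tame-one} then applies to the primitive compatible pair $(\chi_U,\chi_E)$. It gives $m_K(\chi_U\circ\N_{K/U})=1$, and since $K/E$ is unramified, $m_E(\chi_E)=1$.

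Finally,
\[
 \lambda\circ\N_{U/F}=\theta_U\chi_U^{-1}
\]
has conductor exactly $m$, because $m>1=m_U(\chi_U)$. The unramified conductor identity $m_U(\lambda\circ\N_{U/F})=m_F(\lambda)$ then gives $m_F(\lambda)=m$.

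The main obstacle is the well-definedness argument in Step~1. Norm-one principal units are not a priori quotients $\sigma u/u$ of principal units. This is where the Teichm\"uller reduction is needed, and the vanishing of $\mu_U$ on $U_U^1$ is exactly where $\ell\ne p$ enters.
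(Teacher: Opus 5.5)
Your proposal is correct and follows the paper's argument. You show that $\theta_U$ is $\sigma$-invariant on $U_U^1$, then descend $\theta_U|_{U_U^1}$ to $\lambda$ using Hilbert~90 and $\N_{U/F}(U_U^1)=U_F^1$, and then derive compatibility, primitivity and the conductors for the quotient characters, with Lemma~\ref{lem:conjugacy} excluding conductor zero.

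The differences are minor:
\begin{itemize}
\item You obtain $\mu_U|_{U_U^1}=1$ from the pro-$p$ property, where the paper uses the break-$0$ conductor formula.
\item You spell out the Teichm\"uller step that the paper compresses.
\item For $m_E(\chi_E)=1$ you go through the conductor-one no-drop discussion. The paper instead uses $\N_{K/E}(U_K^1)=U_E^1$ directly to see that $\chi_E|_{U_E^1}=1$.
\end{itemize}
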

\begin{proof}
By Lemma~\ref{lem:conjugacy},
\[
 \theta_U^\sigma/\theta_U\in S(K/U),\qquad
 \theta_U^\sigma/\theta_U\ne1.
\]
Since $K/U$ is  tamely ramified, its break is $0$, and therefore
\[
 m_U(\theta_U^\sigma/\theta_U)=1.
\]
This conductor statement is Proposition~\ref{prop:norm-conductors}. Hence
$\theta_U|_{U_U^1}$ is invariant.  Hilbert~90 and surjectivity of the
unramified unit norm give a character $\lambda$ of $F^\times$ whose pullback
agrees with $\theta_U$ on $U_U^1$.  Put
\[
 \chi_U=\theta_U/(\lambda\circ\N_{U/F}),\qquad
 \chi_E=\theta_E/(\lambda\circ\N_{E/F}).
\]
Then $\chi_U$ is trivial on $U_U^1$.  From \eqref{eq:compatible} and
$ \N_{U/F}\circ\N_{K/U}=\N_{K/F}=\N_{E/F}\circ\N_{K/E}$
one has
\[
 \chi_U\circ\N_{K/U}=\chi_E\circ\N_{K/E}.
\]
Since $K/E$ is unramified, $\N_{K/E}:U_K^1\to U_E^1$ is surjective;
hence $\chi_E$ is trivial on $U_E^1$.  The common pullback of
$(\chi_U,\chi_E)$ is
\[
 \Theta\,(\lambda\circ\N_{K/F})^{-1},
\]
so the pair is primitive by Definition~\ref{def:compatible}.  Lemma~\ref{lem:conjugacy}
then excludes conductor zero for both characters.  Hence
\[
 m_U(\chi_U)=m_E(\chi_E)=1.
\]
Since $m>1$ and $\chi_U|_{U_U^1}=1$, \eqref{eq:tame-decomposition} gives
\[
 m_U(\lambda\circ\N_{U/F})=m_U(\theta_U)=m.
\]
As $U/F$ is unramified,
$m_U(\lambda\circ\N_{U/F})=m_F(\lambda)$.
\end{proof}
The decomposition \eqref{eq:tame-decomposition} is formalized by
\lean{Tame/ConductorReduction.lean}{333}{Tame.tame_commonTwist}.

\begin{lemma}[Stationary coefficients under norm pullback]\label{lem:tame-covector}
Let $L/F$ be cyclic of prime degree $\ell$, let $m>1$, and let $\lambda$ have
conductor $m$.  Suppose $g\in F^\times$ satisfies
\begin{equation}\label{eq:tame-base-stationary}
 \lambda(1+z)=\psi_F(gz)
 \qquad(z\in\pp_F^{\ceil{m/2}}).
\end{equation}
If $L/F$ is unramified, then
\begin{equation}\label{eq:tame-unramified-covector}
 (\lambda\circ\N_{L/F})(1+x)=\psi_L(gx)
 \qquad(x\in\pp_L^{\ceil{m/2}}).
\end{equation}
If $L/F$ is totally ramified, then the same identity holds for
\begin{equation}\label{eq:tame-ramified-covector}
 x\in\pp_L^s,\qquad
 s=\ceil{(1+\ell(m-1))/2}.
\end{equation}
\end{lemma}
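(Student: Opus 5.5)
The plan is to expand $\N_{L/F}(1+x)$ by \eqref{eq:norm-expansion} and compare the two sides term by term. The linear term $\Tr_{L/F}(x)$ produces $\psi_L(gx)$. The higher symmetric terms are killed by the depth condition \eqref{eq:tame-base-stationary} together with the conductor bound $m_F(\lambda)=m$. Write
\[
 \N_{L/F}(1+x)=1+y,\qquad y=\Tr_{L/F}(x)+E_2(x)+\cdots+\N_{L/F}(x).
\]
Since $y\in\pp_F$ once $x\in\pp_L^1$, we can factor
\[
 1+y=(1+\Tr_{L/F}x)\bigl(1+w\bigr),\qquad w=\frac{y-\Tr_{L/F}x}{1+\Tr_{L/F}x}.
\]

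The first step is to show $1+\Tr_{L/F}x\in U_F^{\ceil{m/2}}$ and $w\in\pp_F^m$. This gives $\lambda(1+w)=1$. It also gives
\[
 \lambda(1+\Tr_{L/F}x)=\psi_F(g\Tr_{L/F}x)=\psi_L(gx)
\]
by \eqref{eq:tame-base-stationary} and \eqref{eq:additive-pullback}, using $g\in F$ so that $g\Tr_{L/F}(x)=\Tr_{L/F}(gx)$. The factorization is needed because $\lambda(1+a+b)$ is not $\lambda(1+a)\lambda(1+b)$ exactly. It differs by $\lambda(1+ab/(1+a+b))$, and the clean way is to pull out a factor lying in $U_F^m$, where $\lambda$ is trivial.

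In the unramified case, $\Tr_{L/F}(\pp_L^r)=\pp_F^r$ by Lemma~\ref{lem:trace-ideal} with $D=0$ and $e=1$. Each $E_j(x)$ with $j\ge2$ is a sum of products of $j$ conjugates, so it lies in $\pp_L^{jr}\cap F=\pp_F^{jr}$. With $r=\ceil{m/2}$ we get $2r\ge m$, so $w\in\pp_F^m$. Alternatively one can quote $\N_{L/F}(U_L^r)=U_F^r$ and the unit-group structure, but the direct estimate is simpler.

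In the totally ramified case, I would use that the break is $t=0$ when $\ell\ne p$. This is the tame setting of the section; the lemma is stated for cyclic prime $L/F$, and I would add or use tameness. Then $D_{L/F}=\ell-1$, and a bound like \eqref{eq:symmetric-bound} gives
\[
 v_F(E_j(x))\ge\floor{\frac{jv_L(x)+\ell-1}{\ell}}
\]
for $1\le j<\ell$. It also gives $v_F(\N x)=v_L(x)$. For $j=1$ this yields $v_F(\Tr x)\ge\floor{(s+\ell-1)/\ell}$, which is at least $\ceil{m/2}$ given $s=\ceil{(1+\ell(m-1))/2}$. For $j\ge2$ we get $v_F(E_j)\ge\floor{(2s+\ell-1)/\ell}$. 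Since $2s\ge 1+\ell(m-1)$, this is at least $\floor{(\ell m)/\ell}=m$. The norm term satisfies $v_F(\N x)=v_L(x)\ge s\ge m$, since $s\ge m$ when $\ell\ge2$ and $m>1$. Hence $w\in\pp_F^m$.

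\textbf{Main obstacle.} The hard step is justifying the symmetric-function bound for $E_j$ in the tame totally ramified case. Inequality \eqref{eq:symmetric-bound} is stated for wild extensions. I would prove the tame analogue directly: $E_j(x)\in\pp_L^{jv_L(x)}\cap F$, and this gives $v_F(E_j)\ge\ceil{jv_L(x)/\ell}$. If that weaker bound proves insufficient, for instance when $\ell m$ is close to $2s$, I would instead bound the $\Tr$ term by Lemma~\ref{lem:trace-ideal}. The $j\ge2$ terms only need $\ceil{2s/\ell}\ge m$, which follows from $2s\ge\ell(m-1)+1$ together with $\ell\ge2$ and integrality. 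I would check this inequality carefully, including the edge case $\ell=2$.
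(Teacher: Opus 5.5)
Your proposal is correct and is essentially the paper's proof. Both expand $\N_{L/F}(1+x)$, use $E_j(x)\in\pp_L^{js}\cap F$ to get $v_F(E_j(x))\ge\ceil{js/\ell}\ge m$ for $j\ge2$, bound $\Tr_{L/F}x$ via the trace-ideal formula, and then apply the stationary formula over $F$; your factorization through $1+\Tr_{L/F}x$ just spells out the paper's congruence $\N_{L/F}(1+x)\equiv1+\Tr_{L/F}x\pmod{\pp_F^m}$. The tameness hypothesis you propose to add is unnecessary, because the trivial bound in your ``Main obstacle'' paragraph and $\Tr_{L/F}(\pp_L^s)\subseteq\pp_F^{\ceil{s/\ell}}$ hold for every totally ramified extension of degree $\ell$, and this is exactly how the paper argues.
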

\begin{proof}
In the totally ramified case, let $x\in\pp_L^s$.  By the definition of
$E_j$ preceding \eqref{eq:norm-expansion}, each summand of $E_j(x)$ is a
product of $j$ conjugates of $x$, so
\[
 v_L(E_j(x))\ge js.
\]
Since $E_j(x)\in F$ and $e(L/F)=\ell$,
\[
 v_F(E_j(x))\ge\ceil{js/\ell}.
\]
Moreover $2s\ge1+\ell(m-1)$, hence
\[
 \ceil{js/\ell}\ge m\qquad(2\le j\le\ell).
\]
Equation~\eqref{eq:trace-ideal} gives
\[
 v_F(\Tr_{L/F}x)\ge\ceil{s/\ell}\ge\ceil{m/2}.
\]
Thus \eqref{eq:norm-expansion} gives
\[
 \N_{L/F}(1+x)\equiv1+\Tr_{L/F}x\pmod{\pp_F^m},
\]
and \eqref{eq:tame-ramified-covector} follows from
\eqref{eq:tame-base-stationary}.  In the unramified case put
$r=\ceil{m/2}$.  Then
\[
 v_F(E_j(x))\ge jr\ge m\qquad(j\ge2),
 \qquad \Tr_{L/F}x\in\pp_F^r,
\]
by \eqref{eq:trace-ideal}, and \eqref{eq:norm-expansion} gives
\eqref{eq:tame-unramified-covector} in the same way.
\end{proof}
This norm-pullback calculation is \cite[Lemma~5.2]{SML}.
It is formalized by
\lean{Tame/Covector.lean}{257}{Tame.covector}.

\begin{proposition}[Stable comparison in the tame case]\label{prop:tame-high}
If $\ell\ne p$ and $m_U(\theta_U)>1$, then
\[
 \mathcalA_U(\theta_U)=\mathcalA_E(\theta_E).
\]
\end{proposition}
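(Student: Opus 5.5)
Write $m=m_U(\theta_U)>1$ and use Lemma~\ref{lem:tame-decomposition} to factor $\theta_U=\chi_U(\lambda\circ\N_{U/F})$ and $\theta_E=\chi_E(\lambda\circ\N_{E/F})$. Here $(\chi_U,\chi_E)$ is a primitive compatible pair of conductor one, and $m_F(\lambda)=m$. The strategy is to show that $\mathcalA_U(\theta_U)$ and $\mathcalA_E(\theta_E)$ equal $\mathcalA_U(\chi_U)$ and $\mathcalA_E(\chi_E)$ multiplied by one and the same explicit factor. Proposition~\ref{prop:tame-one} then gives $\mathcalA_U(\chi_U)=\mathcalA_E(\chi_E)$ and finishes the proof. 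The norm-character products $\Lambda_U,\Lambda_E$ do not depend on the character, so only the two $\Delta$ factors need comparing.

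\textbf{Step 1: the pulled-back characters have stable conductors.} Since $U/F$ is unramified, $m_U(\lambda\circ\N_{U/F})=m$. Since $E/F$ is totally and tamely ramified ($t=0$, $D_{E/F}=\ell-1$), Proposition~\ref{prop:norm-conductors} gives $m_E(\lambda\circ\N_{E/F})=\ell m-\ell+1=\ell(m-1)+1$, because $m>1=t+1$. Both conductors exceed $1$, and $\chi_U,\chi_E$ have conductor $1\le\lfloor m'/2\rfloor$ for either value $m'$. So Lemma~\ref{lem:stable-twist} applies with $\chi=\lambda\circ\N_{L/F}$ and $\eta=\chi_L$. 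It gives
\[
\Delta_L(\theta_L,\psi_L)=\chi_L(\Gamma_L/b_L)\,\Delta_L(\lambda\circ\N_{L/F},\psi_L),\qquad L\in\{U,E\}.
\]

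\textbf{Step 2: choose common stationary data.} Choose $g\in F$ satisfying \eqref{eq:tame-base-stationary} for $\lambda$; this is possible by Theorem~\ref{thm:lamprecht}, with $v_F(g)=-m-n_F(\psi_F)$. By Lemma~\ref{lem:tame-covector} the same $g$ is a stationary coefficient for $\lambda\circ\N_{L/F}$ in both cases. The required depths match: $\lceil m/2\rceil$ in the unramified case, and $s=\lceil(1+\ell(m-1))/2\rceil$, which is exactly $d+\varepsilon$ for conductor $\ell(m-1)+1$. The valuation is also correct, since \eqref{eq:trace-ideal} gives $v_E(g)=-\ell m-\ell n_F=-(\ell(m-1)+1)-n_E(\psi_E)$. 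Taking any admissible $\Gamma_L$, we have $\Gamma_L/b_L=g^{-1}\in F^\times$.

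\textbf{Step 3: reduce both sides to a common factor.} The twist factors become $\chi_U(g^{-1})$ and $\chi_E(g^{-1})$. The base-field constant $\Delta_L(\lambda\circ\N_{L/F},\psi_L)$ is rewritten by Theorem~\ref{thm:fml} as
\[
\Lambda_L^{-1}\prod_{\nu\in S(L/F)}\Delta_F(\lambda\nu,\psi_F).
\]
Each $\nu$ has conductor $\le1\le\lfloor m/2\rfloor$, so Lemma~\ref{lem:stable-twist} over $F$ gives $\Delta_F(\lambda\nu,\psi_F)=\nu(g^{-1})\Delta_F(\lambda,\psi_F)$. The product is therefore $\epsilon_L(g^{-1})\Delta_F(\lambda,\psi_F)^\ell$. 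This yields
\[
\mathcalA_L(\theta_L)=\chi_L(g^{-1})\epsilon_L(g^{-1})\,\Delta_F(\lambda,\psi_F)^\ell\cdot\frac{\Delta_L(\chi_L,\psi_L)}{1}.
\]
The bookkeeping is that $\Lambda_L$ cancels here and reappears when we rewrite the result in terms of $\mathcalA_L(\chi_L)$. Concretely, $\mathcalA_L(\theta_L)=\chi_L(g^{-1})\epsilon_L(g^{-1})\Delta_F(\lambda,\psi_F)^\ell\,\mathcalA_L(\chi_L)\Lambda_L^{-1}$, and I will recheck this exactly. If $\Lambda_L$ genuinely survives, I will instead apply Theorem~\ref{thm:fml} to $\chi_L$ directly; that is the point I expect to need care.

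\textbf{Step 4: the factor is independent of $L$, and where the work lies.} By Lemma~\ref{lem:restrictions} applied to the primitive compatible pair $(\chi_U,\chi_E)$, the value $\chi_L(g^{-1})\epsilon_L(g^{-1})$ does not depend on $L$. Proposition~\ref{prop:tame-one} then finishes the argument. The main obstacle is verifying Step 2 precisely: the depth and valuation conventions of Lemma~\ref{lem:tame-covector} must match Definition~\ref{def:stationary} for the conductor $\ell(m-1)+1$ from Step 1. I also need to confirm that the $\Lambda_L$ factors balance in Step 3.
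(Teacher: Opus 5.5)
Your Steps 1 and 2 are correct and match the paper's proof. The same admissible $\Gamma\in F^\times$ works over $U$ and $E$, and by Lemma~\ref{lem:tame-covector} the element $g=b/\Gamma$ is a stationary coefficient for both norm pullbacks of $\lambda$.

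The gap is in Step 3. Lemma~\ref{lem:stable-twist} replaces the twisting character $\chi_L$ by the scalar $\chi_L(\Gamma_L/b_L)=\chi_L(g^{-1})$. It does not leave behind a local constant $\Delta_L(\chi_L,\psi_L)$; local constants are not multiplicative in the character. Combining Step 1 with Theorem~\ref{thm:fml} and the stable twist over $F$ gives
\[
 \Delta_L(\theta_L,\psi_L)\Lambda_L
 =\chi_L(g^{-1})\Delta_L(\lambda\circ\N_{L/F},\psi_L)\Lambda_L
 =\chi_L(g^{-1})\epsilon_L(g^{-1})\Delta_F(\lambda,\psi_F)^\ell .
\]
So $\Lambda_L$ cancels exactly, and $\mathcalA_L(\theta_L)=\chi_L(g^{-1})\epsilon_L(g^{-1})\Delta_F(\lambda,\psi_F)^\ell$. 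Your displayed formula carries an extra factor $\Delta_L(\chi_L,\psi_L)=\mathcalA_L(\chi_L)\Lambda_L^{-1}$, and it is false. The conclusion you draw from it also does not follow. Closing the argument that way would need $\Delta_U(\chi_U,\psi_U)=\Delta_E(\chi_E,\psi_E)$. Proposition~\ref{prop:tame-one} gives only $\Delta_U(\chi_U,\psi_U)\Lambda_U=\Delta_E(\chi_E,\psi_E)\Lambda_E$. For $\ell=2$ the factors $\Lambda_U=(-1)^{n_F(\psi_F)}$ and $\Lambda_E=\Delta_F(\omega_E,\psi_F)$ need not agree. Your fallback of applying Theorem~\ref{thm:fml} to $\chi_L$ is not available either, since $\chi_L$ is not Galois invariant and so is not a norm pullback from $F$.

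With the corrected formula, your Step 4 finishes the proof immediately, and this is exactly the paper's argument. Lemma~\ref{lem:restrictions}, applied to the primitive compatible pair $(\chi_U,\chi_E)$ at $g^{-1}\in F^\times$, makes $\chi_L(g^{-1})\epsilon_L(g^{-1})$ independent of $L$, and $\Delta_F(\lambda,\psi_F)^\ell$ does not involve $L$ at all. The conductor-one comparison of Proposition~\ref{prop:tame-one} is not needed in this range. Your reduction to $\mathcalA_L(\chi_L)$ adds nothing, because the stable-twist computation evaluates $\mathcalA_L(\theta_L)$ outright in terms of base-field data.
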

\begin{proof}
Use Lemma~\ref{lem:tame-decomposition} and put $m=m_F(\lambda)$.  Choose
$\Gamma\in F^\times$ with
\[
 v_F(\Gamma)=m+n_F(\psi_F),
\]
a stationary representative $b$ for $\lambda$, and put
\[
 g=b/\Gamma.
\]
Since $E/F$ is tamely totally ramified, \eqref{eq:norm-conductor} gives
\[
 m_E(\lambda\circ\N_{E/F})=1+\ell(m-1)=:M,
\]
and \eqref{eq:trace-ideal} gives
$n_E(\psi_E)=\ell n_F(\psi_F)+\ell-1$.  Hence
$v_E(\Gamma)=M+n_E(\psi_E)$, so the same $\Gamma$ is admissible over $E$.
Thus \eqref{eq:tame-unramified-covector}--\eqref{eq:tame-ramified-covector},
with $s=\ceil{M/2}$ in the ramified case, show that $g=b/\Gamma$ is a
stationary coefficient for both norm pullbacks. The stable-twist formula \eqref{eq:stable-twist} gives,
for $L=U,E$,
\begin{equation}\label{eq:tame-stable-sides}
 \Delta_L(\theta_L,\psi_L)
 =\chi_L(g^{-1})\Delta_L(\lambda\circ\N_{L/F},\psi_L).
\end{equation}
Applying \eqref{eq:fml} and then \eqref{eq:stable-twist} over $F$ gives
\begin{equation}\label{eq:tame-stable-fml}
 \Delta_L(\lambda\circ\N_{L/F},\psi_L)\Lambda_L
 =\prod_{\nu\in S(L/F)}\Delta_F(\nu\lambda,\psi_F)
 =\epsilon_L(g^{-1})\Delta_F(\lambda,\psi_F)^\ell.
\end{equation}
Therefore
\begin{equation}\label{eq:tame-high-final}
 \mathcalA_L(\theta_L)
 =\chi_L(g^{-1})\epsilon_L(g^{-1})
     \Delta_F(\lambda,\psi_F)^\ell.
\end{equation}
Lemma~\ref{lem:restrictions}, applied to $(\chi_U,\chi_E)$, makes the right
side of \eqref{eq:tame-high-final} independent of $L$.
\end{proof}
Equations \eqref{eq:tame-stable-sides} and \eqref{eq:tame-stable-fml} are
equations~(28)--(29) of \cite{SML}.
Proposition~\ref{prop:tame-high} is formalized by
\lean{Tame/Comparison.lean}{1176}{Tame.comparison_of_conductor_gt_one}.

\medskip
By Lemma~\ref{lem:conjugacy}, primitivity gives
$\theta_U^\sigma/\theta_U\ne1$, hence $m_U(\theta_U)\ge1$.
Thus Propositions~\ref{prop:tame-one} and~\ref{prop:tame-high} prove
\eqref{eq:tame-reduction},  hence Theorem~\ref{thm:sml} for
$\ell\ne p$.
This is \cite[Theorem~\SMLnum{O:F:tame}]{SML}.

\section{The wild odd-prime cases}\label{sec:odd}
Assume $\ell=p>2$. Let $(\theta_L)$ be a primitive compatible family
as in Definition~\ref{def:compatible}. Thus
$\theta_L:L^\times\to\C^\times$ and
$\theta_L\circ\N_{K/L}=\Theta$ for every degree-$p$ intermediate field $L$.
Write $I$ for the inertia subgroup of $\Gal(K/F)$.

The power relation \eqref{eq:r} gives a $p$th root of unity.
We will compare the local-constant formulas to obtain a fourth root of
unity as well. Since $p$ is odd, the two conditions force equality.
For $p=2$ they still allow $-1$; Sections~\ref{sec:quadratic}
and~\ref{sec:quadratic-cases} determine this sign by comparing critical functions.

\begin{lemma}\label{lem:odd-conductors}
For every degree-$p$ intermediate field $L$, one has
\[
 m_L(\theta_L)>1,\qquad
 \mathcalA_L(\theta_L)=\Delta_L(\theta_L,\psi_L).
\]
\end{lemma}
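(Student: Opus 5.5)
The second assertion is immediate from Lemma~\ref{lem:elementary-delta}. Since $\ell=p$ is odd and $L/F$ is cyclic of odd prime degree, \eqref{eq:odd-norm-product} gives $\Lambda_L=\prod_{\nu\in S(L/F)}\Delta_F(\nu,\psi_F)=1$. Hence \eqref{eq:lambda-and-A} reduces to $\mathcalA_L(\theta_L)=\Delta_L(\theta_L,\psi_L)$. This part requires nothing about conductors.

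For the conductor bound, I will use primitivity through Lemma~\ref{lem:conjugacy}. Fix $L$ and a generator $\sigma$ of $\Gal(L/F)$. Then $\mu_L=\theta_L^\sigma/\theta_L$ is a nontrivial element of $S(K/L)$. Write $L'=K$ and view $K/L$ as cyclic of degree $p$. The conductor of $\mu_L$ then depends on whether $K/L$ is unramified or ramified.

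\emph{Ramified case.} Suppose $K/L$ is ramified. It is totally and wildly ramified of degree $p$, with lower break $t\ge1$, since wild breaks are positive. By Proposition~\ref{prop:norm-conductors}, $m_L(\mu_L)=t+1\ge2$. On the other hand, $\theta_L^\sigma$ and $\theta_L$ have the same conductor because $\sigma$ preserves the unit filtration. Therefore $m_L(\mu_L)\le m_L(\theta_L)$, which gives $m_L(\theta_L)\ge2$.

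\emph{Unramified case.} Suppose $K/L$ is unramified. Then $L/F$ must be totally ramified. If $L/F$ were unramified as well, inertia would meet $\Gal(K/L)$ trivially and would have order at most $p$ with unramified quotient. This is impossible, since by Lemma~\ref{lem:inertia} $L$ would be the unramified field $U$ and $I=\Gal(K/U)$, forcing $K/U$ to be ramified. Consequently $L\ne U$ and $L/F$ is totally ramified. In this situation $\mu_L$ is unramified, so I need a different argument. Suppose $m_L(\theta_L)\le1$. Then $\theta_L|_{\OO_L^\times}$ factors through $k_L^\times=k_F^\times$, and $\sigma$ acts trivially on $k_L$ because $L/F$ is totally ramified. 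Every unit of $L$ is congruent mod $\pp_L$ to a unit of $F$, which is $\sigma$-fixed. Hence $\theta_L^\sigma/\theta_L$ is trivial on $\OO_L^\times$, which is consistent with $\mu_L$ being unramified, so no contradiction arises yet.

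The contradiction must come from elsewhere. Choose a uniformizer $\pi_L$. Then $\mu_L(\pi_L)=\theta_L(\sigma^{-1}\pi_L/\pi_L)$. The element $\sigma^{-1}\pi_L/\pi_L$ lies in $U_L^{t_L}$, where $t_L\ge1$ is the wild break of $L/F$. If $m_L(\theta_L)\le1\le t_L$, this value is $1$. So $\mu_L$ is trivial on both $\pi_L$ and the units, hence $\mu_L=1$, contradicting Lemma~\ref{lem:conjugacy}. I would in fact run this same uniformizer argument uniformly in both cases: $\mu_L$ is trivial on units when $m_L(\theta_L)\le 1$, and trivial on $\pi_L$ because $\sigma\pi_L/\pi_L\in U_L^1$ whenever $L/F$ is ramified.

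\emph{Remaining subcase.} This leaves $L=U$ unramified over $F$. Here $K/U$ is totally wildly ramified by Lemma~\ref{lem:inertia}, and the ramified-case argument above applies, giving $m_U(\theta_U)\ge t+1\ge2$.

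\emph{Main obstacle.} The hard point is ensuring that the break of a wild cyclic degree-$p$ extension is at least $1$, which holds because the residue characteristic is $p$. I would also check the unit-filtration claim that $\sigma\pi_L/\pi_L\in U_L^1$ under wild ramification. Both facts are standard and come from the ramification results of \cite{FML} that were already invoked.
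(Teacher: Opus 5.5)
Your proof is correct and essentially the paper's argument. In its final form you use the uniformizer/invariance argument when $L/F$ is ramified, the bound $m_L(\mu_L)=t+1\ge2$ from Proposition~\ref{prop:norm-conductors} when $L/F$ is unramified (where $K/L$ is wildly ramified by Lemma~\ref{lem:inertia}), and \eqref{eq:odd-norm-product} for the second assertion, exactly as the paper does; your initial split on whether $K/L$ is ramified is also valid but is a redundant detour.
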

\begin{proof}
Let $\sigma$ generate $\Gal(L/F)$. By Lemma~\ref{lem:conjugacy},
$\theta_L^\sigma/\theta_L$ is a nontrivial character in $S(K/L)$.
If $L/F$ is ramified, it is  totally wildly ramified.
Consequently $\sigma$ acts trivially on $k_L$ and
$\sigma(\pi_L)/\pi_L\in U_L^1$ for a uniformizer $\pi_L$.
A character trivial on $U_L^1$ would therefore be $\sigma$-invariant,
a contradiction. If $L/F$ is unramified, Lemma~\ref{lem:inertia}
shows that $K/L$ is wildly ramified. Proposition~\ref{prop:norm-conductors}
then gives
\[
 1<m_L(\theta_L^\sigma/\theta_L)\le m_L(\theta_L).
\]
Finally \eqref{eq:odd-norm-product} gives
$\prod_{\nu\in S(L/F)}\Delta_F(\nu,\psi_F)=1$;
substitution in \eqref{eq:lambda-and-A} proves the second assertion.
\end{proof}
The conductor argument also holds for $p=2$, but $\mathcalA_L$ then
includes the additional factor $\Delta_F(\omega_L,\psi_F)$ in
\eqref{eq:lambda-and-A}. The formal conjugate-twist theorem is
\lean{Characters/Conjugacy.lean}{518}{Characters.conjugacy}.

\subsection{An unramified intermediate field}\label{subsec:odd-ur}
Suppose $|I|=p$, and put $U=K^I$. Then $U/F$ is unramified of degree
$p$. Let $E\ne U$ be a degree-$p$ intermediate field. Then  $E/F$ is totally ramified and $K/E$ is unramified.
Moreover $K=EU$. The extensions $E/F$ and $K/U$ have the same
lower break $t\ge1$. Put
\[
 T=t+1,\qquad q_0=\ceil{T/p}.
\]

\begin{lemma}[Logarithmic coordinates on unit quotients]\label{lem:odd-log-units}
Let $L$ be a local field of residue characteristic $p>2$, and let
$P$ be the polynomial in \eqref{eq:truncated-log}. For integers
$1\le r\le M$ with $pr\ge M$, the map
\[
 U_L^r/U_L^M\longrightarrow\pp_L^r/\pp_L^M,
 \qquad 1-z\longmapsto P(z),
\]
is an isomorphism of abelian groups. The map
$P:\pp_L^r\to\pp_L^r$ is bijective and preserves valuations.
\end{lemma}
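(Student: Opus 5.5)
The plan is to prove the second assertion first, since it supplies the valuation control needed for the group-isomorphism claim. For $z\in\pp_L^r$ with $r\ge1$, write
\[
 P(z)=z+\sum_{j=2}^{p-1}\frac{z^j}{j}.
\]
Since $p>2$, each $j$ with $1\le j\le p-1$ is a unit in $\Z_{(p)}$. Hence every term with $j\ge2$ has valuation at least $2v_L(z)>v_L(z)$ when $z\ne0$, so $v_L(P(z))=v_L(z)$ and $P$ maps $\pp_L^r$ into $\pp_L^r$, preserving valuations. For bijectivity, I would solve $P(z)=w$ by successive approximation. If $P(z)\equiv w\pmod{\pp_L^k}$ with $k\ge r$, then replacing $z$ by $z+\delta$, where $\delta=w-P(z)\in\pp_L^k$, gives
\[
 P(z+\delta)-P(z)-\delta\in\delta\,\pp_L^{r}\subset\pp_L^{k+1},
\]
because every higher-degree monomial in the expansion carries a factor $\delta$ times something in $\pp_L$. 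The resulting sequence converges by completeness, which gives surjectivity. Injectivity follows from the same estimate: $v_L(P(z)-P(z'))=v_L(z-z')$, since $P(z)-P(z')=(z-z')(1+\text{element of }\pp_L)$. This argument is uniform in mixed and equal characteristic, since only the $\Z_{(p)}$-coefficients of $P$ enter.

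For the map on unit quotients, Lemma~\ref{lem:formal-log} is the key input. The polynomial $P(Z+W-ZW)-P(Z)-P(W)$ has only monomials of total degree at least $p$. Since $(1-z)(1-w)=1-(z+w-zw)$, for $z,w\in\pp_L^r$ we get
\[
 P(z+w-zw)\equiv P(z)+P(w)\pmod{\pp_L^{pr}},
\]
and $pr\ge M$, so the map $U_L^r\to\pp_L^r/\pp_L^M$, $1-z\mapsto P(z)$, is a homomorphism. Its kernel is $U_L^M$: if $P(z)\in\pp_L^M$, then $v_L(z)=v_L(P(z))\ge M$ by valuation preservation, and conversely. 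Surjectivity follows from the bijectivity of $P$ on $\pp_L^r$. Together these give the isomorphism $U_L^r/U_L^M\to\pp_L^r/\pp_L^M$.

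I expect no serious obstacle here. The one point needing care is to work with polynomial identities over $\Z_{(p)}$ and to use only finitely many terms, so that no logarithm series convergence is invoked. This is where $p>2$ is needed: for $p=2$ the truncation $P(Z)=Z$ is additive only modulo degree $2$, which is consistent with Lemma~\ref{lem:formal-log}.
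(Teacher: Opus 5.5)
Your proof is correct and follows the paper's route. Lemma~\ref{lem:formal-log} gives additivity modulo $\pp_L^{pr}\subset\pp_L^M$, and the expansion $P(z)=z+z^2R(z)$, with unit derivative on $\pp_L$, gives valuation preservation and bijectivity on $\pp_L^r$; the paper cites Hensel's lemma where you carry out the successive approximation and the injectivity factorization by hand, and the kernel and surjectivity then follow as you say. A minor aside: the hypothesis $p>2$ plays no role in your argument, which works unchanged for $p=2$ with $P(Z)=Z$, so the restriction only reflects that \eqref{eq:truncated-log} defines $P$ for odd $p$.
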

\begin{proof}
Lemma~\ref{lem:formal-log} proves additivity on these quotients.
Since $P(z)=z+z^2R(z)$ with $R$ integral and $P'(z)$ a unit on
$\pp_L$, Hensel's lemma gives a unique solution of $P(z)=w$ in
$\pp_L^r$ for every $w\in\pp_L^r$. The same expansion shows that
$v_L(P(z))=v_L(z)$ and that this bijection induces the stated
isomorphism. This is \cite[Lemma~6.2]{SML}.
\end{proof}
The quotient isomorphism is
\lean{Odd/TruncatedLog.lean}{1050}{Odd.truncatedLogQuotientMulEquiv}.
Bijectivity on the whole ideal and preservation of valuation are
\lean{Odd/TruncatedLog.lean}{505}{Odd.truncatedLogOnLattice_bijective} and
\lean{Odd/TruncatedLog.lean}{198}{Odd.ord_truncatedLog}, respectively.

Let $\phi$ be arithmetic Frobenius of $U/F$. By
\eqref{eq:norm-exact} and \eqref{eq:norm-product}, norm pullback
restricts to an isomorphism $S(E/F)\to S(K/U)$.
Thus Lemma~\ref{lem:conjugacy} gives
$\tau_F\in S(E/F)$ such that
\[
 \tau_U:=\tau_F\circ\N_{U/F}=\theta_U^\phi/\theta_U.
\]
Then $\tau_F\ne1$ and $m_F(\tau_F)=T$ by
Proposition~\ref{prop:norm-conductors}.

\begin{lemma}[Trace--norm identity for additive characters]\label{lem:odd-norm-character-formula}
There is $\alpha\in F^\times$ such that
\[
 \tau_F(1-z)=\psi_F(\alpha P(z))\quad(z\in\pp_F^{q_0}),
 \qquad v_F(\alpha)=-n_F(\psi_F)-T.
\]
Put $\Psi_L(x)=\psi_L(\alpha x)$ for $L=F,U,E,K$.
Its largest trivial ideal is $\pp_L^T$, and
\[
 \tau_U(1-z)=\Psi_U(P(z))\qquad(z\in\pp_U^{q_0}).
\]
For $L/M=E/F$ or $K/U$ and $w\in\pp_L^{q_0}$, one has
\begin{equation}\label{eq:odd-norm-character-conversion}
 \Psi_M(\Tr_{L/M}w+\N_{L/M}w)=1,
 \qquad \Psi_L(w)=\Psi_M(-\N_{L/M}w).
\end{equation}
\end{lemma}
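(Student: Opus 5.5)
We first construct $\alpha$ using the logarithmic coordinates of Lemma~\ref{lem:odd-log-units} applied to $L=F$ with $r=q_0$ and $M=T$; here $pq_0\ge T$. Since $m_F(\tau_F)=T$ by Proposition~\ref{prop:norm-conductors}, the composite $z\mapsto\tau_F(1-z)$ with the inverse of $P$ becomes an additive character of $\pp_F^{q_0}/\pp_F^T$. The perfect pairing from the proof of Theorem~\ref{thm:lamprecht},
\[
\pp_F^{-T-n}/\pp_F^{-q_0-n}\times\pp_F^{q_0}/\pp_F^{T}\to\C^\times,
\qquad n=n_F(\psi_F),
\]
then produces $\alpha\in\pp_F^{-T-n}$ with $\tau_F(1-z)=\psi_F(\alpha P(z))$. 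Exactness of the valuation $v_F(\alpha)=-n-T$ follows from the same argument as there: otherwise $\tau_F$ would be trivial on $U_F^{T-1}$. Here one uses that $P$ preserves valuations, so $P(\pp_F^{T-1})=\pp_F^{T-1}$.

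Next we compute the conductor of $\Psi_L$ using \eqref{eq:trace-ideal}. The character $\psi_L$ has largest trivial ideal $\pp_L^{-n_L}$, with $n_L=e_{L/F}n+D_{L/F}$. Multiplying by $\alpha$ shifts this by $e_{L/F}v_F(\alpha)=-e_{L/F}(n+T)$. We then check the four fields.
\begin{itemize}
\item For $L=F$ and $L=U$ ($D=0$, $e=1$), the largest trivial ideal is $\pp_L^{T}$.
\item For $L=E$ and $L=K$, the ramification index over $F$ is $p$. The different exponent is $D_{E/F}=(p-1)(t+1)=(p-1)T$ by \eqref{eq:herbrand}, and $D_{K/F}=D_{K/U}$ has the same value. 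This gives exponent $-pn-(p-1)T+p(n+T)=T$.
\end{itemize}
The formula for $\tau_U$ then comes from the unramified congruence \eqref{eq:norm-log-unramified}. For $z\in\pp_U^{q_0}$ put $1-z'=\N_{U/F}(1-z)$. Then $P(z')\equiv\Tr_{U/F}P(z)\pmod{\pp_F^T}$. Since $\Psi_F$ is trivial on $\pp_F^T$, we get
\[
\tau_U(1-z)=\Psi_F(\Tr_{U/F}P(z))=\Psi_U(P(z)).
\]

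For \eqref{eq:odd-norm-character-conversion}, we first handle $E/F$, which is totally ramified of degree $p$ with break $t$, so $T=t+1$. The idea is to extract the identity from compatibility of $\tau$ with norms. Every $w\in\pp_E^{q_0}$ equals $P(z)$ for some $z\in\pp_E^{q_0}$ by Lemma~\ref{lem:odd-log-units}. Since $\tau_F\in S(E/F)$,
\[
1=\tau_F(\N_{E/F}(1-z))=\Psi_F\bigl(P(1-\N_{E/F}(1-z))\bigr).
\]
The ramified congruence \eqref{eq:norm-log-ramified} with $h=0$ then gives $\Psi_F(\Tr_{E/F}w+\N_{E/F}w)=1$. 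This requires $\N_{E/F}(1-z)\in U_F^{q_0}$, which we verify via \eqref{eq:norm-expansion} and \eqref{eq:symmetric-bound}. The second identity follows from the first together with $\Psi_E=\Psi_F\circ\Tr_{E/F}$.

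For $K/U$ the same argument applies to $\tau_U\in S(K/U)$. The norm pullback to $K$ is trivial because $\tau_U\circ\N_{K/U}=\tau_F\circ\N_{K/F}=\tau_F\circ\N_{E/F}\circ\N_{K/E}=1$. We also use that $K/U$ is totally ramified with the same break $t$, and the formula for $\tau_U$ just proved.

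The main obstacle I expect is the domain bookkeeping. One must check that $\N(1-z)$ lands in the range $U_M^{q_0}$ where the logarithmic formula for $\tau$ is valid, and that $P(z)$ ranges over all of $\pp^{q_0}$. Beyond that, the deep part is the ramified norm--logarithm congruence \eqref{eq:norm-log-ramified}, which we take from Lemma~\ref{lem:norm-logarithm}.
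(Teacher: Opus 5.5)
Your proposal is correct and follows the paper's proof essentially step by step. You obtain $\alpha$ from the logarithmic isomorphism of Lemma~\ref{lem:odd-log-units} and additive duality, with the exact conductor $T$ fixing $v_F(\alpha)$; you get the conductors of $\Psi_L$ from \eqref{eq:trace-ideal} and the formula for $\tau_U$ from \eqref{eq:norm-log-unramified}; and you derive \eqref{eq:odd-norm-character-conversion} by writing $w=P(z)$ and combining norm triviality of $\tau_F$ (respectively $\tau_U$) with \eqref{eq:norm-log-ramified} at $h=0$. The domain checks you flag, such as $\N_{L/M}(U_L^{q_0})\subset U_M^{q_0}$ and $q_0\le T-1$ for the valuation argument, are the precision checks the paper delegates to \cite[Corollary~6.4]{SML}, and they go through as you indicate.
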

\begin{proof}
Choose $\alpha$ by Lemma~\ref{lem:odd-log-units} and additive duality;
the exact conductor $T$ fixes its valuation. Formula~\eqref{eq:trace-ideal}
gives the stated additive conductors, and
\eqref{eq:norm-log-unramified} gives the formula for $\tau_U$.
For $L/M=E/F$ or $K/U$, write $w=P(z)$ using
Lemma~\ref{lem:odd-log-units}. The norm character is trivial on
$\N_{L/M}(1-z)$, and \eqref{eq:norm-log-ramified} gives
\[
 1=\Psi_M\bigl(\Tr_{L/M}P(z)+\N_{L/M}(P(z))\bigr).
\]
This proves \eqref{eq:odd-norm-character-conversion}; the norm-filtration
and precision checks are \cite[Corollary~6.4]{SML}.
\end{proof}
The normalized character data are constructed in
\lean{Odd/UR/Setup.lean}{480}{Odd.UR.oddURSetup_exists}.
Given that normalization, \lean{Odd/NormPhase.lean}{48}{Odd.normPhase}
proves both identities in \eqref{eq:odd-norm-character-conversion}.
Fix $\alpha$ and $\Psi_L$ as in Lemma~\ref{lem:odd-norm-character-formula}.

The norms of a common element need not be the coefficients first
chosen for the two characters.

\begin{lemma}[Change of stationary coefficient]\label{lem:odd-change-coefficient}
Let $\theta$ have conductor $m>1$ over a local field $L$ of odd residue
characteristic, and suppose that
$\Psi_L(x)=\psi_L(\alpha x)$ has largest trivial ideal $\pp_L^J$.
Put $d=\floor{m/2}$ and $s=\ceil{m/2}$.
There is $B\in L^\times$, of valuation $J-m$ and determined modulo
$\pp_L^{J-d}$, such that
\[
 \theta(1-z)=\Psi_L(BP(z))\qquad(z\in\pp_L^d).
\]
If $C=B+\eta$ and $v_L(\eta)\ge J-s$, then
\[
 \theta(1-z)=\Psi_L(Cz)\qquad(z\in\pp_L^s)
\]
and
\begin{equation}\label{eq:odd-change-coefficient}
 \Delta_L(\theta,\psi_L)
 =\theta((-\alpha C)^{-1})\Psi_L(-C)
   \Psi_L\!\left(\frac{\eta^2}{2B}\right)g_L,
 \qquad g_L\in\mu_4.
\end{equation}
The correction factor $\Psi_L(\eta^2/(2B))$ is $1$ if $m$ is even or if
$v_L(\eta)\ge J-d$; for even $m$ one also has $g_L=1$.
\end{lemma}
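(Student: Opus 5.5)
We first construct $B$ using logarithmic coordinates. Then we check that any perturbation $C=B+\eta$ is still a stationary coefficient. Finally we feed the coefficient $g=-\alpha C$ into Lamprecht's formula (Theorem~\ref{thm:lamprecht}) and evaluate the critical sum by completing the square.

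\emph{Construction of $B$.} Since $p\ge3$ and $d\ge1$, we have $pd\ge3d\ge m$. Lemma~\ref{lem:odd-log-units} therefore identifies $U_L^d/U_L^m$ with $\pp_L^d/\pp_L^m$ via $1-z\mapsto P(z)$. Through this isomorphism, $\theta$ becomes an additive character of $\pp_L^d/\pp_L^m$. As in the proof of Theorem~\ref{thm:lamprecht}, the pairing
\[
\pp_L^{J-m}/\pp_L^{J-d}\times\pp_L^d/\pp_L^m\to\C^\times,\qquad (B,w)\mapsto\Psi_L(Bw),
\]
is perfect, because $\pp_L^J$ is the largest ideal on which $\Psi_L$ is trivial. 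This yields $B\in\pp_L^{J-m}$, unique modulo $\pp_L^{J-d}$. If $v_L(B)>J-m$, then $\theta$ would be trivial on $U_L^{m-1}$, so exactness of the conductor forces $v_L(B)=J-m$.

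\emph{Stationarity of $C$.} For $z\in\pp_L^s$ write $P(z)=z+z^2/2+z^3R(z)$. The terms $Bz^2/2$ and $Bz^3R(z)$ have valuation at least $J-m+2s\ge J$. The term $\eta z$ has valuation at least $J-s+s=J$. Hence $\Psi_L(BP(z))=\Psi_L(Cz)$, as claimed. Rewriting this in the form \eqref{eq:stationary} with $x=-z$ gives
\[
\theta(1+x)=\psi_L(-\alpha C x).
\]
The relation $\alpha\pp_L^J=\pp_L^{-n_L(\psi_L)}$ gives $v_L(\alpha)=-n_L(\psi_L)-J$. So $\Gamma=(-\alpha C)^{-1}$ has valuation $m+n_L(\psi_L)$ and is admissible, with stationary representative $b=1$ and $g=-\alpha C$. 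Theorem~\ref{thm:lamprecht} then gives
\[
\Delta_L(\theta,\psi_L)=\theta((-\alpha C)^{-1})\,\psi_L(-\alpha C)\,g_L'=\theta((-\alpha C)^{-1})\,\Psi_L(-C)\,g_L',
\]
where $g_L'$ is the normalized critical sum \eqref{eq:Gamma}. For even $m$ the space $V_L$ is trivial, so $g_L'=1$. This also settles the correction factor, since $v_L(\eta^2/(2B))\ge 2(J-s)-(J-m)=J$ when $s=d$.

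\emph{Evaluation of $g_L'$ for $m=2d+1$.} This is the step requiring care with precision and signs. For $x\in\pp_L^d$, the cubic terms have valuation at least $J-m+3d\ge J$, so
\[
\theta(1+x)^{-1}=\Psi_L(Bx-Bx^2/2).
\]
Hence
\[
H_L(x)=\Psi_L(-\eta x-Bx^2/2)=\Psi_L\!\left(-\tfrac B2\bigl(x+\eta/B\bigr)^2\right)\Psi_L\!\left(\tfrac{\eta^2}{2B}\right).
\]
The shift $\eta/B$ has valuation at least $(J-s)-(J-m)=d$, so it lies in $\pp_L^d$. Translation invariance of the finite sum over $V_L$, i.e.\ Lemma~\ref{lem:translation}, removes the shift. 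One must check that the translated function is still well defined on $V_L$; this holds because changing $x$ by $\pp_L^{d+1}$ alters the exponent by an element of $\pp_L^{J}$. The remaining sum is $|V_L|^{-1/2}\sum_{x\in V_L}\Psi_L(-Bx^2/2)$. This is a normalized nondegenerate quadratic Gauss sum over $k_L$, and it lies in $\mu_4$ by the argument of Lemma~\ref{lem:odd-Gamma}. Calling it $g_L$ gives \eqref{eq:odd-change-coefficient}.

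Finally, if $v_L(\eta)\ge J-d$, then $v_L(\eta^2/(2B))\ge 2J-2d-J+m>J$, so the correction factor is $1$.
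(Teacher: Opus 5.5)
Your proposal is correct and follows the paper's proof essentially step for step. You obtain $B$ from Lemma~\ref{lem:odd-log-units} and additive duality, with the exact conductor forcing $v_L(B)=J-m$; you apply Lamprecht's formula with coefficient $-\alpha C$; and you complete the square in $\Psi_L(-\eta x-Bx^2/2)$ to split off $\Psi_L(\eta^2/(2B))$ and a normalized quadratic Gauss sum in $\mu_4$. The one implicit step worth stating is that $v_L(\eta)\ge J-s>J-m$ gives $v_L(C)=J-m$, which is what makes $(-\alpha C)^{-1}$ admissible.
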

\begin{proof}
Since $pd\ge m$, Lemma~\ref{lem:odd-log-units} and the additive
pairing give $B$ with the stated ambiguity and valuation. On
$\pp_L^d$ the terms of $P(z)$ of degree at least three do not
contribute, because $3d\ge m$. Thus
\[
 \theta(1+z)=\Psi_L(-Bz+Bz^2/2).
\]
Apply \eqref{eq:lamprecht} with coefficient $-\alpha C$.
For odd $m=2d+1$, choose $\delta\in L$ with $v_L(\delta)=d$.
The critical function on $k_L$ is
\[
 \bar z\longmapsto\Psi_L(-\eta\delta z-B\delta^2z^2/2).
\]
Here $\eta/(B\delta)$ is integral. Completing the square gives
$\Psi_L(\eta^2/(2B))$ times the normalized pure quadratic sum,
which belongs to $\mu_4$ by \eqref{eq:odd-critical}.
For even conductor the critical quotient is trivial. Finally
$v_L(\eta^2/B)\ge J$ in the two stated cases, so
$\Psi_L(\eta^2/(2B))=1$.
This calculation is \cite[Lemma~6.5]{SML}.
\end{proof}
The coefficient, its ambiguity, and the full formula
\eqref{eq:odd-change-coefficient}, including the assertions in even
conductor, are proved by
\lean{Odd/EnhancedStationary.lean}{973}{Odd.enhancedStationary}.

For $p=2$ and odd $m=2d+1$, the first step fails because $2d<m$:
the unit linearization does not reach the critical ideal $\pp_L^d$.
The completion of the square here therefore cannot replace the full
critical-function comparison required in Section~\ref{sec:quadratic-cases}.

\begin{proposition}[Compatible characters of conductor $T$]\label{prop:odd-ur-characters}
There are $c\in\OO_F^\times$, $d\in\OO_U^\times$ with $d^p-d=c$,
and characters $\chi_U^0,\chi_E^0$ of conductor $T$, forming a primitive
compatible pair, such that
\[
 (\chi_U^0)^\phi/\chi_U^0=\tau_U,
\]
\[
 \chi_U^0(1-z)=\Psi_U(d^pP(z))\quad(z\in\pp_U^{q_0}),\qquad
 \chi_E^0(1-z)=\Psi_E(cP(z))\quad(z\in\pp_E^{q_0}).
\]
One may choose $\bar d^{|k_F|}-\bar d=1$; then $\bar c$ has absolute
trace $1$.
\end{proposition}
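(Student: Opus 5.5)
We construct the model pair by specifying principal-unit data and then descending, in four steps.

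First, choose $\bar d\in k_U$ with $\bar d^{|k_F|}-\bar d=1$. This is possible because $x\mapsto x^{|k_F|}-x$ on $k_U$ has kernel $k_F$, and its image is the kernel of $\Tr_{k_U/k_F}$. Since $[k_U:k_F]=p$, we have $\Tr(1)=p=0$, so $1$ lies in that image. Lift $\bar d$ to $d_0\in\OO_U^\times$ and put $c_0=\phi(d_0)-d_0$, which is a unit with $\bar c_0=1$. We want $c\in\OO_F^\times$ and $d\in\OO_U$ with $d^p-d=c$, with $d$ reducing to $\bar d$ in the normalized form. I would take $c$ to be any unit of $F$ whose residue satisfies $\bar c=\bar d^p-\bar d$. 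This residue lies in $k_F$: applying Frobenius of $k_U/k_F$ to $\bar d^p-\bar d$ adds $1^p-1=0$. Hensel's lemma (the derivative $pd^{p-1}-1$ is a unit) then lifts $\bar d$ to a root $d\in\OO_U$ of $X^p-X=c$. Its absolute trace is $\Tr_{k_F/\F_p}(\bar d^p-\bar d)$, which I will compare with $\bar d^{|k_F|}-\bar d=1$ via the telescoping identity
\[
\sum_i \phi_p^i(\bar d^p-\bar d)=\bar d^{|k_F|}-\bar d .
\]
Here the sum runs over the Frobenius powers for $k_F/\F_p$, and the identity gives $\Tr(\bar c)=1$.

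Second, define characters on the deep unit groups: $\chi_E^0(1-z)=\Psi_E(cP(z))$ on $U_E^{q_0}$ and $\chi_U^0(1-z)=\Psi_U(d^pP(z))$ on $U_U^{q_0}$. Lemma~\ref{lem:odd-log-units} makes these homomorphisms on $U^{q_0}/U^T$ (note $pq_0\ge T$), trivial on $U^T$. Next compare their pullbacks to $U_K^{q_0}$ using \eqref{eq:odd-norm-character-conversion}, with $K/U$ totally ramified of break $t$ and $K/E$ unramified. For $K/E$, \eqref{eq:norm-log-unramified} gives $\chi_E^0(\N_{K/E}(1-z))=\Psi_E(c\Tr_{K/E}P(z))=\Psi_K(cP(z))$. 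For $K/U$, \eqref{eq:norm-log-ramified} gives $\Psi_U(d^p(\Tr+\N)P(z))$, and by \eqref{eq:odd-norm-character-conversion} this equals $\Psi_K(d^pP(z))\cdot(\text{norm term})$. The difference $d^p-c=d$ must then be absorbed by the norm term. This is where the relation $d^p-d=c$ is used, so that the two pullbacks coincide after accounting for $\Psi_K(dP)=\Psi_U(-d\,\N P)$. I expect this matching of the $\N_{K/U}$-correction to be the main obstacle, and I would first check it on the level of the Artin--Schreier structure of $\tau_U$.

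Third, extend. Build a $\Gal(K/F)$-invariant character $\Theta_0$ of $K^\times$ extending the common pullback on $U_K^{q_0}$. I would fix values on a uniformizer and on Teichmüller units invariantly, for instance trivially on a $\phi$-stable complement. Then apply Lemma~\ref{lem:descent} to obtain a compatible pair $(\chi_U^0,\chi_E^0)$ with the prescribed restrictions, adjusting by norm characters if needed. The twist $(\chi_U^0)^\phi/\chi_U^0$ lies in $S(K/U)$. On $U_U^{q_0}$ it equals $\Psi_U((\phi(d)^p-d^p)P(z))=\Psi_U(c^pP(z))$, modulo the ideal on which $\Psi_U$ is trivial. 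After the normalization of $\alpha$ in Lemma~\ref{lem:odd-norm-character-formula}, the factor $c^p$ is absorbed by rescaling. This matches $\tau_U$ on deep units, and since $S(K/U)$ has order $p$ and characters in it are determined by their restriction to $U^{q_0}$ at conductor $T$, equality follows. Nontriviality of $\tau_U$ gives primitivity via Lemma~\ref{lem:conjugacy}.

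Fourth, conductor $T$ follows because $c$ and $d$ are units. The coefficient therefore has exact valuation, making the character nontrivial on $U^{T-1}$ by the duality in Lemma~\ref{lem:odd-norm-character-formula}.
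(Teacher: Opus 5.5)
\textbf{There is a genuine gap in your third step.} There you need a $\Gal(K/F)$-invariant character $\Theta_0$ of $K^\times$ that extends the common function on $U_K^{q_0}$. Your second step, which you flagged as the main obstacle, is actually fine: $\Psi_U(d^p\N_{K/U}P(z))=\Psi_U(\N_{K/U}(dP(z)))=\Psi_K(-dP(z))$ by \eqref{eq:odd-norm-character-conversion}, so both pullbacks equal $\Psi_K(cP(z))$.

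The invariant extension, however, is not automatic. An invariant character of an open subgroup need not have an invariant extension, and your recipe does not produce one. No uniformizer of $K$ is fixed by the wildly ramified group $\Gal(K/U)$. You also say nothing about the layers $U_K^1/U_K^{q_0}$, where invariance must likewise be arranged.

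The paper reverses the order. It first extends the character on $U_U^{q_0}$ to $U^\times$ \emph{with prescribed conjugate quotient} $(\chi_U^0)^\phi/\chi_U^0=\tau_U$. Then $\chi_U^0\circ\N_{K/U}$ is invariant under $\Gal(K/U)$ trivially, and under a lift of $\phi$ because $\tau_U\circ\N_{K/U}=1$. Lemma~\ref{lem:descent} then gives $\chi_E^0$, and its unit formula is exactly your second-step computation, since $\N_{K/E}(U_K^{q_0})=U_E^{q_0}$.

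Such an extension over $U$ exists, for instance, because $\theta_U$ already has conjugate quotient $\tau_U$. Its ratio with the prescribed character is $\phi$-invariant on $U_U^{q_0}$. Since unit groups of the unramified extension $U/F$ are cohomologically trivial, this ratio has the form $\lambda_0\circ\N_{U/F}$ there. One then divides $\theta_U$ by $\lambda\circ\N_{U/F}$ for any extension $\lambda$ of $\lambda_0$ to $F^\times$.

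\textbf{Your conjugate-quotient computation is also wrong.} On $U_U^{q_0}$ the quotient is $\Psi_U((\phi(d)^p-d^p)P(z))$. Since $\phi(d)$ is again a root of $X^p-X=c$, one has $\phi(d)^p-d^p=\phi(d)-d$. This equals $1$ in equal characteristic, and is $\equiv1\pmod{p\OO_U}$ in mixed characteristic by Hensel's lemma at $\bar d+1$. The error term is killed by $\Psi_U$ on $\pp_U^{q_0}$: applying \eqref{eq:trace-ideal} to $\Tr_{E/F}(1)=p$ gives $v_F(p)\ge(p-1)t/p$, hence $v_F(p)+q_0\ge T$.

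So the quotient is $\Psi_U(P(z))=\tau_U(1-z)$ directly, and no factor $c^p$ appears. Rescaling $\alpha$ would not be permitted in any case, because $\alpha$ is fixed by $\tau_F$ and enters the displayed formulas. This verification is precisely what makes the extension with quotient $\tau_U$ consistent on $U_U^{q_0}$.

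\textbf{Your primitivity argument runs the wrong way.} Lemma~\ref{lem:conjugacy} says primitivity implies a nontrivial conjugate quotient, but you need the converse. Argue instead: if the common pullback were $\xi\circ\N_{K/F}$, then $\chi_U^0/(\xi\circ\N_{U/F})\in S(K/U)$ would be $\phi$-invariant by Lemma~\ref{lem:norm-groups}, forcing $\tau_U=1$.

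Your first and fourth steps are correct.
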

\begin{proof}
This is the construction of \cite[Proposition~6.6]{SML} for
$\tau_F$ chosen above. It first extends the displayed character on
$U_U^{q_0}$ to $U^\times$ with the prescribed conjugate quotient.
Its pullback by $\N_{K/U}$ is invariant under $\Gal(K/F)$, and the descent
of Lemma~\ref{lem:descent} gives $\chi_E^0$. The congruences
\eqref{eq:norm-log-ramified}--\eqref{eq:norm-log-unramified} and
\eqref{eq:odd-norm-character-conversion} give its displayed formula.
The unit coefficients $d,c$ and the conductor of $\Psi_L$ give exact
conductor $T$ on both fields. If the common pullback were $\xi\circ\N_{K/F}$, then
$\chi_U^0/(\xi\circ\N_{U/F})\in S(K/U)$ would be $\phi$-invariant by
Lemma~\ref{lem:norm-groups}. This would make $\chi_U^0$ invariant,
contrary to its conjugate quotient $\tau_U\ne1$.
\end{proof}
The compatible characters, their exact conductors, and their
principal-unit formulas are constructed by
\lean{Odd/UR/Models.lean}{1420}{Odd.UR.models}.

The next decomposition is the wild analogue of \eqref{eq:tame-decomposition}:
the fixed conductor-$T$ pair $(\chi_U^0,\chi_E^0)$ replaces the conductor-one tame pair.

\begin{proposition}[A common base-field twist]\label{lem:odd-ur-twist}
After changing $\chi_E^0$ by an unramified character in $S(K/E)$ if
necessary, there is a quasi-character $\lambda$ of $F^\times$ such that
\begin{equation}\label{eq:odd-ur-twist}
 \theta_U=\chi_U^0(\lambda\circ\N_{U/F}),\qquad
 \theta_E=\chi_E^0(\lambda\circ\N_{E/F}).
\end{equation}
For some integer $h\ge0$, their conductors are
\[
 m_U(\theta_U)=T+h,\qquad m_E(\theta_E)=T+ph.
\]
If $h>0$, then $m_F(\lambda)=T+h$; if $h=0$, then $m_F(\lambda)\le T$.
\end{proposition}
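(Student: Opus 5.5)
Mirror the tame decomposition (Lemma~\ref{lem:tame-decomposition}), with the conductor-$T$ pair $(\chi_U^0,\chi_E^0)$ of Proposition~\ref{prop:odd-ur-characters} replacing the conductor-one pair.

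\textbf{Step 1: descend $\theta_U/\chi_U^0$ to $F$.} Put $\rho_U=\theta_U/\chi_U^0$. By Lemma~\ref{lem:conjugacy}, $\theta_U^\phi/\theta_U$ generates $S(K/U)$, and so does $\tau_U=(\chi_U^0)^\phi/\chi_U^0$. Replacing $\theta_U$ by a Frobenius conjugate, which changes neither $\mathcalA_U$ nor the conductor, we may assume $\theta_U^\phi/\theta_U=\tau_U$. Then $\rho_U$ is $\phi$-invariant. The argument of Lemma~\ref{lem:descent}, applied to the cyclic extension $U/F$, gives $\lambda$ with $\rho_U=\lambda\circ\N_{U/F}$. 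Here Hilbert~90 shows that $\rho_U$ is well defined on the norm group, and the norm subgroup has finite index, so the character extends to $F^\times$. This gives the first identity in \eqref{eq:odd-ur-twist}.

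\textbf{Step 2: obtain the $E$-identity.} Put $\rho_E=\theta_E/\chi_E^0$. Compatibility of both pairs and $\N_{U/F}\circ\N_{K/U}=\N_{E/F}\circ\N_{K/E}$ give
\[
 \rho_E\circ\N_{K/E}=(\lambda\circ\N_{E/F})\circ\N_{K/E}.
\]
Hence $\theta_E/(\chi_E^0(\lambda\circ\N_{E/F}))$ lies in $S(K/E)$. Since $K/E$ is unramified, every element of $S(K/E)$ is unramified. Absorbing it into $\chi_E^0$ preserves the conductor $T$, the principal-unit formula, and compatibility. This yields the second identity.

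\textbf{Step 3: conductors.} Let $r=m_F(\lambda)$.
\begin{itemize}
\item If $r>T$, then $U/F$ is unramified, so $m_U(\lambda\circ\N_{U/F})=r>T=m_U(\chi_U^0)$, and therefore $m_U(\theta_U)=r$. Over $E$, \eqref{eq:norm-conductor} gives
\[
 m_E(\lambda\circ\N_{E/F})=pr-D_{E/F}=pr-(p-1)T=T+p(r-T)>T,
\]
so $m_E(\theta_E)=T+ph$ with $h=r-T$.
\item If $r\le T$, the conductor of $\lambda\circ\N_{E/F}$ is at most $T$, by Proposition~\ref{prop:norm-conductors} for $r\le t+1$. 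Hence both $\theta_U$ and $\theta_E$ have conductor at most $T$.
\end{itemize}

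\textbf{Main obstacle: exact conductor $T$ when $r\le T$.} In this case both conductors must equal $T$ exactly, so that $h=0$. Over $U$, the lower bound comes from Lemma~\ref{lem:odd-conductors}: $m_U(\theta_U)\ge m_U(\theta_U^\phi/\theta_U)=m_U(\tau_U)=T$, the last equality because $K/U$ has break $t$. Over $E$, the lower bound is the difficulty when $r=T$, since cancellation could lower the conductor. I would argue by contradiction. Suppose $\theta_E$ were trivial on $U_E^{T-1}$. Since $K/E$ is unramified, $\N_{K/E}$ maps $U_K^{T-1}$ onto $U_E^{T-1}$, so $\Theta$ would be trivial on $U_K^{T-1}$. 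Pulling back by $\N_{K/U}$, this would force $\theta_U$ to have conductor at most $T-1$ on the appropriate norm image. For the prescribed conjugate quotient $\tau_U$ of conductor $T$, this is impossible by Proposition~\ref{prop:norm-conductors}.

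I expect this last step to need care. If it resists, I would instead use the explicit principal-unit formulas. On $U_E^{q_0}$, $\theta_E(1-z)=\Psi_E(cP(z))\lambda(\N_{E/F}(1-z))$, and \eqref{eq:odd-norm-character-conversion} converts the second factor into an additive character. The coefficient $c$ is a unit whose residue has absolute trace $1$. I would check that the combined coefficient has valuation exactly $-n_E(\psi_E)-T$; exact conductor $T$ then follows by additive duality.
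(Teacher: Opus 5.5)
Your proposal is correct and is essentially the paper's argument. The invariant quotient $\theta_U/\chi_U^0$ descends to $\lambda$, and the discrepancy on $E$ lies in the unramified group $S(K/E)$ and is absorbed into $\chi_E^0$. The delicate bound $m_E(\theta_E)\ge T$ comes from $m_E(\theta_E)=m_K(\Theta)$, since $K/E$ is unramified, together with the absence of a conductor drop at the break for $\N_{K/U}$. The paper runs this last route uniformly in $h$: it computes $m_K(\Theta)=T+ph$ from $m_U(\theta_U)=T+h$, so it needs neither your separate $E/F$ computation for $m_F(\lambda)>T$ nor the explicit-coefficient fallback.

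Two corrections. First, no replacement is needed in Step~1, because $\tau_U$ was \emph{defined} as $\theta_U^\phi/\theta_U$ and $\chi_U^0$ was constructed with that quotient. Replacing $\theta_U$ by a Frobenius conjugate could not have helped anyway: $\Gal(U/F)$ acts trivially on $S(K/U)$, so every conjugate of $\theta_U$ has the same conjugate quotient. Second, the no-drop step should cite \eqref{eq:conjugate-twists}: every $S(K/U)$-twist of $\theta_U$ is a Galois conjugate and hence still has conductor $T$. That is exactly the criterion in Proposition~\ref{prop:norm-conductors}; the conductor of $\tau_U$ alone does not supply it.
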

\begin{proof}
The matching conjugate quotients make $\theta_U/\chi_U^0$ invariant.
The descent argument of Lemma~\ref{lem:descent} gives
$\theta_U/\chi_U^0=\lambda\circ\N_{U/F}$. Compatibility puts
$\theta_E/(\chi_E^0(\lambda\circ\N_{E/F}))$ in $S(K/E)$, whose characters
are unramified, so it is absorbed by the stated adjustment.

The conductor of $\theta_U$ is at least the conductor $T$ of its
conjugate quotient. Write it as $T+h$. Equation~\eqref{eq:conjugate-twists}
makes every twist by $S(K/U)$ a conjugate, so no conductor drop occurs
at $T$. Proposition~\ref{prop:norm-conductors} gives conductor
$T+ph$ for the common pullback to $K$. Since $K/E$ is unramified,
this equals $m_E(\theta_E)$. The conductor of $\chi_U^0$ is $T$ and
unramified norm pullback preserves conductors; comparison in
\eqref{eq:odd-ur-twist} gives the assertions about $\lambda$.
These are \cite[Proposition~6.7]{SML}.
\end{proof}
Once the models are constructed, this descent and conductor argument
does not use $p>2$; its quadratic form is Lemma~\ref{lem:quadratic-ur-twist}.
The common twist is formalized by
\lean{Odd/UR/Twist.lean}{383}{Odd.UR.twist}.

For larger $h$, both pullbacks of the twisting character have
half-conductor at least $T$. The conductor-$T$ characters can therefore
be removed by the stable-twist formula.

\begin{proposition}[Comparison in the stable range]\label{prop:odd-ur-stable}
If $h\ge T$, then
\[
 \Delta_U(\theta_U,\psi_U)=\Delta_E(\theta_E,\psi_E).
\]
\end{proposition}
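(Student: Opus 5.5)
We follow the pattern of Proposition~\ref{prop:tame-high}: remove the conductor-$T$ models by the stable-twist formula and compare the norm pullbacks of $\lambda$ through the First Main Lemma. By Proposition~\ref{lem:odd-ur-twist}, with $h\ge T\ge 2$, we have $m:=m_F(\lambda)=T+h$. Unramified pullback preserves conductors, so $m_U(\lambda\circ\N_{U/F})=m$. By \eqref{eq:norm-conductor}, since $m>T=t+1$,
\[
 m_E(\lambda\circ\N_{E/F})=pm-D_{E/F}=T+ph.
\]
These agree with $m_U(\theta_U)$ and $m_E(\theta_E)$. In both cases the model $\chi_L^0$ has conductor $T\le\floor{m_L(\lambda\circ\N_{L/F})/2}$: over $U$ because $h\ge T$ gives $\floor{(T+h)/2}\ge T$, and over $E$ a fortiori. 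Lemma~\ref{lem:stable-twist} therefore applies with $\eta=\chi_L^0$ and $\chi=\lambda\circ\N_{L/F}$.

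\textbf{Step 1: a common admissible element and a common stationary coefficient.} Choose $\Gamma\in F^\times$ with $v_F(\Gamma)=m+n_F(\psi_F)$, and choose a stationary representative $b$ for $\lambda$ over $F$. Put $g=b/\Gamma$. Over $U$, $\Gamma$ is admissible because $n_U(\psi_U)=n_F(\psi_F)$. Over $E$, \eqref{eq:trace-ideal} gives $n_E=pn_F+(p-1)T$, so
\[
 v_E(\Gamma)=pm+pn_F=(T+ph)+n_E,
\]
and $\Gamma$ is admissible there as well. We then need the analogue of Lemma~\ref{lem:tame-covector}: $(\lambda\circ\N_{L/F})(1+x)=\psi_L(gx)$ for $x$ in the half-conductor ideal. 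In the unramified case this goes through verbatim. In the totally ramified wild case we instead use \eqref{eq:symmetric-bound} to bound $E_j(x)$ for $2\le j<p$, a trivial valuation bound for $\N_{E/F}(x)$, and the trace-ideal formula. The point to check is that for $x\in\pp_E^{\ceil{(T+ph)/2}}$ we get $\N(1+x)\equiv 1+\Tr x$ modulo $\pp_F^m$, and that $\Tr x$ lands in $\pp_F^{\ceil{m/2}}$. This precision estimate is where $h\ge T$ is used. I expect it to be the main obstacle, since the $E_j$ bound involves $(p-1)T/p$ and must still exceed $m$.

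\textbf{Step 2: assemble.} By \eqref{eq:stable-twist},
\[
 \Delta_L(\theta_L,\psi_L)=\chi_L^0(g^{-1})\Delta_L(\lambda\circ\N_{L/F},\psi_L).
\]
Apply \eqref{eq:fml} and then \eqref{eq:stable-twist} over $F$ to each twist $\nu\lambda$, where $\nu$ has conductor at most $T\le\floor{m/2}$:
\[
 \Delta_L(\lambda\circ\N_{L/F},\psi_L)\Lambda_L=\epsilon_L(g^{-1})\Delta_F(\lambda,\psi_F)^p.
\]
Since $p$ is odd, $\Lambda_L=1$ by \eqref{eq:odd-norm-product} and $\epsilon_L=1$. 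Hence
\[
 \Delta_L(\theta_L,\psi_L)=\chi_L^0(g^{-1})\Delta_F(\lambda,\psi_F)^p .
\]
Finally, $g^{-1}\in F^\times$, and Lemma~\ref{lem:restrictions} applied to the primitive compatible pair $(\chi_U^0,\chi_E^0)$ gives $\chi_U^0|_{F^\times}=\chi_E^0|_{F^\times}$. This proves the claimed equality. The unramified adjustment of $\chi_E^0$ in Proposition~\ref{lem:odd-ur-twist} keeps the pair compatible and primitive, so the lemma still applies.
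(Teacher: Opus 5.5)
Your proposal is correct and follows essentially the paper's proof: one admissible $\Gamma\in F^\times$ and one stationary representative for $\lambda$ serve both norm pullbacks, and the stable-twist formula removes $\chi_L^0$. The First Main Lemma with stable twisting over $F$ then gives $\Delta_F(\lambda,\psi_F)^p$, and Lemma~\ref{lem:restrictions} with $\epsilon_U=\epsilon_E=1$ matches the multipliers.

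The only difference is that the paper cites \cite[Proposition~6.8]{FML} for the common stationary coefficient over $E$, whereas you verify it directly. The estimate you flagged does go through, with $s=\ceil{(T+ph)/2}$: $2s\ge T+ph$ handles $E_j(x)$ for $j\ge2$, $(p-2)h\ge T$ handles $\N_{E/F}(x)$, and $(p-1)T\ge p$ puts $\Tr_{E/F}x$ in $\pp_F^{\ceil{m/2}}$.
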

\begin{proof}
Put $m=m_F(\lambda)=T+h\ge2T$. Choose an admissible $\Gamma\in F^\times$
and a stationary representative $\beta\in\OO_F^\times$ for $\lambda$.
Since $\floor{m/2}\ge T$, the same $\Gamma,\beta$ apply to both norm
pullbacks by \cite[Proposition~6.8(a)--(b)]{FML}.
Equations~\eqref{eq:norm-conductor} and \eqref{eq:trace-ideal} give
\[
 m_E(\lambda\circ\N_{E/F})+n_E(\psi_E)
 =T+p(m-T)+pn_F(\psi_F)+(p-1)T
 =p(m+n_F(\psi_F))=v_E(\Gamma).
\]
The pullback conductors are $m$ over $U$ and $T+p(m-T)\ge m$ over
$E$. Both are at least $2T$, so \eqref{eq:stable-twist} gives
\begin{equation}\label{eq:odd-ur-stable-twist}
 \Delta_L(\theta_L,\psi_L)
 =\chi_L^0(\Gamma/\beta)
   \Delta_L(\lambda\circ\N_{L/F},\psi_L),\qquad L=U,E.
\end{equation}
The multipliers agree by Lemma~\ref{lem:restrictions}, since
$\epsilon_U=\epsilon_E=1$ by \eqref{eq:epsilon-character}.
Each $\nu\in S(L/F)$ has conductor at most $T\le\floor{m/2}$.
Equations~\eqref{eq:fml} and \eqref{eq:stable-twist} over $F$ give
\[
 \Delta_L(\lambda\circ\N_{L/F},\psi_L)
 =\prod_{\nu\in S(L/F)}\Delta_F(\nu\lambda,\psi_F)
 =\Delta_F(\lambda,\psi_F)^p.
\]
Here the product of the $\nu$ is trivial by \eqref{eq:epsilon-character},
and their local-constant product is $1$ by
\eqref{eq:odd-norm-product}. Substitution in
\eqref{eq:odd-ur-stable-twist} proves the assertion.
This is the stable range in \cite[Section~6]{SML}.
\end{proof}
The common stationary coefficient is constructed in
\lean{Odd/UR/Stable.lean}{193}{Odd.UR.stable_common_covector};
the complete stable-range comparison is
\lean{Odd/UR/Stable.lean}{298}{Odd.UR.oddUR_stable_comparison}.

For $p=2$, the base-field coefficient need not be stationary over $E$.
Proposition~\ref{prop:quadratic-ur-stable} instead uses a sufficiently close
coefficient and retains the quadratic norm-character factors.

For $0\le h\le t$, put
\[
 d_U=\floor{(T+h)/2},\qquad d_E=\floor{(T+ph)/2}.
\]

\begin{lemma}[Norm representatives for the twist coefficient]\label{lem:odd-ur-character-formulas}
Suppose $0\le h\le t$. There are $A_*\in F$ and $x\in E$ such that
\[
 \lambda(1-z)=\Psi_F(A_*P(z))\qquad(z\in\pp_F^{d_U}).
\]
Put $A=\N_{E/F}(x)$. These choices can be made so that
\begin{equation}\label{eq:odd-ur-A}
 A_*=A\quad(h<t),\qquad
 A_*=A+\varepsilon,\quad\varepsilon\in\OO_F\quad(h=t).
\end{equation}
With $\varepsilon=0$ when $h<t$, put
\[
 B_U=A+d^p+\varepsilon,\qquad B_E=A-x+c.
\]
Then
\begin{equation}\label{eq:odd-ur-character-formulas}
 \theta_U(1-z)=\Psi_U(B_UP(z))\quad(z\in\pp_U^{d_U}),\qquad
 \theta_E(1-z)=\Psi_E(B_EP(z))\quad(z\in\pp_E^{d_E}),
\end{equation}
and $v_U(B_U)=-h$, $v_E(B_E)=-ph$.
If $h>0$, then $v_E(x)=v_F(A)=-h$. If $h=0$, then $x$ is integral;
the zero coefficient class may be represented by $A=x=0$.
\end{lemma}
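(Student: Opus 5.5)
We prove the lemma in three steps. First we choose the coefficient of $\lambda$ in logarithmic coordinates. Then we realize it as a norm from $E$. Finally we read off the formulas for $\theta_U$ and $\theta_E$ by multiplying the model formulas of Proposition~\ref{prop:odd-ur-characters} with the norm pullbacks of $\lambda$.

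\textbf{Step 1: the coefficient $A_*$.} By Proposition~\ref{lem:odd-ur-twist}, $m_F(\lambda)\le T+h$. Since $p\,d_U\ge T+h$ and $pq_0\ge T$, Lemma~\ref{lem:odd-log-units} identifies $U_F^{d_U}/U_F^{T+h}$ additively through $P$. The character $\Psi_F$ has largest trivial ideal $\pp_F^T$, so additive duality gives $A_*\in\pp_F^{-h}$ with $\lambda(1-z)=\Psi_F(A_*P(z))$ for $z\in\pp_F^{d_U}$. The class of $A_*$ is determined modulo $\pp_F^{T-d_U}$, which contains $\OO_F$ because $d_U\le T$ when $h\le t$. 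When $h>0$ we have exact conductor $T+h$, which forces $v_F(A_*)=-h$. When $h=0$ we may take $A_*$ integral; it is then absorbed into $\varepsilon$, or is $0$, and we set $A=x=0$.

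\textbf{Step 2: realizing $A$ as a norm.} For $0<h<t$ we apply Lemma~\ref{lem:norm-approximation} with $j=T-d_U$. This is legitimate because $h\ge1$ gives $T-d_U\le t$. It yields $x\in E^\times$ with $v_E(x)=-h$ and $\N(x)/A_*\in U_F^{j}$. Then $A=\N(x)$ satisfies $A-A_*\in\pp_F^{j-h}$, which lies in the ambiguity ideal, so we may replace $A_*$ by $A$. The main obstacle is the boundary $h=t$: there the admissible $j$ falls one step short, and the residual discrepancy is only integral. This is exactly why we keep $\varepsilon\in\OO_F$ in \eqref{eq:odd-ur-A}. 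We will verify the bookkeeping against the precision $T-d_U$ carefully in this case, and check the valuation of $x$ from $v_F(\N x)=v_E(x)$.

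\textbf{Step 3: the formulas for $\theta_U$ and $\theta_E$.} For $\theta_U$, the field $U/F$ is unramified. By \eqref{eq:norm-log-unramified} and additivity, $\lambda(\N_{U/F}(1-z))=\Psi_U(A_*P(z))$ at the needed precision. Multiplying by $\chi_U^0(1-z)=\Psi_U(d^pP(z))$ gives $B_U=A+\varepsilon+d^p$.

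For $\theta_E$, we use \eqref{eq:norm-log-ramified}: $\lambda(\N_{E/F}(1-z))=\Psi_F(A_*\Tr P(z)+A_*\N P(z))$. Next we convert with \eqref{eq:odd-norm-character-conversion} applied to $w=xP(z)$; this requires a precision check on $v_E(w)\ge q_0$, which follows from $d_E\ge q_0+h$. This gives $\Psi_F(\Tr(xw')+\N(xw'))=1$, and we want to trade $A\Tr(P(z))$ for the $E$-side coefficient $A-x$. Concretely we write $\Psi_F(A\Tr w)=\Psi_E(Aw)$. The norm term $\Psi_F(A\N P(z))=\Psi_F(\N(xP(z)))=\Psi_E(-xP(z))$ by the second identity in \eqref{eq:odd-norm-character-conversion}. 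The $\varepsilon$-term disappears at depth $d_E$. Adding $\chi_E^0(1-z)=\Psi_E(cP(z))$ yields $B_E=A-x+c$.

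The valuations then follow: $v_U(B_U)=v_F(A)=-h$ since $d,\varepsilon$ are integral. Also $v_E(B_E)=\min(v_E(A),v_E(x))=\min(-ph,-h)=-ph$ for $h>0$. For $h=0$ both statements follow from integrality, with $c$ a unit.
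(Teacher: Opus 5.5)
Your outline is the paper's: additive duality in $P$-coordinates, then Lemma~\ref{lem:norm-approximation}, then the norm--logarithm congruences together with $\Psi_F(A\N_{E/F}P(z))=\Psi_E(-xP(z))$. Step~3 and the valuations for $h>0$ are fine. The gap is in the precision bookkeeping of Steps~1--2, and it breaks the case $h=0$.

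Since $h\le t$ gives $d_U\le T-1$, the ambiguity ideal $\pp_F^{T-d_U}$ is contained in $\pp_F$; it does not contain $\OO_F$. So at $h=0$ a nonzero integral class of $A_*$ modulo $\pp_F^{\ceil{T/2}}$ is genuinely nonzero, and you may not set $A=x=0$ and move $A_*$ into $\varepsilon$. This contradicts the requirement $\varepsilon=0$ for $h<t$, and $0<t$ always. It also makes the formula for $\theta_E$ false. At $h=0$ one has $d_E=\floor{T/2}<T$, and your remark that ``the $\varepsilon$-term disappears at depth $d_E$'' holds only at $h=t$, where $d_E\ge T$. For instance, if $\varepsilon=\N_{E/F}(y)$ with $y$ a unit and $\bar y\notin\F_p$, the extra factor $\Psi_F(\varepsilon(\Tr_{E/F}P(z)+\N_{E/F}P(z)))$ equals $\Psi_E((\varepsilon-y)P(z))$. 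Here $\varepsilon-y$ is a unit, so this factor is nontrivial on $\pp_E^{d_E}$.

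The paper instead applies Lemma~\ref{lem:norm-approximation} to the nonzero integral classes as well, with relative precision $\ceil{T/2}\le t$. This produces an integral $x\ne0$, and $x=0$ is used only for the zero class. The $h=0$ valuations then need more than integrality. One has $v_E(A-x+c)=0$ because $\bar A=\bar x^{\,p}$ and $\bar x^{\,p}-\bar x+\bar c\ne0$, since $\bar c$ has nonzero absolute trace (Proposition~\ref{prop:odd-ur-characters}). One has $v_U(A+d^p)=0$ because $\bar d\notin k_F$, hence $\bar d^{\,p}\notin k_F$.

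In Step~2 the relative precision $j=T-d_U$ is too small. From $\N_{E/F}(x)/A_*\in U_F^j$ and $v_F(A_*)=-h$ you get $A-A_*\in\pp_F^{j-h}$. This lies in $\pp_F^{T-d_U}$ only if $j\ge T+h-d_U=\ceil{(T+h)/2}$, so for $h\ge1$ your claim that $\pp_F^{j-h}$ lies in the ambiguity ideal is false. With the correct $j$, the hypothesis $j\le t$ of Lemma~\ref{lem:norm-approximation} is equivalent to $h<t$, not to $h\ge1$. That is exactly why $h=t$ is the boundary. With your $j$ there would be no boundary at all, since $T-d_U=1\le t$ at $h=t$; so your explanation of the endpoint is inconsistent with your own choice.

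At $h=t$ one takes $j=t$, which leaves $A-A_*\in\pp_F^{t-h}=\OO_F$; this is the $\varepsilon$ of \eqref{eq:odd-ur-A}. The verification you postpone has two parts. Over $U$, this $\varepsilon$ is simply carried in $B_U$. Over $E$, it is invisible because $d_E\ge T$ at $h=t$, so both $\Tr_{E/F}P(z)$ and $\N_{E/F}P(z)$ lie in $\pp_F^T$ for $z\in\pp_E^{d_E}$.
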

\begin{proof}
Lemma~\ref{lem:odd-log-units} and additive duality give $A_*$ modulo
$\pp_F^{T-d_U}$. For $0<h<t$, the required relative norm precision
is $T+h-d_U\le t$, so Lemma~\ref{lem:norm-approximation} supplies a
norm in this class. The integral classes at $h=0$ are treated by the
same lemma, with $x=0$ for the zero class. At $h=t$, precision $t$
leaves an integral error, which is the $\varepsilon$ in
\eqref{eq:odd-ur-A}. These choices are \cite[Lemma~6.8]{SML}.

The unramified norm-logarithm formula and the formula for $\chi_U^0$
give $B_U$. For the ramified pullback, the essential conversion is
\[
 \Psi_F\bigl(A\N_{E/F}(P(z))\bigr)
 =\Psi_E(-xP(z)),
\]
by \eqref{eq:odd-norm-character-conversion}, since
$A=\N_{E/F}(x)$ and $xP(z)\in\pp_E^{q_0}$.
Together with \eqref{eq:norm-log-ramified}, this gives coefficient
$A-x$ for the twist and hence $B_E=A-x+c$ for $\theta_E$.
At $h=t$, the integral error contributes trivially on the required
ideal. The precise depth checks and the coefficient valuations are
\cite[Proposition~6.9]{SML}; in particular they include this endpoint.
\end{proof}
The norm choice is \lean{Odd/UR/NormChoice.lean}{259}{Odd.UR.ActualTwist.normChoice};
the formulas and valuations for the prescribed characters are
\lean{Odd/UR/Coefficients.lean}{251}{Odd.UR.coefficients}.

Put
\[
 C=x+d,\quad Z_U=\N_{K/U}(C),\quad Z_E=\N_{K/E}(C),\quad
 \eta_U=Z_U-B_U,\quad \eta_E=Z_E-B_E,
\]
and $S=\Tr_{U/F}Z_U-\Tr_{E/F}Z_E$. Here $C\ne0$, since $x\in E$,
$d\in U\setminus F$, and $E\cap U=F$.

\begin{lemma}[Local constants through common norms]\label{lem:odd-ur-ratio}
For $0\le h\le t$, one has
\begin{equation}\label{eq:odd-ur-ratio}
 \frac{\Delta_U(\theta_U,\psi_U)}{\Delta_E(\theta_E,\psi_E)}
 \in\mu_4\,\Psi_F(-S)
 \frac{\Psi_U(\eta_U^2/(2B_U))}
      {\Psi_E(\eta_E^2/(2B_E))},
\end{equation}
and
\[
 S=pA-\Tr_{E/F}(x^p)+p\Tr_{E/F}(x).
\]
\end{lemma}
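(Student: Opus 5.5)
The plan is to apply the change-of-coefficient formula \eqref{eq:odd-change-coefficient} of Lemma~\ref{lem:odd-change-coefficient} on each side, with the common element $C=x+d\in K$ supplying the replacement coefficients through its norms $Z_U=\N_{K/U}(C)$ and $Z_E=\N_{K/E}(C)$. By Lemma~\ref{lem:odd-ur-character-formulas}, $B_U$ and $B_E$ are admissible coefficients in the sense of Lemma~\ref{lem:odd-change-coefficient} for $\theta_U,\theta_E$ relative to $\Psi_U,\Psi_E$. Both have largest trivial ideal $\pp^T$, so $J=T$ on both sides.

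\textbf{Step 1: the depth condition.} We first check that $v_U(\eta_U)\ge T-\ceil{(T+h)/2}$ and $v_E(\eta_E)\ge T-\ceil{(T+ph)/2}$. For $U$, expand $\N_{K/U}(x+d)$ over the conjugates $x_i$ of $x$; these are conjugates under $\Gal(K/U)\cong\Gal(E/F)$, and $d$ is fixed. The elementary symmetric functions \eqref{eq:symmetric-bound} for $E/F$ (valid for negative valuations) give
\[
 Z_U=\N_{E/F}(x)+\cdots+\Tr_{E/F}(x)\,d^{p-1}+d^p .
\]
The middle terms are small. Since $A=\N_{E/F}(x)$, this gives $\eta_U\equiv-\varepsilon+(\text{small})$.

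For $E$, $\N_{K/E}(x+d)=\prod_j(x+\phi^jd)$. Expanding in powers of $x$ gives
\[
 Z_E=x^p-\Tr(d)\,x^{p-1}+\cdots+\N_{U/F}(d).
\]
Reducing $d^p-d=c$ modulo $\pp$ controls $\N_{U/F}(d)$ and the other symmetric functions of the $\phi^jd$. We then compare with $B_E=A-x+c$, using the Artin--Schreier relation and the fact that $x^p-x$ approximates $A=\N(x)$ to the required precision below the break. I expect this to be the main obstacle: the precise valuations must be tracked, including the endpoint $h=t$ with the integral error $\varepsilon$ and the case $h=0$ with $x=0$. I would first handle $h=0$ (where $C=d$ and everything is explicit), then do the generic estimate via \eqref{eq:symmetric-bound}.

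\textbf{Step 2: the two local constants.} Applying \eqref{eq:odd-change-coefficient} gives
\[
 \Delta_L(\theta_L,\psi_L)=\theta_L((-\alpha Z_L)^{-1})\,\Psi_L(-Z_L)\,\Psi_L\!\left(\frac{\eta_L^2}{2B_L}\right)g_L,\qquad g_L\in\mu_4 .
\]
The character values cancel in the quotient. Indeed, $\theta_L(Z_L)=\theta_L(\N_{K/L}C)=\Theta(C)$ by \eqref{eq:compatible}, and $\theta_U(-\alpha)=\theta_E(-\alpha)$ by Lemma~\ref{lem:restrictions}, since $\epsilon_L=1$ and $-\alpha\in F^\times$. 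The additive factors combine as $\Psi_U(-Z_U)=\Psi_F(-\Tr_{U/F}Z_U)$, and similarly for $E$, so their quotient is $\Psi_F(-S)$. This proves \eqref{eq:odd-ur-ratio}, with the $\mu_4$ factor coming from $g_U/g_E$.

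\textbf{Step 3: the formula for $S$.} By Galois theory,
\[
 \Tr_{U/F}\N_{K/U}(C)=\sum_{\phi^j}\prod_i\bigl(x_i+\phi^jd\bigr),\qquad
 \Tr_{E/F}\N_{K/E}(C)=\sum_i\prod_j\bigl(x_i+\phi^jd\bigr).
\]
Expanding both by powers of $d$ and $x$, all mixed terms cancel, because both sides are sums over the whole group $\Gal(K/F)$ of a product over a subgroup, reorganized. What remains is $\Tr_{U/F}$ and $\Tr_{E/F}$ of pure terms:
\[
 S=p\N_{E/F}(x)+\Tr_{U/F}(d^p)-\Tr_{E/F}(x^p)-\Tr_{U/F}\N(d)\cdot(\cdots).
\]
Using $d^p-d=c$ and $\Tr_{U/F}(d)$, I expect the $d$-terms to reduce to $p\Tr_{E/F}(x)$ after substituting $d_j=d+j$ modulo corrections, as in the interpolation identities of Lemma~\ref{lem:interpolation}. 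I would verify this identity directly as a polynomial identity in the conjugates, which is the cleanest route for the formal version.
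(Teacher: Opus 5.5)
Your Step~2 is the paper's argument: change of coefficient with $C=Z_U,Z_E$, cancellation of $\theta_L(-\alpha)$ by Lemma~\ref{lem:restrictions}, $\theta_U(Z_U)=\Theta(C)=\theta_E(Z_E)$, and $\Psi_L(-Z_L)=\Psi_F(-\Tr_{L/F}Z_L)$.

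\textbf{The gap is in Step~3.} Write $x_1,\dots,x_p$ and $d_1,\dots,d_p$ for the conjugates of $x$ and $d$ over $F$. Then exactly
\[
 S=\sum_{k=0}^{p}\Bigl(E_k(x)\Tr_{U/F}(d^{p-k})-E_k(d)\Tr_{E/F}(x^{p-k})\Bigr),
 \qquad E_0=1,\quad \Tr(y^0)=p .
\]
For general $x,d$ the mixed terms $1\le k\le p-1$ do not cancel; already $k=1$ contributes $\Tr(x)\Tr(d^{p-1})-\Tr(d)\Tr(x^{p-1})$. So your reason, that ``both sides are sums over $\Gal(K/F)$ of products over a subgroup,'' is false. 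The two sides use different subgroups and are not rearrangements of each other. The formula holds only because of the specific minimal polynomial $X^p-X-c$ of $d$. Your fallback, substituting $d+j$ for the conjugates ``modulo corrections,'' is exact only in characteristic $p$. In mixed characteristic it gives a congruence, whereas the lemma asserts an exact identity for $S$; its $p$-divisible terms are estimated later, not discarded.

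\textbf{The missing step} is the pair of exact norm identities
\[
 Z_U=d^p+\sum_{j=1}^{p-1}a_jd^{p-j}+A,\qquad Z_E=x^p-x+c,
\]
where $a_j$ is the $j$th elementary symmetric function of the $x_i$.
\begin{itemize}
\item The first identity is the expansion of $\prod_i(x_i+d)$.
\item For the second, $d\notin E$, so $f=X^p-X-c$ is also the minimal polynomial of $d$ over $E$. Hence $\N_{K/E}(x+d)=(-1)^pf(-x)=x^p-x+c$ for odd $p$. The symmetric functions of the $d_j$ are therefore known exactly, not merely modulo $p$ as in your Step~1.
\item Newton's identities for $f$ give $\Tr_{U/F}(d^j)=0$ for $1\le j<p-1$, $\Tr_{U/F}(d^{p-1})=p-1$ and $\Tr_{U/F}(d^p)=pc$.
\item Hence $\Tr_{U/F}Z_U=pA+(p-1)\Tr_{E/F}x+pc$ and $\Tr_{E/F}Z_E=\Tr_{E/F}(x^p)-\Tr_{E/F}x+pc$. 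Their difference is the stated $S$, in both characteristics.
\end{itemize}

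The same identities give $\eta_U=\sum_{j=1}^{p-1}a_jd^{p-j}-\varepsilon$ and $\eta_E=x^p-A$ exactly. This also corrects the $E$-side of your Step~1. What is needed there is $x^p\approx A$, not $x^p-x\approx A$. The latter fails in general, since $x$ itself need not lie in $\pp_E^{T-\ceil{(T+ph)/2}}$. The required bound for $x^p-A=\sum_{j=1}^{p-1}(-1)^{j+1}a_jx^{p-j}$ follows from \eqref{eq:symmetric-bound}. The paper itself quotes both depth estimates from Lemma~6.10 of the companion paper.
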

\begin{proof}
The estimates of \cite[Lemma~6.10]{SML} give
\[
 v_U(\eta_U)\ge T-\ceil{(T+h)/2},\qquad
 v_E(\eta_E)\ge T-\ceil{(T+ph)/2}.
\]
Thus \eqref{eq:odd-change-coefficient} applies to
\eqref{eq:odd-ur-character-formulas}, with $C=Z_U,Z_E$.
The factors at $-\alpha\in F^\times$ cancel by
Lemma~\ref{lem:restrictions} and \eqref{eq:epsilon-character}, while
compatibility gives
\[
 \theta_U(Z_U)=\Theta(C)=\theta_E(Z_E).
\]
Dividing \eqref{eq:odd-change-coefficient} for the two fields gives
\eqref{eq:odd-ur-ratio}.

If $a_j=E_j^{E/F}(x)$, the exact norm identities are
\[
 Z_U=d^p+\sum_{j=1}^{p-1}a_jd^{p-j}+A,
 \qquad Z_E=x^p-x+c.
\]
Newton's identities for $X^p-X-c$ give
$\Tr_{U/F}(d^j)=0$ for $1\le j<p-1$,
$\Tr_{U/F}(d^{p-1})=p-1$, and $\Tr_{U/F}(d^p)=pc$.
Taking traces in the two norm identities proves the formula for $S$.
These are \cite[Lemma~6.11]{SML}.
\end{proof}
The displacement bounds used in this construction are proved in
\lean{Odd/UR/NormDepths.lean}{808}{Odd.UR.normDepths}.
The formal theorem
\lean{Odd/UR/ScalarPhase.lean}{475}{Odd.UR.scalarPhase}
constructs the representatives and packages the ratio and trace identity
as \code{ScalarPhaseAt}; it also establishes the bounds permitting
the change of coefficient.

\begin{lemma}[Cancellation of additive factors]\label{lem:odd-ur-cancellation}
Suppose $0\le h\le t$. If $h<t$, or if $h=t$ and $p>3$, then
\[
 S\in\pp_F^T,\qquad
 \Psi_U(\eta_U^2/(2B_U))=\Psi_E(\eta_E^2/(2B_E))=1.
\]
If $p=3$ and $h=t$, put
\[
 a=\Tr_{E/F}x,\qquad b=E_2^{E/F}(x),\qquad
 \eta_E=x^3-A=ax^2-bx,
\]
\[
 \Lambda=3a-a^3+3ab-\frac{b^2}{A}
                  +\frac{\Tr_{E/F}(\eta_E^2)}{2A}.
\]
Then the factor multiplying $\mu_4$ in \eqref{eq:odd-ur-ratio} is
$\Psi_F(-\Lambda)=1$.
\end{lemma}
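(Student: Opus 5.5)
The plan is to treat the two kinds of factor in \eqref{eq:odd-ur-ratio} separately: first the additive factor $\Psi_F(-S)$, then the two quadratic correction factors. For $\Psi_F(-S)$ I would rewrite $S$ using Newton's identity for the power sum $\Tr_{E/F}(x^p)$ in terms of the elementary symmetric functions $a_j=E_j^{E/F}(x)$, with $a_p=A$. Since $p$ is odd, the term $(-1)^{p-1}p\,a_p=pA$ cancels the $pA$ in $S$. What remains is:
\begin{itemize}
\item $p\,a_1$;
\item minus the isobaric weight-$p$ monomials in $a_1,\dots,a_{p-1}$ (e.g.\ $a_1^p$, $a_1a_{p-1}$) appearing in Newton's identity, with integer coefficients.
\end{itemize}

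Each such term is bounded below with \eqref{eq:symmetric-bound}, using $v_E(x)=-h$:
\[
 v_F(a_j)\ge\floor{\frac{-jh+(p-1)T}{p}}.
\]
A monomial $a_{j_1}\cdots a_{j_r}$ with $\sum j_i=p$ and $r\ge2$ then has valuation at least roughly $r(p-1)T/p-h$. For $r\ge2$ this is at least $T$ whenever $h<t$, and also at $h=t$ as long as $p>3$, where there is enough slack to absorb the floors. In mixed characteristic the term $p\,a_1$ is handled by adding $v_F(p)=e_F$ and using the break bound $t\le pe_F/(p-1)$ (the bound \eqref{eq:odd-mixed-pbound} type input); in equal characteristic it vanishes. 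This gives $S\in\pp_F^T$, hence $\Psi_F(-S)=1$.

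For the quadratic correction factors, Lemma~\ref{lem:odd-change-coefficient} says $\Psi_L(\eta_L^2/(2B_L))=1$ as soon as $v_L(\eta_L)\ge T-d_L$, or the conductor is even. So I would sharpen the displacement bounds quoted in Lemma~\ref{lem:odd-ur-ratio} from $T-\ceil{\cdot/2}$ to $T-\floor{\cdot/2}$, using the explicit expressions
\[
 \eta_U=\sum_{j=1}^{p-1}a_jd^{p-j}-\varepsilon,\qquad
 \eta_E=x^p-x-A+\text{(integral)},
\]
and the same symmetric-function bounds. The $a_j$ with $1\le j\le p-1$ gain the extra $(p-1)T/p$ over the naive bound, which should give the needed one additional unit exactly when $h<t$ or $p>3$. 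If the bound is still borderline in some parity, I would fall back on checking directly that $v(\eta_L^2/B_L)\ge T$.

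The main obstacle is the endpoint $p=3$, $h=t$. There the monomial estimates are sharp, and neither $S$ nor the $\eta$ factors vanish individually. For this case I would compute all three factors explicitly over $F$:
\begin{itemize}
\item For $p=3$, Newton gives $S=3a-(a^3-3ab)+\dots$.
\item Over $U$, $\eta_U^2/(2B_U)$ has trace determined by $B_U\equiv A$ to the relevant precision together with the traces of $d,d^2$. This yields the term $-b^2/A$.
\item Over $E$, \eqref{eq:odd-norm-character-conversion} converts $\Psi_E$ to $\Psi_F\circ\Tr_{E/F}$, giving $\Tr_{E/F}(\eta_E^2)/(2A)$ after replacing $B_E$ by $A$ modulo harmless error.
\end{itemize}
Collecting these gives the factor $\Psi_F(-\Lambda)$. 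It remains to show $\Lambda\in\pp_F^T$. Expanding $\eta_E=ax^2-bx$, $\Tr_{E/F}(\eta_E^2)$ becomes a polynomial in $a,b,A$ via the power sums of $x$. The leading pieces should cancel against $-b^2/A+3ab-a^3+3a$ identically, leaving only terms controlled by \eqref{eq:symmetric-bound}. I expect this explicit cancellation, and the precision bookkeeping in mixed characteristic at the endpoint, to be the delicate step.
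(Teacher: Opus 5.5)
Your decomposition ($S$ first, then the two quadratic corrections, then an explicit computation at $p=3$, $h=t$) matches the paper's, but three steps fail as proposed.

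\textbf{The bound on $S$.} Your monomial estimate is false. For $r=2$, e.g.\ $a_1a_{p-1}$, \eqref{eq:symmetric-bound} gives only about $2(p-1)T/p-h$. This is below $T$ for $h$ near $t$ once $T>p$; for $p=5$, $T=10$, $h=8$ it gives $6+1=7$. The missing fact comes from the Girard--Waring formula. In $\Tr_{E/F}(x^p)$, every monomial in $a_1,\dots,a_{p-1}$ other than $a_1^p$ has coefficient $\pm p\,(r-1)!/\prod r_i!\in p\Z$, because its number of factors satisfies $1<r<p$. Hence $S=-a_1^p$ in equal characteristic. In mixed characteristic these terms gain $v_F(p)\ge\floor{(p-1)T/p}$, by \eqref{eq:trace-ideal} applied to $\Tr_{E/F}(1)=p$.

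\textbf{The correction over $U$ at $h=t$.} Valuations do not control it, even for $p>3$. Put $b=E_{p-1}^{E/F}(x)$. Then $\eta_U$ differs from $bd-\varepsilon$ by an element of $\pp_U$. Since $b,\varepsilon\in\OO_F$ may be units, $\eta_U$ is in general a unit, while $T-d_U=1$ and $v_U(\eta_U^2/(2B_U))=t<T$. So both your sharpened bound and your fallback fail. The paper computes instead
\[
 \Psi_U\bigl(\eta_U^2/(2B_U)\bigr)=\Psi_U\bigl((bd-\varepsilon)^2/(2A)\bigr)
 =\Psi_F\bigl(p\varepsilon^2/(2A)\bigr)=1,
\]
using $\Tr_{U/F}d=\Tr_{U/F}(d^2)=0$ from Newton's identities for $X^p-X-c$. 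This trace vanishing, not valuation slack, is where $p>3$ enters. For $p=3$ one has $\Tr_{U/F}(d^2)=2$, which is the source of the term $b^2/A$.

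\textbf{The endpoint $p=3$, $h=t$.} The cancellation you expect is only partial. Newton's identities give
\[
 \Lambda=3a+a^3-\frac{b^2+b^3}{A}+\frac{a^2(a^2-3b)^2}{2A},
\]
and only the last term lies in $\pp_F^T$ by \eqref{eq:symmetric-bound}. Since $b$ may be a unit, $(b^2+b^3)/A$ has valuation only $\ge t$. For $3a+a^3$, the bounds give only $v_F\ge t$ at the maximal break in mixed characteristic. The missing idea is that these terms are $\Tr_{E/F}(w)+\N_{E/F}(w)$ for $w=a$ and $w=b/x$ (note $\Tr_{E/F}(1/x)=b/A$), and both elements lie in $\pp_E^{q_0}$. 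So \eqref{eq:odd-norm-character-conversion}, i.e.\ triviality of $\tau_F$ on norms, gives $\Psi_F(3a+a^3)=\Psi_F((b^2+b^3)/A)=1$.

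Relatedly, rewriting $\Psi_E$ as $\Psi_F\circ\Tr_{E/F}$ is just the definition $\psi_E=\psi_F\circ\Tr_{E/F}$ with $\alpha\in F$. Equation \eqref{eq:odd-norm-character-conversion} is needed at this final step, not there.
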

\begin{proof}
For $h<t$, \cite[Lemma~6.10]{SML} strengthens the error bounds to
$v_U(\eta_U)\ge T-d_U$ and $v_E(\eta_E)\ge T-d_E$.
Lemma~\ref{lem:odd-change-coefficient} gives
$\Psi_U(\eta_U^2/(2B_U))=\Psi_E(\eta_E^2/(2B_E))=1$.
The bound on $S$ is \cite[Lemma~6.12]{SML}.
At $h=t$ and $p>3$, the same source gives the bound on $S$, and the
stronger error bound over $E$ still holds. Write
$b=E_{p-1}^{E/F}(x)\in\OO_F$. The boundary calculation gives
\[
 \Psi_U(\eta_U^2/(2B_U))
 =\Psi_U((bd-\varepsilon)^2/(2A))
 =\Psi_F(p\varepsilon^2/(2A))=1,
\]
since $\Tr_{U/F}d=\Tr_{U/F}(d^2)=0$ and the last argument lies in
$\pp_F^T$. This is the boundary calculation following
\cite[Lemma~6.12]{SML}.

For $p=3$, the same calculation and the norm identity for $x$ give
\[
 S=3a-a^3+3ab,\qquad
 \Psi_U(\eta_U^2/(2B_U))=\Psi_F(b^2/A),
\]
\[
 \Psi_E(\eta_E^2/(2B_E))
 =\Psi_F(\Tr_{E/F}(\eta_E^2)/(2A)).
\]
Their substitution in \eqref{eq:odd-ur-ratio} gives $\Psi_F(-\Lambda)$.
Newton's identities give
\[
 \Lambda=3a+a^3-\frac{b^2+b^3}{A}
             +\frac{a^2(a^2-3b)^2}{2A}.
\]
The symmetric estimate \eqref{eq:symmetric-bound} gives
$v_F(a)\ge\ceil{t/3}\ge1$ and $v_F(b)\ge0$.
Thus the last term lies in $\pp_F^T$;
also $v_E(a)\ge3\ceil{t/3}\ge t\ge q_0$ and
$v_E(b/x)\ge t\ge q_0$.
Apply \eqref{eq:odd-norm-character-conversion} to these two elements to obtain
\[
 \Psi_F(3a+a^3)=1,\qquad
 \Psi_F((b^2+b^3)/A)=1.
\]
Thus $\Psi_F(\Lambda)=1$. The valuation bounds and the cubic identity
are \cite[Lemma~\SMLnum{O:I:cubicidentity}]{SML}.
\end{proof}
The ordinary trace bound is
\lean{Odd/UR/TraceDepth.lean}{361}{Odd.UR.traceDepth}.
At $h=t$, the unramified factor for $p>3$ is evaluated by
\lean{Odd/UR/Comparison.lean}{40}{Odd.UR.unramifiedBoundaryCorrection}.
The cubic identity in Lemma~\ref{lem:odd-ur-cancellation} is formalized by
\lean{Odd/UR/CubicIdentity.lean}{537}{Odd.UR.cubicIdentity}.

This vanishing is not a quadratic identity: at $h=0$ and even $T$,
the trace difference in Proposition~\ref{prop:quadratic-ur-minimal} is
$S\equiv1\pmod{\pp_F^T}$, not $0$.

Substitution of Lemma~\ref{lem:odd-ur-cancellation} into
\eqref{eq:odd-ur-ratio} now gives
\begin{equation}\label{eq:odd-ur-small}
 \frac{\Delta_U(\theta_U,\psi_U)}{\Delta_E(\theta_E,\psi_E)}\in\mu_4
 \qquad(0\le h\le t).
\end{equation}

\begin{proposition}\label{prop:odd-ur-comparison}
If $|I|=p$, then $\mathcalA_U(\theta_U)=\mathcalA_E(\theta_E)$ for
every degree-$p$ intermediate field $E\ne U$.
\end{proposition}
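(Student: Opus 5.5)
By Lemma~\ref{lem:odd-conductors}, $\mathcalA_L(\theta_L)=\Delta_L(\theta_L,\psi_L)$ for $L=U,E$, so the goal is $\Delta_U(\theta_U,\psi_U)=\Delta_E(\theta_E,\psi_E)$. I would first apply Proposition~\ref{lem:odd-ur-twist} to write $\theta_U=\chi_U^0(\lambda\circ\N_{U/F})$ and $\theta_E=\chi_E^0(\lambda\circ\N_{E/F})$ with the conductor parameter $h\ge0$. Adjusting $\chi_E^0$ by an unramified character in $S(K/E)$ is harmless: it changes only $\chi_E^0$, and the equality of the $\theta$'s is unaffected. The plan then splits according to the size of $h$.

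\emph{Large $h$.} If $h\ge T$, Proposition~\ref{prop:odd-ur-stable} gives the equality directly.

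\emph{Small and intermediate $h$.} If $0\le h\le t$, the chain Lemma~\ref{lem:odd-ur-character-formulas}, Lemma~\ref{lem:odd-ur-ratio}, Lemma~\ref{lem:odd-ur-cancellation} yields \eqref{eq:odd-ur-small}. Since $T=t+1$, these two ranges exhaust all $h\ge0$, so no gap needs separate treatment. In this range the quotient $r=\Delta_U(\theta_U,\psi_U)/\Delta_E(\theta_E,\psi_E)$ lies in $\mu_4$. On the other hand, $r=\mathcalA_U(\theta_U)/\mathcalA_E(\theta_E)$ satisfies $r^p=1$ by \eqref{eq:r} of Proposition~\ref{prop:power}. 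Because $p$ is odd, $\gcd(p,4)=1$, hence $r=1$.

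The only real bookkeeping is making sure the hypotheses of those lemmas apply. First, $\psi_F$ is an arbitrary nontrivial character; the normalization by $\alpha$ in Lemma~\ref{lem:odd-norm-character-formula} is only auxiliary notation, and if one wished to normalize $\psi_F$, Lemma~\ref{lem:restrictions} shows the quotient is unchanged. Second, at the endpoint $h=t$ the case $p=3$ must be included; this is exactly what the cubic identity in Lemma~\ref{lem:odd-ur-cancellation} handles, so no further work is needed there. Third, \eqref{eq:r} requires primitivity, which is part of the standing hypothesis. The main potential obstacle is therefore only the verification that the ratio in \eqref{eq:odd-ur-ratio} reduces to $\mu_4$ in every subcase of $h\le t$. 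All of that has been packaged in the preceding lemmas, so the proof is a short assembly of these results.
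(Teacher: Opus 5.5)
Your proposal is correct and follows the paper's proof essentially step for step. Proposition~\ref{lem:odd-ur-twist} supplies $h\ge0$, and since $T=t+1$ the ranges $h\le t$ and $h\ge T$ are exhaustive: for $h\le t$, \eqref{eq:odd-ur-small} puts the quotient in $\mu_4$ and \eqref{eq:r} gives $p$th power $1$, so it equals $1$ because $(p,4)=1$; for $h\ge T$, Proposition~\ref{prop:odd-ur-stable} applies; Lemma~\ref{lem:odd-conductors} converts between $\Delta$ and $\mathcalA$ throughout.
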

\begin{proof}
Proposition~\ref{lem:odd-ur-twist} gives $h\ge0$. Since $T=t+1$, the ranges
of \eqref{eq:odd-ur-small} and Proposition~\ref{prop:odd-ur-stable}
exhaust these integers. In the first range the quotient belongs to
$\mu_4$; its $p$th power is $1$ by \eqref{eq:r} and
Lemma~\ref{lem:odd-conductors}. Since $(p,4)=1$, it is $1$.
In the second range equality is already proved. Use
Lemma~\ref{lem:odd-conductors} to replace the local constants by
$\mathcalA_U,\mathcalA_E$.
\end{proof}
This is \cite[Theorem~\SMLnum{O:I:main}]{SML}. It is formalized by
\lean{Odd/UR/Comparison.lean}{428}{Odd.UR.comparison}.

\subsection{Totally ramified extensions}\label{subsec:odd-total}
Suppose $|I|=p^2$, so $K/F$ is totally ramified.
Choose distinct degree-$p$ intermediate fields $L_1,L_2$.
For $i=1,2$, let $t_i$ be the lower ramification break of $L_i/F$,
choosing $L_1$ so that $t_1$ is minimal among the lower breaks of all
degree-$p$ intermediate extensions of $F$ in $K$. Put
\[
 t=t_1\le t_2,\qquad \delta=t_2-t,\qquad t'=t+p\delta.
\]
The lower breaks of $K/L_2$ and $K/L_1$ are $t$ and $t'$, respectively.

In mixed characteristic, \eqref{eq:trace-ideal} applied to $\Tr_{L_2/F}(1)=p$ gives
$ v_F(p)\ge\ceil{(p-1)t_2/p}$, hence
\begin{equation}\label{eq:odd-mixed-pbound}
 v_K(p)\ge p(p-1)t_2.
\end{equation}
This bound is \lean{Odd/Total/Setup.lean}{296}{Odd.Total.oddTotal_primeValuationBound}.
The best-approximation argument in \cite[Proposition~7.4]{SML} also gives
$p\nmid t$; in Lean, this is part of
\lean{Odd/Total/ASApproximation.lean}{630}{Odd.Total.oddAS_approximate_generator}.

\begin{lemma}[Artin--Schreier coordinate and norm estimates]\label{lem:odd-total-coordinate}
There is $\Delta\in K$ such that
\[
 \Delta^p-\Delta=a\in L_2,\qquad
 v_K(\Delta)=-t,\quad v_{L_2}(a)=-t,
 \quad K=L_2(\Delta).
\]
So $\N_{K/L_2}(\Delta)=a$, since $p$ is odd. Put
\[
 b=\N_{K/L_1}(\Delta),\qquad s=-E_{p-1}^{K/L_1}(\Delta).
\]
Then
\begin{equation}\label{eq:odd-total-H14}
 \Tr_{L_1/F}(b)-\Tr_{L_2/F}(a)\in\pp_F^{t_2+1},\qquad
 v_K(b-a)\ge-t,\qquad v_{L_1}(s)\ge(p-1)\delta.
\end{equation}
\end{lemma}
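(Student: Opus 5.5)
I would construct $\Delta$ by the Artin--Schreier argument for the degree-$p$ extension $K/L_2$, whose lower break is $t$. Start from any element $y\in K$ with $\sigma y-y\in\OO_K^\times$ (or of controlled valuation) for a generator $\sigma$ of $\Gal(K/L_2)$. Apply Lemma~\ref{lem:best-approximation} with $M=L_2$ to replace $y$ by $y-a'$ of maximal valuation $q$, with $p\nmid q$. Then use the standard break computation $v_K(\sigma\Delta-\Delta)=v_K(\Delta)+t$ for elements of valuation prime to $p$. Normalizing $\sigma\Delta-\Delta=1$ forces $v_K(\Delta)=-t$ and $p\nmid t$.

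Next I would check that $a:=\Delta^p-\Delta$ lies in $L_2$, up to an error. In equal characteristic this is automatic once $\sigma\Delta=\Delta+1$. In mixed characteristic $\sigma\Delta=\Delta+1$ only gives $\sigma(\Delta^p-\Delta)-(\Delta^p-\Delta)=(\Delta+1)^p-\Delta^p-1$, whose binomial terms carry a factor $p$. The bound \eqref{eq:odd-mixed-pbound}, $v_K(p)\ge p(p-1)t_2$, makes these terms small compared with $v_K(\Delta^{p-1})=-(p-1)t$. One then corrects $\Delta$ by a successive-approximation argument, again using Lemma~\ref{lem:best-approximation}, to obtain an exact Artin--Schreier generator with the same leading term. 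This mixed-characteristic correction is the step I expect to be hardest: one must keep $v_K(\Delta)=-t$ and $K=L_2(\Delta)$ while killing the error.

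With this $\Delta$, $v_{L_2}(a)=v_K(\Delta^p)/p=-t$ because $K/L_2$ is totally ramified. Since $\prod_{\xi\in\F_p}(X-\xi)=X^p-X$, the norm is
\[
 \N_{K/L_2}(\Delta)=\prod_{\xi\in\F_p}(\Delta+\xi)=\Delta^p-\Delta=a,
\]
with the sign $(-1)^p=-1$ absorbed since $p$ is odd.

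For \eqref{eq:odd-total-H14}, I would expand $b=\N_{K/L_1}(\Delta)=\prod_\tau\tau(\Delta)$ over $\Gal(K/L_1)$. A generator $\tau$ acts on $\Delta$ as $\tau\Delta=\Delta+\rho$, where $\rho\in K$ has $v_K(\rho)\ge t'-t$; this valuation is fixed by the break $t'$ of $K/L_1$. Then $b=\Delta^p+(\text{terms involving }\rho)$. The elementary symmetric functions of the displacements give
\[
 v_K(b-\Delta^p+\Delta)\ge-t,
\]
hence $v_K(b-a)\ge-t$. Similarly $s=-E_{p-1}^{K/L_1}(\Delta)$ is controlled by products of $p-1$ displacements, each of valuation at least $p\delta$ relative to $\Delta$. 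Converting to $v_{L_1}$ gives $v_{L_1}(s)\ge(p-1)\delta$.

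Finally, for the trace congruence I would write
\[
 \Tr_{L_1/F}(b)-\Tr_{L_2/F}(a)=\Tr_{K/F}\!\left(\frac{b-a}{p}\right)
\]
in spirit, or more precisely compare both traces with $\Tr_{K/F}$ of a common expression. I would apply Lemma~\ref{lem:trace-ideal} to $K/F$, which has different exponent $D_{K/F}=(p-1)(t+1)+p(p-1)(t_2+1)$. With the bound $v_K(b-a)\ge-t$ and the symmetric estimate \eqref{eq:symmetric-bound} for the lower-order terms, this should give $\pp_F^{t_2+1}$.
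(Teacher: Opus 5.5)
Your proposal has three problems: the displacement expansion is too weak for the middle estimate and for $s$; the mixed-characteristic normalization and norm formula are wrong as stated; and, most seriously, there is no mechanism for the trace congruence.

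\textbf{The middle estimate and the bound on $s$.} Put $\rho_j=\tau^j\Delta-\Delta$ and use only $v_K(\rho_j)\ge t'-t$. Then $b=\prod_j(\Delta+\rho_j)=\sum_k\Delta^{p-k}e_k(\rho)$ gives just $v_K(b-\Delta^p)\ge -pt+t'$. When $\delta=0$ this is $-(p-1)t<-t$. Similarly, $E_{p-1}^{K/L_1}(\Delta)=\sum_{k=0}^{p-1}(p-k)\Delta^{p-1-k}e_k(\rho)$ is not a sum of products of $p-1$ displacements. Its terms with $1\le k\le p-2$ have crude valuation far below $p(p-1)\delta$.

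The missing cancellation is exactly \eqref{eq:symmetric-bound} for the break $t'$ of $K/L_1$. The characteristic polynomial of $\Delta$ over $L_1$ gives $\Delta^p-b=\sum_{j=1}^{p-1}(-1)^{j+1}E_j\Delta^{p-j}$ with $v_K(E_j\Delta^{p-j})\ge -pt+(p-1)t'\ge -t$; this is the paper's argument. The same bound gives directly $v_{L_1}(E_{p-1}(\Delta))\ge\floor{(p-1)(p\delta+1)/p}=(p-1)\delta$.

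\textbf{Mixed characteristic normalization.} No $\Delta$ satisfies $\sigma\Delta=\Delta+1$, since $\Tr_{K/L_2}(\sigma\Delta-\Delta)=0\ne p$. So the conjugates of $\Delta$ are not $\Delta+\xi$. The identity $\N_{K/L_2}(\Delta)=a$ must instead come from the minimal polynomial $X^p-X-a$, and that is where oddness of $p$ enters.

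\textbf{The trace congruence.} This is the substantive part of the lemma (the paper cites \cite[Theorem~7.7]{SML}), and your last paragraph gives no mechanism for it.
\begin{itemize}
\item The expression $\Tr_{K/F}((b-a)/p)$ is meaningless in characteristic $p$.
\item In mixed characteristic it is a valid identity, but dividing by $p$ lowers $v_K$ by $p^2v_F(p)$. Lemma~\ref{lem:trace-ideal} then gives an exponent at most $t_2-1$.
\item More fundamentally, $v_K(b-a)\ge -t$ plus trace and symmetric bounds cannot suffice. Replacing $b$ by $b+y$ with $y\in\pp_{L_1}^{\ceil{-t/p}}$ preserves that bound. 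But it moves $\Tr_{L_1/F}(b)$ by an arbitrary element of $\Tr_{L_1/F}(\pp_{L_1}^{\ceil{-t/p}})$, which is not contained in $\pp_F^{t_2+1}$.
\end{itemize}

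You must use how both $\Gal(K/L_1)$ and $\Gal(K/L_2)$ act on $\Delta$. In equal characteristic, for instance, take $\sigma\Delta=\Delta+1$ and let $\tau$ generate $\Gal(K/L_1)$.
\begin{itemize}
\item Commutativity gives $\sigma\rho_j=\rho_j$, so $\rho_j\in L_2$.
\item With $e=\Tr_{K/L_1}(\Delta)$, one gets $\Tr_{L_2/F}(a)=\sum_j\bigl((\tau^j\Delta)^p-\tau^j\Delta\bigr)=e^p-e$.
\item Also $\Tr_{L_1/F}(b)=\sum_{i\in\F_p}\N_{K/L_1}(\Delta+i)=-e$.
\item Hence $\Tr_{L_1/F}(b)-\Tr_{L_2/F}(a)=-e^p$ exactly.
\item Here $e=(\tau-1)^{p-2}\rho_1\in F$. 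Each application of $\tau-1$ on $L_2$ raises the valuation by at least $t_2$, and by $t_2+1$ on elements of valuation divisible by $p$. So $v_F(e^p)=v_{L_2}(e)\ge t_2+1$.
\end{itemize}
In mixed characteristic the conjugates only approximate $\Delta+\xi$ with $\xi\in\mathcal T$. Controlling that deviation, using \eqref{eq:odd-mixed-pbound}, is the real work of the congruence, and it is absent from your proposal.
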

\begin{proof}
The coordinate is constructed in \cite[Proposition~7.4]{SML}, using
Lemma~\ref{lem:best-approximation}. The trace--norm congruence is
\cite[Theorem~7.7]{SML}; the estimate for $s$ is
\cite[Lemma~7.6]{SML}.
For the middle estimate, the characteristic-polynomial identity and
\eqref{eq:symmetric-bound} give
\[
 v_K(\Delta^p-\N_{K/L_1}(\Delta))\ge-p t+(p-1)t'\ge-t.
\]
Since $a=\Delta^p-\Delta$, it follows that $v_K(b-a)\ge-t$.
\end{proof}
The coordinate is constructed by
\lean{Odd/Total/ASCoordinate.lean}{308}{Odd.Total.aSCoordinate}.
The trace--norm congruence is \lean{Odd/Total/TraceNorm.lean}{941}{Odd.Total.traceNorm};
the estimate for $s$ follows from \lean{Odd/Total/TwistedEstimates.lean}{470}{Odd.Total.twistedEstimates}.
The power--norm estimate used for $b-a$ is
\lean{Odd/Total/PowerNorm.lean}{142}{Odd.Total.powerNorm}.
For $p=2$, Lemmas~\ref{lem:char2-coordinate},
\ref{lem:maximal-coordinate}, and~\ref{lem:nonmaximal-coordinate}
also use norms of traces to supply the norm-character coefficients.

Put
\[
 m_1=1+t+t',\qquad m_2=1+t+t_2,\qquad q_i=\ceil{m_i/p}.
\]
Choose $1\ne\tau\in S(L_2/F)$ and $\alpha\in F^\times$ such that
\[
 \tau(1-z)=\psi_F(\alpha P(z))
 \quad\bigl(z\in\pp_F^{\ceil{(t_2+1)/p}}\bigr),
 \qquad v_F(\alpha)=-n_F(\psi_F)-t_2-1.
\]
The choice follows from Lemma~\ref{lem:odd-log-units},
Proposition~\ref{prop:norm-conductors}, and additive duality.
Write \[\Psi_L(x)=\psi_L(\alpha x).\]
Equation~\eqref{eq:trace-ideal} gives largest trivial ideals
$\pp_F^{t_2+1}$, $\pp_{L_1}^{t'+1}$, and $\pp_{L_2}^{t_2+1}$.

\begin{lemma}[Compatibility of the unit models]\label{lem:nonexceptional-trace}
The formulas
\[
 R_1(1-z)=\Psi_{L_1}(bP(z))\quad(z\in\pp_{L_1}^{q_1}),\qquad
 R_2(1-z)=\Psi_{L_2}(aP(z))\quad(z\in\pp_{L_2}^{q_2})
\]
define characters on $U_{L_1}^{q_1}$ and $U_{L_2}^{q_2}$. For every
$u\in U_K^{q_1}$ their norm pullbacks agree:
\[
 R_1(\N_{K/L_1}(u))=R_2(\N_{K/L_2}(u)).
\]
\end{lemma}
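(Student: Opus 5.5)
First I check that $R_1$ and $R_2$ are characters, using Lemma~\ref{lem:odd-log-units} on each field. Over $L_1$ the largest trivial ideal of $\Psi_{L_1}$ is $\pp_{L_1}^{t'+1}$. Since $v_{L_1}(b)=v_K(\Delta)=-t$, the function $z\mapsto\Psi_{L_1}(bP(z))$ depends only on $P(z)$ modulo $\pp_{L_1}^{t'+1+t}=\pp_{L_1}^{m_1}$. With $r=q_1$ and $M=m_1$ we have $pq_1\ge m_1$, so $1-z\mapsto P(z)$ is a homomorphism $U_{L_1}^{q_1}/U_{L_1}^{m_1}\to\pp_{L_1}^{q_1}/\pp_{L_1}^{m_1}$. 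Hence $R_1$ is a character. The argument for $R_2$ is the same: $v_{L_2}(a)=-t$, the trivial ideal is $\pp_{L_2}^{t_2+1}$, and the modulus is $m_2$.

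For the compatibility, I take $u=1-z$ with $z\in\pp_K^{q_1}$. I would apply \eqref{eq:norm-log-ramified} to each of $K/L_1$ and $K/L_2$, with $h$ chosen so that the modulus $\pp_{L_i}^{T_i+h}$ lies inside the kernel of $\Psi_{L_i}(c\,\cdot)$. Here $c=b$ or $c=a$, and $T_1=t'+1$, $T_2=t+1$ are the breaks of $K/L_1$ and $K/L_2$ plus one. This reduces the claim to the additive identity
\[
 \Psi_{L_1}\bigl(b(\Tr_{K/L_1}w+\N_{K/L_1}w)\bigr)
 =\Psi_{L_2}\bigl(a(\Tr_{K/L_2}w+\N_{K/L_2}w)\bigr),
 \qquad w=P(z).
\]
The trace terms combine by transitivity: both equal $\Psi_K(bw)$ and $\Psi_K(aw)$ respectively, since $a,b$ lie in the base fields. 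The resulting discrepancy is $\Psi_K((b-a)w)$. By \eqref{eq:odd-total-H14}, $v_K(b-a)\ge -t$, and $w\in\pp_K^{q_1}$. The trivial ideal of $\Psi_K$ is $\pp_K^{D_{K/F}+\dots}$, which by \eqref{eq:trace-ideal} equals $\pp_K^{1+t+t'\cdot\ldots}$; I expect $q_1-t$ to reach it once the exact exponent ($t'+t+1$ scaled appropriately) is checked. If it falls short, the correction has to be absorbed using $s$ and the precise form of $b-a$.

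The norm terms $\N_{K/L_i}(w)$ are handled with \eqref{eq:odd-norm-character-conversion}-type conversions: $\Psi_{L_i}(c\,\N(w))$ is rewritten through the norm of $\Delta w$ or a related element, and then compared via the trace--norm congruence $\Tr_{L_1/F}b-\Tr_{L_2/F}a\in\pp_F^{t_2+1}$ from \eqref{eq:odd-total-H14}. The twisted term involving $s=-E_{p-1}(\Delta)$ enters when $\N_{K/L_1}(\Delta+\cdot)$ is expanded, and the bound $v_{L_1}(s)\ge(p-1)\delta$ should kill it.

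I expect the main obstacle to be this norm-term comparison and the precision bookkeeping. The two extensions $K/L_1$ and $K/L_2$ have different breaks $t'$ and $t$, so the ideals on which the errors must vanish differ. In mixed characteristic the $p$-multiples from Lemma~\ref{lem:norm-logarithm} must be controlled by \eqref{eq:odd-mixed-pbound}. I would first try reducing $w$ to generators, namely elements $\Delta^{-j}y$ with $y$ in a subfield, and then apply the interpolation identities of Lemma~\ref{lem:interpolation} to evaluate the conjugate sums explicitly.
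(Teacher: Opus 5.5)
Your check that $R_1,R_2$ are characters is correct. Since $v_{L_1}(b)=v_{L_2}(a)=-t$, the characters $\Psi_{L_1}(b\,\cdot)$ and $\Psi_{L_2}(a\,\cdot)$ are trivial on $\pp_{L_1}^{m_1}$ and $\pp_{L_2}^{m_2}$, and Lemma~\ref{lem:odd-log-units} applies because $pq_i\ge m_i$.

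The compatibility argument has a genuine gap, and it is more than bookkeeping.

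First, \eqref{eq:norm-log-ramified} is too coarse on $U_K^{q_1}$. For $K/L_1$, whose break is $t'$, the error lies in $\pp_{L_1}^{t'+1+h}$ when $z\in\pp_K^{h+\ceil{(t'+1)/p}}$. To reach the kernel $\pp_{L_1}^{t'+1+t}$ of $\Psi_{L_1}(b\,\cdot)$ you need $h\ge t$, that is, $z\in\pp_K^{t_2+\ceil{(t+1)/p}}$. For $K/L_2$ you need $h\ge t_2$, which gives the same depth. Now $q_1=\delta+\ceil{(2t+1)/p}$ and $\ceil{(2t+1)/p}\le t$, so this depth is always strictly larger than $q_1$. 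Hence your reduction to the displayed additive identity holds only on a proper subgroup of $U_K^{q_1}$.

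Second, even on that subgroup the trace discrepancy $\Psi_K((b-a)w)$ does not vanish for valuation reasons. By \eqref{eq:trace-ideal}, the largest trivial ideal of $\Psi_K$ is $\pp_K^{t'+1}$, not the exponent you guessed. So the bound $v_K(b-a)\ge-t$ from \eqref{eq:odd-total-H14} makes this factor trivial only for $w\in\pp_K^{m_1}$. The trace terms must therefore cancel against the norm terms. But $w\mapsto\Psi_{L_i}(c\,\N_{K/L_i}(w))$ is not additive, so you cannot reduce $w$ to monomials $\Delta^{-j}y$ over the whole group. Also, the congruence $\Tr_{L_1/F}(b)-\Tr_{L_2/F}(a)\in\pp_F^{t_2+1}$ concerns $\Delta$ alone and says nothing about the norms of a general $w$.

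The missing idea is to linearize one graded step at a time, which is what the paper does.
\begin{itemize}
\item Start from a depth at which both pullbacks are trivial. Suppose the quotient $u\mapsto R_1(\N_{K/L_1}u)/R_2(\N_{K/L_2}u)$ is trivial on $U_K^{r+1}$ for some $r\ge q_1$. Then its value at $1-z$ is additive in $z\in\pp_K^r$.
\item After an unramified base extension, the polynomial rigidity lemma reduces this value, viewed as a polynomial in a scalar parameter, to the degree-one and degree-$p$ contributions of the norm--logarithm polynomial. These are the trace and norm contributions. This step replaces your global use of \eqref{eq:norm-log-ramified}.
\item Only now is an expansion into monomials legitimate. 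Write $z=\sum_{j=0}^{p-1}x_j\Delta^j$ with $x_j\in L_2$. Because $p\nmid t$, the nonzero terms have distinct valuations modulo $p$, so each lies in $\pp_K^r$. Additivity then reduces the claim to single monomials $x\Delta^j$.
\item For $j\ne p-2$, both contributions lie in the trivial ideal. For the exceptional exponent $j=p-2$, they need not vanish separately but cancel each other under the additive character.
\item This gives triviality on $U_K^r$, and descending induction reaches $r=q_1$.
\end{itemize}
Your proposal lacks the graded induction and the rigidity step. It also does not identify the role of $p\nmid t$ or of the exceptional exponent, so the precision bookkeeping you anticipate cannot be completed along the route you describe.
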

\begin{proof}
This is \cite[Theorem~8.5]{SML}. The proof descends through the unit
filtration from a depth where both norm pullbacks are trivial.
If their quotient is trivial on $U_K^{r+1}$, with $r\ge q_1$, its value
at $1-z$ is additive in $z\in\pp_K^r$.
After a suitable unramified base extension, the polynomial rigidity
lemma \cite[Lemma~\SMLnum{O:M:rigidity}]{SML} reduces this value to the
degree-one and degree-$p$ contributions of the norm--logarithm
polynomial in a scalar parameter. These are the trace and norm
contributions identified in \cite[Lemma~8.4]{SML}.
For a monomial $x\Delta^j$, with $x\in L_2$, $0\le j<p$, and
$v_K(x\Delta^j)\ge r$, both contributions lie in the trivial ideal
when $j\ne p-2$; for $j=p-2$ they cancel under the additive character
\cite[Proposition~7.12 and Lemma~8.2]{SML}.
Since $p\nmid t$, the nonzero terms in the expansion
$z=\sum_{j=0}^{p-1}x_j\Delta^j$ have distinct valuations modulo $p$,
so each term belongs to $\pp_K^r$. Additivity therefore gives
triviality on $U_K^r$, completing the induction.
\end{proof}
The polynomial-rigidity step is
\lean{Finite/PolynomialRigidity.lean}{724}{Finite.polynomialRigidity}.
The nonexceptional trace estimate and exceptional trace formula are
\lean{Odd/Total/SecondDefect.lean}{138}{Odd.Total.secondDefect_nonexceptional_pairing}
and
\lean{Odd/Total/SecondDefect.lean}{166}{Odd.Total.secondDefect_exceptional_pairing},
respectively. The complete compatibility statement is
\lean{Odd/Models/Compatibility.lean}{1643}{Odd.Models.compatibility}.

For the nonexceptional trace estimate, the original proof split
$j=0$, $1\le j\le p-3$, and $j=p-1$, assuming
$v_K(x\Delta^j)\ge1+\delta$. The trace calculation used here treats
all these exponents by one expansion requiring only $x\in\pp_{L_2}$;
$j=p-2$ still needs its separate calculation
\cite[Remark~\SMLnum{O:A:lean-nonexceptional}]{SML}.
For $p=2$, the unit formulas use $P(z)=z$ at stationary depth;
compatibility instead uses Lemma~\ref{lem:E2}, as in
Propositions~\ref{prop:char2-characters}, \ref{prop:maximal-characters},
and~\ref{prop:nonmaximal-characters}.

\begin{lemma}[Conductor lower bounds]\label{lem:odd-total-lower-conductors}
For every primitive compatible pair $(\theta_1,\theta_2)$ over
$L_1,L_2$, one has $m_{L_i}(\theta_i)\ge m_i$.
\end{lemma}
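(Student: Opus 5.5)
Set $\mu_i=\theta_i^{\sigma_i}/\theta_i$, where $\sigma_i$ generates $\Gal(L_i/F)$. By Lemma~\ref{lem:conjugacy}, $\mu_i$ is a nontrivial element of $S(K/L_i)$. Since $K/F$ is totally ramified, $K/L_i$ is totally ramified cyclic of degree $p$. Its lower break is $t'$ for $i=1$ and $t$ for $i=2$. Proposition~\ref{prop:norm-conductors} therefore gives $m_{L_1}(\mu_1)=t'+1$ and $m_{L_2}(\mu_2)=t+1$. So the conductor of $\theta_i$ is at least $t'+1$ and $t+1$, respectively. This alone does not reach $m_i=1+t+t'$ or $1+t+t_2$. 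The extra $t$ (respectively $t_2$) must come from the ramification of $L_i/F$ itself, through the conjugation by $\sigma_i$.

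The plan is to compute $\mu_i$ on deep units directly in terms of $\theta_i$. Let $m=m_{L_i}(\theta_i)$, and suppose for contradiction that $m<m_i$. For $u\in U_{L_i}^j$, write $\sigma_i(u)/u$. Because $L_i/F$ is totally ramified with lower break $t_i$, the map $u\mapsto\sigma_i(u)/u$ sends $U_{L_i}^j$ into $U_{L_i}^{j+t_i}$ for $j\ge1$. Hence $\mu_i(u)=\theta_i(\sigma_i(u)/u)^{-1}$ up to inversion conventions. This is trivial once $j+t_i\ge m$.

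Applying this at $j=m_{L_i}(\mu_i)-1$, where $\mu_i$ is nontrivial, will force $m\ge m_{L_i}(\mu_i)-1+t_i+1$. For $i=2$ this is $(t+1)-1+t_2+1=m_2$. For $i=1$ it is $(t'+1)-1+t+1=m_1$. I will refine the argument so that the extra $+1$ is accounted for. The claim to establish is that $\sigma_i(u)/u$ lands in $U^{j+t_i}$ exactly, and that $u\mapsto\sigma_i u/u$ induces a surjection (indeed an isomorphism for $p\nmid j$) $U^j/U^{j+1}\to U^{j+t_i}/U^{j+t_i+1}$. Then nontriviality of $\mu_i$ on $U^{j}$ for $j=m_{L_i}(\mu_i)-1$ gives nontriviality of $\theta_i$ on $U^{j+t_i}$, so $m\ge j+t_i+1=m_i$.

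The main obstacle is this surjectivity. The graded map $u\mapsto\sigma u/u$ on $U^j/U^{j+1}$ is, on leading terms, multiplication by $j$ times a fixed unit (coming from $\sigma\pi/\pi$). It degenerates when $p\mid j$. So I must check that $j=t'$ (for $i=1$) and $j=t$ (for $i=2$) are prime to $p$. For $t$ this holds by the remark after \eqref{eq:odd-mixed-pbound}. For $t'=t+p\delta$ it follows because $t'\equiv t\pmod p$. If a direct graded argument proves awkward, the fallback is to use the descent of Lemma~\ref{lem:descent} with Hilbert~90. The kernel of $\N_{K/L_i}$ on deep units is generated by the elements $\tau(y)/y$, and the best-approximation estimate of Lemma~\ref{lem:best-approximation} controls their valuations. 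One then argues that if $\theta_i$ had conductor below $m_i$, $\Theta$ would agree on $U_K^{\psi}$ with a norm from $F$, contradicting primitivity.
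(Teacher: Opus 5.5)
Your core argument is correct and matches the paper's proof. With $j=m_{L_i}(\mu_i)-1$ (that is, $j=t'$ or $t$), you pick $u\in U_{L_i}^{j}$ with $\mu_i(u)\ne1$. Since $\mu_i(u)$ is a value of $\theta_i^{\pm1}$ at $\sigma_i^{\pm1}(u)/u\in U_{L_i}^{j+t_i}$, the character $\theta_i$ is nontrivial on $U_{L_i}^{j+t_i}$, so $m_{L_i}(\theta_i)\ge j+t_i+1=m_i$.

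This already gives $m_i$ exactly, so there is no missing $+1$. The proposed refinement via surjectivity of $U^j/U^{j+1}\to U^{j+t_i}/U^{j+t_i+1}$ (with the check $p\nmid j$) is unnecessary, and so is the Hilbert~90 fallback. Only the inclusion $\sigma_i(u)/u\in U_{L_i}^{j+t_i}$ is used.
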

\begin{proof}
Let $\sigma_i$ generate $\Gal(L_i/F)$ and put $u_1=t'$, $u_2=t$.
Lemma~\ref{lem:conjugacy} and
Proposition~\ref{prop:norm-conductors} give a nontrivial conjugate
quotient $\mu_i=\theta_i^{\sigma_i}/\theta_i$ of conductor $u_i+1$.
Choose $y=1+z\in U_{L_i}^{u_i}$ with $\mu_i(y)\ne1$.
The ramification bound gives
\[
 \sigma_i^{-1}(y)/y\in U_{L_i}^{t_i+u_i},\qquad
 \theta_i(\sigma_i^{-1}(y)/y)=\mu_i(y)\ne1.
\]
Hence $m_{L_i}(\theta_i)>t_i+u_i=m_i-1$.
The ramification bound is \cite[Lemma~C.3]{SML}, and this deduction
is \cite[Lemma~8.7]{SML}.
\end{proof}
The conductor bound for a prescribed compatible pair is
\lean{Odd/Conductors/Lower.lean}{172}{Odd.Conductors.lower}.
This ramification argument does not require odd $p$. In the quadratic
cases the numerical bounds are obtained from their own tower breaks.

\begin{proposition}[Extension and realization of the unit models]\label{prop:odd-total-characters}
The characters $R_1,R_2$ of Lemma~\ref{lem:nonexceptional-trace} extend
to primitive compatible characters of $L_1^\times,L_2^\times$, still
denoted $R_1,R_2$, of conductors $m_1,m_2$ satisfying
\[
 R_1(1-z)=\Psi_{L_1}(bP(z))\quad(z\in\pp_{L_1}^{q_1}),\qquad
 R_2(1-z)=\Psi_{L_2}(aP(z))\quad(z\in\pp_{L_2}^{q_2}).
\]
More generally, suppose $(\theta_1,\theta_2)$ is primitive and
compatible and $\theta_2|_{U_{L_2}^{m_2}}=1$.
Then $m_{L_i}(\theta_i)=m_i$, and there are $\alpha'=j\alpha$, $1\le j<p$,
and $\Delta'\in K$ such that
\[
 (\Delta')^p-\Delta'=a'\in L_2,
 \qquad v_K(\Delta')=-t.
\]
For some $\rho\in\Gal(L_1/F)$, one has
\[
 \begin{aligned}
 \theta_2(1-z)&=\psi_{L_2}(\alpha'a' P(z))
             &&(z\in\pp_{L_2}^{q_2}),\\
 \theta_1^\rho(1-z)&=\psi_{L_1}(\alpha'\N_{K/L_1}(\Delta')P(z))
             &&(z\in\pp_{L_1}^{q_1}).
 \end{aligned}
\]
\end{proposition}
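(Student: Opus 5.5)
The plan has two halves: an extension step, and a classification of all primitive compatible pairs of conductor at most $(m_1,m_2)$ in terms of these models.

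\emph{Extension.} By Lemma~\ref{lem:nonexceptional-trace}, $R_1\circ\N_{K/L_1}$ and $R_2\circ\N_{K/L_2}$ agree on $U_K^{q_1}$. First extend $R_2$ from $U_{L_2}^{q_2}$ to $L_2^\times$; this is possible because the subgroup is open and $\C^\times$ is divisible. The extension should be chosen so that its norm pullback to $K$ is $\Gal(K/F)$-invariant.

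\begin{itemize}
\item Invariance under $\Gal(K/L_2)$ is automatic.
\item For invariance under a generator of $\Gal(L_2/F)$, the prescribed unit formula is already invariant, since $a$ and $\alpha$ are fixed by the relevant conjugation up to terms in the trivial ideal. The remaining freedom is a character of the finite quotient $L_2^\times/U_{L_2}^{q_2}$, and I would adjust it by characters trivial on $U_{L_2}^{q_2}$ to kill the cocycle.
\end{itemize}

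With invariance in hand, Lemma~\ref{lem:descent} descends the common pullback to a character of $L_1^\times$. Its agreement with $R_1$ on $U_{L_1}^{q_1}$ follows from Lemma~\ref{lem:nonexceptional-trace} together with the surjectivity of $\N_{K/L_1}$ onto the relevant unit quotients, which I would check via the norm filtration in Proposition~\ref{prop:norm-conductors}. Any discrepancy lies in $S(K/L_1)$, whose characters have conductor $t'+1\le q_1$, so it can be absorbed.

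Primitivity comes from the conjugate quotient $R_2^{\sigma}/R_2$. On deep units it equals $\Psi_{L_2}((\sigma a-a)P(z))$, and $v(\sigma a-a)$ is governed by the break $t_2$, which should make it nontrivial. The conductors are bounded above by $m_i$ because $v_{L_2}(a)=-t$ and the trivial ideal is $\pp_{L_2}^{t_2+1}$. Equality then follows from Lemma~\ref{lem:odd-total-lower-conductors}.

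\emph{Classification.} Now let $(\theta_1,\theta_2)$ be primitive and compatible with $\theta_2|_{U^{m_2}}=1$. Lemma~\ref{lem:odd-total-lower-conductors} gives $m_{L_2}(\theta_2)\ge m_2$, hence equality. The common pullback then has conductor $\ell m_2-D$ by Proposition~\ref{prop:norm-conductors}, and pushing this to $L_1$ gives $m_{L_1}(\theta_1)=m_1$.

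By Lemma~\ref{lem:odd-log-units} and duality, $\theta_2(1-z)=\Psi_{L_2}(a''P(z))$ for some $a''$ of valuation $-t$ on $\pp^{q_2}$. The conjugate quotient $\theta_2^{\sigma}/\theta_2$ lies in $S(K/L_2)$ and generates it; since $\tau\circ\N$ also generates, the quotient equals $(\tau\circ\N)^{j}$ for some $1\le j<p$. This determines the $\sigma$-variation of $a''$ and forces $\alpha'=j\alpha$. I would then apply the Artin--Schreier construction of Lemma~\ref{lem:odd-total-coordinate} to $a'=a''/j$, or to a best approximation of it via Lemma~\ref{lem:best-approximation}, to get $\Delta'$ with $(\Delta')^p-\Delta'=a'$ modulo the trivial ideal.

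Running the extension step with $\Delta'$ in place of $\Delta$ produces a model pair $(R_1',R_2')$ with $R_2'=\theta_2$ on $U^{q_2}$. Both pairs have the same pullback on $U_K^{q_1}$, by Lemma~\ref{lem:nonexceptional-trace}. So $\theta_1/R_1'$ restricted to $U_{L_1}^{q_1}$ has trivial norm pullback, and is therefore the restriction of an element of $S(K/L_1)$. By Lemma~\ref{lem:conjugacy}, twisting by $S(K/L_1)$ is the same as conjugating by some $\rho\in\Gal(L_1/F)$, which gives the displayed formula for $\theta_1^\rho$.

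\emph{Main obstacle.} I expect the hardest step to be the normalization showing that an arbitrary conductor-$m_2$ coefficient $a''$ is, up to the scalar $j$ and the trivial ideal, exactly of Artin--Schreier form $a'$ with $v_K(\Delta')=-t$. This needs the coefficient-ambiguity bookkeeping of Lemma~\ref{lem:odd-change-coefficient}, and in mixed characteristic the bound \eqref{eq:odd-mixed-pbound}.
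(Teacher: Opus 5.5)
Your skeleton matches the paper's. You extend and descend the unit models, get $m_{L_2}(\theta_2)=m_2$ from Lemma~\ref{lem:odd-total-lower-conductors} and $m_{L_1}(\theta_1)=m_1$ from \eqref{eq:norm-conductor} on the common pullback, and turn a discrepancy in $S(K/L_1)$ into a conjugation $\rho$.

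\textbf{The realization step is a genuine gap.} The paper does not derive this step; it imports it from \cite[Theorem~8.9]{SML}. You need $\Delta'\in K$ with $(\Delta')^p-\Delta'=a'\in L_2$ \emph{exactly}, $v_K(\Delta')=-t$, and $ja'\equiv a''$ modulo $\pp_{L_2}^{t_2+1-q_2}$.
\begin{itemize}
\item Lemma~\ref{lem:odd-total-coordinate}, including the best approximation inside it, produces \emph{some} Artin--Schreier coordinate of $K/L_2$ from the field extension. It takes no prescribed target, and a general $a''$ of valuation $-t$ is not of this form. So ``apply it to $a''/j$'' has no content.
\item The constraint you extract from compatibility is too weak. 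On $U_{L_2}^{q_2}$ the conjugate quotient is $\Psi_{L_2}((\sigma a''-a'')P(z))$, and $v_{L_2}(\sigma a''-a'')=t_2-t$ because $p\nmid t$. This is identically $1$ whenever $q_2\ge t+1$, for example $p=3$, $t=1$, $t_2=5$. In general it determines $a''$ only modulo $F+\pp_{L_2}^{1-q_2}$, far coarser than the required precision.
\item A smaller slip: $\tau\circ\N_{L_2/F}=1$ because $\tau\in S(L_2/F)$. The index $j$ must be read off against the conjugate quotient of the model $R_2$.
\end{itemize}
What is missing is an argument that uses all of $\theta_2$. One outline:
\begin{itemize}
\item Match conjugate quotients to write $\theta_2=R_2^{\,j}(\lambda\circ\N_{L_2/F})$.
\item Observe that $m_{L_2}(\theta_2)=m_2$ and $p\nmid t$ force $m_{L_2}(\lambda\circ\N_{L_2/F})<m_2$.
\item Compute the truncated-logarithm coefficient of $\lambda\circ\N_{L_2/F}$ on $U_{L_2}^{q_2}$ using \eqref{eq:norm-log-ramified}.
\item Prove that this change is realized by a new exact coordinate $\Delta'$.
\end{itemize}
In mixed characteristic even the last step is not a translation $\Delta\mapsto\Delta+y$ with $y\in L_2$, because the cross terms $\binom{p}{k}\Delta^k y^{p-k}$ do not lie in $L_2$.

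\textbf{The extension step is repairable but rests on incorrect justifications.}
\begin{itemize}
\item The unit formula for $R_2$ is not $\sigma$-invariant modulo the trivial ideal, since again $v_{L_2}(\sigma a-a)=t_2-t$. Moreover $\sigma x/x\in U_{L_2}^{t_2}\subset U_{L_2}^{q_2}$ for every $x\in L_2^\times$. So twisting by characters trivial on $U_{L_2}^{q_2}$ never changes $R_2^\sigma/R_2$, and there is no cocycle to kill.
\item The correct reason is that \emph{every} extension has invariant pullback. Let $\tilde\sigma$ generate $\Gal(K/L_1)$ and $y\in K^\times$. Then $w=\tilde\sigma^{-1}(y)/y\in U_K^{t'}\subset U_K^{q_1}$, so Lemma~\ref{lem:nonexceptional-trace} gives $R_2(\N_{K/L_2}w)=R_1(\N_{K/L_1}w)=R_1(1)=1$.
\item $\N_{K/L_1}(U_K^{q_1})$ has index $p$ in $U_{L_1}^{q_1}$, not full image. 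Nontrivial characters in $S(K/L_1)$ have conductor $t'+1>q_1$, the reverse of your inequality. The descended character therefore agrees with $R_1$ only on that index-$p$ subgroup. It must be multiplied by the element of $S(K/L_1)$ matching the discrepancy, which keeps the pullback.
\item Primitivity cannot be read off deep units when $q_2\ge t+1$. Use instead that $R_2$ visibly has exact conductor $m_2=1+t+t_2\not\equiv t_2+1\pmod p$. By Proposition~\ref{prop:norm-conductors}, every $(\lambda\circ\N_{L_2/F})\mu$ with $\mu\in S(K/L_2)$ has conductor $\le t_2+1$ or $\equiv t_2+1\pmod p$.
\end{itemize}
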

\begin{proof}
Lemma~\ref{lem:nonexceptional-trace} gives the compatible principal-unit
characters. They extend to the first primitive compatible pair by
\cite[Theorem~8.6]{SML}; the prescribed-pair assertion is
\cite[Theorem~8.9]{SML}.
The conductor equalities need only the stated upper bound:
Lemma~\ref{lem:odd-total-lower-conductors} gives $m_{L_2}(\theta_2)=m_2$,
and \eqref{eq:norm-conductor} applied to the common
pullback gives
\[
 p\,m_{L_1}(\theta_1)-(p-1)(t'+1)
 =p\,m_2-(p-1)(t+1).
\]
Substitution of $t'=t+p\delta$ and $t_2=t+\delta$ gives
$m_{L_1}(\theta_1)=m_1$.
The construction supplies a first character with the prescribed formula
and the same pullback as $\theta_1$. Their quotient is in $S(K/L_1)$,
so \eqref{eq:conjugate-twists} identifies it with a conjugate
$\theta_1^\rho$.
\end{proof}
The extension of the principal-unit prescriptions to the first
compatible pair is
\lean{Odd/Models/GlobalExtension.lean}{463}{Odd.Models.globalExtension}.
The original prescribed-pair statement assumed both equalities
$m_{L_i}(\theta_i)=m_i$. Proposition~\ref{prop:odd-total-characters}
assumes only $\theta_2|_{U_{L_2}^{m_2}}=1$: the lower bound of
Lemma~\ref{lem:odd-total-lower-conductors} forces the second equality,
and \eqref{eq:norm-conductor} for the common pullback forces the first.
Thus both exact conductors are now conclusions, rather than hypotheses
\cite[Remark~\SMLnum{O:M:lean-conductors}]{SML}.
The prescribed-pair result is formalized by
\lean{Odd/Models/Realization.lean}{1087}{Odd.Models.realization},
and compatibility by
\lean{Odd/Models/Compatibility.lean}{1643}{Odd.Models.compatibility}.

\begin{proposition}[Comparison at minimal conductors]\label{prop:odd-total-minimal}
If a primitive compatible pair $(\theta_1,\theta_2)$ has
$m_{L_i}(\theta_i)=m_i$, then
$\mathcalA_{L_1}(\theta_1)=\mathcalA_{L_2}(\theta_2)$.
\end{proposition}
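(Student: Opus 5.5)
Since $p$ is odd, Lemma~\ref{lem:odd-conductors} gives $\mathcalA_{L_i}(\theta_i)=\Delta_{L_i}(\theta_i,\psi_{L_i})$. By \eqref{eq:r}, the quotient $\Delta_{L_1}(\theta_1,\psi_{L_1})/\Delta_{L_2}(\theta_2,\psi_{L_2})$ is a $p$th root of unity. It therefore suffices to show that this quotient lies in $\mu_4$, exactly as in Proposition~\ref{prop:odd-ur-comparison}.

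\textbf{Normalization.} Since $m_{L_2}(\theta_2)=m_2$, the character $\theta_2$ is trivial on $U_{L_2}^{m_2}$, so Proposition~\ref{prop:odd-total-characters} applies. It gives $\alpha'=j\alpha$ and $\Delta'$ with $(\Delta')^p-\Delta'=a'$ and $v_K(\Delta')=-t$, together with the principal-unit formulas for $\theta_2$ and $\theta_1^\rho$. By Lemma~\ref{lem:conjugacy}, replacing $\theta_1$ by $\theta_1^\rho$ does not change $\mathcalA_{L_1}$. By Lemma~\ref{lem:restrictions}, replacing $\psi_F$ by $\psi_F(j\,\cdot)$ does not change the quotient. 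We may therefore assume $\alpha'=\alpha$, and we may rename $\Delta'$ as $\Delta$, $a'$ as $a$, and set $b=\N_{K/L_1}(\Delta)$. The estimates of Lemma~\ref{lem:odd-total-coordinate} should still hold for this $\Delta'$, because they use only the Artin--Schreier relation and $v_K(\Delta')=-t$. The characters then satisfy $\theta_2(1-z)=\Psi_{L_2}(aP(z))$ on $\pp_{L_2}^{q_2}$ and $\theta_1(1-z)=\Psi_{L_1}(bP(z))$ on $\pp_{L_1}^{q_1}$.

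\textbf{Common-norm coefficients.} Next I apply Lemma~\ref{lem:odd-change-coefficient} on each $L_i$. The largest trivial ideals of $\Psi_{L_i}$ are $J_1=t'+1$ and $J_2=t_2+1$, and the $P$-coefficients are $B_1=b$ and $B_2=a$; their valuations $J_i-m_i=-t$ are the expected ones. For the common coefficients I take $C=\Delta\in K$, so that
\[
 Z_1=\N_{K/L_1}(\Delta)=b=B_1,\qquad Z_2=\N_{K/L_2}(\Delta)=a=B_2 .
\]
Hence both displacements $\eta_i$ vanish, and the correction factors $\Psi_{L_i}(\eta_i^2/(2B_i))$ equal $1$. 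Formula~\eqref{eq:odd-change-coefficient} then gives $\Delta_{L_i}(\theta_i,\psi_{L_i})\in\mu_4\,\theta_i((-\alpha Z_i)^{-1})\Psi_{L_i}(-Z_i)$. Compatibility gives $\theta_1(Z_1)=\Theta(\Delta)=\theta_2(Z_2)$. The factors $\theta_i(-\alpha)$ at $-\alpha\in F^\times$ agree by Lemma~\ref{lem:restrictions}, since $\epsilon_{L_i}=1$. The quotient therefore lies in $\mu_4\,\Psi_F\bigl(\Tr_{L_1/F}b-\Tr_{L_2/F}a\bigr)$. By the first congruence of \eqref{eq:odd-total-H14}, this trace difference lies in $\pp_F^{t_2+1}$, which is the trivial ideal of $\Psi_F$. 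So the quotient lies in $\mu_4$, and combined with the $p$th-root property it equals $1$.

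\textbf{Main obstacle.} The delicate step is checking that Lemma~\ref{lem:odd-change-coefficient} applies with these $B_i$ at the exact depths. The lemma requires the unit formula on $\pp_{L_i}^{d_i}$ with $d_i=\floor{m_i/2}$, while Proposition~\ref{prop:odd-total-characters} supplies formulas only on $\pp_{L_i}^{q_i}$ with $q_i=\ceil{m_i/p}\le d_i$ (since $m_i\ge3$). These formulas hold on the larger ideals, so they restrict correctly. The remaining point is that $B_i$ is only determined modulo $\pp_{L_i}^{J_i-d_i}$, and I must confirm that $a$ and $b$ themselves are valid representatives, with the valuations required. If instead $\Delta'$ fails to satisfy the estimates of \eqref{eq:odd-total-H14}, I would try first to rerun the proofs of Lemma~\ref{lem:odd-total-coordinate} for $\Delta'$, since they use only the Artin--Schreier relation, $v_K(\Delta')=-t$, and $p\nmid t$.
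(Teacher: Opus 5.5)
Your proposal is correct and is essentially the paper's argument. Lemma~\ref{lem:odd-change-coefficient} with zero displacement is the paper's direct use of \eqref{eq:lamprecht} and \eqref{eq:odd-critical} at the coefficients $\N_{K/L_i}(\Delta')$, and the rest is the same chain: compatibility, Lemma~\ref{lem:restrictions}, the trace congruence in \eqref{eq:odd-total-H14}, and $\mu_4\cap\mu_p=1$. Your rescaling of $\alpha'$ to $\alpha$ is harmless but unnecessary, and your ``main obstacle'' dissolves, since any element satisfying the formula on $\pp_{L_i}^{d_i}$ is an admissible $B_i$.

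The one step you assert without justification, and which the paper checks, is $\N_{K/L_2}(\Delta')=a'$. It holds because $p\nmid t$ and $v_K(\Delta')=-t$ force $\Delta'\notin L_2$, so $X^p-X-a'$ is its minimal polynomial over $L_2$, and $p$ is odd.
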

\begin{proof}
Write $\Theta=\theta_1\circ \N_{K/L_1}=\theta_2\circ \N_{K/L_2}$.
Use Proposition~\ref{prop:odd-total-characters} and put
$b'=\N_{K/L_1}(\Delta')$.
Since $p\nmid t$ and $v_K(\Delta')=-t$, one has
$\Delta'\notin L_2$. Its minimal polynomial is therefore
$X^p-X-a'$, giving $\N_{K/L_2}(\Delta')=a'$.
Since $q_i\le\floor{m_i/2}$ and $3\floor{m_i/2}\ge m_i$,
only $z+z^2/2$ in $P(z)$ contributes on the critical ideal.
Equation~\eqref{eq:lamprecht}, with coefficient $-\alpha'b'$
or $-\alpha'a'$, together with \eqref{eq:Gamma} and \eqref{eq:odd-critical}, gives
\begin{equation}\label{eq:odd-total-minimal-ratio}
 \frac{\Delta_{L_2}(\theta_2)}{\Delta_{L_1}(\theta_1^\rho)}
 =\frac{g_2}{g_1}
   \frac{\theta_1^\rho(\alpha'b')}
        {\theta_2(\alpha'a')}
   \psi_F\!\left(\alpha'[\Tr_{L_1/F}(b')-\Tr_{L_2/F}(a')]\right),
 \qquad g_i\in\mu_4.
\end{equation}
Here the additive character in $\Delta_{L_i}$ is $\psi_{L_i}$.
Lemma~\ref{lem:restrictions} and \eqref{eq:epsilon-character} give
$\theta_1^\rho(\alpha')=\theta_2(\alpha')$; their values at $-1$
agree as well. Compatibility, preserved by conjugation, gives
\[
 \theta_1^\rho(b')
 =\Theta(\Delta')=\theta_2(a').
\]
The estimate \eqref{eq:odd-total-H14} applies to $\Delta'$,
and $\psi_F(\alpha'\,\cdot)$ is trivial on $\pp_F^{t_2+1}$.
Thus the quotient belongs to $\mu_4$.
Lemma~\ref{lem:elementary-delta} removes the conjugation by $\rho$;
Lemma~\ref{lem:odd-conductors} identifies each local constant with
$\mathcalA_{L_i}$. Equation~\eqref{eq:r} gives $p$th power $1$.
Since $(p,4)=1$, the quotient is $1$.
This is \cite[Theorem~9.34]{SML}.
\end{proof}
The minimal-conductor comparison, including realization of the
prescribed pair, is
\lean{Odd/Conductors/Minimal.lean}{318}{Odd.Conductors.minimal}.
For $p=2$ and odd conductor, the unit-model depth
$\ceil{m_i/2}$ exceeds the critical depth $\floor{m_i/2}$; moreover,
$\mu_2\cap\mu_4=\mu_2$. Thus this final argument cannot determine the
sign. The corresponding critical-function comparisons are listed in
Section~\ref{subsec:quadratic-breaks}.

The lower bounds just proved ensure that the common-twist reduction
ends at $m_1,m_2$, rather than at a smaller pair. Its quadratic
counterparts are Lemmas~\ref{lem:char2-twist}, \ref{lem:maximal-twist},
and~\ref{lem:nonmaximal-twist}; their conductor alternatives must be
recomputed.

\begin{lemma}[Reduction to minimal conductors]\label{lem:odd-total-reduction}
Every primitive compatible pair $(\theta_1,\theta_2)$ can be written
\[
 \theta_1=\chi_1(\lambda\circ \N_{L_1/F}),\qquad
 \theta_2=\chi_2(\lambda\circ \N_{L_2/F}),
\]
where $(\chi_1,\chi_2)$ is primitive and compatible,
$m_{L_1}(\chi_1)=m_1$, and $m_{L_2}(\chi_2)=m_2$.
If $m_{L_2}(\theta_2)>m_2$, then $m_F(\lambda)=1+r$ with
$r>t_2$ and $p(r-t_2)>t$.
\end{lemma}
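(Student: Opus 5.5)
The plan is to divide $(\theta_1,\theta_2)$ by the model pair of Proposition~\ref{prop:odd-total-characters} and show that the quotient is a base-field twist. First, Proposition~\ref{prop:odd-total-characters} supplies a primitive compatible pair $(R_1,R_2)$ of conductors $m_1,m_2$. The conjugate quotients $\theta_i^{\sigma_i}/\theta_i$ and $R_i^{\sigma_i}/R_i$ both generate $S(K/L_i)$ (Lemma~\ref{lem:conjugacy}). After replacing $R_2$ by a suitable power-compatible choice, I expect these quotients to coincide. The first attempt is to replace $\alpha$ by $j\alpha$, which gives the model $R_2^j$ in the additive coordinates and moves the conjugate quotient through all generators. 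Then $\theta_2/R_2$ is $\Gal(L_2/F)$-invariant. By the descent argument of Lemma~\ref{lem:descent} (Hilbert~90 for $L_2/F$ and extension from the norm subgroup), $\theta_2/R_2=\lambda\circ\N_{L_2/F}$ for a quasi-character $\lambda$ of $F^\times$.

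Put $\chi_2=R_2$ and $\chi_1=\theta_1/(\lambda\circ\N_{L_1/F})$. The common pullback of $(\chi_1,\chi_2)$ is $\Theta(\lambda\circ\N_{K/F})^{-1}$, so the pair is compatible and, by Definition~\ref{def:compatible}, primitive. Since $\chi_2=R_2$ has conductor $m_2$, the prescribed-pair part of Proposition~\ref{prop:odd-total-characters} (which only needs $\chi_2|_{U^{m_2}}=1$) gives $m_{L_1}(\chi_1)=m_1$. This avoids having to compare $\chi_1$ with $R_1$ directly.

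For the conductor assertion, assume $m_{L_2}(\theta_2)>m_2$. Then $m_{L_2}(\lambda\circ\N_{L_2/F})=m_{L_2}(\theta_2)>m_2$, since $\chi_2$ has conductor $m_2$. Write $m_F(\lambda)=1+r$, and compare this with Proposition~\ref{prop:norm-conductors} for $L_2/F$, whose break is $t_2$:
\begin{itemize}
\item If $1+r\le t_2+1$, the pullback has conductor at most $t_2+1<m_2$, which is impossible. Hence $r>t_2$.
\item In that range the pullback conductor is $\psi_{L_2/F}(r)+1=t_2+p(r-t_2)+1$. The inequality $>m_2=1+t+t_2$ becomes $p(r-t_2)>t$.
\end{itemize}

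The main obstacle is the first step: matching the conjugate quotients exactly rather than only up to a power. The fallback is to use Proposition~\ref{prop:odd-total-characters} in its realization form applied to $\theta_2$ truncated. Alternatively, twist the pair $(R_1,R_2)$ by a conjugation $\rho$ and use \eqref{eq:conjugate-twists} to absorb the discrepancy into $S(K/L_2)$, as in Proposition~\ref{lem:odd-ur-twist}.
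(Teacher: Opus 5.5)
Your argument is correct, but it obtains $\lambda$ by a different route from the paper.

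\textbf{The paper's route.} The paper works on the $L_1$ side. A separate deep norm-kernel calculation (Lemma~9.32 of the companion paper, using Lemma~\ref{lem:best-approximation}) shows that $\theta_1|_{U_{L_1}^{m_1}}$ kills the kernel of $\N_{L_1/F}$ on $U_{L_1}^{m_1}$, so it factors through the norm image. Extending that character gives $\lambda$. Then $\chi_1=\theta_1/(\lambda\circ\N_{L_1/F})$ is trivial on $U_{L_1}^{m_1}$ by construction, and Lemma~\ref{lem:odd-total-lower-conductors} together with \eqref{eq:norm-conductor} gives both exact conductors.

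\textbf{Your route.} You instead match conjugate quotients on the $L_2$ side against an existing model and descend by Hilbert~90. This is what the paper itself does in Proposition~\ref{lem:odd-ur-twist} and, for $p=2$, in Lemma~\ref{lem:char2-twist}. The step you flag as the main obstacle is not one; your first attempt already settles it. The group $S(K/L_2)$ is cyclic of order $p$, and both $R_2^{\sigma_2}/R_2$ and $\theta_2^{\sigma_2}/\theta_2$ generate it. So the latter equals $(R_2^{\sigma_2}/R_2)^j$ for some $1\le j<p$, and $R_2^j$ is still trivial on $U_{L_2}^{m_2}$: its unit formula has coefficient $j\alpha a$, the $\alpha'=j\alpha$ of Proposition~\ref{prop:odd-total-characters}.

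That is all you need from $R_2^j$. Take $\chi_2=R_2^j$ and $\chi_1=\theta_1/(\lambda\circ\N_{L_1/F})$. This pair has common pullback $\Theta(\lambda\circ\N_{K/F})^{-1}$, so it is primitive and compatible. The prescribed-pair part of Proposition~\ref{prop:odd-total-characters} then gives both exact conductors. You never need $(R_1^j,R_2^j)$ itself, and your fallbacks can be dropped. Your final conductor bookkeeping for $\lambda$ is the same as the paper's.

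\textbf{What each route buys.} The paper's route determines $\lambda$ intrinsically from $\theta_1|_{U_{L_1}^{m_1}}$ and does not depend on the existence of model characters. Yours avoids the norm-kernel computation entirely. The price is using the model construction of Proposition~\ref{prop:odd-total-characters}, but Propositions~\ref{prop:odd-total-minimal} and~\ref{prop:odd-total-higher} invoke that construction in any case.
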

\begin{proof}
The norm-kernel calculation of \cite[Lemma~9.32]{SML}, using
Lemma~\ref{lem:best-approximation}, shows that $\theta_1$ on
$U_{L_1}^{m_1}$ factors through its norm image. The resulting
character is continuous, since this norm map is a quotient of a compact
group onto its image. Extend it to $\lambda$ on $F^\times$ by the
character-extension argument in Lemma~\ref{lem:descent}.
Removing $\lambda\circ \N_{L_i/F}$ preserves compatibility and primitivity
and makes $m_{L_1}(\chi_1)\le m_1$. Lemma~\ref{lem:odd-total-lower-conductors}
gives equality. Equation~\eqref{eq:norm-conductor} then gives
\[
 p\,m_{L_1}(\chi_1)-(p-1)(t'+1)
 =p\,m_{L_2}(\chi_2)-(p-1)(t+1),
\]
so $m_{L_2}(\chi_2)=m_2$.
If $m_{L_2}(\theta_2)>m_2$, then $m_{L_2}(\lambda\circ \N_{L_2/F})=m_{L_2}(\theta_2)$.
The same conductor formula forces $m_F(\lambda)=1+r>1+t_2$ and
\[
 m_{L_2}(\theta_2)=1+t_2+p(r-t_2)>1+t+t_2.
\]
The last inequality is $p(r-t_2)>t$.
This is \cite[Proposition~9.33]{SML}.
\end{proof}
Triviality on the deep norm kernel is
\lean{Odd/Conductors/Descent.lean}{288}{Odd.Conductors.descent};
the resulting common twist and its conductor conditions are
\lean{Odd/Conductors/Reduction.lean}{231}{Odd.Conductors.reduction}.

\begin{proposition}[Comparison after a higher-conductor twist]\label{prop:odd-total-higher}
Let $(\chi_1,\chi_2)$ be a primitive compatible pair of conductors
$m_1,m_2$, and let $\lambda$ be a character of $F^\times$ with
$m_F(\lambda)=1+r$, $r>t_2$, and $p(r-t_2)>t$.
Then
\[
 \Delta_{L_1}(\chi_1(\lambda\circ \N_{L_1/F}),\psi_{L_1})
 =\Delta_{L_2}(\chi_2(\lambda\circ \N_{L_2/F}),\psi_{L_2}).
\]
\end{proposition}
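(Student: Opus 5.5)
The plan is to follow the stable-range pattern of Proposition~\ref{prop:odd-ur-stable} and Proposition~\ref{prop:tame-high}. Put $m=m_F(\lambda)=1+r$. By \eqref{eq:norm-conductor}, since $r>t_2\ge t$ exceeds the breaks $t_1=t$ and $t_2$ of both $L_i/F$, the conductors are
\[
 M_i:=m_{L_i}(\lambda\circ\N_{L_i/F})=1+t_i+p(r-t_i).
\]
Explicitly, $M_1=1+t+p(r-t)$ and $M_2=1+t_2+p(r-t_2)$. Choose $\Gamma\in F^\times$ with $v_F(\Gamma)=m+n_F(\psi_F)$. The computation in the proof of Proposition~\ref{prop:odd-ur-stable}, which combines \eqref{eq:norm-conductor} and \eqref{eq:trace-ideal}, gives $M_i+n_{L_i}(\psi_{L_i})=p(m+n_F(\psi_F))=v_{L_i}(\Gamma)$. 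Hence $\Gamma$ is admissible for $\lambda\circ\N_{L_i/F}$ on both fields.

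The first real step is to check that $\chi_i$ lies in the stable range of Lemma~\ref{lem:stable-twist} relative to $\lambda\circ\N_{L_i/F}$, i.e.\ $m_i\le\floor{M_i/2}$. For $L_2$ this reads $2(1+t+t_2)\le 1+t_2+p(r-t_2)$. Using $p(r-t_2)\ge t+1$ and $r-t_2\ge1$, we get $p(r-t_2)\ge \max(t+1,p)$. I expect a short case check with $p\ge3$ and $t_2\ge t$ to suffice, possibly using $p\nmid t$ and $t_2$ relative to $t$. If it fails for small $r$, I would instead invoke \cite[Proposition~6.8]{FML}-style stationarity directly. For $L_1$ the analogous inequality $2(1+t+t')\le M_1$ follows since $M_1-M_2=(p-1)\delta$ and $m_1-m_2=(p-1)\delta$.

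The second step is a common stationary coefficient. Take a stationary representative $\beta$ for $\lambda$ and $g=\beta/\Gamma\in F$. Then show $(\lambda\circ\N_{L_i/F})(1+x)=\psi_{L_i}(gx)$ for $x\in\pp_{L_i}^{\ceil{M_i/2}}$, using \eqref{eq:norm-expansion} and \eqref{eq:symmetric-bound} exactly as in Lemma~\ref{lem:tame-covector} but with the wild break. This is the main obstacle. One must show that $\Tr$ dominates and that the $E_j$, $2\le j\le p$, land in $\pp_F^m$; the condition $p(r-t_2)>t$ should be exactly what makes $\N(x)$ and the middle symmetric terms negligible at depth $\ceil{M_i/2}$. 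An alternative route is the truncated logarithm via Lemma~\ref{lem:norm-logarithm}, writing $\lambda(1-z)=\psi_F(gP(z))$ and using \eqref{eq:norm-log-ramified}; I would try this first, since it is precisely calibrated. Given the common coefficient, \eqref{eq:stable-twist} yields
\[
 \Delta_{L_i}(\chi_i(\lambda\circ\N_{L_i/F}),\psi_{L_i})=\chi_i(\Gamma/\beta)\,\Delta_{L_i}(\lambda\circ\N_{L_i/F},\psi_{L_i}).
\]

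Finally, \eqref{eq:fml} gives $\Delta_{L_i}(\lambda\circ\N_{L_i/F},\psi_{L_i})=\prod_{\nu\in S(L_i/F)}\Delta_F(\nu\lambda,\psi_F)$, using \eqref{eq:odd-norm-product}. Each $\nu$ has conductor at most $t_2+1\le\floor{m/2}$. To check this, note that $r>t_2$ alone gives only $1+r\ge t_2+2$. So I would use \eqref{eq:stable-twist} over $F$ when the condition holds. Otherwise I would pass through the product directly: the $\nu$ are stable twists of $\lambda$ relative to $\Gamma,\beta$ once $2(t_2+1)\le 1+r$, and if this fails I would handle the product by the same common-coefficient comparison over $F$. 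This gives $\Delta_F(\lambda,\psi_F)^p\,\epsilon_{L_i}(\Gamma/\beta)=\Delta_F(\lambda,\psi_F)^p$. The multipliers $\chi_i(\Gamma/\beta)$ with $\Gamma/\beta\in F^\times$ agree by Lemma~\ref{lem:restrictions}, since $\epsilon_{L_i}=1$, which concludes the proof.
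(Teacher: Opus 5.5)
\textbf{There is a genuine gap.} The hypotheses do not put you in the stable range, and each of your three stable-twist steps fails when $r$ is small.

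\emph{The twist by $\chi_i$ is not stable.} The conditions $r>t_2$ and $p(r-t_2)>t$ only say that $M_2=1+t_2+p(r-t_2)$ exceeds $m_2=1+t+t_2$, i.e.\ $m_{L_2}(\theta_2)>m_2$; this is exactly how they arise in Lemma~\ref{lem:odd-total-reduction}. Lemma~\ref{lem:stable-twist} needs $2m_i\le M_i$. For $L_2$ this means $p(r-t_2)\ge 1+2t+t_2$. For $L_1$ it means the stronger $p(r-t_2)\ge 1+2t+t'$. Your transfer to $L_1$ is wrong: the equal differences $M_1-M_2=m_1-m_2=(p-1)\delta$ give $2m_1-M_1=2m_2-M_2+(p-1)\delta$. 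For example, $p=3$, $t=t_2=1$, $r=2$ satisfies the hypotheses and occurs in characteristic $3$, yet $m_2=3>\floor{M_2/2}=2$.

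\emph{The norm characters are not stable twists of $\lambda$.} Each $\nu\in S(L_i/F)$ has conductor $t_i+1$, which exceeds $\floor{(r+1)/2}$ unless $r\ge 2t_i+1$. So $\prod_{\nu}\Delta_F(\nu\lambda,\psi_F)$ cannot be reduced to $\Delta_F(\lambda,\psi_F)^p$. Without that reduction the two different products over $S(L_1/F)$ and $S(L_2/F)$ are not compared at all.

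\emph{The base coefficient is not stationary.} In general $g=\beta/\Gamma$ is not a stationary coefficient for $\lambda\circ\N_{L_i/F}$ on $\pp_{L_i}^{\ceil{M_i/2}}$ unless $M_i\ge 2r+1$, i.e.\ $(p-2)(r-t_i)\ge t_i$. Otherwise the term $\psi_F(g\N_{L_i/F}(x))$ in the norm expansion is not negligible; for instance $p=3$, $t=1$, $t_2=2$, $r=3$ gives $\ceil{M_2/2}=3<r+1$. Your fallback to \cite[Proposition~6.8]{FML} does not help. As used in Proposition~\ref{prop:odd-ur-stable}, it requires $\floor{m_F(\lambda)/2}\ge t_2+1$, which is the same restriction.

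\textbf{What is missing is the explicit non-stable calculation.} The paper applies Lamprecht's formula directly to $\theta_i=\chi_i(\lambda\circ\N_{L_i/F})$. Let $\gamma,\beta,\alpha$ be the truncated-logarithm coefficients of $\lambda$, of a nontrivial $\nu\in S(L_1/F)$, and of a nontrivial character in $S(L_2/F)$. Let $a=\Delta^p-\Delta$ and $b=\N_{K/L_1}(\Delta)$ come from Proposition~\ref{prop:odd-total-characters} applied to the minimal pair. The paper uses the corrected coefficients $C_1=\gamma-\beta w_1+\alpha b$ and $C_2=\gamma-\alpha w_2+\alpha a$. Here the norm choices $\N_{L_1/F}(w_1)=\gamma/\beta$ and $\N_{L_2/F}(w_2)=\gamma/\alpha$ convert the non-negligible norm term through the norm characters.

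It then writes the quotient of local constants as $(g_2/g_1)\,Q_1Q_2Q_3Q_4\,\psi_F\bigl(\alpha(\Tr_{L_1/F}(b)-\Tr_{L_2/F}(a))\bigr)$ with $g_i\in\mu_4$. The combined phase of $Q_1Q_2Q_3Q_4$ is shown to be trivial by the interpolation and cancellation calculations of \cite[Section~9]{SML}. The trace factor is trivial by \eqref{eq:odd-total-H14}. The quotient therefore lies in $\mu_4$, and only then do \eqref{eq:r} and $(p,4)=1$ give equality.

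Your argument essentially reproduces Proposition~\ref{prop:odd-ur-stable}. It is valid only in the subrange where all three stability conditions hold. It never uses \eqref{eq:r}, which is what removes the remaining $\mu_4$ ambiguity once the critical sums of the $\theta_i$ can no longer be matched directly.
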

\begin{proof}
Put $\theta_1=\chi_1(\lambda\circ \N_{L_1/F})$ and
$\theta_2=\chi_2(\lambda\circ \N_{L_2/F})$. Apply
Proposition~\ref{prop:odd-total-characters} to the minimal pair,
conjugating the first character if necessary; this does not change its
local constant. For this proof, rename the resulting $\alpha'$ and
$\Delta'$ as $\alpha$ and $\Delta$, and set $b=\N_{K/L_1}(\Delta)$ and
$a=\Delta^p-\Delta$. As in Proposition~\ref{prop:odd-total-minimal},
$\N_{K/L_2}(\Delta)=a$. The character coefficients are now $\alpha b,\alpha a$.
Choose a nontrivial $\nu\in S(L_1/F)$ and coefficients
$\beta,\gamma\in F^\times$ such that
\[
 \begin{aligned}
 \nu(1-z)&=\psi_F(\beta P(z))
        &&\bigl(z\in\pp_F^{\ceil{(t+1)/p}}\bigr),\\
 \lambda(1-z)&=\psi_F(\gamma P(z))
        &&\bigl(z\in\pp_F^{\ceil{(r+1)/p}}\bigr).
 \end{aligned}
\]
The analogous formula for a nontrivial character of $S(L_2/F)$
has coefficient $\alpha$ and depth $\ceil{(t_2+1)/p}$.

The norm choices in \cite[Lemmas~8.17--8.18]{SML} allow these
coefficients to be adjusted without changing the character formulas so that
\[
 \N_{L_1/F}(w_1)=\gamma/\beta,\qquad \N_{L_2/F}(w_2)=\gamma/\alpha.
\]
The analogous normalization changes in the quadratic proof are
Lemmas~\ref{lem:maximal-norm-choice} and~\ref{lem:nonmaximal-norm-choice}:
there one rescales $\alpha$ to make the twisting coefficient a norm
from the third quadratic field.
Put
\[
 C_1=\gamma-\beta w_1+\alpha b,\qquad
 C_2=\gamma-\alpha w_2+\alpha a,\qquad A_1=\gamma+\alpha b.
\]
The norm-pullback calculation \cite[Proposition~8.20]{SML} gives
\[
 \begin{gathered}
 m_{L_1}(\theta_1)=1+t'+p(r-t_2),\qquad
 m_{L_2}(\theta_2)=1+t_2+p(r-t_2),\\
 \theta_i(1-z)=\psi_{L_i}(C_iP(z))
       \quad\bigl(z\in\pp_{L_i}^{\ceil{m_{L_i}(\theta_i)/p}}\bigr).
 \end{gathered}
\]
The inequality $p(r-t_2)>t$ makes
$\gamma$ the term of strictly smallest valuation in $A_1,C_1,C_2$;
in particular these elements are nonzero.

To separate the multiplicative and additive contributions, define
\[
 \begin{aligned}
 Q_1&=\frac{\chi_1(A_1)}{\chi_2(C_2)},&
 Q_2&=\lambda\!\left(\frac{\N_{L_1/F}(A_1)}{\N_{L_2/F}(C_2)}\right),\\
 Q_3&=\psi_{L_1}(-\beta w_1)\theta_1(C_1/A_1),&
 Q_4&=\psi_{L_2}(\alpha w_2).
 \end{aligned}
\]
Apply \eqref{eq:odd-change-coefficient} with zero displacement to
these truncated-logarithm coefficients. The values at $-1$ agree by
Lemma~\ref{lem:restrictions}. Separating the remaining factors gives
\begin{equation}\label{eq:odd-total-four-factors}
 \frac{\Delta_{L_2}(\theta_2,\psi_{L_2})}
      {\Delta_{L_1}(\theta_1,\psi_{L_1})}
 =\frac{g_2}{g_1}Q_1Q_2Q_3Q_4\,
   \psi_F\!\left(\alpha(\Tr_{L_1/F}(b)-\Tr_{L_2/F}(a))\right),
 \qquad g_i^4=1.
\end{equation}
Indeed $Q_1Q_2=\theta_1(A_1)/\theta_2(C_2)$, and multiplication by
$\theta_1(C_1/A_1)$ gives the character quotient from Lamprecht's
formula. The traces of $\gamma$ cancel, leaving the additive factors
shown above. This is \cite[Proposition~8.22]{SML}.

The substantial calculation concerns the product, not its individual
factors. The linear and quadratic contributions are evaluated over $F$;
\cite[Theorem~9.29 and Corollary~9.30]{SML} give
\[
 (Q_1Q_2Q_3Q_4)^{-1}=\psi_F(\alpha\phi),
 \qquad \phi\in\pp_F^{t_2+1}.
\]
Thus their combined phase is trivial. The estimates use the trace and
symmetric-function bounds of Section~\ref{sec:norms}, the interpolation
identities \eqref{eq:interpolation}--\eqref{eq:power-sums}, and the
cancellation between the linear and quadratic terms. The detailed
expansions are in \cite[Section~9]{SML}.
Also \eqref{eq:odd-total-H14} makes the final factor in
\eqref{eq:odd-total-four-factors} equal to $1$.
The quotient therefore lies in $\mu_4$. Twisting preserves compatibility
and primitivity, so \eqref{eq:r} and Lemma~\ref{lem:odd-conductors}
give its $p$th power equal to $1$. Since $(p,4)=1$, it is $1$.
This is \cite[Corollary~9.31]{SML}.
\end{proof}
The coefficient choices and their preservation are
\lean{Odd/Parameters/NormChoice.lean}{92}{Odd.Parameters.normChoice} and
\lean{Odd/Parameters/Preservation.lean}{140}{Odd.Parameters.preservation};
the pullback formula is \lean{Odd/Parameters/Pullback.lean}{159}{Odd.Parameters.pullback}.
The four-factor identity is \lean{Odd/Parameters/FourFactors.lean}{365}{Odd.Parameters.fourFactors}.
The combined phase is evaluated by \lean{Odd/Transition/Phase.lean}{82}{Odd.Transition.phase}
and shown to be trivial by \lean{Odd/Transition/Cancel.lean}{49}{Odd.Transition.cancel}.
For the constructed coefficients,
\lean{Odd/Transition/Cancel.lean}{218}{Odd.Transition.actual_local_factors}
then proves equality of the actual local constants.

\begin{proposition}\label{prop:odd-total-comparison}
If $K/F$ is totally ramified and $\ell=p>2$, then
$\mathcalA_L(\theta_L)=\mathcalA_{L'}(\theta_{L'})$
for every pair $L,L'$ of degree-$p$ intermediate fields.
\end{proposition}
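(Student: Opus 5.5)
The plan is to assemble Propositions~\ref{prop:odd-total-minimal} and~\ref{prop:odd-total-higher} through the reduction Lemma~\ref{lem:odd-total-reduction}. First reduce to a single pair. Fix $L_1$ with minimal lower break $t=t_1$, as in Section~\ref{subsec:odd-total}. If $L,L'$ are both different from $L_1$, it suffices to compare each of them with $L_1$. So assume $L=L_1$ and $L'=L_2$ is any other degree-$p$ intermediate field. The numbers $t_2,\delta,t',m_1,m_2$ are then defined relative to this $L_2$, and the pair $(\theta_{L_1},\theta_{L_2})$ is primitive and compatible with common pullback $\Theta$. (If several fields attain the minimal break, any of them may serve as $L_1$; nothing below uses strictness.)

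Next apply Lemma~\ref{lem:odd-total-reduction} to write
\[
\theta_{L_i}=\chi_i(\lambda\circ\N_{L_i/F}),
\]
where $(\chi_1,\chi_2)$ is primitive and compatible of conductors $m_1,m_2$. There are two cases.
\begin{itemize}
\item If $m_{L_2}(\theta_{L_2})\le m_2$, then Lemma~\ref{lem:odd-total-lower-conductors} forces $m_{L_2}(\theta_{L_2})=m_2$, so $\theta_{L_2}|_{U_{L_2}^{m_2}}=1$. Proposition~\ref{prop:odd-total-characters} then gives $m_{L_1}(\theta_{L_1})=m_1$ as well, and Proposition~\ref{prop:odd-total-minimal} gives $\mathcalA_{L_1}(\theta_{L_1})=\mathcalA_{L_2}(\theta_{L_2})$ directly.
\item Otherwise the lemma gives $m_F(\lambda)=1+r$ with $r>t_2$ and $p(r-t_2)>t$. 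These are exactly the hypotheses of Proposition~\ref{prop:odd-total-higher}, which yields
\[
\Delta_{L_1}(\theta_{L_1},\psi_{L_1})=\Delta_{L_2}(\theta_{L_2},\psi_{L_2}).
\]
Lemma~\ref{lem:odd-conductors} then converts this into equality of $\mathcalA_{L_i}$, since the norm-character products are $1$ for odd $p$.
\end{itemize}

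The step needing the most care is the bookkeeping in the first reduction. The setup of Section~\ref{subsec:odd-total} is stated for $L_1$ chosen with minimal break, so every comparison must go through that field. One also has to check that the lemmas cited apply to an arbitrary second field $L_2$. The estimates \eqref{eq:odd-total-H14}, the models, and the reduction are all stated for a general second field with $t_2\ge t$, so no further restriction arises.

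One must also confirm that $\mathcalA_L(\theta_L)$ depends only on $\Theta$ and $L$, by Lemma~\ref{lem:conjugacy}. Then the pairwise equalities with $L_1$ chain together to give the statement for every pair $L,L'$.
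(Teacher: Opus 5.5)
Your proposal is correct and follows essentially the same route as the paper. You fix $L_1$ of minimal break and split on whether $m_{L_2}(\theta_{L_2})=m_2$, in which case Proposition~\ref{prop:odd-total-minimal} applies, or is larger, in which case Lemma~\ref{lem:odd-total-reduction}, Proposition~\ref{prop:odd-total-higher} and Lemma~\ref{lem:odd-conductors} apply; you then chain arbitrary pairs through $L_1$. The only cosmetic difference is that the paper uses Lemma~\ref{lem:descent} to supply a character of $L_1^\times$ with pullback $\Theta$ for pairs avoiding $L_1$, whereas you use the family's own $\theta_{L_1}$ together with Lemma~\ref{lem:conjugacy}, which amounts to the same thing.
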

\begin{proof}
Retain $L_1$ of minimal lower break and let $L_2\ne L_1$ vary.
Lemma~\ref{lem:odd-total-lower-conductors} gives
$m_{L_2}(\theta_{L_2})\ge m_2$.
If equality holds, the common-pullback conductor formula gives
$m_{L_1}(\theta_{L_1})=m_1$, and Proposition~\ref{prop:odd-total-minimal}
applies. Otherwise Lemma~\ref{lem:odd-total-reduction} gives the
hypotheses of Proposition~\ref{prop:odd-total-higher};
Lemma~\ref{lem:odd-conductors} then gives equality of $\mathcalA$.
For a pair $L,L'$ not containing $L_1$, Lemma~\ref{lem:descent} supplies a
character of $L_1^\times$ with the same pullback $\Theta$.
The two comparisons with that character give the asserted equality.
\end{proof}
This is \cite[Theorem~\SMLnum{O:G:totalbranch}]{SML}, formalized by
\lean{Odd/Total/Comparison.lean}{467}{Odd.Total.comparison}.

\subsection{Conclusion for odd residue characteristic}
The preceding calculations apply in both characteristics. In equal
characteristic,
\[
 \operatorname{char}F=p\quad\Longrightarrow\quad p=0\text{ in }F.
\]
In mixed characteristic the terms divisible by $p$ in
\eqref{eq:odd-ur-ratio} and \eqref{eq:odd-total-minimal-ratio} are
retained until their valuations are estimated, using
\eqref{eq:trace-ideal} and \eqref{eq:odd-mixed-pbound}, respectively.

\begin{corollary}\label{cor:odd-sml}
The Second Main Lemma holds when $\ell=p>2$.
\end{corollary}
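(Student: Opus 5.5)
The corollary is an assembly of the two wild odd-prime comparisons, organized by the size of the inertia subgroup $I$ of $\Gal(K/F)\simeq C_p\times C_p$. The proof follows the case division of Lemma~\ref{lem:inertia}: since $\Gal(K/F)/I$ is cyclic, $|I|\in\{p,p^2\}$.

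\textbf{Case $|I|=p$.} Put $U=K^I$. This is the unique unramified degree-$p$ intermediate field: any other unramified one would force $I\subset\Gal(K/U')$, and hence $I=\Gal(K/U')$. Every other degree-$p$ intermediate field $E$ is then totally ramified over $F$. Proposition~\ref{prop:odd-ur-comparison} gives $\mathcalA_U(\theta_U)=\mathcalA_E(\theta_E)$ for every $E\ne U$. For two fields $L_1,L_2$ both different from $U$, we compare each with $U$ and combine the equalities transitively. If one of $L_1,L_2$ equals $U$, the comparison is direct. This gives \eqref{eq:sml} for all pairs.

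\textbf{Case $|I|=p^2$.} Here $K/F$ is totally ramified, and Proposition~\ref{prop:odd-total-comparison} already asserts $\mathcalA_L(\theta_L)=\mathcalA_{L'}(\theta_{L'})$ for every pair of degree-$p$ intermediate fields.

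Some bookkeeping is required to apply these propositions.

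\begin{itemize}
\item Both propositions are stated for a primitive compatible family $(\theta_L)$ with common pullback $\Theta$, which is exactly the hypothesis of Theorem~\ref{thm:sml}.
\item By Lemma~\ref{lem:conjugacy}, $\mathcalA_L(\theta_L)$ is independent of the choice of $\theta_L$ among characters with pullback $\Theta$. So the modifications made inside the proofs do not change the quantities compared: conjugating by $\rho$, or adjusting $\chi_E^0$ by an element of $S(K/E)$.
\item Lemma~\ref{lem:odd-conductors} gives $\mathcalA_L=\Delta_L$ in both cases, since \eqref{eq:odd-norm-product} removes the norm-character product.
\end{itemize}

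Finally, the arguments hold in both mixed and equal characteristic, as remarked just above. Terms divisible by $p$ are controlled by \eqref{eq:trace-ideal} and \eqref{eq:odd-mixed-pbound}, or vanish outright when $\operatorname{char}F=p$.

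There is no real obstacle at this stage. All the substantive work sits in the earlier propositions, where the explicit calculations place the quotient in $\mu_4$ and \eqref{eq:r} places it in $\mu_p$; since $(p,4)=1$, it equals $1$. The only point to check is that the two inertia cases are exhaustive and that the transitivity step covers pairs not containing the distinguished field. In the totally ramified case the latter is already handled inside Proposition~\ref{prop:odd-total-comparison} through the auxiliary field $L_1$ and Lemma~\ref{lem:descent}.
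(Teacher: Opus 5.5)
Your proposal is correct and follows the paper's route. The corollary is the assembly, over the exhaustive inertia cases $|I|\in\{p,p^2\}$ of Lemma~\ref{lem:inertia}, of Proposition~\ref{prop:odd-ur-comparison} (with the comparison passing through the unramified field $U$) and Proposition~\ref{prop:odd-total-comparison}, which is exactly the dispatch used in Section~\ref{sec:completion} and in the Lean exhaustion theorem; your bookkeeping points (independence of $\mathcalA_L$ from the choice of $\theta_L$, and $\mathcalA_L=\Delta_L$ by Lemma~\ref{lem:odd-conductors}) are already built into those two propositions.
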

 
\section{Biquadratic extensions and the sign}\label{sec:quadratic}
Assume $\ell=p=2$. Let $(\theta_L)$ be a primitive compatible family
as in Definition~\ref{def:compatible}. Write $L_1,L_2,L_3$ for the
three quadratic intermediate fields and $\theta_i=\theta_{L_i}$, so
$\theta_i\circ\N_{K/L_i}=\Theta$. Let $\omega_i$ be the nontrivial
character in $S(L_i/F)$.

Put
\begin{equation*}
 \mathcal A_i
 :=\mathcalA_{L_i}(\theta_i)
 =\Delta_{L_i}(\theta_i,\psi_{L_i})\Delta_F(\omega_i,\psi_F).
\end{equation*}
Lemma~\ref{lem:norm-groups} gives
\begin{equation*}
 S(K/F)=\{1,\omega_1,\omega_2,\omega_3\},\qquad
 \omega_1\omega_2=\omega_3.
\end{equation*}
Equation~\eqref{eq:common-power} gives
\begin{equation*}
 \mathcal A_i^2
 =\Delta_K(\Theta,\psi_K)
   \prod_{j=1}^3\Delta_F(\omega_j,\psi_F),
\end{equation*}
so
\[
 \mathcal A_i/\mathcal A_j\in\{1,-1\}.
\]
This is \cite[Proposition~4.4]{SML}, formalized by
\lean{Characters/QuadraticSquare.lean}{126}{Characters.quadraticSquare}.

\subsection{Conductors and local-constant factors}

\begin{lemma}\label{lem:quadratic-conductors}
For every primitive compatible biquadratic family,
\[
 m_{L_i}(\theta_i)>1\qquad(i=1,2,3).
\]
\end{lemma}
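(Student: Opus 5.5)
The plan is to repeat the argument of Lemma~\ref{lem:odd-conductors}, whose conductor part was noted there not to use $p>2$. Fix $i$ and let $\sigma_i$ generate $\Gal(L_i/F)$. By Lemma~\ref{lem:conjugacy}, primitivity makes the conjugate quotient $\mu_i=\theta_i^{\sigma_i}/\theta_i$ a nontrivial element of $S(K/L_i)$. Lemma~\ref{lem:inertia} with $\ell=p=2$ leaves two cases for $L_i/F$: either it is unramified, or it is totally (hence wildly) ramified of degree $2$.

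\emph{Ramified case.} Since $L_i/F$ is totally wildly ramified, $\sigma_i$ acts trivially on $k_L$. For a uniformizer $\pi_L$ of $L=L_i$ we also have $\sigma_i(\pi_L)/\pi_L\in U_L^1$. Consequently every $x\in L^\times$ satisfies $\sigma_i(x)/x\in U_L^1$.

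If $m_{L_i}(\theta_i)\le1$, then $\theta_i$ is trivial on $U_L^1$. It follows that $\theta_i^{\sigma_i}=\theta_i$, so $\mu_i=1$, which is a contradiction.

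\emph{Unramified case.} Here Lemma~\ref{lem:inertia} gives $I=\Gal(K/L_i)$, so $K/L_i$ is totally and wildly ramified of degree $2$ with lower break $t\ge1$. By Proposition~\ref{prop:norm-conductors}, the nontrivial character $\mu_i\in S(K/L_i)$ has conductor $t+1\ge2$.

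Since $\sigma_i$ preserves $U_L^j$, the conjugate $\theta_i^{\sigma_i}$ has the same conductor as $\theta_i$. Therefore $m(\mu_i)\le m(\theta_i)$, which gives $m_{L_i}(\theta_i)\ge2$.

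The only point needing care is the wildness claim $t\ge1$, i.e.\ that degree-$2$ ramification in residue characteristic $2$ has break at least $1$. This holds because the inertia group is then a $2$-group sitting inside the wild inertia, so $G_1=G_0$ and hence $t\ge1$. This is the same step already used in Lemma~\ref{lem:odd-conductors}, so I expect no real obstacle; the formal counterpart should again be \code{Characters.conjugacy} together with the imported conductor formula.
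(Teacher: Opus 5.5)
Your proof is correct and is essentially the paper's argument. In both, Lemma~\ref{lem:conjugacy} makes $\mu_i=\theta_i^{\sigma_i}/\theta_i$ nontrivial. In the ramified case, $\sigma_i$ acts trivially on $L_i^\times/U_{L_i}^1$, so conductor at most one would force invariance. In the unramified case, Proposition~\ref{prop:norm-conductors} gives $1<m_{L_i}(\mu_i)\le m_{L_i}(\theta_i)$. You just spell out details the paper leaves implicit: why $\sigma_i(x)/x\in U_{L_i}^1$, why the break satisfies $t\ge1$, and why conjugation preserves conductors.
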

\begin{proof}
Let $\sigma_i$ be the nontrivial element of $\Gal(L_i/F)$.
By Lemma~\ref{lem:conjugacy}, $\mu_i=\theta_i^{\sigma_i}/\theta_i$
is the nontrivial character of $S(K/L_i)$.
If $L_i/F$ is ramified, $\sigma_i$ acts trivially on
$L_i^\times/U_{L_i}^1$, so a character of conductor at most one
would be invariant. If $L_i/F$ is unramified, $K/L_i$ is wildly
ramified and Proposition~\ref{prop:norm-conductors} gives
$1<m_{L_i}(\mu_i)\le m_{L_i}(\theta_i)$.
This is the conductor argument of Lemma~\ref{lem:odd-conductors};
it does not use oddness of $p$. Its formal ingredients are linked there.
\end{proof}

By Lemma~\ref{lem:quadratic-conductors}, $m_{L_i}(\theta_i)>1$, so
Lamprecht's formula applies to every $\theta_i$. Write
\[
 m_i=m_{L_i}(\theta_i)=2d_i+\varepsilon_i,
 \qquad \varepsilon_i\in\{0,1\}.
\]
Choose an admissible element $\Gamma_i$ and a stationary representative
$b_i$ for $(\theta_i,\psi_{L_i})$, and let $H_i,g_i$ be the critical
function and normalized sum in \eqref{eq:critical}--\eqref{eq:Gamma}.
Substituting \eqref{eq:lamprecht}   gives
\begin{equation}\label{eq:quadratic-factor}
 \mathcal A_i
 =\Delta_F(\omega_i,\psi_F)
   \theta_i(\Gamma_i/b_i)\psi_{L_i}(b_i/\Gamma_i)g_i.
\end{equation}
Thus \eqref{eq:sml} in the biquadratic case is equivalent to
\begin{equation}\label{eq:quadratic-calculation}
 \Delta_F(\omega_i,\psi_F)
 \theta_i(\Gamma_i/b_i)\psi_{L_i}(b_i/\Gamma_i)g_i
 =
 \Delta_F(\omega_j,\psi_F)
 \theta_j(\Gamma_j/b_j)\psi_{L_j}(b_j/\Gamma_j)g_j.
\end{equation}
\subsection{Norms of one element of \texorpdfstring{$K$}{K}}

\begin{lemma}[Biquadratic trace--norm identity]\label{lem:E2}
For $C\in K$ let $E_2(C)$ be the second elementary symmetric function of
its four conjugates.  Then
\[
 E_2(C)=\Tr_{L_i/F}(\N_{K/L_i}(C))+\N_{L_i/F}(\Tr_{K/L_i}(C))
 \qquad(i=1,2,3).
\]
\end{lemma}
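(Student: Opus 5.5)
Write $G=\Gal(K/F)=\{1,\sigma,\tau,\sigma\tau\}$ and fix $i$. After relabelling we may take $\Gal(K/L_i)=\{1,\sigma\}$, so that $\tau|_{L_i}$ generates $\Gal(L_i/F)$. Put $C_g=g(C)$ for $g\in G$. The four conjugates of $C$ are then $C,\ \sigma C,\ \tau C,\ \sigma\tau C$. The plan is to expand both sides of the identity as symmetric expressions in these four elements and compare them term by term. No valuation or ramification input is needed: this is an identity of field elements, valid in any characteristic.

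First, compute the right side. The relative norm and trace are $\N_{K/L_i}(C)=C\cdot\sigma C$ and $\Tr_{K/L_i}(C)=C+\sigma C$; both lie in $L_i$. Applying $\Tr_{L_i/F}$, which is $x\mapsto x+\tau x$ on $L_i$, and using that $\sigma$ and $\tau$ commute, we get
\[
 \Tr_{L_i/F}(\N_{K/L_i}C)=C\,\sigma C+\tau C\,\sigma\tau C .
\]
Applying $\N_{L_i/F}$, which is $x\mapsto x\,\tau x$, gives
\[
 \N_{L_i/F}(\Tr_{K/L_i}C)=(C+\sigma C)(\tau C+\sigma\tau C)
 =C\,\tau C+C\,\sigma\tau C+\sigma C\,\tau C+\sigma C\,\sigma\tau C .
\]

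Adding these two expressions yields all six products of pairs of distinct conjugates, each exactly once. That sum is by definition $E_2(C)$, the second elementary symmetric function of the four conjugates. The pairing of the six terms depends on $i$ only through the choice of $\sigma$, so the same computation works for each $i=1,2,3$.

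The only point needing care is the formal bookkeeping. Relative norms and traces in a tower must be expressed as products and sums over Galois elements restricted from $K$, and one must check that the conjugates of $C$ over $F$ are exactly the orbit under $G$, counted with multiplicity. In other words, $E_2(C)$ should be read as the coefficient of the characteristic polynomial of $C$ over $F$, not of its minimal polynomial. If $E_2$ is defined through the minimal polynomial, we would first pass to the characteristic polynomial $\prod_{g\in G}(X-gC)$. I expect this identification of the Galois-indexed formulas with the Mathlib field norm and trace to be the main, though routine, obstacle.
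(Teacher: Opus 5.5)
Your proposal is correct and is the same argument the paper gives. The paper also pairs the four conjugates via the involution fixing $L_i$, reads off the two terms as $c_1c_2+c_3c_4$ and $(c_1+c_2)(c_3+c_4)$, and observes that their sum is $E_2(C)$.
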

\begin{proof}
If the involution fixing $L_i$ pairs the four conjugates as
$c_1,c_2$ and $c_3,c_4$, the two terms on the right are
\[
 c_1c_2+c_3c_4,\qquad (c_1+c_2)(c_3+c_4),
\]
whose sum is $E_2(C)$.
\end{proof}
This is \cite[Lemma~4.5]{SML}.
It is formalized by
\lean{Algebra/BiquadraticE2.lean}{126}{Algebra.biquadraticE2}.

\begin{lemma}[Norms as stationary coefficients]\label{lem:common-norm}
Fix $\alpha\in F^\times$ and, for $L=F,L_1,L_2,L_3$, put
\[
 \Psi_L(x)=\psi_L(\alpha x).
\]
Let
\[
 D=\theta_i|_{F^\times}\omega_i,
\]
which is independent of $i$ by Lemma~\ref{lem:restrictions}.  Put
\[
 T_i=m_F(\omega_i),
\]
and assume $T_i>1$.  Let $C\in K^\times$ satisfy $\Tr_{K/L_i}(C)\ne0$ and set
\begin{equation*}
 Z_i=\N_{K/L_i}(C),\qquad W_i=\N_{L_i/F}(\Tr_{K/L_i}(C)).
\end{equation*}
Assume
\begin{align*}
 \theta_i(1-z)&=\Psi_{L_i}(Z_i z)
 &&\bigl(z\in\pp_{L_i}^{\ceil{m_i/2}}\bigr),\\
 \omega_i(1-z)&=\Psi_F(W_i z)
 &&\bigl(z\in\pp_F^{\ceil{T_i/2}}\bigr).
\end{align*}
Let $g_i,h_i$ be the corresponding normalized critical sums, with value
$1$ in even conductor.  Then
\begin{equation}\label{eq:common-factor}
 \mathcal A_i
 =D(-\alpha^{-1})\Theta(C)^{-1}
   \Psi_F(-E_2(C))\,g_i h_i.
\end{equation}
\end{lemma}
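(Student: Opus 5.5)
The plan is to evaluate both factors of $\mathcal A_i=\Delta_{L_i}(\theta_i,\psi_{L_i})\Delta_F(\omega_i,\psi_F)$ by Lamprecht's formula, Theorem~\ref{thm:lamprecht}. In each case I would use the prescribed coefficient as the stationary coefficient $g=b/\Gamma$, and then combine the two results using compatibility and Lemma~\ref{lem:E2}.

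\emph{Step 1: the factor over $L_i$.} Rewrite the hypothesis as
\[
\theta_i(1+z)=\psi_{L_i}(-\alpha Z_i z)\qquad\bigl(z\in\pp_{L_i}^{\ceil{m_i/2}}\bigr).
\]
This is \eqref{eq:stationary} with $g=-\alpha Z_i$. By the uniqueness argument in the proof of Theorem~\ref{thm:lamprecht}, a coefficient representing $\theta_i$ on $\pp_{L_i}^{d_i+\varepsilon_i}$ must have valuation exactly $-m_i-n_{L_i}(\psi_{L_i})$; otherwise $\theta_i$ would be trivial on $U_{L_i}^{m_i-1}$. Hence $Z_i\ne0$ has the right valuation. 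Choose any unit $b$, put $\Gamma=b/g$, which is admissible, and take $b$ as the stationary representative. Note that \eqref{eq:critical} and \eqref{eq:Gamma} depend only on $g$, so $g_i$ is the normalized critical sum attached to $-\alpha Z_i$. Theorem~\ref{thm:lamprecht} ($m_i>1$ by Lemma~\ref{lem:quadratic-conductors}) gives
\[
\Delta_{L_i}(\theta_i,\psi_{L_i})=\theta_i\bigl((-\alpha Z_i)^{-1}\bigr)\,\psi_{L_i}(-\alpha Z_i)\,g_i .
\]

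\emph{Step 2: the factor over $F$.} In the same way, using $T_i>1$ and the formula for $\omega_i$ (the hypothesis $\Tr_{K/L_i}C\ne0$ gives $W_i\ne0$), I would get
\[
\Delta_F(\omega_i,\psi_F)=\omega_i\bigl((-\alpha W_i)^{-1}\bigr)\,\psi_F(-\alpha W_i)\,h_i .
\]

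\emph{Step 3: multiply and simplify.} The product has three kinds of factor.
\begin{itemize}
\item Multiplicative factors at $\alpha$: since $-\alpha\in F^\times$,
\[
\theta_i(-\alpha)^{-1}\omega_i(-\alpha)^{-1}=D(-\alpha^{-1}).
\]
\item The norms: $\theta_i(Z_i)^{-1}=\Theta(C)^{-1}$ by \eqref{eq:compatible}. Also $\omega_i(W_i)=1$, because $W_i$ is a norm from $L_i$ and $\omega_i\in S(L_i/F)$.
\item Additive factors: by \eqref{eq:additive-pullback},
\[
\psi_{L_i}(-\alpha Z_i)\psi_F(-\alpha W_i)=\Psi_F\bigl(-(\Tr_{L_i/F}Z_i+W_i)\bigr)=\Psi_F(-E_2(C)),
\]
where the last equality is Lemma~\ref{lem:E2}.
\end{itemize}
Collecting these gives \eqref{eq:common-factor}.

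The only delicate point is Step~1 and its analogue in Step~2: justifying that the given coefficient may be used directly in Lamprecht's formula. This requires the exact valuation of $Z_i$ (respectively $W_i$), so that an admissible $\Gamma$ with unit $b$ exists. It also requires that the Lamprecht normalization, in which $g$ enters through $\theta(\Gamma/b)=\theta(g^{-1})$ and $\psi(b/\Gamma)=\psi(g)$, is independent of the auxiliary split $g=b/\Gamma$. I expect this bookkeeping, rather than any estimate, to be the main obstacle. Everything else is the sign convention $1-z$ versus $1+z$, which produces the factors $-\alpha$.
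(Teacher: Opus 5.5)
Your proposal is correct and follows the paper's proof essentially step for step. You rewrite the hypotheses via $z\mapsto -z$ to read off the stationary coefficients $-\alpha Z_i$ and $-\alpha W_i$, apply Lamprecht's formula to both factors, and finish with compatibility, $\omega_i(W_i)=1$, the definition of $D$, and Lemma~\ref{lem:E2}. Your extra check that these coefficients have valuation exactly $-m-n$ (so that a unit $b$ and an admissible $\Gamma=b/g$ exist) makes explicit what the paper compresses into ``one may take $b/\Gamma=-\alpha Z_i$''.
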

\begin{proof}
From $\theta_i(1-z)=\Psi_{L_i}(Z_i z)$, replacing $z$ by $-x$ gives
$\theta_i(1+x)=\psi_{L_i}(-\alpha Z_i x)$; thus in
\eqref{eq:stationary} one may take $b/\Gamma=-\alpha Z_i$.  Similarly,
for $\omega_i$ one may take $b/\Gamma=-\alpha W_i$.  Hence
\eqref{eq:lamprecht} gives
\[
 \Delta_{L_i}(\theta_i)
 =\theta_i(-\alpha^{-1})\theta_i(Z_i)^{-1}
   \Psi_{L_i}(-Z_i)g_i,
\]
\[
 \Delta_F(\omega_i)
 =\omega_i(-\alpha^{-1})\omega_i(W_i)^{-1}
   \Psi_F(-W_i)h_i.
\]
Compatibility gives
\[
 \theta_i(Z_i)=\Theta(C),
\]
and $W_i=\N_{L_i/F}(\Tr_{K/L_i}(C))$ gives
\[
 \omega_i(W_i)=1.
\]
Multiplying the two formulas, using
$D=\theta_i|_{F^\times}\omega_i$, and applying Lemma~\ref{lem:E2}
gives \eqref{eq:common-factor}.
\end{proof}
This is \cite[Lemma~4.6]{SML}.
The formal counterpart is
\lean{Stationary/CommonOrigin.lean}{235}{Stationary.commonOrigin}.

\begin{lemma}[Products of critical functions]\label{lem:common-function}
Let $C,C'\in K^\times$ satisfy $\Tr_{K/L_i}(C),\Tr_{K/L_i}(C')\ne0$, and put
\[
 Z_i=\N_{K/L_i}(C),\qquad W_i=\N_{L_i/F}(\Tr_{K/L_i}(C)),
\]
\begin{equation*}
 1+z_i=\frac{\N_{K/L_i}(C')}{\N_{K/L_i}(C)},\qquad
 1+w_i=\frac{\N_{L_i/F}(\Tr_{K/L_i}(C'))}{\N_{L_i/F}(\Tr_{K/L_i}(C))}.
\end{equation*}
Then
\begin{equation*}
 \Psi_{L_i}(-Z_i z_i)\theta_i(1+z_i)^{-1}
 \Psi_F(-W_iw_i)\omega_i(1+w_i)^{-1}
 =\Theta(C'/C)^{-1}
  \Psi_F(-E_2(C')+E_2(C)),
\end{equation*}
which is independent of $i$.

If, in addition, $m_i=m_{L_i}(\theta_i)>1$, $T_i=m_F(\omega_i)>1$,
$Z_i,W_i$ are stationary coefficients for $\theta_i,\omega_i$, respectively,
and
\[
 z_i\in\pp_{L_i}^{\floor{m_i/2}},\qquad
 w_i\in\pp_F^{\floor{T_i/2}},
\]
then the left side is the product of the corresponding critical functions
in \eqref{eq:critical}.
\end{lemma}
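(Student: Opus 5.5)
The identity is a direct multiplicative computation, parallel to the proof of Lemma~\ref{lem:common-norm}, using compatibility and Lemma~\ref{lem:E2}. Put $Z_i'=\N_{K/L_i}(C')$ and $W_i'=\N_{L_i/F}(\Tr_{K/L_i}(C'))$, so that $Z_iz_i=Z_i'-Z_i$ and $W_iw_i=W_i'-W_i$.

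\emph{The additive factors.} Since $\Psi_{L_i}=\Psi_F\circ\Tr_{L_i/F}$, we get
\[
 \Psi_{L_i}(-Z_iz_i)\Psi_F(-W_iw_i)
 =\Psi_F\bigl(-(\Tr_{L_i/F}Z_i'+W_i')+(\Tr_{L_i/F}Z_i+W_i)\bigr).
\]
Lemma~\ref{lem:E2}, applied to $C'$ and to $C$, turns the right side into $\Psi_F(-E_2(C')+E_2(C))$.

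\emph{The multiplicative factors.} Multiplicativity of the norm gives $1+z_i=\N_{K/L_i}(C'/C)$. Compatibility \eqref{eq:compatible} therefore gives $\theta_i(1+z_i)=\Theta(C'/C)$. Also, $1+w_i$ is a quotient of two elements of $\N_{L_i/F}(L_i^\times)$; the nonvanishing hypotheses on the traces are used here. Hence $\omega_i(1+w_i)=1$. Combining the two paragraphs gives the displayed formula, and the right side visibly does not depend on $i$.

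\emph{The second assertion.} We unwind \eqref{eq:critical} with the conventions of the proof of Lemma~\ref{lem:common-norm}. There, a formula $\theta_i(1-z)=\Psi_{L_i}(Z_iz)$ means that the stationary quantity $g=b/\Gamma$ in Definition~\ref{def:stationary} is $-\alpha Z_i$, and likewise $g=-\alpha W_i$ for $\omega_i$. The critical function at $x$ is then
\[
 \psi_{L_i}(-\alpha Z_ix)\theta_i(1+x)^{-1}=\Psi_{L_i}(-Z_ix)\theta_i(1+x)^{-1},
\]
which at $x=z_i$ is the first factor; the analogous statement holds for $\omega_i$ at $w_i$. The hypotheses $z_i\in\pp_{L_i}^{\floor{m_i/2}}$ and $w_i\in\pp_F^{\floor{T_i/2}}$ say that these points lie in the domains $\pp^d$ of the critical functions. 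The well-definedness argument after Definition~\ref{def:stationary} shows that the values depend only on the classes in $V_{L_i}$ and $V_F$.

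The main obstacle is bookkeeping rather than mathematics. One must check that the sign convention $b/\Gamma=-\alpha Z_i$, coming from the variable change $z\mapsto -x$, matches exactly the factor $\Psi_{L_i}(-Z_iz_i)$, so that no stray $\psi(\cdot)$ or character value at $-1$ appears. One must also check that the stationary hypothesis is stated on the ideal $\pp^{\ceil{m/2}}$ required by \eqref{eq:stationary}.
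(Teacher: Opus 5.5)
Your proposal is correct and follows essentially the same route as the paper: compatibility gives $\theta_i(1+z_i)=\Theta(C'/C)$, norm triviality gives $\omega_i(1+w_i)=1$, and Lemma~\ref{lem:E2}, applied to $C$ and to $C'$, turns the additive exponent into $-E_2(C')+E_2(C)$. For the second assertion, you unwind it with $g=-\alpha Z_i$ (respectively $-\alpha W_i$) and invoke well-definedness on $V$; this only makes explicit what the paper leaves to the definition \eqref{eq:critical}.
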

\begin{proof}
Compatibility and norm triviality give
\[
 \theta_i(1+z_i)=\Theta(C'/C),
 \qquad \omega_i(1+w_i)=1.
\]
The additive exponent is
\[
 -\Tr_{L_i/F}(\N_{K/L_i}(C')-\N_{K/L_i}(C))-\N_{L_i/F}(\Tr_{K/L_i}(C'))+\N_{L_i/F}(\Tr_{K/L_i}(C))
 =-E_2(C')+E_2(C)
\]
by Lemma~\ref{lem:E2}.
\end{proof}
This is \cite[Lemma~4.7]{SML}.
The pointwise identity is formalized by
\lean{Stationary/CommonFunction.lean}{65}{Stationary.commonFunctionProduct_eq}.

\subsection{Equality of the finite sums}
A pointwise identity gives equality of sums by reindexing when the
induced norm maps are bijective. At equal breaks their images can have
index two. The next lemma shows why the unrepresented coset contributes
zero when the kernels are distinct.

\begin{lemma}[Cancellation from a common pullback]\label{lem:common-pullback}
Let $I$ be an index set with at least two elements.  For $i\in I$, let
$V_i$ be finite abelian groups of the same order and let
$H_i:V_i\to\C^\times$ satisfy $H_i(0)=1$.  Suppose
\[
 B_i(x,y)=\frac{H_i(x+y)}{H_i(x)H_i(y)}
\]
is a nondegenerate bicharacter.  Let $Z$ be a finite abelian group and
let $p_i:Z\to V_i$ be group homomorphisms with kernels of order two,
images of index two, and
pairwise distinct kernels.  If
\begin{equation*}
 H_i\circ p_i=Q
\end{equation*}
for one function $Q:Z\to\C^\times$, then
\begin{equation*}
 \sum_{x\in V_i}H_i(x)=\frac12\sum_{z\in Z}Q(z)
 \qquad(i\in I).
\end{equation*}
In particular, the normalized sums
$|V_i|^{-1/2}\sum_{x\in V_i}H_i(x)$ are independent of $i$.
\end{lemma}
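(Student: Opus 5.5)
The plan is to compare the sum over $V_i$ with the sum over the image $W_i:=p_i(Z)$, and to show that the complementary coset $V_i\setminus W_i$ contributes zero. The second part is the only real point, and it is where the hypothesis of pairwise distinct kernels enters.

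\emph{Step 1 (reindexing over the image).} Since $p_i$ has kernel of order two, every $x\in W_i$ has exactly two preimages. Hence
\[
 \sum_{z\in Z}Q(z)=\sum_{z\in Z}H_i(p_i(z))=2\sum_{x\in W_i}H_i(x).
\]
It therefore suffices to prove $\sum_{y\in V_i\setminus W_i}H_i(y)=0$.

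\emph{Step 2 (a translation from another kernel).} Fix $i$ and choose $j\in I$ with $j\ne i$, which is possible because $|I|\ge2$. Let $\ker p_j=\{0,k\}$. Because the kernels are distinct, $k\notin\ker p_i$, so $c:=p_i(k)$ is a nonzero element of $W_i$. From $p_j(k)=0$ we get $Q(z+k)=Q(z)$ for all $z\in Z$, and applying $H_i\circ p_i=Q$ gives
\[
 H_i(x+c)=H_i(x)\qquad(x\in W_i).
\]
Taking $x=0$ gives $H_i(c)=H_i(0)=1$. Substituting into the defining relation $H_i(x+c)=H_i(x)H_i(c)B_i(x,c)$ then shows $B_i(x,c)=1$ for all $x\in W_i$.

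\emph{Step 3 (the sign on the missing coset).} The map $B_i(\cdot,c)$ is a character of $V_i$, and it is nontrivial because $B_i$ is nondegenerate and $c\ne0$. It is trivial on the index-two subgroup $W_i$, so it must be the unique character with kernel $W_i$. Hence $B_i(y,c)=-1$ for every $y\notin W_i$. For such $y$ we obtain
\[
 H_i(y+c)=H_i(y)H_i(c)B_i(y,c)=-H_i(y).
\]
Translation by $c\in W_i$ permutes the coset $V_i\setminus W_i$. Writing $S$ for the sum of $H_i$ over this coset, the translation therefore gives $S=-S$, so $S=0$.

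Combining Steps~1--3 gives $\sum_{x\in V_i}H_i(x)=\tfrac12\sum_{z\in Z}Q(z)$. The right-hand side does not depend on $i$, and the groups $V_i$ all have the same order, so the normalized sums $|V_i|^{-1/2}\sum_{x\in V_i}H_i(x)$ are independent of $i$.
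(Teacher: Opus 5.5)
Your proposal is correct and follows the paper's proof essentially step for step. Both proofs translate by the image under $p_i$ of the nonzero element of $\ker p_j$, deduce that $B_i(\cdot,c)$ is the nontrivial character trivial on $\operatorname{im}p_i$ so that the missing coset cancels, and finish with the two-to-one fiber count.
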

\begin{proof}
Fix $i$ and choose $j\ne i$.  Let $a$ be the nonzero element of
$\ker p_j$.  Since the kernels are distinct,
\[
 w_i:=p_i(a)\ne0,\qquad H_i(w_i)=Q(a)=H_j(0)=1.
\]
For $x=p_i(z)\in\operatorname{im}p_i$,
\[
 H_i(w_i+x)=Q(a+z)=H_j(p_j(a+z))=H_j(p_jz)=H_i(x),
\]
so
\[
 B_i(w_i,x)=1
 \qquad(x\in\operatorname{im}p_i).
\]
Nondegeneracy makes $B_i(w_i,\cdot)$ the nontrivial character of
$V_i/\operatorname{im}p_i$. Since $w_i\in\operatorname{im}p_i$,
translation by $w_i$ preserves its complement and multiplies each
summand there by $H_i(w_i)B_i(w_i,x)=-1$. Therefore
\[
 \sum_{x\notin\operatorname{im}p_i}H_i(x)=0.
\]
Every element of $\operatorname{im}p_i$ has two preimages under $p_i$,
hence
\[
 \sum_{z\in Z}Q(z)
 =2\sum_{x\in\operatorname{im}p_i}H_i(x)
 =2\sum_{x\in V_i}H_i(x).
\]
\end{proof}
This is \cite[Lemma~4.8]{SML}.

The original equal-break proofs identified the character on the
unrepresented coset by explicit coefficient calculations. Here the
common pullback, distinct kernels, and nondegeneracy give a translation
multiplying every summand there by $-1$, proving cancellation without
those calculations; see \cite[Remark~\SMLnum{U:lean-missing-coset}]{SML}.

The translation and fiber-counting step is
\lean{Finite/MissingCoset.lean}{30}{Finite.missingCoset}.
It takes the element $w_i$ and its two polar-pairing values as inputs.
The common pullback and distinct kernels supply these inputs in
\lean{Dyadic/Equal/EqualBreaks.lean}{253}{Dyadic.Equal.OneBreakResidue.hasse_sum_eq_half_common} and
\lean{Dyadic/Nonmaximal/EqualBreaks.lean}{330}{Dyadic.Nonmaximal.equalBreaks_hasseCancellation}.
Thus these declarations together implement the argument above.

\subsection{Changing the common element}
The following identities separate the common algebra from the
case-specific valuation estimates for $C=Y-X$ in Section~\ref{sec:quadratic-cases}.

\begin{lemma}[Norms after a quadratic shift]\label{lem:quadratic-shift}
Let $Y\in K$, $X\in L_3$, and let $\sigma_i$ be the nontrivial
automorphism of $K/L_i$. For $i=1,2,3$, put
\[
 a_i=\N_{K/L_i}(Y),\qquad h_i=\Tr_{K/L_i}(Y),\qquad
 \beta_i=\N_{L_i/F}(h_i).
\]
Set $A=\N_{L_3/F}(X)$, $C=Y-X$, and
$Q_i=\Tr_{K/L_i}(Y\sigma_iX)$ for $i=1,2$.
Primes denote the nontrivial conjugation of $L_3/F$. Then, for $i=1,2$,
\[
 \N_{K/L_i}(C)=a_i+A-Q_i,\qquad
 \Tr_{K/L_i}(C)=h_i-\Tr_{L_3/F}(X),
\]
\[
 \N_{L_i/F}(Q_i)-\beta_i A=(X'-X)(a_3X'-a_3'X),
\]
\[
 \N_{L_i/F}(\Tr_{K/L_i}(C))-\beta_i
 =\bigl(\Tr_{L_3/F}(X)\bigr)^2
  -\Tr_{L_3/F}(X)\Tr_{K/F}(Y).
\]
In particular, the last two differences are independent of $i=1,2$.
\end{lemma}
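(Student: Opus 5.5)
This is a direct computation with the four conjugates of $Y$ and the two conjugates of $X$. Write $\Gal(K/F)=\{1,\sigma_1,\sigma_2,\sigma_3\}$, where $\sigma_i$ fixes $L_i$ and $\sigma_3=\sigma_1\sigma_2$. Since $X\in L_3$, both $\sigma_1$ and $\sigma_2$ restrict to the nontrivial conjugation of $L_3/F$, so $\sigma_iX=X'$ for $i=1,2$.

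\textbf{The norm and trace of $C$.} Expand directly:
\[
 \N_{K/L_i}(C)=(Y-X)(\sigma_iY-X')=a_i+XX'-(Y X'+X\sigma_iY).
\]
Here $XX'=A$. The last term is $Y\sigma_iX+\sigma_i(Y\sigma_iX)=Q_i$. This gives the first formula. For the trace, $\Tr_{K/L_i}(C)=h_i-(X+X')=h_i-\Tr_{L_3/F}(X)$.

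\textbf{The last identity.} Put $s=\Tr_{L_3/F}(X)\in F$. Then
\[
 \N_{L_i/F}(h_i-s)=\beta_i-s\Tr_{L_i/F}(h_i)+s^2,
\]
and $\Tr_{L_i/F}(h_i)=\Tr_{K/F}(Y)$ by transitivity of traces. This gives the last identity; its right side visibly does not depend on $i$.

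\textbf{The middle identity.} This is the only step needing care. Let $\tau$ be an element of $\Gal(K/F)$ restricting to the nontrivial element of $\Gal(L_i/F)$; for $i=1,2$ one can take $\tau=\sigma_3$. Then $\tau X=X$, because $\sigma_3$ fixes $L_3$. Write $Q_i=YX'+X\sigma_iY$. Its conjugate is
\[
 \tau Q_i=\sigma_3Y\cdot X'+X\cdot\sigma_3\sigma_iY,
\]
where $\sigma_3\sigma_i$ is the other element $\sigma_j$ ($j\ne i$, $j\in\{1,2\}$). Multiplying out $Q_i\,\tau Q_i$ gives four terms:
\begin{itemize}
\item $Y\sigma_3Y\,X'^2$;
\item $Y\sigma_jY\,XX'$;
\item $\sigma_iY\sigma_3Y\,XX'$;
\item $\sigma_iY\sigma_jY\,X^2$.
\end{itemize}
Now
\[
 \beta_iA=(Y+\sigma_iY)(\sigma_3Y+\sigma_jY)\,XX',
\]
and expanding the product of conjugate sums yields exactly the four cross products $Y\sigma_3Y$, $Y\sigma_jY$, $\sigma_iY\sigma_3Y$, $\sigma_iY\sigma_jY$, each multiplied by $XX'$. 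Subtracting, the two middle terms cancel, leaving
\[
 a_3\,(X'^2-XX')+a_3'\,(X^2-XX')=(X'-X)(a_3X'-a_3'X),
\]
using $a_3=Y\sigma_3Y$ and $a_3'=\sigma_iY\,\sigma_j Y$. The latter equality holds because $\sigma_i(Y\sigma_3Y)=\sigma_iY\,\sigma_jY$, and $\sigma_i$ restricts to the conjugation of $L_3$.

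\textbf{Independence of $i$.} In both the middle and last identities the right sides involve only $X$, $a_3$ and $\Tr_{K/F}(Y)$, so they are independent of $i=1,2$.

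\textbf{Where care is needed.} The only real pitfall is bookkeeping: matching the group elements correctly, in particular $\sigma_3\sigma_i=\sigma_j$, and identifying $a_3'$ as $\sigma_iY\,\sigma_jY$. If $\Tr_{K/L_i}$ or the norms are ever treated abstractly, for example in Lean, one would realize them through these explicit conjugate products, as in Lemma~\ref{lem:E2}.
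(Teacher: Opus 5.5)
Your proof is correct and follows the paper's own argument. Both expand $(Y-X)(\sigma_iY-X')$, multiply the two conjugates of $Q_i=YX'+X\sigma_iY$ and subtract $\beta_iA$, and expand $\N_{L_i/F}(h_i-\Tr_{L_3/F}X)$ using $\Tr_{L_i/F}(h_i)=\Tr_{K/F}(Y)$; you also spell out the group bookkeeping ($\tau=\sigma_3$, $\sigma_3\sigma_i=\sigma_j$, $a_3'=\sigma_iY\,\sigma_jY$) that the paper leaves implicit.
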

\begin{proof}
Expand $(Y-X)(\sigma_iY-X')$ to obtain the first norm identity;
trace linearity gives the other. Expanding the two conjugates of
$Q_i=YX'+(\sigma_iY)X$ and subtracting $\beta_iA$ gives
$a_3(X'^2-XX')+a_3'(X^2-XX')$, proving the next identity.
Finally, expand the norm of $h_i-\Tr_{L_3/F}(X)$ and use
$\Tr_{L_i/F}(h_i)=\Tr_{K/F}(Y)$.
\end{proof}
These are the common algebraic identities specialized in
\cite[equations~(218)--(219) and Lemmas~12.9, 13.9]{SML}.
Their case-specific Lean proofs are linked with
\eqref{eq:char2-common-E} and Lemmas~\ref{lem:maximal-common-element}
and~\ref{lem:nonmaximal-common-element} below.

\section{The quadratic ramification cases}\label{sec:quadratic-cases}
Continue to assume $\ell=p=2$.
We construct compatible characters, express the prescribed family as
a common twist, and compare the resulting stationary formulas.
The conductor ranges determine which critical sums remain; the
pointwise identities of Section~\ref{sec:quadratic} determine their sign.
For each coefficient specified below, $H$ and $g$ denote the critical
function and normalized sum of \eqref{eq:critical}--\eqref{eq:Gamma}.
Subscripts $i$, $U$, and $E$ refer to $\theta_i$, $\theta_U$, and
$\theta_E$; a subscript $\omega_i$ refers to the norm character.
\subsection{An unramified quadratic intermediate field}\label{subsec:quadratic-ur}
Suppose $|I|=2$.  Let $U/F$ be the unramified quadratic intermediate
field and let $E/F$ be ramified quadratic of lower break $t\ge1$.  Put $T=t+1$.
This is the field configuration of Section~\ref{subsec:odd-ur} with $p=2$.
Let $\sigma$ generate
$\Gal(U/F)$. Write $k=k_F=k_E$ and $\kappa=k_U=k_K$.
For the ramified quadratic
extension $E/F$ one has
\begin{equation*}
 n_E(\psi_E)=2n_F(\psi_F)+T,\qquad
 \Tr_{E/F}(\pp_E^j)=\pp_F^{\floor{(j+T)/2}},\qquad
 v_F(\N_{E/F}(x))=v_E(x).
\end{equation*}
The first two formulas are \eqref{eq:trace-ideal}, with
$D_{E/F}=T$ by \eqref{eq:herbrand}; the last is
\eqref{eq:valuation-conventions}. These are the formulas used in
\cite[Section~10]{SML}.

Put $q=\ceil{T/2}$. By Proposition~\ref{prop:norm-conductors},
$m_F(\omega_E)=T>1$. Theorem~\ref{thm:lamprecht} therefore supplies
$\alpha\in F^\times$. Define
$\Psi_L(z)=\psi_L(\alpha z)$ so that
\begin{equation}\label{eq:quadratic-ur-tau}
 \omega_E(1-z)=\Psi_F(z)\quad(z\in\pp_F^q),
 \qquad v_F(\alpha)=-n_F(\psi_F)-T.
\end{equation}
\begin{lemma}[Quadratic trace--norm identity]\label{lem:quadratic-ur-normphase}
For every $z\in\pp_E^q$, one has
\begin{equation}\label{eq:quadratic-ur-normphase}
 \Psi_E(z)=\Psi_F(\N_{E/F}(z)).
\end{equation}
\end{lemma}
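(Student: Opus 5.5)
We want $\Psi_E(z)=\Psi_F(\N_{E/F}z)$ for $z\in\pp_E^q$, where $\Psi_E(z)=\psi_F(\alpha\Tr_{E/F}z)$. This is the quadratic analogue of \eqref{eq:odd-norm-character-conversion}, with $P(z)=z$. It therefore suffices to show
\[
 \Psi_F\bigl(\Tr_{E/F}(z)+\N_{E/F}(z)\bigr)=1\qquad(z\in\pp_E^q),
\]
because $\N_{E/F}(z)$ may be replaced by $-\N_{E/F}(z)$. Indeed, $v_F(2\N_{E/F}z)\ge v_F(2)+q$, and by \eqref{eq:trace-ideal} applied to $\Tr_{E/F}(1)=2$ in mixed characteristic, $v_F(2)\ge\floor{T/2}$. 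Hence $v_F(2\N_{E/F}z)\ge\floor{T/2}+\ceil{T/2}=T$. The largest trivial ideal of $\Psi_F$ is $\pp_F^T$, since $v_F(\alpha)=-n_F(\psi_F)-T$, so $\Psi_F(2\N_{E/F}z)=1$. In equal characteristic $2$ the sign is irrelevant.

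\textbf{Step 1: reduction to the norm of $1-z$.} Since $1-z\in U_E^q$, the element $\N_{E/F}(1-z)$ is a norm from $E$, so $\omega_E(\N_{E/F}(1-z))=1$. We expand
\[
 \N_{E/F}(1-z)=1-\Tr_{E/F}(z)+\N_{E/F}(z).
\]
Write $u=\Tr_{E/F}(z)-\N_{E/F}(z)$, so that this norm equals $1-u$. If $u\in\pp_F^q$, then \eqref{eq:quadratic-ur-tau} gives $1=\omega_E(1-u)=\Psi_F(u)$. This is the displayed identity with the opposite sign on $\N_{E/F}(z)$, which is harmless by the observation above.

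\textbf{Step 2: the depth check $u\in\pp_F^q$.} By \eqref{eq:trace-ideal},
\[
 v_F(\Tr_{E/F}z)\ge\floor{(q+T)/2}\ge q,
\]
since $T\ge q$. Also $v_F(\N_{E/F}z)=v_E(z)\ge q$. Hence $u\in\pp_F^q$.

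\textbf{Main point.} Unlike the odd case, no truncated-logarithm error terms appear. The only delicate step is the parity and sign bookkeeping with $2\N_{E/F}(z)$ in mixed characteristic, and the bound $v_F(2)\ge\floor{T/2}$ from the different handles it. I expect Step 2 to be routine.
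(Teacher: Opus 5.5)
Your proof is correct and is the paper's argument: $\omega_E$ is trivial on $\N_{E/F}(1-z)=1-(\Tr_{E/F}z-\N_{E/F}z)$, and the same depth check via \eqref{eq:trace-ideal} puts $\Tr_{E/F}z-\N_{E/F}z$ in $\pp_F^q$, so \eqref{eq:quadratic-ur-tau} applies. The sign detour through $\Psi_F(2\N_{E/F}z)=1$ is unnecessary, though your bound $v_F(2)\ge\floor{T/2}$ is correct: since $\Psi_E(z)=\Psi_F(\Tr_{E/F}z)$ and, for $p=2$, $\N_{E/F}(1-z)$ carries $+\N_{E/F}(z)$ rather than the odd-degree $-\N$, the identity $\Psi_F(\Tr_{E/F}z-\N_{E/F}z)=1$ from your Step 1 is already exactly the claim.
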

\begin{proof}
As in Lemma~\ref{lem:odd-norm-character-formula}, use norm triviality,
now with the exact quadratic identity
\[
 \N_{E/F}(1-z)=1-\Tr_{E/F}(z)+\N_{E/F}(z),
\]
and \eqref{eq:trace-ideal} gives
$v_F(\Tr_{E/F}(z))\ge\floor{(q+T)/2}\ge q$, while
$v_F(\N_{E/F}(z))=v_E(z)\ge q$. Thus \eqref{eq:quadratic-ur-tau}
applies to $\Tr_{E/F}(z)-\N_{E/F}(z)$. Since $\omega_E(\N_{E/F}(1-z))=1$,
\[
 1=\Psi_F(\Tr_{E/F}(z)-\N_{E/F}(z)),
\]
which is \eqref{eq:quadratic-ur-normphase}.
The identity \eqref{eq:quadratic-ur-normphase} is
\cite[Lemma~10.1]{SML}.
\end{proof}
The identity on the whole stated ideal is
\lean{Dyadic/UR/NormPhase.lean}{72}{Dyadic.UR.normPhase}.

Choose $c\in\OO_F$ with residue of absolute trace one and
$d\in\OO_U$ such that
\begin{equation*}
 d^2-d=c,\qquad \Tr_{U/F}(d)=1,\qquad \N_{U/F}(d)=-c.
\end{equation*}
\begin{proposition}[Compatible characters of conductor $T$]\label{prop:quadratic-ur-characters}
There are characters $\chi_U^0,\chi_E^0$ of conductor $T$ forming a
primitive compatible pair and satisfying
\begin{align*}
 (\chi_U^0)^\sigma/\chi_U^0&=\omega_E\circ \N_{U/F},\\
 \chi_U^0(1-z)&=\Psi_U(d^2z)
      &&(z\in\pp_U^q),\\
 \chi_E^0(1-z)&=\Psi_E(cz)
      &&(z\in\pp_E^q).
\end{align*}
\end{proposition}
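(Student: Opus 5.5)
The construction follows Proposition~\ref{prop:odd-ur-characters}, with the exact quadratic norm identities replacing the truncated logarithm. First I define $\chi_U^0$ on principal units. The formula $1-z\mapsto\Psi_U(d^2z)$ on $U_U^q$ is multiplicative modulo $U_U^T$: since $2q\ge T$, we have $(1-z)(1-w)=1-(z+w-zw)$ with $zw\in\pp_U^{2q}\subset\pp_U^T$. By \eqref{eq:trace-ideal} for the unramified $U/F$, the largest trivial ideal of $\Psi_U$ is $\pp_U^T$, because $v_F(\alpha)=-n_F(\psi_F)-T$. Hence the formula is a well-defined character of $U_U^q/U_U^T$. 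Because $d$ is a unit, it is nontrivial on $U_U^{T-1}$, which gives conductor exactly $T$.

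Next I check that the conjugate quotient is right on $U_U^q$. We have $(\chi_U^0)^\sigma(1-z)/\chi_U^0(1-z)=\Psi_U((\sigma^{-1}(d^2)-d^2)z)$. Since $d^2=d+c$, this is $\Psi_U((\sigma^{-1}d-d)z)$. Now $\sigma^{-1}d=1-d$, so the coefficient is $1-2d$. On the other side, the unramified norm gives $\N_{U/F}(1-z)\equiv 1-\Tr_{U/F}(z)\pmod{\pp_F^{2q}}$, so $\omega_E(\N_{U/F}(1-z))=\Psi_F(\Tr_{U/F}z)=\Psi_U(z)$. These agree because $2dz\in\pp_U^T$ is killed. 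The residual ambiguity is only when $v(2)$ is small relative to $T$; I expect $v_U(2)\ge T/2$ to be forced by \eqref{eq:trace-ideal} applied to $\Tr_{E/F}(1)=2$, which gives $v_F(2)\ge\lceil T/2\rceil=q$.

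Then I extend $\chi_U^0$ from $U_U^q$ to $U^\times$ with the prescribed conjugate quotient, exactly as in \cite[Proposition~6.6]{SML}. The quotient $\omega_E\circ\N_{U/F}$ is trivial on norms from $K$. The extension is chosen so that the $\sigma$-twisted quotient matches; for example, take $\chi_U^0(\pi_F)$ arbitrary and extend across the residue units using the surjectivity of the unramified norm. I expect this extension step, in particular arranging the conjugate quotient off $U_U^q$, to be the main obstacle.

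The pullback $\chi_U^0\circ\N_{K/U}$ is $\Gal(K/F)$-invariant, because its $\sigma$-conjugate quotient is the pullback of $\omega_E\circ\N_{U/F}\circ\N_{K/U}=\omega_E\circ\N_{K/F}=1$. Lemma~\ref{lem:descent} therefore produces $\chi_E^0$. To identify $\chi_E^0$ on $U_E^q$, take $u\in U_K^q$ and compare $\chi_U^0(\N_{K/U}u)$ with $\chi_E^0(\N_{K/E}u)$. The residue extension $K/E$ is unramified, so every $1-z$ with $z\in\pp_E^q$ is a norm from $U_K^q$. Combining Lemma~\ref{lem:quadratic-ur-normphase}, the exact quadratic norm expansion, and the trace relation $\Tr_{K/E}$ against $\N_{K/U}$ then yields the coefficient $c=d^2-d$ over $E$. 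The discrepancy between coefficients is $d$, and $\Psi_E(dz)$ is absorbed by Lemma~\ref{lem:quadratic-ur-normphase}.

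Primitivity follows exactly as at the end of Proposition~\ref{prop:odd-ur-characters}. If the common pullback were $\xi\circ\N_{K/F}$, then $\chi_U^0$ would be $\sigma$-invariant, which contradicts $\omega_E\circ\N_{U/F}\ne1$. Finally, the conductor of $\chi_E^0$ is $T$, because the unit $c$ has nonzero residue trace.
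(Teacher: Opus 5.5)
Your outline follows the paper's argument: prescribe $\chi_U^0$ on $U_U^q$, check the conjugate quotient there using $\sigma(d^2)-d^2=1-2d$ and \eqref{eq:trace-ideal}, extend with the prescribed quotient, descend by Lemma~\ref{lem:descent}, read off $\chi_E^0$ from exact quadratic norms, and deduce primitivity and conductors. However, two steps are wrong as written.

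\textbf{The bound on $v_F(2)$.} The inequality $v_F(2)\ge\ceil{T/2}$ fails in mixed characteristic when $t=2e$: then $T=2e+1$ and $v_F(2)=e=q-1$. What \eqref{eq:trace-ideal} gives from $2=\Tr_{E/F}(1)\in\pp_F^{\floor{T/2}}$ is $v_F(2)\ge\floor{T/2}$. This weaker bound is exactly what you need, since it gives $v_U(2dz)\ge\floor{T/2}+\ceil{T/2}=T$ for $z\in\pp_U^q$.

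\textbf{The identification of $\chi_E^0$.} The factor $\Psi_E(dz)$ is meaningless, because $d\notin E$. Write $1-z=\N_{K/E}(1-w)$ with $w\in\pp_K^q$. Since $\N_{K/E}w\in\pp_E^{2q}\subset\pp_E^T$, you get $\Psi_E(cz)=\Psi_K(cw)$, while
\[
 \chi_U^0(\N_{K/U}(1-w))=\Psi_K(d^2w)\,\Psi_U(-d^2\N_{K/U}w).
\]
Because $d^2-c=d$, the required equality is $\Psi_K(dw)=\Psi_U(\N_{K/U}(dw))$. This is the norm-phase identity for $K/U$, namely $\Psi_K(y)=\Psi_U(\N_{K/U}y)$ for $y\in\pp_K^q$. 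It follows from your own formula $\omega_E(\N_{U/F}(1-x))=\Psi_U(x)$ on $\pp_U^q$, together with $\omega_E\circ\N_{K/F}=1$, by the argument proving Lemma~\ref{lem:quadratic-ur-normphase}. Alternatively, combine that lemma for $E/F$ with Lemma~\ref{lem:E2}; the cross terms $\Tr_{E/F}\N_{K/E}y$ and $\N_{U/F}\Tr_{K/U}y$ both lie in $\pp_F^T$.

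\textbf{The extension step you flag.} The direct tool is Hilbert~90 on the unit filtration of the unramified extension $U/F$.
Put $\mu=\omega_E\circ\N_{U/F}$ and define $\chi(\sigma^{-1}x/x)=\mu(x)$ on norm-one units.
\begin{itemize}
\item This is well defined because $\mu(y)=\omega_E(y)^2=1$ for $y\in F^\times$.
\item It agrees with your prescription on $U_U^q$, because every norm-one element of $U_U^q$ has the form $\sigma^{-1}y/y$ with $y\in U_U^q$.
\end{itemize}
Extend the resulting character of $(\ker\N_{U/F}\cdot U_U^q)/U_U^T$ arbitrarily to $\OO_U^\times/U_U^T$, and choose $\chi(\pi_F)$ freely, which is allowed since $\mu(\pi_F)=\omega_E(\pi_F)^2=1$. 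This gives $\chi^\sigma/\chi=\mu$ on all of $U^\times$.

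With these corrections the proposal is complete.
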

\begin{proof}
Use the extension, descent, and primitivity argument of
Proposition~\ref{prop:odd-ur-characters}, now with the linear prescription.
Here $2q\ge T$ and $\sigma(d^2)-d^2=1-2d$, together with
\eqref{eq:trace-ideal}, verify the unit prescription and its conjugate quotient.
The exact quadratic norm and \eqref{eq:quadratic-ur-normphase}
replace the norm--logarithm congruence. The unit coefficients give
conductor $T$; see \cite[Theorem~10.4]{SML}.
\end{proof}
The compatible characters and both conductor-$T$ formulas are
constructed by \lean{Dyadic/UR/Models.lean}{887}{Dyadic.UR.models}.
Fix characters as in Proposition~\ref{prop:quadratic-ur-characters}.

\begin{lemma}[A common base-field twist]\label{lem:quadratic-ur-twist}
For every primitive compatible pair there is a character $\lambda$ of
$F^\times$ such that, after an unramified adjustment of $\chi_E^0$,
\begin{equation*}
 \theta_U=\chi_U^0(\lambda\circ \N_{U/F}),\qquad
 \theta_E=\chi_E^0(\lambda\circ \N_{E/F}),
\end{equation*}
and
\begin{equation*}
 m_U(\theta_U)=T+h,\qquad m_E(\theta_E)=T+2h,
 \qquad h\ge0.
\end{equation*}
For $h>0$, $m_F(\lambda)=T+h$; for $h=0$,
$m_F(\lambda)\le T$.
\end{lemma}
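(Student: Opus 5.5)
The plan is to repeat the proof of Proposition~\ref{lem:odd-ur-twist} with $p=2$. That descent and conductor argument does not use oddness once the models of Proposition~\ref{prop:quadratic-ur-characters} are available.

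\textbf{Descent to $\lambda$.} Let $\sigma$ generate $\Gal(U/F)$. By Lemma~\ref{lem:conjugacy}, $\theta_U^\sigma/\theta_U$ is the nontrivial element of $S(K/U)$. This group has order two, and its nontrivial element is $\omega_E\circ\N_{U/F}$. Indeed, by \eqref{eq:norm-exact} and \eqref{eq:norm-product}, norm pullback maps $S(E/F)$ isomorphically onto $S(K/U)$. By Proposition~\ref{prop:quadratic-ur-characters}, $(\chi_U^0)^\sigma/\chi_U^0$ is the same character. Hence $\theta_U/\chi_U^0$ is $\sigma$-invariant. The descent argument of Lemma~\ref{lem:descent}, applied to the cyclic extension $U/F$ (Hilbert~90 followed by extension from the open norm subgroup), gives $\lambda$ with $\theta_U/\chi_U^0=\lambda\circ\N_{U/F}$.

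\textbf{Adjusting $\chi_E^0$.} Since $\N_{U/F}\circ\N_{K/U}=\N_{E/F}\circ\N_{K/E}$, compatibility of both pairs shows that $\theta_E/(\chi_E^0(\lambda\circ\N_{E/F}))$ pulls back trivially to $K$. So it lies in $S(K/E)$. Because $K/E$ is unramified (Lemma~\ref{lem:inertia}), this character is unramified. Multiplying $\chi_E^0$ by it preserves compatibility with $\chi_U^0$, since its pullback is trivial. It also preserves the conductor $T$ and the formula on $\pp_E^q$. Primitivity is preserved because the common pullback is unchanged.

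\textbf{Conductors.} The conjugate quotient $\omega_E\circ\N_{U/F}$ has conductor $T$, because unramified pullback preserves conductors and $m_F(\omega_E)=T$. So $m_U(\theta_U)\ge T$; write $m_U(\theta_U)=T+h$ with $h\ge0$. By \eqref{eq:conjugate-twists}, every twist of $\theta_U$ by $S(K/U)$ is a Galois conjugate and has the same conductor. Hence, by the last clause of Proposition~\ref{prop:norm-conductors} applied to $K/U$ (break $t$), no drop occurs at $m=T$. Then \eqref{eq:norm-conductor} gives $m_K(\Theta)=2(T+h)-T=T+2h$. This holds also for $h=0$, where the conductor is preserved. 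Since $K/E$ is unramified, $m_E(\theta_E)=m_K(\Theta)=T+2h$.

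\textbf{Conductor of $\lambda$.} Finally, $\lambda\circ\N_{U/F}=\theta_U/\chi_U^0$ and $m_U(\chi_U^0)=T$. If $h>0$, the conductor of the quotient is exactly $T+h$. If $h=0$, it is at most $T$. Since $U/F$ is unramified, $m_F(\lambda)=m_U(\lambda\circ\N_{U/F})$.

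The main point needing care is the no-drop claim at $m=T$. It must be checked that the ``strict decrease'' criterion in Proposition~\ref{prop:norm-conductors} is excluded exactly by the conjugacy statement \eqref{eq:conjugate-twists}. With that checked, the rest is formal.
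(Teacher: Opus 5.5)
Your proposal is correct and is essentially the paper's proof. The paper likewise reuses the descent-and-conductor argument of Proposition~\ref{lem:odd-ur-twist} with $p=2$, and observes that the conjugate quotients of the models and of the prescribed pair agree because $S(K/U)$ has a unique nontrivial character. Your write-up spells out the steps the paper delegates to that earlier proof: descent via Hilbert~90, absorbing the unramified element of $S(K/E)$, the no-drop argument at $m=T$ via \eqref{eq:conjugate-twists}, and the conductor comparison for $\lambda$.
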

\begin{proof}
The descent and conductor argument of Proposition~\ref{lem:odd-ur-twist}
applies with $p=2$ to the models of Proposition~\ref{prop:quadratic-ur-characters}.
Their conjugate quotients already match those of the prescribed pair,
since $S(K/U)$ has a unique nontrivial character.
See \cite[Proposition~10.5]{SML}.
\end{proof}
The prescribed pair, common twist, and exact conductor alternatives
are treated by \lean{Dyadic/UR/Twist.lean}{386}{Dyadic.UR.twist}.

For $0\le h<T$, put
\[
 s_U=\ceil{(T+h)/2},\qquad s_E=\ceil{T/2}+h.
\]
As in Lemma~\ref{lem:odd-ur-character-formulas}, choose the twist coefficient
as a norm. Here the formulas are asserted at the stationary depths
$s_U,s_E$, which need not equal the critical depths.

\begin{lemma}[Stationary coefficients from a norm]\label{lem:quadratic-ur-coefficients}
Suppose $0\le h<T$. There is $x\in E$, with $A=\N_{E/F}(x)$, such that
\[
 \lambda(1-z)=\Psi_F(Az)\qquad(z\in\pp_F^{s_U}).
\]
If $h>0$, then $v_E(x)=v_F(A)=-h$; if $h=0$, then $x$ is integral.
Put
\[
 \begin{gathered}
 B_U=A+d^2,\qquad B_E=A-x+c,\qquad C=x+d,\\
 Z_U=\N_{K/U}(C),\qquad Z_E=\N_{K/E}(C),\qquad b=\Tr_{E/F}(x).
 \end{gathered}
\]
Then
\[
 \theta_U(1-z)=\Psi_U(B_Uz)\quad(z\in\pp_U^{s_U}),\qquad
 \theta_E(1-z)=\Psi_E(B_Ez)\quad(z\in\pp_E^{s_E}),
\]
with coefficient valuations $-h$ and $-2h$, respectively. Moreover
\[
 \begin{aligned}
 Z_U&=A+bd+d^2,\qquad Z_E=x^2+x-c,\\
 S:=\Tr_{U/F}(Z_U)-\Tr_{E/F}(Z_E)
   &=1+4c-\mathcal D_x,\qquad
 \mathcal D_x=b^2-4A=(2x-b)^2.
 \end{aligned}
\]
\end{lemma}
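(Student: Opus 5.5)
The proof has three parts: choose the twist coefficient as a norm, pull the stationary formulas back to $U$ and $E$, and finish with exact algebraic identities for $C=x+d$. Throughout, the odd-prime argument of Lemma~\ref{lem:odd-ur-character-formulas} serves as the template, with $P(z)$ replaced by the linear prescription $z$ at the stationary depths $s_U,s_E$.

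\textbf{Step 1: the twist coefficient is a norm.} By Lemma~\ref{lem:quadratic-ur-twist}, $m_F(\lambda)=T+h$ when $h>0$ and $m_F(\lambda)\le T$ when $h=0$. Additive duality, as in the proof of Theorem~\ref{thm:lamprecht}, then gives a coefficient $A_*$ with $\lambda(1-z)=\Psi_F(A_*z)$ on $\pp_F^{s_U}$. This coefficient is determined modulo $\pp_F^{T-s_U}$, and $v_F(A_*)=-h$ when $h>0$. Since $h<T$, the precision needed is $T-s_U\le t$ in the relevant range. Lemma~\ref{lem:norm-approximation} therefore gives $x\in E^\times$ with $A=\N_{E/F}(x)\equiv A_*$ to this precision, and $v_E(x)=v_F(A)$. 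When $h=0$, the integral class is handled the same way, and $x=0$ may represent the zero class. I expect to check the endpoint case, where $T-s_U$ could equal $t$, separately; if the precision fails there, a small integral error may have to be carried along, as in \eqref{eq:odd-ur-A}.

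\textbf{Step 2: the stationary formulas on $U$ and $E$.} Over $U$, which is unramified, expand $\N_{U/F}(1-z)=1-\Tr z+\N z$. Here $\N z\in\pp_F^{2s_U}$ with $2s_U\ge T+h$, so only the trace contributes, and $\lambda\circ\N_{U/F}(1-z)=\Psi_U(Az)$. Combining this with Proposition~\ref{prop:quadratic-ur-characters} gives the coefficient $B_U=A+d^2$.

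Over $E$, use the exact identity $\N_{E/F}(1-z)=1-\Tr_{E/F}z+\N_{E/F}z$ to write
\[
\lambda(\N_{E/F}(1-z))=\Psi_F(A\Tr_{E/F}z)\,\Psi_F(-A\N_{E/F}z).
\]
Apply Lemma~\ref{lem:quadratic-ur-normphase} to $xz$. This element lies in $\pp_E^{q}$ because $s_E-h=q$, so
\[
\Psi_F(A\N_{E/F}z)=\Psi_F(\N_{E/F}(xz))=\Psi_E(xz).
\]
The twist therefore has coefficient $A-x$, and multiplying by $\chi_E^0$ gives $B_E=A-x+c$.

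The main obstacle is the depth bookkeeping in this step. One must verify that $\Tr_{E/F}z$ and $\N_{E/F}z$ lie in $\pp_F^{s_U}$ for $z\in\pp_E^{s_E}$, using $\Tr_{E/F}(\pp_E^j)=\pp_F^{\floor{(j+T)/2}}$. One must also verify that the formula for $\chi_E^0$, stated at depth $q\le s_E$, applies. The valuations are then read off directly: $v_U(B_U)=-h$, and $v_E(B_E)=v_E(A)=-2h$ when $h>0$ because $A$ dominates; when $h=0$ the coefficients are units, since the residue of $d^2$ is not in $k$ and $c$ is a unit.

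\textbf{Step 3: the algebraic identities.} Let $\sigma$ be the conjugation of $K/U$; it fixes $d$ and conjugates $x\mapsto x'$. Then
\[
Z_U=(x+d)(x'+d)=A+bd+d^2.
\]
For $Z_E$, the conjugation fixing $E$ sends $d\mapsto1-d$, so
\[
Z_E=(x+d)(x+1-d)=x^2+x-(d^2-d)=x^2+x-c.
\]
Taking traces with $\Tr_{U/F}d=1$ and $\Tr_{U/F}d^2=1+2c$ gives $\Tr_{U/F}Z_U=2A+b+1+2c$. On the other side, $\Tr_{E/F}(x^2)=b^2-2A$, so $\Tr_{E/F}Z_E=b^2-2A+b-2c$. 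Subtracting yields $S=1+4c-(b^2-4A)$. Finally, $(2x-b)^2=(x-x')^2=b^2-4A$ gives $\mathcal D_x$.
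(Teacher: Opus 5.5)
Your route is the paper's. You take a norm representative from Lemma~\ref{lem:norm-approximation}, convert the norm term to a trace term via \eqref{eq:quadratic-ur-normphase} applied to $xz$, multiply by the models of Proposition~\ref{prop:quadratic-ur-characters}, and expand the quadratic norms. Your Step~3 is correct as written. The depth checks you list in Step~2 also hold: for $z\in\pp_E^{s_E}$ one has $\floor{(s_E+T)/2}\ge s_U$ and $v_F(\N_{E/F}z)\ge q+h\ge s_U$.

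The gap is in Step~1, where you pass the wrong precision to Lemma~\ref{lem:norm-approximation}.
\begin{itemize}
\item That lemma controls $\N_{E/F}(x)/A_*$ multiplicatively. Since $v_F(A_*)=-h$, the congruence you need modulo $\pp_F^{T-s_U}$ requires relative precision $j=T-s_U+h=\floor{(T+h)/2}$.
\item With $j=T-s_U$ you only get agreement modulo $\pp_F^{T-s_U-h}$, which is too weak when $h>0$. Also, the bound $T-s_U\le t$ is automatic and never uses $h<T$.
\item The hypothesis $h<T$ is exactly what gives $\floor{(T+h)/2}\le t$, with equality at $h=T-1$. So the lemma applies at that endpoint and no integral error arises.
\item This differs from \eqref{eq:odd-ur-A}. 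There the depth is $\floor{(T+h)/2}$, so the relative precision $\ceil{(T+h)/2}$ exceeds $t$ at $h=t$.
\end{itemize}
Your fallback of carrying an integral error would not prove the lemma, which asserts that the coefficient is exactly a norm.

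A second, smaller point is in Step~2. For $h=0$, the fact that $c$ is a unit does not show that $B_E=A-x+c$ is a unit, because the integral term $A-x$ could cancel $\bar c$. You need $\bar A=\bar x^2$, which holds because $E/F$ is totally ramified. Then $\bar x^2-\bar x+\bar c\ne0$, because $\bar c$ has absolute trace one.

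More simply, both valuation assertions follow from the exact conductors $m_U(\theta_U)=T+h$ and $m_E(\theta_E)=T+2h$ of Lemma~\ref{lem:quadratic-ur-twist}. The characters $\Psi_U,\Psi_E$ have largest trivial ideals $\pp_U^T,\pp_E^T$, and $s_U\le T+h-1$, $s_E\le T+2h-1$. The stationary formulas therefore force $v_U(B_U)=-h$ and $v_E(B_E)=-2h$ for every $h$, including $h=0$.
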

\begin{proof}
The coefficient of $\lambda$ on $\pp_F^{s_U}$ is determined modulo
$\pp_F^{T-s_U}$. The required relative precision is at most
$\floor{(T+h)/2}\le t$, so Lemma~\ref{lem:norm-approximation} gives
a norm representative $A=\N_{E/F}(x)$, with $x=0$ for the zero class.
The same norm-to-trace argument as in
Lemma~\ref{lem:odd-ur-character-formulas} now gives
\[
 \lambda(\N_{E/F}(1-z))
 =\Psi_F(A\Tr_{E/F}(z)-A \N_{E/F}(z))=\Psi_E((A-x)z),
\]
using \eqref{eq:quadratic-ur-normphase} on $xz$.
Multiplying by the formulas of Proposition~\ref{prop:quadratic-ur-characters}
gives $B_U,B_E$. The precision checks and nonvanishing of their leading
coefficients are \cite[Lemmas~10.6--10.7]{SML}.
The identities for $Z_U,Z_E,S$ follow by expanding the quadratic norms.
\end{proof}
The representative is constructed by
\lean{Dyadic/UR/NormChoice.lean}{247}{Dyadic.UR.normChoice};
\lean{Dyadic/UR/Coefficients.lean}{336}{Dyadic.UR.coefficients_exists}
provides the resulting coefficients for the prescribed pair.

The next two comparisons replace Lemmas~\ref{lem:odd-ur-ratio}
and~\ref{lem:odd-ur-cancellation}: the norm-character factors must be
retained, and the remaining critical sums compared exactly.

\begin{proposition}[Minimal-conductor comparison]\label{prop:quadratic-ur-minimal}
If $h=0$, then
\begin{equation}\label{eq:quadratic-ur-minimal-target}
 \Delta_U(\theta_U,\psi_U)\Delta_F(\omega_U,\psi_F)
 =\Delta_E(\theta_E,\psi_E)\Delta_F(\omega_E,\psi_F).
\end{equation}
\end{proposition}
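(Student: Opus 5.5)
The plan is to run the common-element argument of Lemma~\ref{lem:common-norm} with the single element $C=x+d$ of Lemma~\ref{lem:quadratic-ur-coefficients}, but treating $U$ separately. Over $U$ the norm character $\omega_U$ is unramified, so $\Delta_F(\omega_U,\psi_F)=\omega_U(\pi_F)^{n_F(\psi_F)}=(-1)^{n_F(\psi_F)}$ is explicit and carries no critical sum. For $h=0$ both conductors equal $T$, and $x$ is integral with $A=\N_{E/F}(x)$. The prescribed formulas hold on $\pp_U^q$ and $\pp_E^q$, with $q=\ceil{T/2}$, and have coefficients $B_U=A+d^2$ and $B_E=A-x+c$.

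\textbf{Step 1: move the coefficients to the common norms.} I first check that $Z_U=\N_{K/U}(C)$ and $Z_E=\N_{K/E}(C)$ are still stationary coefficients on the same depths. The required precision is modulo $\pp^{T-q}=\pp^{\floor{T/2}}$.
\begin{itemize}
\item Over $U$ one has $Z_U-B_U=bd$ with $b=\Tr_{E/F}x$. By \eqref{eq:trace-ideal}, $b\in\Tr_{E/F}(\OO_E)=\pp_F^{\floor{T/2}}$, so this works.
\item Over $E$, the difference $Z_E-B_E$ equals $x^2-A+2x-2c$, up to the sign conventions in $\N_{U/F}(d)$. I would show it lies in $\pp_E^{\floor{T/2}}$, or absorb it using \eqref{eq:quadratic-ur-normphase}. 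Here I use $x^2-A=x(x-x')$ and $v_E(x-x')\ge T$ for integral $x$, since the lower break is $t$.
\end{itemize}

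\textbf{Step 2: apply Lamprecht's formula.} Apply \eqref{eq:lamprecht} to $\theta_U$ and $\theta_E$ with coefficients $-\alpha Z_U$ and $-\alpha Z_E$, and to $\omega_E$ with coefficient $-\alpha$ from \eqref{eq:quadratic-ur-tau}. This gives
\[
 \Delta_U(\theta_U,\psi_U)=\theta_U(-\alpha^{-1})\,\theta_U(Z_U)^{-1}\,\Psi_U(-Z_U)\,g_U,
\]
\[
 \Delta_E(\theta_E,\psi_E)\,\Delta_F(\omega_E,\psi_F)=(\theta_E\omega_E)(-\alpha^{-1})\,\theta_E(Z_E)^{-1}\,\Psi_E(-Z_E)\,\Psi_F(-1)\,g_Eh_E.
\]
Compatibility gives $\theta_U(Z_U)=\Theta(C)=\theta_E(Z_E)$. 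Lemma~\ref{lem:restrictions} gives $\theta_U\omega_U=\theta_E\omega_E$ on $F^\times$, so the values at $-\alpha^{-1}$ differ by $\omega_U(-\alpha^{-1})=(-1)^{v_F(\alpha)}=(-1)^{n_F(\psi_F)+T}$. Combining this with $\Delta_F(\omega_U,\psi_F)=(-1)^{n_F(\psi_F)}$ leaves a sign $(-1)^T$. The additive factors combine to $\Psi_F(-S+1)$, where $S=1+4c-\mathcal D_x$ as in Lemma~\ref{lem:quadratic-ur-coefficients}, that is, to $\Psi_F(\mathcal D_x-4c)$. Since $\mathcal D_x=(x-x')^2=-\N_{E/F}(x-x')$ and $x-x'\in\pp_E^q$, Lemma~\ref{lem:quadratic-ur-normphase} turns $\Psi_F(\mathcal D_x)$ into $\Psi_E(x'-x)$. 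This is trivial because $\Tr_{E/F}(x'-x)=0$. I expect $\Psi_F(4c)=1$ by the conductor of $\Psi_F$ together with $v_F(4)\ge T$ in mixed characteristic; the latter is \eqref{eq:trace-ideal} applied to $\Tr_{E/F}(1)=2$.

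The target \eqref{eq:quadratic-ur-minimal-target} thus reduces to
\[
 g_U=(-1)^T g_Eh_E,
\]
with all critical sums equal to $1$ for even $T$, so the even case is then immediate.

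\textbf{Step 3, the main obstacle: odd $T=2d+1$.} Here I must show $g_U=-g_Eh_E$. This is a sign between a normalized critical sum over $V_U\cong\kappa$ and a product of two over $V_E\times V_F\cong k\times k$. Both sides have order $|k|^2$ and nondegenerate polar pairings, but by Lemma~\ref{lem:translation} equal pairings are not enough, so I would compare the functions pointwise.

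Vary $C'=C(1+y)$ with $y$ in a suitable ideal of $K$. Lemma~\ref{lem:common-function} expresses both
\[
 H_U(z_U)\qquad\text{and}\qquad H_E(z_E)\,H_{\omega_E}(w_E)
\]
as the same function $\Theta(C'/C)^{-1}\Psi_F(E_2(C)-E_2(C'))$ of $y$. The remaining difficulty is the ad hoc map from a residue group of $K$ onto $V_U$ and $V_E\times V_F$. I would check whether the two maps are bijective or have kernels and images of index two as in Lemma~\ref{lem:common-pullback}, and reindex accordingly. The extra sign $-1$ should come from the $\Psi_F(-1)$ normalization of $\omega_E$ being evaluated off the critical ideal; concretely, from $\Psi_U(-d^2)$ versus $\Psi_F(-1)$ through $\Tr_{U/F}(d^2)=1+2c$. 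Pinning down that sign is where I expect most of the work.
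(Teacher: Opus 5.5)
Your Steps 1 and 2 are correct, and for even $T$ your argument is the paper's: the quotient is $(-1)^T\Psi_F(1-S)$, $\Psi_F(\mathcal D_x)=1$ because the trace-zero element $x-x'$ has valuation at least $T$, and $4c\in\pp_F^T$.

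\textbf{The gap is the odd-$T$ case, where $\Psi_F(4c)\ne1$.} Equation \eqref{eq:trace-ideal} gives only $v_F(2)\ge\floor{T/2}$, so $v_F(4)\ge T$ holds only for even $T$. Odd $T$ occurs only in mixed characteristic with $t=2e$, i.e.\ $T=2e+1$, and then $v_F(4c)=T-1$. On this layer $\Psi_F(4w)=\omega_E(1-4w)$. Since $1+4(a^2+a)=(1+2a)^2$, this value depends only on $\Tr_{k/\F_2}\bar w$. It is nontrivial because $\omega_E$ has conductor $T$. Hence $\Psi_F(4w)=(-1)^{\Tr_{k/\F_2}\bar w}$, and since $\bar c$ has absolute trace one, $\Psi_F(4c)=-1$. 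For example, take $F=\Q_2$, $E=\Q_2(\sqrt2)$ and the standard $\psi_F$; then $\alpha=1/8$ and $\Psi_F(4c)=\psi_F(c/2)=-1$ for odd $c$. This is exactly the paper's evaluation $\Psi_F(W_U)=-1$. Your normalization of $\omega_E$ by the coefficient $1$, instead of $W_E=\N_{E/F}(2x+1)$, changes the scalar only by $\Psi_F(1-W_E)=\omega_E(W_E)=1$.

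\textbf{Consequence: your Step 3 target is false.} For odd $T$ the scalar is $(-1)^T\Psi_F(1-S)=(-1)(-1)=1$, so the identity to prove is $g_U=g_Eh_E$, with no sign. All these normalized sums have absolute value one, so $g_U=-g_Eh_E$ cannot hold. The sign you planned to extract from $\Psi_U(-d^2)$ versus $\Psi_F(-1)$ does not exist: those scalars are already inside $\Psi_F(1-S)$, and the critical functions agree exactly.

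\textbf{Step 3 also lacks the actual residue maps.} In the paper, the critical quotients are identified with $\kappa$ and $k$. Lemma~\ref{lem:common-function}, with the $U$-side trace--norm term trivial modulo $\pp_F^T$, then gives $H_U(z^2)=H_E(\Tr_{\kappa/k}z)\,H_{\omega_E}\bigl((\Tr_{\kappa/k}(\bar Cz))^2\bigr)$. Write $z=u+v\bar d$ and $\xi=\bar x$. The two arguments on the right are $v$ and $(u+(\xi+1)v)^2$, so they define a bijection $\kappa\to k\times k$, and squaring is a bijection of $\kappa$. Summing gives $g_U=g_Eh_E$ directly. No index-two reindexing or missing-coset cancellation as in Lemma~\ref{lem:common-pullback} is involved. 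Your $h_E$ equals the paper's $g_{\omega_E}$, because the two coefficients differ by $4A+2b\in\pp_F^{2e}$ and $3e\ge T$.
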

\begin{proof}
Use the elements of Lemma~\ref{lem:quadratic-ur-coefficients}.
When $T$ is even, \eqref{eq:lamprecht}, compatibility, and
Lemma~\ref{lem:restrictions} express the quotient as
$(-1)^T\Psi_F(1-S)$. Here $1-S=\mathcal D_x-4c$.
The trace-zero element $2x-b$ has valuation at least $T$, so
\eqref{eq:quadratic-ur-normphase} gives $\Psi_F(\mathcal D_x)=1$;
also $4c\in\pp_F^T$. Thus the quotient is $1$.
When $T$ is odd, put $W_U=\N_{U/F}(\Tr_{K/U}(C))=\N_{U/F}(b+2d)$.
Applying \eqref{eq:lamprecht} and Lemma~\ref{lem:E2}, and writing
$g_U,g_E,g_{\omega_E}$ for the normalized critical sums, gives
\begin{equation*}
 \frac{\Delta_U(\theta_U)\Delta_F(\omega_U)}
      {\Delta_E(\theta_E)\Delta_F(\omega_E)}
 =(-1)^T\Psi_F(W_U)\frac{g_U}{g_Eg_{\omega_E}},
\end{equation*}
Here $T=2e+1$, where $e=v_F(2)$, and $b/2\in\OO_F$.
Since $W_U=4\N_{U/F}(d+b/2)$ and the residue of $\N_{U/F}(d+b/2)$ has absolute
trace one, \cite[Lemma~10.3]{SML} gives $\Psi_F(W_U)=-1=(-1)^T$.
For the critical sums, identify the critical quotients with
$\kappa=k_U$ and $k=k_F=k_E$ using the critical elements in
\cite[Theorem~10.8]{SML}. The common-function calculation of
Lemma~\ref{lem:common-function}, with the trace-norm term over $U$
constant modulo the trivial ideal, gives
\[
 H_U(z^2)=H_E(\Tr_{\kappa/k}z)
 H_{\omega_E}\bigl((\Tr_{\kappa/k}(\bar C z))^2\bigr).
\]
Writing $z=u+v\bar d$ and $\bar x=\xi\in k$, the two arguments on the
right are $v$ and $(u+(\xi+1)v)^2$. Thus they give a bijection
$\kappa\to k\times k$, and squaring is a bijection of $\kappa$.
Summing and using $|\kappa|=|k|^2$ gives $g_U=g_Eg_{\omega_E}$.
Hence \eqref{eq:quadratic-ur-minimal-target} holds.
This is \cite[Theorem~10.8]{SML}.
\end{proof}
The odd-conductor shape and the residue character used for the sign
are \lean{Dyadic/UR/OddConductor.lean}{282}{Dyadic.UR.oddConductor}.
The complete minimal-conductor equality is
\lean{Dyadic/UR/Minimal.lean}{756}{Dyadic.UR.minimal}.

The conjugated norm contributes a sign, which must be retained
along with the critical sums.

\begin{proposition}[Conjugated stationary norms]\label{prop:quadratic-ur-critical}
If $1\le h<T$, then
$\mathcalA_U(\theta_U)=\mathcalA_E(\theta_E)$.
\end{proposition}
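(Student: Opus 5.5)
We work with the data of Lemma~\ref{lem:quadratic-ur-coefficients}. The common element $C=x+d$ has norms $Z_U,Z_E$, and these will serve as stationary coefficients in place of $B_U,B_E$. We have $\eta_U=Z_U-B_U=bd$ and $\eta_E=Z_E-B_E=x^2+2x-A-2c$. The first step is to bound the displacements: $v_U(bd)$ and $v_E(\eta_E)$ must be at least $T-s_U$ and $T-s_E$ respectively. These bounds follow from $v_F(b)\ge\ceil{(T-2h)/2}$ (from \eqref{eq:trace-ideal} applied to $v_E(x)=-h$) and from the identity $x^2-bx+A=0$. Once the bounds hold, $Z_U$ and $Z_E$ are stationary coefficients for $\theta_U$ and $\theta_E$ on the ideals $\pp^{s_U}$ and $\pp^{s_E}$.

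For $\omega_U$ and $\omega_E$ we use $W_L=\N_{L/F}(\Tr_{K/L}C)$. For $L=U$ this is $\N_{U/F}(b+2d)$; for $L=E$ it is $\N_{E/F}(2x+1)$. Rescaling by units and applying Lemma~\ref{lem:quadratic-ur-normphase}, each $W_L$ should be a stationary coefficient for the corresponding norm character. Where this fails only by a unit factor, we absorb the factor using \eqref{eq:scaling}.

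Next we apply Lemma~\ref{lem:common-norm} for $i=U$ and $i=E$. Both sides of \eqref{eq:sml} become $D(-\alpha^{-1})\Theta(C)^{-1}\Psi_F(-E_2(C))$ times the product of normalized critical sums. The scalar factors cancel, since they depend only on $C$. It remains to show $g_Ug_{\omega_U}=g_Eg_{\omega_E}$. Here $\omega_U$ is unramified with $g=1$, so its contribution reduces to the sign $(-1)^{n}$-type factor. That factor must be tracked; this is presumably the ``sign from the conjugated norm'' mentioned before the statement.

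For the critical sums we choose $C'=C(1+y)$ with $y$ running over a suitable ideal of $\OO_K$. By Lemma~\ref{lem:common-function}, the map $y\mapsto(z_L,w_L)$ gives homomorphisms $p_L$ from a common finite group $Z$ to the critical quotient $V_L\times V_{\omega_L}$. Pointwise, $H_L\circ p_L$ equals one common function $Q$, namely $\Theta(1+y)^{-1}\Psi_F(E_2(C)-E_2(C'))$.

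There are two cases according to the parities of $T+h$ and $T+2h$. If $p_U$ and $p_E$ are bijective after reindexing, as in Proposition~\ref{prop:quadratic-ur-minimal} where squaring is bijective on $\kappa$, the sums agree directly. If the images have index two with distinct kernels of order two, Lemma~\ref{lem:common-pullback} gives equality of the normalized sums. When $T+2h$ is even and $T+h$ is odd, only one side carries a critical sum, and we must show it is trivial, or equals the sign produced by $\Psi_F(W_U)$ as in Lemma~10.3.

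The main obstacle is the bookkeeping of depths. We must verify that $z_L\in\pp_L^{\floor{m_L/2}}$ and $w_L\in\pp_F^{\floor{T/2}}$ for the chosen family $C'$. We must also verify that the induced maps into the residual critical quotients $V_U\cong\kappa$ (when $T+h$ is odd) and $V_E\cong k$ have the required bijectivity or kernel structure. The first thing to try is $y\in\pp_U^{\floor{(T+h)/2}}\OO_K$, reducing $p_L$ to explicit linear and Frobenius-square maps on $\kappa$, exactly as in the $h=0$ calculation.
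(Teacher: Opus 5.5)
There is a genuine gap: your first step is false in part of the range, and the three ingredients that close this case are missing.

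\textbf{The coefficient over $E$.} $Z_E=\N_{K/E}(C)$ is not, in general, a stationary coefficient for $\theta_E$ on $\pp_E^{s_E}$. Since $B_E$ is determined only modulo $\pp_E^{T-s_E}=\pp_E^{\floor{T/2}-h}$, you need $v_E(Z_E-B_E)\ge\floor{T/2}-h$. Using $A=xx'$, one gets $Z_E-B_E=x(x-x')+2(x-c)$. For odd $h$, $v_E(x-x')=t-h$ exactly, whatever $x$ is chosen, since $v_E(x)=-h$ is forced. Because $t\le 2e$ in mixed characteristic, this gives $v_E(Z_E-B_E)=t-2h$, which is too small as soon as $h\ge\ceil{T/2}$; already $t=h=1$ gives $-1<0$. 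Similarly, for $T=2e+1$ and $h>e$, $W_E-1=\N_{E/F}(1+2x)-1=2b+4A$ has valuation $2e-h<\floor{T/2}$, so $W_E$ is not stationary for $\omega_E$ either. Hence neither Lemma~\ref{lem:common-norm} nor the critical-function part of Lemma~\ref{lem:common-function} applies over $E$ there.

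The idea you are missing is the \emph{conjugated} norm $\widehat Z_E=(x'/x)\N_{K/E}(C)$. Its displacement $\widehat Z_E-B_E=b(1-c/x)$ has valuation $v_E(b)\ge 2\floor{(T-h)/2}\ge\floor{T/2}-h$ for every $1\le h<T$. The price is $\theta_E(\widehat Z_E)=\mu_E(x)\Theta(C)=(-1)^h\Theta(C)$, because the conjugate quotient $\mu_E\in S(K/E)$ is unramified. This $(-1)^h$ is the sign announced before the statement; it is not the factor $\Delta_F(\omega_U,\psi_F)$. Over $U$, $Z_U$ does work, but only via the sharper trace bound $v_F(b)\ge\floor{(T-h)/2}$ from \eqref{eq:trace-ideal}.

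\textbf{The scalar factor and the critical sums.} Lemma~\ref{lem:common-norm} assumes $m_F(\omega_i)>1$, which fails for the unramified $\omega_U$. So the two sides do not reduce to a common $D(-\alpha^{-1})\Theta(C)^{-1}\Psi_F(-E_2(C))$. With the corrected coefficients, the leftover scalar is $(-1)^{T+h}\Psi_F(-cb^2/A)$. Proving it equals $1$ is a separate parity computation that your plan does not contain.

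Nor are the critical sums matched through a common pullback. One needs $g_U=1$ for the unramified sum, which is exactly the one-sided case you leave open. For odd $T=2e+1$ one also needs the reciprocity $g_Eg_{\omega_E}=1$, where $g_{\omega_E}$ is taken for the coefficient $1$. This comes from the critical-depth identity $\omega_E(1+\N_{E/F}(w))=\Psi_E(w)$ for $w\in\pp_E^e$, one step below the domain $\pp_E^{e+1}$ of Lemma~\ref{lem:quadratic-ur-normphase}. With $w=xv$ it gives $H_E(v)=H_{\omega_E}(\N_{E/F}(xv))^{-1}$ on $\pp_E^{e+h}$. Reindexing by the bijection $v\mapsto\N_{E/F}(xv)$ of critical quotients yields $g_E=\overline{g_{\omega_E}}$, hence $g_Eg_{\omega_E}=1$.
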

\begin{proof}
Use the elements of Lemma~\ref{lem:quadratic-ur-coefficients}.  Let $x'$ be the nontrivial conjugate of $x$ over
$F$ and put
\begin{equation*}
 \widehat Z_U=A+bd+d^2,\qquad
 \widehat Z_E=\frac{x'}{x}(x^2+x-c),\qquad
 \mathfrak u=b^2/A.
\end{equation*}
The corrected coefficients satisfy
\begin{align*}
 \widehat Z_U-B_U&=bd,\\
 \widehat Z_E-B_E&=b(1-c/x),\\
 \Tr_{U/F}\widehat Z_U-\Tr_{E/F}\widehat Z_E&=1+c\mathfrak u,\\
 \frac{\theta_E(\widehat Z_E)}{\theta_U(\widehat Z_U)}&=(-1)^h.
\end{align*}
By \cite[Lemma~10.10]{SML}, these corrected coefficients satisfy the
stationary formulas on $\pp_U^{s_U}$ and $\pp_E^{s_E}$.
Write $g_U,g_E,g_{\omega_E}$ for the normalized critical sums for
$\widehat Z_U,\widehat Z_E,1$, respectively.
Equation~\eqref{eq:lamprecht} and Lemma~\ref{lem:restrictions} then give
\begin{equation}\label{eq:quadratic-ur-critical-ratio}
 \frac{\Delta_U(\theta_U)\Delta_F(\omega_U)}
      {\Delta_E(\theta_E)\Delta_F(\omega_E)}
 =(-1)^{T+h}\Psi_F(-c\mathfrak u)\frac{g_U}{g_Eg_{\omega_E}}
 =\frac{g_U}{g_Eg_{\omega_E}}.
\end{equation}
The last equality is the parity calculation in
\cite[Proposition~10.12]{SML}. The unramified sum satisfies $g_U=1$
by \cite[Lemma~10.13]{SML}. If $T$ is even, both $g_E$ and $g_{\omega_E}$
are $1$ by \eqref{eq:Gamma}.

Suppose $T$ is odd. Then $T=2e+1$, where $e=v_F(2)$, and the
critical ideals for $\theta_E$ and $\omega_E$ are
$\pp_E^{e+h}$ and $\pp_F^e$. On these ideals write their critical
functions as
\[
 H_E(v)=\Psi_E(-\widehat Z_Ev)\theta_E(1+v)^{-1},\qquad
 H_{\omega_E}(u)=\Psi_F(-u)\omega_E(1+u)^{-1}.
\]
At depth $e$, one cannot apply \eqref{eq:quadratic-ur-normphase},
whose domain begins at $e+1$. Instead the exact norm factorization
in \cite[Lemma~10.14]{SML} gives
\[
 \omega_E(1+\N_{E/F}(w))=\Psi_E(w)\qquad(w\in\pp_E^e).
\]
Using $w=xv$, the coefficient formulas then give
\[
 H_E(v)=\Psi_E(xv)\Psi_F(A \N_{E/F}(v))
       =H_{\omega_E}(\N_{E/F}(xv))^{-1}
       \qquad(v\in\pp_E^{e+h}).
\]
The estimates justifying this identity are \cite[Lemma~10.15]{SML}.
Multiplication by $x$ and the norm at depth $e<2e=t$ induce
bijections on the respective critical quotients. Reindexing therefore
gives $g_E=\overline{g_{\omega_E}}$, and \eqref{eq:lamprecht} gives
$g_Eg_{\omega_E}=1$. Thus \eqref{eq:quadratic-ur-critical-ratio} is $1$
in both parities. Substitution in \eqref{eq:lambda-and-A} proves
the assertion, which is \cite[Theorem~10.16]{SML}.
\end{proof}
The corrected coefficients and scalar quotient are
\lean{Dyadic/UR/ConjugatedNorm.lean}{516}{Dyadic.UR.conjugatedNorm} and
\lean{Dyadic/UR/CriticalRatio.lean}{355}{Dyadic.UR.criticalRatio}.
The unramified sum is \lean{Dyadic/UR/UnramifiedSum.lean}{439}{Dyadic.UR.unramifiedSum}.
The critical-depth norm identity is
\lean{Dyadic/UR/CriticalReciprocity.lean}{349}{Dyadic.UR.criticalReciprocity};
\lean{Dyadic/UR/RamifiedSum.lean}{512}{Dyadic.UR.ramifiedSum}
uses it to compare the two remaining sums. The full result is
\lean{Dyadic/UR/CriticalComparison.lean}{131}{Dyadic.UR.criticalComparison}.

\begin{proposition}[Comparison by stable twisting]\label{prop:quadratic-ur-stable}
If $h\ge T$, then
$\mathcalA_U(\theta_U)=\mathcalA_E(\theta_E)$.
\end{proposition}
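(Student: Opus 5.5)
We follow the pattern of Proposition~\ref{prop:odd-ur-stable}, but we keep the quadratic norm-character factors, since $\epsilon_L=\omega_L$ is now nontrivial. By Lemma~\ref{lem:quadratic-ur-twist}, write $\theta_L=\chi_L^0(\lambda\circ\N_{L/F})$ for $L=U,E$, where $m=m_F(\lambda)=T+h\ge 2T$. Choose $\Gamma\in F^\times$ admissible for $(\lambda,\psi_F)$ and a stationary representative $\beta$ for $\lambda$, and put $g=\beta/\Gamma$.

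\textbf{Step 1: admissibility.} Over $U$ the conductor of $\lambda\circ\N_{U/F}$ is $m$, and $n_U=n_F$. Over $E$ it is $T+2(m-T)$ by \eqref{eq:norm-conductor}, and $n_E=2n_F+T$. Hence $v_E(\Gamma)=2(m+n_F)$ equals the conductor plus $n_E$, so the same $\Gamma$ is admissible over both fields.

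\textbf{Step 2: stationarity (the main obstacle).} The coefficient $g$ must be made stationary for the pullbacks. Over $U$ the unramified argument of Lemma~\ref{lem:tame-covector} applies, since traces and $E_2$ are deep enough. Over $E$ the exact norm $\N_{E/F}(1+x)=1+\Tr x+\N x$ gives the extra term $\lambda$-phase $\psi_F(g\N_{E/F}x)$. At the stationary depth $\ceil{(T+2h)/2}$ of $E$ this term need not be trivial, because $p=2$ breaks the odd argument, as the remark after Proposition~\ref{prop:odd-ur-stable} warns. I would therefore not insist that $g$ itself be stationary over $E$. Instead I would take a nearby coefficient $g_E=g+\eta$ obtained from the norm phase \eqref{eq:quadratic-ur-normphase}, rewriting $\psi_F(g\N_{E/F}x)$ as an $E$-linear term $\psi_E(\ast\, x)$ in analogy with the $A-x$ correction of Lemma~\ref{lem:quadratic-ur-coefficients}. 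The displacement $\eta$ must have valuation large enough that it does not change the critical sum, that is, $\eta$ should vanish on the critical ideal $\pp_E^{\floor{M/2}}$ modulo the trivial ideal. The hypothesis $h\ge T$ should supply exactly this margin.

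\textbf{Step 3: stable twisting.} Since $m_L(\chi_L^0)\le T\le\floor{m_L/2}$, Lemma~\ref{lem:stable-twist} gives
\[
 \Delta_L(\theta_L,\psi_L)=\chi_L^0(\Gamma/b_L)\,\Delta_L(\lambda\circ\N_{L/F},\psi_L),
\]
where $b_L$ is the stationary representative chosen in Step 2. Next apply \eqref{eq:fml} to $\lambda$. The characters $\omega_L$ have conductor at most $T\le\floor{m/2}$, so by \eqref{eq:stable-twist} over $F$,
\[
 \Delta_L(\lambda\circ\N_{L/F},\psi_L)\,\Delta_F(\omega_L,\psi_F)=\omega_L(g^{-1})\,\Delta_F(\lambda,\psi_F)^2.
\]
This yields
\[
 \mathcalA_L(\theta_L)=\chi_L^0(\Gamma/b_L)\,\omega_L(g^{-1})\,\Delta_F(\lambda,\psi_F)^2.
\]

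\textbf{Step 4: matching the prefactors.} It remains to show that $\chi_L^0(\Gamma/b_L)\,\omega_L(g^{-1})$ is independent of $L$. If $b_L$ equals $\beta$ on both sides, this is Lemma~\ref{lem:restrictions} applied to $(\chi_U^0,\chi_E^0)$. Otherwise the correction over $E$ contributes a factor $\chi_E^0(b_E/\beta)$, a unit value close to $1$, which must be shown to be $1$ or to cancel against the extra $\psi_E$-phase that Lamprecht's formula produces from $\eta$. This bookkeeping in residue characteristic two, with the factor $\Psi_F$ of \eqref{eq:quadratic-ur-tau} tracking $\omega_E$, is where I expect the real work to be.
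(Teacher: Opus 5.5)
\textbf{There is a genuine gap.} Your Steps~1 and~3 coincide with the paper's proof: a common admissible $\Gamma$, stable twisting over $U$ and $E$, then \eqref{eq:fml} and \eqref{eq:stable-twist} over $F$. Together these give $\mathcalA_L(\theta_L)=\chi_L^0(\Gamma/b_L)\,\omega_L(g^{-1})\,\Delta_F(\lambda,\psi_F)^2$. But you stop exactly where this proposition differs from Proposition~\ref{prop:odd-ur-stable}. Step~4 leaves the factor $\chi_E^0(\beta/b_E)$ undetermined, and Step~2 aims at the wrong condition.

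The critical sum of $\lambda\circ\N_{E/F}$ never has to be controlled. Formula \eqref{eq:stable-twist} holds for \emph{any} stationary representative $b_E$ of $\lambda\circ\N_{E/F}$. The same $b_E$ serves $\theta_E$, so the phases $\psi_E(b_E/\Gamma)$ and the two critical sums cancel. Then \eqref{eq:fml} eliminates $\Delta_E(\lambda\circ\N_{E/F},\psi_E)$. Hence there is no extra $\psi_E$-phase produced by $\eta$ to cancel against. Your Step~2 requirement is also not what is needed. Triviality of $\psi_E(\eta\,\cdot)$ on $\pp_E^{\floor{M/2}}\supseteq\pp_E^{\ceil{M/2}}$ would make $g$ itself stationary over $E$. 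That fails here, since $x\mapsto\psi_F(g\N_{E/F}x)$ is already nontrivial for $v_E(x)=m-1$. So $h\ge T$ does not supply that margin.

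\textbf{The missing idea} is the coefficient comparison $b_E/\beta\in U_E^{m-T}=U_E^{h}$. Combined with $h\ge T=m_E(\chi_E^0)$, it gives $\chi_E^0(\beta/b_E)=1$. This is where $h\ge T$ is used beyond the stable-range inequalities. It follows from your own set-up, in four steps.
\begin{enumerate}
\item Put $M=2m-T$. For $x\in\pp_E^{\ceil{M/2}}$, both $\Tr_{E/F}x$ and $\N_{E/F}x$ lie in $\pp_F^{\ceil{m/2}}$, so $\lambda(\N_{E/F}(1+x))=\psi_E(gx)\,\psi_F(g\N_{E/F}x)$.
\item The second factor is additive on this ideal. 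The cross term $\Tr_{E/F}(xy')$, with $y'$ the conjugate of $y$, lies in $\pp_F^{m}$.
\item It is trivial on $\pp_E^{m}$, because $v_F(g)=-m-n_F(\psi_F)$.
\item By additive duality it equals $\psi_E(\eta x)$ with $v_E(\eta)\ge -n_E(\psi_E)-m=v_E(g)+h$. So $b_E=\Gamma(g+\eta)$ satisfies $b_E/\beta\in U_E^{h}$, and the ambiguity in $b_E$ is deeper still.
\end{enumerate}
With this, Lemma~\ref{lem:restrictions} applied to $(\chi_U^0,\chi_E^0)$ completes your Step~4. It gives $\mathcalA_L(\theta_L)=D(g^{-1})\Delta_F(\lambda,\psi_F)^2$ for $L=U,E$, where $D=\chi_L^0|_{F^\times}\omega_L$, which is the paper's conclusion.
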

\begin{proof}
This follows the stable-twist argument of Proposition~\ref{prop:odd-ur-stable},
but allows the stationary coefficient over $E$ to change.
Put $n=m_F(\lambda)=T+h$ and choose $b_F\in F^\times$ satisfying
\[
 \lambda(1+z)=\psi_F(b_Fz)\qquad(z\in\pp_F^{\ceil{n/2}}).
\]
Set
\[
 c_F=b_F^{-1},\qquad
 D=\chi_U^0|_{F^\times}\omega_U
   =\chi_E^0|_{F^\times}\omega_E.
\]
The equality defining $D$ follows from Lemma~\ref{lem:restrictions}.
The pullback conductors are $n$ over $U$ by \cite[Lemma~3.11]{FML}
and $2n-T$ over $E$ by \eqref{eq:norm-conductor}; both are at least $2T$.
Choose a stationary coefficient $b_E$ for $\lambda\circ \N_{E/F}$.
The coefficient comparison in \cite[Lemma~10.17]{SML} gives
$b_E/b_F\in U_E^{n-T}$. Since $n-T\ge T=m_E(\chi_E^0)$,
$\chi_E^0(b_E^{-1})=\chi_E^0(c_F)$. Over $U$, one may use $b_F$
itself by \eqref{eq:tame-unramified-covector}.
Thus \eqref{eq:stable-twist} gives, for $L=U,E$,
\[
 \Delta_L(\theta_L)=\chi_L^0(c_F)\Delta_L(\lambda\circ \N_{L/F}).
\]
Equations~\eqref{eq:fml} and \eqref{eq:stable-twist} over $F$,
where $m_F(\omega_L)\le T\le\floor{n/2}$, give
\[
 \Delta_L(\lambda\circ \N_{L/F})\Delta_F(\omega_L)
 =\Delta_F(\lambda)\Delta_F(\lambda\omega_L)
 =\omega_L(c_F)\Delta_F(\lambda)^2.
\]
Multiplication yields
\begin{equation*}
 \mathcalA_L(\theta_L)
 =D(c_F)\Delta_F(\lambda,\psi_F)^2.
\end{equation*}
This is \cite[Theorem~10.18]{SML}.
\end{proof}
The ramified coefficient comparison is
\lean{Dyadic/HighCovectors.lean}{117}{Dyadic.highCovectors}.
The two stable-twist calculations are combined in
\lean{Dyadic/UR/Stable.lean}{37}{Dyadic.UR.stable}.

\begin{proposition}\label{prop:quadratic-ur}
If $\ell=p=2$ and $|I|=2$, then \eqref{eq:quadratic-calculation} holds for
every primitive compatible pair, in mixed and equal characteristic.
\end{proposition}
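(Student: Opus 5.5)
The plan is to assemble the three comparisons of this subsection along the conductor parameter $h$. Given a primitive compatible family, Lemma~\ref{lem:inertia} with $\ell=p=2$ and $|I|=2$ yields one unramified quadratic field $U$. The other two quadratic fields $E$ are ramified over $F$, and $K/E$ is unramified. By Lemma~\ref{lem:conjugacy}, $\mathcalA_L(\theta_L)$ depends only on $\Theta$, and every pair among the three fields contains $U$ or can be compared through $U$. It therefore suffices to prove $\mathcalA_U(\theta_U)=\mathcalA_E(\theta_E)$ for each ramified $E$. By \eqref{eq:quadratic-factor} this is equivalent to \eqref{eq:quadratic-calculation}. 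For a pair $(E,E')$ of ramified fields, transitivity through $U$ then gives the required equality.

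Fix a ramified $E$, with break $t\ge1$ and $T=t+1$. I would take the models $(\chi_U^0,\chi_E^0)$ of Proposition~\ref{prop:quadratic-ur-characters}. Lemma~\ref{lem:quadratic-ur-twist} then writes $\theta_U$ and $\theta_E$ as their common twist by $\lambda\circ\N_{L/F}$. This uses an unramified adjustment of $\chi_E^0$ by an element of $S(K/E)$. The adjustment is harmless because the pair still pulls back to the same $\Theta$, and the identity to be proved concerns $\theta_U,\theta_E$ themselves. The integer $h\ge0$ produced by this lemma splits the argument into three exhaustive ranges:
\begin{itemize}
\item $h=0$: Proposition~\ref{prop:quadratic-ur-minimal} gives \eqref{eq:quadratic-ur-minimal-target}, and this is exactly $\mathcalA_U=\mathcalA_E$ by \eqref{eq:lambda-and-A}.
\item $1\le h<T$: this is Proposition~\ref{prop:quadratic-ur-critical}.
\item $h\ge T$: this is Proposition~\ref{prop:quadratic-ur-stable}.
\end{itemize}

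The only step needing care is the bookkeeping, because the paper's statements concern pairs while the claim concerns families. First, Proposition~\ref{prop:quadratic-ur-minimal} is phrased with $\Delta_F(\omega_U,\psi_F)$ and $\Delta_F(\omega_E,\psi_F)$, which match the norm-character factors in $\mathcalA_L$ since $S(L/F)=\{1,\omega_L\}$ and $\Delta_F(1,\psi_F)=1$. Second, the propositions were proved for a fixed choice of $\psi_F$, but $\psi_F$ was arbitrary throughout, and Lemma~\ref{lem:restrictions} shows the quotient is independent of rescaling in any case. Third, the characteristic of $F$ entered only through valuation estimates. Those estimates used \eqref{eq:trace-ideal}, which covers both characteristics, and the identity $T=2e+1$ with $e=v_F(2)$ appears only in the odd-$T$ mixed case, where it was stated explicitly. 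So no case distinction on characteristic is needed beyond what the cited propositions already handle.
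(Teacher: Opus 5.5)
Your proposal is correct and follows the paper's proof: Lemma~\ref{lem:quadratic-ur-twist} supplies $h\ge0$, Propositions~\ref{prop:quadratic-ur-minimal}, \ref{prop:quadratic-ur-critical}, and~\ref{prop:quadratic-ur-stable} cover $h=0$, $1\le h<T$, and $h\ge T$, and two ramified fields are compared through $U$, with Lemma~\ref{lem:conjugacy} removing the dependence on the choice over $U$. The one step to make explicit is that, when the prescribed pair consists of two ramified fields, the character $\theta_U$ must first be produced from their common pullback by Lemma~\ref{lem:descent}, as the paper does.
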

\begin{proof}
Lemma~\ref{lem:quadratic-ur-twist} gives $h\ge0$.
Propositions~\ref{prop:quadratic-ur-minimal},
\ref{prop:quadratic-ur-critical}, and~\ref{prop:quadratic-ur-stable}
cover $h=0$, $1\le h<T$, and $h\ge T$, respectively. Thus
\begin{equation}\label{eq:quadratic-ur-all}
 \mathcalA_U(\theta_U)=\mathcalA_E(\theta_E)\qquad(h\ge0).
\end{equation}
For two ramified fields, extend their common pullback to $U$ by
Lemma~\ref{lem:descent} and use \eqref{eq:quadratic-ur-all} twice.
Lemma~\ref{lem:conjugacy} makes the result independent of the choice over $U$.
Equation~\eqref{eq:quadratic-factor} then gives
\eqref{eq:quadratic-calculation}.
\end{proof}

Proposition~\ref{prop:quadratic-ur} is \cite[Theorem~10.20]{SML}.
The theorem \lean{Dyadic/UR/Comparison.lean}{235}{Dyadic.UR.comparison}
proves it.

\subsection{Breaks in a totally ramified biquadratic extension}\label{subsec:quadratic-breaks}
Suppose $|I|=4$. Let $t_i$ be the lower break of $L_i/F$ and label
the fields so that $t_1\le t_2\le t_3$.
\begin{lemma}[Biquadratic break patterns]\label{lem:quadratic-breaks}
One has $t_2=t_3$. In characteristic two, all three breaks are odd.
In mixed characteristic put $e=v_F(2)$. The possibilities are
\begin{align}
 &(t_1,t_2,t_3)=(2a-1,2e,2e),&&1\le a\le e,
                                      \label{eq:maximal-breaks}\\
 &(t_1,t_2,t_3)=(2a-1,2r-1,2r-1),&&1\le a\le r\le e.
                                      \label{eq:nonmaximal-breaks}
\end{align}
\end{lemma}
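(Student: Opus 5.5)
The plan is to work with upper ramification breaks of the quotients $L_i/F$ of $\Gal(K/F)\simeq C_2\times C_2$ and use Herbrand functions together with the classification of ramified quadratic extensions of a dyadic field.

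\emph{Step 1: equality $t_2=t_3$.} Since $K/F$ is totally ramified and wild, the upper ramification filtration on $G=\Gal(K/F)$ has quotients that are elementary abelian $2$-groups. For a quotient $G/H_i\simeq\Gal(L_i/F)$, the upper break of $L_i/F$ is the largest $u$ with $G^u\not\subset H_i$. This uses Herbrand compatibility with quotients; since $L_i/F$ is quadratic, its lower and upper breaks coincide, so this $u$ equals $t_i$.

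Suppose $G^u$ has order $2$ at some stage, say $G^u=H_j$. Then the breaks jump only at two values $u_1<u_2$, with $G^{u}=G$ for $u\le u_1$ and $G^u=H_j$ for $u_1<u\le u_2$. Hence $t_j=u_1$ and $t_i=u_2$ for the two indices $i\ne j$. If instead $G^u$ drops directly from $G$ to $1$, all three breaks are equal.

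In either case the two largest breaks coincide, which gives $t_2=t_3$, with $t_1\le t_2$.

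\emph{Step 2: parity and bounds for a single quadratic extension.} For a ramified quadratic extension $E/F$ with break $t$, I use the standard description. In characteristic two, $E=F(\wp^{-1}(a))$ with $v_F(a)=-t$ after best approximation, as in Lemma~\ref{lem:best-approximation}. Here $t$ must be odd: if $t$ were even, the leading term would be a square and could be removed. In mixed characteristic, write $E=F(\sqrt{1+u})$ or $F(\sqrt{\pi u'})$. One gets either $t=2e$, the case of a uniformizer square root, or $t=2a-1$ with $1\le a\le e$, the case $v_F(u)=2e-2a+1$. Equivalently, via the discriminant and Lemma~\ref{lem:trace-ideal}, $D_{E/F}=t+1\le 2e+1$, and $t$ is odd unless $t=2e$. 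So each $t_i$ lies in $\{1,3,\dots,2e-1\}\cup\{2e\}$.

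\emph{Step 3: combining in mixed characteristic.} I must show that $t_1=2e$ is impossible, and that if $t_2=t_3=2e$ then $t_1$ is odd.

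Only one class of square roots of uniformizers is involved. The extensions with break $2e$ are $F(\sqrt{\pi u})$, and the product of two such generators is a unit. Concretely, if $t_1=t_2=t_3=2e$, then $L_3=F(\sqrt{\pi u\cdot\pi u'})=F(\sqrt{uu'})$, which has break at most $2e-1$ or is unramified. That contradicts total ramification. Hence $t_1<2e$, so $t_1$ is odd, which gives \eqref{eq:maximal-breaks}.

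Otherwise $t_2=2r-1$ is odd and $t_1=2a-1\le t_2$, which gives \eqref{eq:nonmaximal-breaks}.

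In characteristic two, Step 2 already gives odd breaks for all three fields.

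\emph{Main obstacle.} The main difficulty is Step 2 in mixed characteristic: the precise determination of the break of $F(\sqrt{1+u})$ from $v_F(u)$, after normalizing $u$ to be non-square-reducible. I would do this by best approximation of $\sqrt{1+u}$, in the spirit of Lemma~\ref{lem:best-approximation}, computing $v(\sigma y-y)$.
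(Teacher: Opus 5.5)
Your proof is correct, but it argues on the Galois side, whereas the paper argues with the norm characters.

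\textbf{The paper's route.} It gets $t_2=t_3$ from $\omega_1\omega_2=\omega_3$ (Lemma~\ref{lem:norm-groups}). A product of two characters of unequal conductors has the larger conductor, so the largest of the three conductors $m_F(\omega_i)=t_i+1$ (Proposition~\ref{prop:norm-conductors}) occurs at least twice. In mixed characteristic it quotes the list of quadratic conductors $2a$ ($1\le a\le e$) and $2e+1$. It rules out three breaks equal to $2e$ because two characters of conductor $2e+1$ have a product of smaller conductor.

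\textbf{Your route.} You replace the first step by Herbrand's theorem $(G/H)^u=G^uH/H$, together with the observation that the upper filtration of $C_2\times C_2$ passes through at most one subgroup of order $2$. You replace the last step by Kummer theory: $v_F(D_1D_2)$ is even when $v_F(D_1)$ and $v_F(D_2)$ are odd. Under local class field theory these are the same facts viewed from the two sides, so the two arguments have parallel structure.

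\textbf{What each buys.} The paper's version costs nothing extra in context, since the norm characters and their conductor formula are used throughout and are already formalized. Yours avoids the reciprocity map. Your filtration picture, with $G\supset H_1\supset 1$ jumping at $t_1<t_2$, also gives immediately the lower breaks $2t_2-t_1,\,t_1,\,t_1$ of $K/L_1,K/L_2,K/L_3$ that the paper uses afterwards.

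\textbf{Step~2.} Your plan goes through. For $y=\sqrt D$ with $E=F(y)$ ramified, Lemma~\ref{lem:best-approximation} gives an odd maximal $q=v_E(y-a)$. Then $(y-a)\pi_F^{-(q-1)/2}$ is a uniformizer, and the break is $v_E(2y)-q$. This equals $2e$ when $v_F(D)$ is odd. When $D$ is a unit it equals $2e-q\in\{1,3,\dots,2e-1\}$.

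\textbf{Two corrections to Step~3.} First, there are in general several classes of uniformizer square roots: $F(\sqrt{\pi})\ne F(\sqrt{\pi u})$ when $u$ is a nonsquare unit. What you actually use is only that $D_1$, $D_2$, $D_1D_2$ cannot all have odd valuation. Second, a break of at most $2e-1$ for $F(\sqrt{uu'})$ contradicts $t_3=2e$, not total ramification. Total ramification is needed only to exclude the unramified alternative.
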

\begin{proof}
The norm characters satisfy $\omega_1\omega_2=\omega_3$ by
Lemma~\ref{lem:norm-groups}. The product of two characters of unequal
conductors has the larger conductor, so the maximum of the three
conductors occurs at least twice. In characteristic two, reduced
Artin--Schreier representatives give odd positive breaks.
In mixed characteristic, \cite[Lemma~14.2]{SML} gives the possible
quadratic conductors $2a$ and $2e+1$; the product of two characters
of conductor $2e+1$ has smaller conductor. Subtracting one from the
conductors, by Proposition~\ref{prop:norm-conductors}, gives exactly
\eqref{eq:maximal-breaks} and \eqref{eq:nonmaximal-breaks}.
The full classification is \cite[Lemma~\SMLnum{U:dispatch}]{SML}.
\end{proof}
The relation between the breaks in the field tower is
\lean{Ramification/DiamondBreaks.lean}{1042}{Ramification.diamondBreaks};
the mixed-characteristic conductor alternatives are
\lean{Classification/QuadraticConductors.lean}{199}{Classification.quadraticConductors}.
The resulting exhaustive case selection is part of
\lean{Classification/Exhaustion.lean}{646}{Classification.exhaustion}.

The tower relation of Section~\ref{subsec:odd-total} gives lower breaks
$2t_2-t_1,t_1,t_1$ for $K/L_1,K/L_2,K/L_3$, respectively.
As there, the model characters $\chi_i$ yield the prescribed
characters by a common base-field twist. The corresponding
constructions and minimal-conductor comparisons are:
\begin{center}
\small
\begin{tabular}{@{}>{\raggedright\arraybackslash}p{.30\linewidth}@{\hspace{.02\linewidth}}>{\raggedright\arraybackslash}p{.26\linewidth}@{\hspace{.02\linewidth}}>{\raggedright\arraybackslash}p{.40\linewidth}@{}}
\toprule
Step & Equal characteristic $2$ & Mixed characteristic\\
\midrule
Common element $Y$ & Lemma~\ref{lem:char2-coordinate} & Lemmas~\ref{lem:maximal-coordinate}, \ref{lem:nonmaximal-coordinate}\\[3pt]
Model characters $\chi_i$ & Proposition~\ref{prop:char2-characters} & Propositions~\ref{prop:maximal-characters}, \ref{prop:nonmaximal-characters}\\[3pt]
Common base-field twist & Lemma~\ref{lem:char2-twist} & Lemmas~\ref{lem:maximal-twist}, \ref{lem:nonmaximal-twist}\\[3pt]
Adjustment from $Y$ to $C$ & Lemma~\ref{lem:char2-minimal-coefficients} & Lemmas~\ref{lem:maximal-common-element}, \ref{lem:nonmaximal-common-element}--\ref{lem:nonmaximal-minimal-coefficients}\\[3pt]
Critical functions & Proposition~\ref{prop:char2-minimal} & Proposition~\ref{prop:maximal-minimal}, Lemma~\ref{lem:nonmaximal-critical-functions}\\
\bottomrule
\end{tabular}
\end{center}
The coordinates and conductor ranges differ; Lemma~\ref{lem:quadratic-shift}
gives the common algebra of the adjustment. The comparisons retain the
quadratic norm-character factors and use Section~\ref{sec:quadratic}
to determine the sign.

\subsection{Equal characteristic two}\label{subsec:char2}
Assume $\operatorname{char}F=2$ and $K/F$ is totally ramified with
$\Gal(K/F)\simeq C_2^2$. Let $k$ be their common residue field.
Choose a uniformizer $\pi$ of $K$ and put
$\Pi_i=\N_{K/L_i}(\pi)$ and $\varpi=\N_{K/F}\pi$.
Identify $F=k((\varpi))$ and use these compatible uniformizers
for the critical quotients. Such a presentation is constructed by
\lean{Residues/EqualCharacteristicPresentation.lean}{574}{Residues.existsEqualCharacteristicPresentation}.
By Lemma~\ref{lem:restrictions}, it suffices to use
\[
 \psi_F(x)=(-1)^{\Tr_{k/\F_2}\Res_F(x\,d\varpi)},
\]
whose largest trivial ideal is $\OO_F$; the passage to arbitrary additive
characters is also recorded in \cite[Theorem~11.14]{SML}.
Let $\sigma_i$ be the nontrivial element of $\Gal(K/L_i)$.
Let $\omega_i$ be the nontrivial character of
$F^\times/\N_{L_i/F}(L_i^\times)$.
For a ramified quadratic extension $E/D$ of break $t$,
\begin{equation*}
 \Tr_{E/D}(\pp_E^a)=\pp_D^{\floor{(a+t+1)/2}},\qquad
 v_D(\N_{E/D}x)=v_E(x),\qquad m_D(\omega_{E/D})=t+1.
\end{equation*}
The trace formula is \eqref{eq:trace-ideal}; the conductor formula is the
first assertion of Proposition~\ref{prop:norm-conductors}; the valuation formula is
\eqref{eq:valuation-conventions}. 

The residue formula identifies the norm characters compatibly with
trace and norm.

\begin{lemma}[Artin--Schreier norm characters]\label{lem:char2-residue-formulas}
Let $f$ represent a nonzero class in
$F/\{a^2+a:a\in F\}$. The norm character of $F(x)/F$, $x^2+x=f$, is
\begin{equation}\label{eq:AS-character}
 \omega_f(u)=(-1)^{\Tr_{k/\F_2}\Res_F(f\,du/u)}
 \qquad(u\in F^\times).
\end{equation}
For a finite separable extension $E/F$, a differential $\eta$ over $E$,
and $u\in E^\times$, one has
\begin{equation}\label{eq:trace-residue}
 \Tr_{k_E/k}\Res_E(\eta)=\Res_F(\Tr_{E/F}\eta),
 \qquad
 \frac{d\N_{E/F}(u)}{\N_{E/F}(u)}
 =\Tr_{E/F}\!\left(\frac{du}{u}\right).
\end{equation}
\end{lemma}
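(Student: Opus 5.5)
I would prove the two displays in \eqref{eq:trace-residue} first, since \eqref{eq:AS-character} is then reduced to them by a reciprocity argument. Both identities in \eqref{eq:trace-residue} are standard facts about residues of differentials over complete discretely valued fields with perfect residue field in characteristic $p$. The logarithmic-derivative identity is purely algebraic. Over a normal closure, $\N_{E/F}(u)=\prod_\tau\tau(u)$ over the embeddings $\tau$, and $d\log$ turns products into sums. Since $d$ commutes with each embedding, $d\N(u)/\N(u)=\sum_\tau\tau(du/u)=\Tr_{E/F}(du/u)$. Here the trace of differentials is defined through $\Omega_E=E\otimes_F\Omega_F$, which is valid because $E/F$ is separable.

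For the trace compatibility of residues, I would factor $E/F$ into an unramified step followed by a totally ramified step. In the unramified case, $E=k_E((\varpi))$ and $\Tr$ acts coefficientwise, which gives $\Tr_{k_E/k}$ on the $\varpi^{-1}$ coefficient. In the totally ramified case, I would check the identity on the monomial differentials $\pi_E^j\,d\pi_E$. One route is the classical Tate argument, using continuity and an explicit generator. The other is to cite Serre's \emph{Local Fields}, Ch.~II, where the residue-trace formula appears. I expect this step to be routine but technical.

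For \eqref{eq:AS-character}, I would define $\omega_f$ by the right-hand side and show three things.

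\emph{It is a continuous character.} Multiplicativity follows from $d(uv)/(uv)=du/u+dv/v$. The character is trivial on $U_F^N$ for $N$ large, because $\Res(f\,du/u)=0$ once $v(du/u)$ exceeds $-v(f)$.

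\emph{It is nontrivial.} Replacing $f$ by $f+a^2+a$ changes the residue by $\Res(a^2\,du/u)+\Res(a\,du/u)$. The Frobenius compatibility $\Res(a^2\,du/u)=\Res(a\,du/u)^2$ makes this change have trace zero over $\F_2$, so the formula depends only on the class of $f$. Nontriviality then follows by choosing a reduced representative and a suitable $u$.

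\emph{It vanishes on norms from $F(x)$.} By the second identity in \eqref{eq:trace-residue}, $\Res_F(f\,d\N u/\N u)=\Res_F(\Tr(f\,du/u))=\Tr_{k_E/k}\Res_E(f\,du/u)$. Next write $f=x^2+x$ in $E$. The term $\Res_E(x^2\,du/u)$ equals $\Res_E(x\,du/u)^2$ by the same Frobenius compatibility, so the total residue is $\rho^2+\rho$ with $\rho=\Res_E(x\,du/u)$. The absolute trace of $\rho^2+\rho$ vanishes.

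Since $F^\times/\N_{F(x)/F}$ has order two, a nontrivial character trivial on norms must be $\omega_f$. The main obstacle is the Frobenius compatibility $\Res(a^2\,du/u)=\Res(a\,du/u)^2$. This holds because $d(u)/u$ can be written with $u=\varpi^n v$ and $a^2\,dv/v$ expanded in $k((\varpi))$: only coefficients at exponents divisible by $2$ survive the squaring, which reduces the claim to $\Res(a^2\,d\varpi/\varpi)=\Res(a\,d\varpi/\varpi)^2$ and the exactness of $d(\varpi^j)$ for $j\neq0$. I would verify this carefully by expanding in the variable $\varpi$.
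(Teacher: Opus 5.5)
Your overall architecture is sound and matches the structure behind the paper's citations to \cite[Lemma~11.2 and Appendix~B]{SML}. The logarithmic-norm identity and trace compatibility of residues move $\Res_F(f\,d\N u/\N u)$ up to $E$, Artin--Schreier invariance over $E$ kills it, and norm index two identifies the nontrivial character. The genuine gap is the step you flag yourself, $\Res(a^2\,du/u)=\Res(a\,du/u)^2$. Both the class-independence and the vanishing on norms rest on it, and your sketch does not prove it.

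Knowing that $a^2$ has only even exponents and that exact differentials have zero residue cannot suffice. The identity fails for non-logarithmic differentials: for $\omega=d\varpi$ and $a=\varpi^{-1}$ one has $\Res(a\omega)=1$ but $\Res(a^2\omega)=0$. Write $du/u=\sum_j w_j\varpi^j\,d\varpi$. Then the claim is equivalent to $w_{2m+1}=w_m^2$ for all $m$, that is, to the fact that logarithmic differentials are fixed by the Cartier operator. This is the missing idea, and in characteristic two it has a short proof. Since $k$ is perfect, $F=F^2\oplus\varpi F^2$, so write $u=A^2+\varpi B^2$. Then $du=B^2\,d\varpi$, and
\[
 \frac{du}{u}=\frac{B^2(A^2+\varpi B^2)}{u^2}\,d\varpi
 =d\Bigl(\frac{\varpi A^2B^2}{u^2}\Bigr)+g^2\,\frac{d\varpi}{\varpi},
 \qquad g=\frac{\varpi B^2}{u},\qquad g\,\frac{d\varpi}{\varpi}=\frac{du}{u}.
\]
In characteristic two $a^2\,dh=d(a^2h)$, so the exact term contributes no residue, and
\[
 \Res\Bigl(a^2\frac{du}{u}\Bigr)=\Res\Bigl((ag)^2\frac{d\varpi}{\varpi}\Bigr)
 =\Res\Bigl(ag\,\frac{d\varpi}{\varpi}\Bigr)^2=\Res\Bigl(a\,\frac{du}{u}\Bigr)^2.
\]
The middle equality holds because squaring on $k((\varpi))$ squares the constant coefficient. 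The same computation works over $E$ with its own uniformizer.

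The same missing idea affects \eqref{eq:trace-residue}. Your citation of Chapter~II of \emph{Local Fields} does not supply the residue--trace formula. Moreover, the totally ramified ``monomial check'' is not routine: for odd $j\ne-1$, the differential $\pi_E^j\,d\pi_E=(\pi_E^{(j+1)/2})^2\,d\pi_E/\pi_E$ is not exact. In characteristic two you can get through as follows. Separability gives $E=E^2\oplus\varpi E^2$, and $\Tr_{E/F}(\nu^2)=\Tr_{E/F}(\nu)^2$, so the decomposition above commutes with the trace. Since $d\pi_E/\pi_E$ is Cartier-fixed, this gives $\Res_F\Tr(b^2\,d\pi_E/\pi_E)=\Res_F\Tr(b\,d\pi_E/\pi_E)^2$. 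Induction then reduces to exact monomials, where $\Tr\,dG=d\Tr G$, or to $j=-1$, which is the log-norm identity. Otherwise cite a genuine source, as the paper does. This case cannot be skipped, because your norm-vanishing step applies the formula to a wildly ramified quadratic $E/F$.
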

\begin{proof}
The norm-character formula is \cite[Lemma~11.2]{SML}.
The trace--residue identities are proved in
\cite[Appendix~\SMLnum{app:local-residues}]{SML}.
\end{proof}
The identification with the actual norm character is
\lean{Characters/ArtinSchreierProduct.lean}{199}{Characters.artinSchreierNormCharacter_eq_normCharacter}.
The two identities in \eqref{eq:trace-residue} are
\lean{Residues/TraceCompatibility.lean}{704}{Residues.traceCompatibility} and
\lean{Residues/LogNorm.lean}{393}{Residues.logNorm}, respectively.

\begin{lemma}[Artin--Schreier invariance]\label{lem:char2-AS-invariance}
For $a\in F$ and $u\in F^\times$,
\begin{equation}\label{eq:AS-invariance}
 \Tr_{k/\F_2}\Res_F\left((a^2+a)\frac{du}{u}\right)=0.
\end{equation}
\end{lemma}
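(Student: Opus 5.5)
We must show $\Tr_{k/\F_2}\Res_F\bigl((a^2+a)\,du/u\bigr)=0$ for $a\in F=k((\varpi))$ and $u\in F^\times$. The plan is to split the differential into the two pieces $a\,du/u$ and $a^2\,du/u$ and show that their trace-residues coincide. This uses Frobenius invariance of the absolute trace together with the characteristic-two identity $d(a^2)=0$.

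\textbf{Reduction of the logarithmic differential.} Write $u=c\,\varpi^n u_1$ with $c\in k^\times$ and $u_1\in 1+\varpi k[[\varpi]]$. Since $dc=0$, we have
\[
 \frac{du}{u}=n\frac{d\varpi}{\varpi}+\frac{du_1}{u_1}.
\]
Both terms are of the form $\frac{dv}{v}$, so bilinearity reduces the claim to two cases: $u=\varpi$, and $u$ a principal unit. Alternatively, avoiding the decomposition, one can work directly with $\omega:=du/u=\sum_j c_j\varpi^j\,d\varpi$ and use only two facts. First, $\omega$ is logarithmic, that is $\omega=dv/v$. Second, the Cartier operator $\mathcal C$ fixes logarithmic forms: $\mathcal C(dv/v)=dv/v$.

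\textbf{Cartier step.} In characteristic $2$ the Cartier operator satisfies
\[
 \mathcal C(a^2\eta)=a\,\mathcal C(\eta),\qquad
 \Res_F(\mathcal C\eta)=\Res_F(\eta)^{1/2}.
\]
Hence
\[
 \Res_F\!\left(a^2\frac{du}{u}\right)
 =\Res_F\!\left(\mathcal C\!\left(a^2\frac{du}{u}\right)\right)^2
 =\Res_F\!\left(a\,\mathcal C\!\left(\frac{du}{u}\right)\right)^2
 =\Res_F\!\left(a\frac{du}{u}\right)^2.
\]
Applying $\Tr_{k/\F_2}$ and using $\Tr(y^2)=\Tr(y)$ gives equality of the two trace-residues. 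Their sum is therefore $2\Tr(\cdot)=0$ in $\F_2$, which proves \eqref{eq:AS-invariance}.

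\textbf{Main obstacle: $\mathcal C(dv/v)=dv/v$.} To avoid importing Cartier theory, I would verify the needed residue identity $\Res(a^2\omega)=\Res(a\omega)^2$ directly, with $\omega=du/u$, in two steps.
\begin{itemize}
\item \emph{Uniformizer case.} For $\omega=d\varpi/\varpi$ and $a=\sum a_i\varpi^i$, we get $\Res(a^2\omega)=a_0^2=\Res(a\omega)^2$.
\item \emph{Principal-unit case.} Factor $u_1=\prod_{j\ge1}(1-c_j\varpi^j)$. Because residues are continuous and depend only on finitely many coefficients, a finite product suffices. By additivity of the logarithmic derivative it then suffices to treat $v=1-c\varpi^j$. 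Here
\[
 \frac{dv}{v}=\sum_{m\ge1} j\,c^m\varpi^{jm}\frac{d\varpi}{\varpi}.
\]
The identity reduces to coefficient matching: the terms with $j$ even vanish, while for $j$ odd the indices $jm$ pair with the squares as required. Equivalently, one checks that $\mathcal C$ fixes $\sum_m j c^m \varpi^{jm}\,d\varpi/\varpi$, using $\mathcal C(\varpi^{2s}\,d\varpi/\varpi)=\varpi^{s}\,d\varpi/\varpi$ and $\mathcal C(\varpi^{\mathrm{odd}}\,d\varpi/\varpi)=0$.
\end{itemize}
Working in series in $\varpi$ with $k$ perfect keeps this elementary, and the truncation to finitely many factors is the point to handle with some care.
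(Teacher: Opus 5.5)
Your proof is correct, but it follows a different route from the paper's. The paper does no computation here. It reads \eqref{eq:AS-invariance} as the statement that the Artin--Schreier residue pairing $(f,u)\mapsto\Tr_{k/\F_2}\Res_F(f\,du/u)$ factors through $F/\{a^2+a:a\in F\}$. It then refers to the companion paper, where this vanishing is established together with the norm-character formula \eqref{eq:AS-character}.

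You prove the identity from first principles. The Cartier identities give $\Res_F(a^2\,du/u)=\Res_F(a\,du/u)^2$, and Frobenius invariance of $\Tr_{k/\F_2}$ makes the two halves cancel. Your coefficient check for $u=\varpi$ and $u=1-c\varpi^j$ is right and dispenses with Cartier theory entirely: everything vanishes for even $j$, and only even $m$ survive for odd $j$.

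The two approaches buy different things.
\begin{itemize}
\item The paper's framing is brief and gives the identity its conceptual meaning, as well-definedness of the Artin--Schreier symbol.
\item Yours is independent of class field theory, and this matters. If \eqref{eq:AS-character} is proved without class field theory, one must show the residue character is trivial on $\N_{E/F}(E^\times)$ for $E=F(x)$, $x^2+x=f$. By \eqref{eq:trace-residue}, that reduces to exactly \eqref{eq:AS-invariance} over $E$. A direct argument like yours is what keeps that chain non-circular.
\end{itemize}

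Two small remarks. First, your Cartier version reintroduces the operator that the revision behind \eqref{eq:rational-residue} was meant to remove; the coefficient version does not. Second, in the truncation step the required precision is set by $a^2$, not by $a$. If $v_F(a)\ge -M$, match $u_1$ modulo $\varpi^{2M+1}$ by a finite product $\prod_{j\le 2M}(1-c_j\varpi^j)$, with the $c_j$ found by successive approximation.
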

\begin{proof}
The Artin--Schreier residue pairing vanishes on the zero class
represented by $a^2+a$. This is equation~(\SMLnum{D:EQ:AS-invariance})
of \cite{SML}, proved with the norm-character formula in
\cite[Lemma~11.2]{SML}.
\end{proof}
Artin--Schreier invariance of the residue exponent is
\lean{EqualChar/NormCharacter.lean}{53}{EqualChar.residueExponent_artinSchreier}.

\begin{corollary}[Rational residue identity]\label{cor:char2-rational-residue}
For $f,w\in F$ and $H=1+\varpi w^2$, one has
\begin{equation}\label{eq:rational-residue}
 \Tr_{k/\F_2}\Res_F\left(\frac{\varpi f^2}{H}\,d\varpi\right)
 =\Tr_{k/\F_2}\Res_F\left(\frac fH\,d\varpi\right).
\end{equation}
\end{corollary}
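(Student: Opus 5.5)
We prove \eqref{eq:rational-residue} by writing the left-hand differential as an Artin--Schreier term plus an exact differential, and then applying Lemma~\ref{lem:char2-AS-invariance} with $u=\varpi$. Throughout we use that $\operatorname{char}F=2$, so $d(b^2)=2b\,db=0$ for every $b\in F$.

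\emph{Step 1: rewrite the differential.} Multiply numerator and denominator by $H$ and use $H=1+\varpi w^2$:
\[
 \frac{\varpi f^2}{H}\,d\varpi
 =\Bigl(\frac fH\Bigr)^2\varpi H\,d\varpi
 =\Bigl(\frac{\varpi f}{H}\Bigr)^2\frac{d\varpi}{\varpi}
  +\Bigl(\frac{\varpi f w}{H}\Bigr)^2 d\varpi .
\]
The two terms come from the two summands of $\varpi H=\varpi+\varpi^2w^2$.

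\emph{Step 2: the second term is exact.} Put $b=\varpi fw/H\in F$. Since $d(b^2)=0$,
\[
 d(b^2\varpi)=b^2\,d\varpi .
\]
So $b^2\,d\varpi$ is exact. Exact differentials on $F=k((\varpi))$ have residue $0$: the coefficient of $\varpi^{-1}$ in the derivative of a Laurent series vanishes. Hence this term contributes nothing to $\Tr_{k/\F_2}\Res_F$.

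\emph{Step 3: the first term.} Put $a=\varpi f/H\in F$. Then
\[
 a^2\frac{d\varpi}{\varpi}=(a^2+a)\frac{d\varpi}{\varpi}+a\frac{d\varpi}{\varpi}.
\]
Equation~\eqref{eq:AS-invariance} with $u=\varpi$ kills the first summand after taking $\Tr_{k/\F_2}\Res_F$. The remaining summand is
\[
 a\frac{d\varpi}{\varpi}=\frac fH\,d\varpi ,
\]
which is exactly the right side of \eqref{eq:rational-residue}. Since $\Res_F$ and $\Tr_{k/\F_2}$ are additive, combining Steps 1--3 proves the corollary.

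The only step needing care is Step 1: spotting that the multiplication by $H/H$ splits off an exact square term from an Artin--Schreier term. Once that decomposition is found, the rest is immediate. The facts used are standard in the residue formalism of Appendix~\SMLnum{app:local-residues} of \cite{SML}: the vanishing of residues of exact differentials, and $d(b^2)=0$ in characteristic two.
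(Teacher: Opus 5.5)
Your proof is correct, but it takes a different route from the paper. The paper applies \eqref{eq:AS-invariance} once, with $a=\varpi f$ and $u=\varpi H$. Since $d(\varpi^2w^2)=0$ in characteristic two, $du=d\varpi$, so $(a^2+a)\,du/u$ is exactly $\frac{\varpi f^2}{H}\,d\varpi+\frac fH\,d\varpi$ and nothing is left over.

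You instead keep $u=\varpi$ and put $H$ into $a=\varpi f/H$. This forces the split $\varpi H=\varpi+(\varpi w)^2$ and leaves the square term $(\varpi fw/H)^2\,d\varpi$, which needs the additional fact that exact differentials have zero residue. Your decomposition is essentially the two-square-term splitting of the original Cartier argument that the paper mentions after the corollary, with the Cartier step replaced by invariance at $u=\varpi$.

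The trade-off is this. The paper's choice $u=\varpi H$ removes both the splitting and the exactness fact, which is why the formal proof is a single substitution; but it uses the invariance lemma for a nontrivial $u$. Your version needs only the instance $u=\varpi$. That instance is just $\Tr_{k/\F_2}(c_0^2)=\Tr_{k/\F_2}(c_0)$, where $c_0$ is the constant term of $a$. Together with the exactness fact, your argument therefore becomes an elementary Laurent-series computation in $F=k((\varpi))$.

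In either version you should record why $H\ne0$: for $w\ne0$, $\varpi w^2$ has odd valuation and so cannot equal $1$. You use this when you divide by $H$ to form $a$ and $b$.
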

\begin{proof}
The valuation of $\varpi w^2$ is odd when $w\ne0$, so $H\ne0$.
Apply \eqref{eq:AS-invariance} with $a=\varpi f$ and
$u=\varpi H$. Since $du=d\varpi$, the two terms in
\eqref{eq:AS-invariance} are exactly the two residues in
\eqref{eq:rational-residue}.
\end{proof}
The original proof split the rational expression into two square terms
and applied the Cartier operator. Here the same identity follows from
Artin--Schreier invariance \eqref{eq:AS-invariance}, with $a=\varpi f$
and $u=\varpi H$, so no separate Cartier calculation is needed
\cite[Remark~\SMLnum{D:EQ:lean-rational-identity}]{SML}.
In the formal character construction, this substitution in the
invariance theorem of Lemma~\ref{lem:char2-AS-invariance} is used in
\lean{Dyadic/Equal/Commutator.lean}{709}{Dyadic.Equal.commutator}.
The separate proof of the original rational identity remains
\lean{Dyadic/Equal/CartierRational.lean}{85}{Dyadic.Equal.cartierRational}.

Choose reduced Artin--Schreier generators $L_i=F(z_i)$ with
\[
 z_i^2+z_i=f_i\quad(i=1,2),\qquad
 z_3=z_1+z_2,\qquad f_3=f_1+f_2.
\]
Here each $f_i$ is the sum of a constant in $k$ and a finite sum of
negative odd powers of $\varpi$. Retain $t_1\le t_2=t_3$.
For the model characters to be constructed, set the conductor and depth
parameters as follows:
\[
\begin{array}{c|c|c|c}
 i&T_i=m_F(\omega_i)&m_i&s_i=\ceil{m_i/2}\\ \hline
 1&t_1+1=2r_1&2T_2-1&T_2\\
 2,3&t_2+1=2r_2=2r_3&T_1+T_2-1&r_1+r_2
\end{array}
\]
Put $\delta=r_2-r_1$.
Let $f_i^-$ be the negative-power part of the reduced Laurent series $f_i$.
Write
\[
 \beta_i=\varpi^{t_2}f_i^-=d_i^2,\qquad d_3=d_1+d_2.
\]
These square roots belong to $F$: the negative exponents of the reduced
$f_i$ are odd, $t_2$ is odd, and $k$ is perfect.
Put
\begin{equation*}
 Y=d_1z_2+d_2z_1,\qquad
 B=\beta_1f_2+\beta_2f_1,\qquad
 \kappa_i=d_jd_k\quad(\{i,j,k\}=\{1,2,3\}).
\end{equation*}
As in Lemma~\ref{lem:odd-total-coordinate}, the norms of one element
supply the model coefficients. Here $Y$ also supplies coefficients for
the quadratic norm characters, through the norms of its traces.

\begin{lemma}[Simultaneous norm and trace formulas]\label{lem:char2-coordinate}
For the elements just defined,
\begin{equation*}
 \sigma_iY-Y=d_i,\qquad \Tr_{K/L_i}(Y)=d_i,\qquad
 a_i:=\N_{K/L_i}(Y)=\kappa_i z_i+B,
\end{equation*}
and
\begin{equation*}
 v_K(Y)=-t_1,\qquad v_{L_i}(a_i)=-t_1,\qquad
 \N_{K/F}(Y)=\N_{L_i/F}(a_i),\qquad v_F(\N_{K/F}(Y))=-t_1.
\end{equation*}
\end{lemma}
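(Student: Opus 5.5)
The plan is to compute everything from the explicit action of $\Gal(K/F)$ on the Artin--Schreier generators, working in characteristic two throughout. The nontrivial element $\sigma_i$ of $\Gal(K/L_i)$ fixes $z_i$ and sends each of the other two generators $z_j$ to $z_j+1$. Concretely:
\begin{itemize}
\item $\sigma_1$ sends $z_2\mapsto z_2+1$ and fixes $z_1$;
\item $\sigma_2$ sends $z_1\mapsto z_1+1$ and fixes $z_2$;
\item $\sigma_3$ sends both $z_1\mapsto z_1+1$ and $z_2\mapsto z_2+1$, and fixes $z_3=z_1+z_2$.
\end{itemize}
Substituting into $Y=d_1z_2+d_2z_1$ gives $\sigma_1Y=Y+d_1$, $\sigma_2Y=Y+d_2$ and $\sigma_3Y=Y+d_1+d_2=Y+d_3$. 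Since $d_i\in F$, it follows that $\Tr_{K/L_i}(Y)=Y+\sigma_iY=d_i$ and $\N_{K/L_i}(Y)=Y^2+d_iY$.

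Next I expand $Y^2+d_iY$, using $z_j^2=z_j+f_j$ and $d_j^2=\beta_j$. For $i=1$,
\[
 Y^2+d_1Y=\beta_1(z_2+f_2)+\beta_2(z_1+f_1)+\beta_1z_2+d_1d_2z_1
 =B+(d_2^2+d_1d_2)z_1=B+\kappa_1z_1 .
\]
The case $i=2$ is symmetric. For $i=3$ I rewrite $Y=d_1z_3+d_3z_1$, which holds in characteristic two, and repeat the same computation. This proves the formula $a_i=\kappa_iz_i+B$. The formula $\N_{K/F}(Y)=\N_{L_i/F}(a_i)$ is transitivity of norms.

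For the valuations, the key step is to show that $B$ is integral. Write $f_i=c_i+f_i^-$ with $c_i\in k$. Then
\[
 B=\varpi^{t_2}\bigl(f_1^-f_2+f_2^-f_1\bigr)=\varpi^{t_2}\bigl(c_2f_1^-+c_1f_2^-\bigr),
\]
because the cross term $f_1^-f_2^-$ appears twice and so vanishes in characteristic two. Since $v_F(f_i^-)=-t_i\ge-t_2$, this gives $v_F(B)\ge0$. On the other hand, $v_F(\beta_i)=t_2-t_i$, so $v_F(d_1)=\delta$ and $v_F(d_2)=v_F(d_3)=0$; here $f_3^-$ has valuation $-t_3=-t_2$ because $t_2=t_3$. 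Consequently $v_F(\kappa_1)=0$ and $v_F(\kappa_2)=v_F(\kappa_3)=\delta$.

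Since $L_i/F$ is totally ramified of degree two, $v_{L_i}(z_i)=-t_i$ and $v_{L_i}$ doubles $v_F$ on $F$. Hence $v_{L_1}(\kappa_1z_1)=-t_1$, and for $i=2,3$ we get $v_{L_i}(\kappa_iz_i)=2\delta-t_2=-t_1$. In every case this term has odd valuation $-t_1<0\le v_{L_i}(B)$, so $v_{L_i}(a_i)=-t_1$. Because $K/L_i$ is totally ramified, $v_K(Y)=v_{L_i}(\N_{K/L_i}Y)=-t_1$. Likewise $v_F(\N_{K/F}Y)=v_F(\N_{L_i/F}a_i)=v_{L_i}(a_i)=-t_1$.

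The main point to get right is this valuation step. The two summands $d_1z_2$ and $d_2z_1$ of $Y$ both have $K$-valuation $-2t_1$ and cancel, so $v_K(Y)$ cannot be read off term by term. It must be obtained through the norm $a_1$ together with the characteristic-two cancellation that makes $B$ integral.
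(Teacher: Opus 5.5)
Your proposal is correct and follows essentially the same route as the paper. Like the paper, you expand $Y^2=\beta_1z_2+\beta_2z_1+B$, read off the traces and norms from the involutions adding $1$ to $z_1$ or $z_2$, get integrality of $B$ from the characteristic-two cancellation of the negative parts, and then compare $v_{L_i}(\kappa_iz_i)=-t_1$ with $v_{L_i}(B)\ge0$ before transferring valuations by total ramification. Your version simply writes out the details that the paper leaves implicit.
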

\begin{proof}
Expanding $Y^2$ gives
$Y^2=\beta_1z_2+\beta_2z_1+B$; the two involutions adding $1$
to $z_1$ or $z_2$ give the three trace and norm formulas.
Since the negative parts in $B$ cancel, $B\in\OO_F$.
For $i=2,3$, $v_{L_i}(\kappa_i)=2\delta=t_2-t_1$;
for $i=1$, this valuation is zero. Hence $v_{L_i}(a_i)=-t_1$.
The remaining valuations follow from \eqref{eq:valuation-conventions}.
These are equations~(201)--(202) of \cite{SML}.
\end{proof}
The common-element identities and their valuations are
\lean{Dyadic/Equal/Origin.lean}{1483}{Dyadic.Equal.SimultaneousASGenerators.exact_origin} and
\lean{Dyadic/Equal/Origin.lean}{1538}{Dyadic.Equal.SimultaneousASGenerators.origin_orders}, respectively.

Put
\[
 \alpha=\varpi^{-T_2},\qquad
 \Psi_F(x)=\psi_F(\alpha x),\qquad
 \Psi_{L_i}(x)=\Psi_F(\Tr_{L_i/F}(x)).
\]
\begin{lemma}[Simultaneous norm-character coefficients]\label{lem:char2-lower-coefficients}
For $i=1,2,3$, one has
\begin{equation*}
 \omega_i(1-z)=\Psi_F(\beta_i z)\qquad(z\in\pp_F^{r_i}),
\end{equation*}
and
\begin{equation*}
 \Psi_{L_i}(\beta_i z)=\Psi_F(\beta_i \N_{L_i/F}(z))
 \qquad(z\in\pp_{L_i}^{r_i}).
\end{equation*}
\end{lemma}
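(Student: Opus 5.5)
The first formula comes from the residue description of the norm character in Lemma~\ref{lem:char2-residue-formulas}. The second comes from the quadratic norm identity combined with that description, in the style of Lemma~\ref{lem:quadratic-ur-normphase}.

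\emph{The formula for $\omega_i$.} By \eqref{eq:AS-character} with $f=f_i$ and $u=1-z$,
\[
 \omega_i(1-z)=(-1)^{\Tr_{k/\F_2}\Res_F(f_i\,d(1-z)/(1-z))}.
\]
For $z\in\pp_F$, write $d(1-z)/(1-z)=-(\sum_{n\ge0}z^n)\,dz$. The constant part of $f_i$ contributes no residue, since $dz/(1-z)$ is the logarithmic differential of a principal unit and is therefore regular. So only $f_i^-$ matters. It has valuation $-t_i$, with $t_i=2r_i-1$.

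For $z\in\pp_F^{r_i}$, the terms $f_i^-z^n\,dz$ with $n\ge1$ have valuation at least $-t_i+2r_i-1\ge0$, so they also drop out. This leaves the residue of $f_i^-\,dz$. The one delicate point is the passage from $z\,d\varpi$ to $dz$. Integrating by parts gives $\Res(f\,dz)=-\Res(z\,df)$. The $\varpi$-derivative of $f_i^-$ only involves even exponents, since the exponents of $f_i^-$ are odd, and so
\[
 df_i^-=\varpi^{-1}f_i^-\,d\varpi\qquad(\text{characteristic }2).
\]
Hence the residue is $\Res(\varpi^{-1}f_i^-z\,d\varpi)$.

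Now compare with $\Psi_F(\beta_iz)=\psi_F(\varpi^{-T_2}\varpi^{t_2}f_i^-z)$. Since $T_2=t_2+1$, its argument is $\varpi^{-1}f_i^-z$, which matches. Signs are irrelevant in characteristic two.

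\emph{The second identity.} Take $z\in\pp_{L_i}^{r_i}$. Norm triviality gives $\omega_i(\N_{L_i/F}(1-z))=1$, and the quadratic expansion reads
\[
 \N_{L_i/F}(1-z)=1-\Tr_{L_i/F}(z)+\N_{L_i/F}(z).
\]
The trace formula gives
\[
 v_F(\Tr_{L_i/F}z)\ge\floor{(r_i+t_i+1)/2}\ge r_i,
\]
and the norm has $v_F\ge r_i$. The principal unit $1-(\Tr z-\N z)$ equals the norm only up to no error, because the identity above is exact. So the first part applies and yields $\Psi_F(\beta_i(\Tr z-\N z))=1$.

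This, however, is an identity for $\beta_i$ acting on $z$, whereas the target has argument $\beta_iz$. Because $\beta_i\in F$, $\Tr(\beta_iz)=\beta_i\Tr z$, and $\Psi_{L_i}(\beta_iz)=\Psi_F(\beta_i\Tr z)$ by definition. Combining the two gives the claim.

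The main obstacle is the residue bookkeeping in the first part. Specifically, one has to justify that:
\begin{itemize}
\item[(a)] the higher logarithm terms vanish at depth exactly $r_i$;
\item[(b)] the constant term of $f_i$ can be dropped;
\item[(c)] the relation $\Res(f\,dz)=\Res(\varpi^{-1}fz\,d\varpi)$ holds for reduced $f$.
\end{itemize}
If (c) turns out to be awkward, I would instead apply Artin--Schreier invariance, Lemma~\ref{lem:char2-AS-invariance}, with a suitable $a$ to move between the two forms, as in Corollary~\ref{cor:char2-rational-residue}.
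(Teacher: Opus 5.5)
Your proposal is correct and follows the paper's proof. For the first formula, the paper uses the residue formula \eqref{eq:AS-character}, the depth estimate $-t_i+r_i+(r_i-1)=0$ that removes the higher terms of $(1-z)^{-1}$, and integration by parts with $df_i=\alpha\beta_i\,d\varpi$; for the second, it uses the exact quadratic norm identity with norm triviality, as in Lemma~\ref{lem:quadratic-ur-normphase}. Your items (a)--(c) are already settled by your own computation, so the Artin--Schreier fallback is not needed.
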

\begin{proof}
Apply \eqref{eq:AS-character} and integration by parts, using
$df_i=\alpha\beta_i\,d\varpi$. On $\pp_F^{r_i}$ the error in replacing
$(1-z)^{-1}$ by $1$ in $f_i\,dz/(1-z)$ has valuation at least
$-t_i+r_i+(r_i-1)=0$, so it has zero residue.
The second formula follows from the first by the norm-triviality
argument proving \eqref{eq:quadratic-ur-normphase}, applied with
$\Psi_F(\beta_i\,\cdot)$. The depth checks are the norm-character
calculation in \cite[Section~11.3]{SML}.
\end{proof}
The lower-character formulas are
\lean{Dyadic/Equal/Origin.lean}{1039}{Dyadic.Equal.dyadicEqual_lowerCharts};
the norm-phase identity is
\lean{Dyadic/Equal/Origin.lean}{879}{Dyadic.Equal.originAddChar_normPhase}.

\begin{proposition}[Model characters at minimal conductors]\label{prop:char2-characters}
There are primitive compatible characters $\chi_i$ of exact conductors
$m_i$ satisfying
\begin{equation*}
 \chi_i(1-z)=\Psi_{L_i}(a_i z)
 \qquad(z\in\pp_{L_i}^{s_i}).
\end{equation*}
\end{proposition}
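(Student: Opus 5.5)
We follow the pattern of Propositions~\ref{prop:odd-total-characters} and~\ref{prop:quadratic-ur-characters}. First we define principal-unit characters by the displayed formulas, then check that their norm pullbacks agree on a deep unit group of $K$, and finally extend and descend.

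\emph{Well-definedness.} Fix $i$. On $\pp_{L_i}^{s_i}$ the map $z\mapsto\Psi_{L_i}(a_iz)$ must be a character of $U_{L_i}^{s_i}/U_{L_i}^{m_i}$ via $1-z$. We have $(1-z)(1-w)=1-(z+w-zw)$, so we need $\Psi_{L_i}(a_izw)=1$ for $z,w\in\pp_{L_i}^{s_i}$. By Lemma~\ref{lem:char2-coordinate}, $v_{L_i}(a_izw)\ge 2s_i-t_1$. The largest trivial ideal of $\Psi_{L_i}$ follows from \eqref{eq:trace-ideal}, using $v_F(\alpha)=-T_2$ and the fact that $\psi_F$ is trivial exactly on $\OO_F$. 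It is $\pp_{L_i}^{2T_2-t_i}$, so the check reduces to a numerical comparison with the table. The same computation gives triviality on $U_{L_i}^{m_i}$ and nontriviality on $U_{L_i}^{m_i-1}$. The latter holds because $v_{L_i}(a_i)=-t_1$ is exact, and this yields the exact conductors once the extension is made.

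\emph{Compatibility.} For $u=1-y\in U_K^{r}$ with $r$ large enough, we compare $\Psi_{L_i}(a_i\cdot(1-\N_{K/L_i}(1-y)))$ for $i=1,2,3$. We write $\N_{K/L_i}(1-y)=1-\Tr_{K/L_i}y+\N_{K/L_i}y$ and try to reduce each side to an expression over $F$. We use Lemma~\ref{lem:E2} and the norm-phase identity of Lemma~\ref{lem:char2-lower-coefficients}, with $\Psi_{L_i}(\beta_i\,\cdot)=\Psi_F(\beta_i\N_{L_i/F}(\cdot))$. The key device is the common element: since $a_i=\N_{K/L_i}(Y)$ and $\Tr_{K/L_i}(Y)=d_i$ with $d_i^2=\beta_i$, the term $\Tr_{L_i/F}(a_i\Tr_{K/L_i}y)$ should combine with the $\omega_i$-type norm terms into the $i$-independent quantity $\Psi_F(\pm E_2$-type$)$ expression of the product $Y\cdot y$. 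This is an infinitesimal version of Lemma~\ref{lem:common-function}. Where the additive-character computation is awkward, we would instead express everything via residues, using \eqref{eq:AS-character}, \eqref{eq:trace-residue}, and Corollary~\ref{cor:char2-rational-residue} to absorb the quadratic cross terms.

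\emph{Descent, extension, primitivity.} After compatibility is established on a deep group $U_K^{r}$, we proceed by the descent scheme of Lemma~\ref{lem:nonexceptional-trace}. Suppose the quotient of pullbacks is trivial on $U_K^{r+1}$. Then it is additive on $\pp_K^r/\pp_K^{r+1}$, and it vanishes because each basis monomial in $Y$ falls into a trivial ideal or the contributions cancel. This yields agreement on $U_K^{q}$ for the relevant starting depth. Next we extend $\chi_1$ to $L_1^\times$ with the correct conjugate quotient, using the character-extension step of Lemma~\ref{lem:descent}. The pullback $\Theta$ is then $\Gal(K/F)$-invariant, because the generators $\sigma_iY-Y=d_i$ make the Galois action explicit. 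Descent therefore gives $\chi_2,\chi_3$, adjusted by elements of $S(K/L_i)$ so that they agree with the unit prescriptions. For primitivity we argue as in Proposition~\ref{prop:odd-ur-characters}: if $\Theta=\xi\circ\N_{K/F}$, then $\chi_1^{\sigma}/\chi_1$ would be trivial. But on units it equals $\Psi_{L_1}((\sigma a_1-a_1)z)$, with $\sigma a_1-a_1=\kappa_1$, which is nonzero of valuation $0$, a contradiction.

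The main obstacle is the compatibility step in odd conductor. There the model depth $s_i$ exceeds the critical depth, so linear unit formulas do not directly control the quadratic cross terms. We expect to need the exact residue identity \eqref{eq:rational-residue} to show that these terms agree across $i$.
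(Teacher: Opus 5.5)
\emph{Verdict: there is a genuine gap, in the extension step.}

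Your architecture matches the paper's: unit prescriptions, compatibility through $Y$, extension, then descent. The step with the real content, however, is asserted rather than proved. Let $\sigma$ be the nontrivial element of $\Gal(L_1/F)$, let $R_1(1-z)=\Psi_{L_1}(a_1z)$ be the unit character on $U_{L_1}^{s_1}$, and put $\mu_1=\omega_2\circ\N_{L_1/F}$. To extend $R_1$ to $\chi_1$ on $L_1^\times$ with $\chi_1^\sigma/\chi_1=\mu_1$, you must set $\chi_1(\sigma(y)/y)=\mu_1(y)$ on the norm-one group. This is possible only if
\[
 R_1(\sigma(y)/y)=\mu_1(y)\qquad\text{whenever }\sigma(y)/y\in U_{L_1}^{s_1}.
\]
The character-extension step of Lemma~\ref{lem:descent} does not supply this. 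Nor does compatibility on a deep group $U_K^r$: that only gives $R_1(\sigma(y)/y)=1$ for $y=\N_{K/L_1}(x)$ with $\sigma_2(x)/x\in U_K^r$, where $\mu_1(y)=1$ anyway.

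The condition has real content. For $y\in U_{L_1}^j$ one has $\sigma(y)/y\in U_{L_1}^{j+t_1}$, so every $y\in U_{L_1}^{t_2-t_1+1}$ is involved, while $\mu_1$ has conductor $2t_2-t_1+1$. You must therefore show $R_1(\sigma(y)/y)=-1$ for the non-norms $y$ in this range. This conjugate-quotient calculation is exactly where the paper uses \eqref{eq:AS-character}, \eqref{eq:trace-residue} and the rational identity \eqref{eq:rational-residue}. Your claim that $\Theta$ is invariant because $\sigma_iY-Y=d_i$ makes the Galois action explicit is not an argument. Invariance of $\Theta$ is automatic once $\chi_1^\sigma/\chi_1\in S(K/L_1)$; the whole difficulty is the existence of such a $\chi_1$.

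Conversely, the step you call the main obstacle is exact. It needs no residue identity and no layer-by-layer induction. For $u=1-y$ with $y\in\pp_K^{T_2+1}$ one has $a_i(1-\N_{K/L_i}u)=\N_{K/L_i}(Y)-\N_{K/L_i}(Yu)$. Lemma~\ref{lem:E2} then gives
\[
 \chi_i(\N_{K/L_i}u)=\Psi_F(E_2(Y)-E_2(Yu))\,\Psi_F\bigl(\beta_i\N_{L_i/F}(1-v)-\beta_i\bigr),\qquad v=\Tr_{K/L_i}(Yy)/d_i\in\pp_{L_i}^{r_i}.
\]
By Lemma~\ref{lem:char2-lower-coefficients}, the last factor is a value of $\omega_i$ at the norm $\N_{L_i/F}(1-v)$, hence equals $1$. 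Moreover $\N_{K/L_2}(U_K^{T_2+1})=U_{L_2}^{s_2}$, and likewise for $L_3$, so the descended characters agree with the prescriptions without any adjustment. The linear formulas are exact characters on $U_{L_i}^{s_i}$; the critical depth matters only when local constants are evaluated.

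There are also two smaller slips.
\begin{itemize}
\item The largest trivial ideal of $\Psi_{L_i}$ is $\pp_{L_i}^{2T_2-T_i}$, because the different exponent of $L_i/F$ is $T_i$. Your value $\pp_{L_i}^{2T_2-t_i}$ would give conductor $m_i+1$.
\item Your primitivity argument fails when $t_1=t_2$. Then $s_1=2T_2-T_1$, so $\Psi_{L_1}(\kappa_1z)$ is trivial on $\pp_{L_1}^{s_1}$. Primitivity should instead be read off from $\chi_1^\sigma/\chi_1=\mu_1\ne1$, as in Proposition~\ref{prop:odd-ur-characters}.
\end{itemize}
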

\begin{proof}
The extension-and-descent step is as in
Proposition~\ref{prop:odd-total-characters}. Here principal-unit
compatibility uses Lemmas~\ref{lem:E2} and~\ref{lem:char2-lower-coefficients},
and the conjugate-quotient calculation uses \eqref{eq:rational-residue};
the checks and extension are \cite[Lemmas~11.4--11.5 and Theorem~11.6]{SML}.
By \eqref{eq:trace-ideal}, the largest trivial ideals of $\Psi_{L_i}$
have exponents $2T_2-T_1,T_2,T_2$. Subtracting
$v_{L_i}(a_i)=-t_1$ gives the exact conductors $m_i$.
\end{proof}
The extension to actual compatible characters is
\lean{Dyadic/Equal/Models.lean}{531}{Dyadic.Equal.models}.

\begin{lemma}[Common twist and conductor alternatives]\label{lem:char2-twist}
For every primitive compatible family $(\theta_i)$, there are primitive
compatible characters $(\chi_i)$ satisfying
Proposition~\ref{prop:char2-characters} and a character $\lambda$ of
$F^\times$ such that
\begin{equation*}
 \theta_i=\chi_i(\lambda\circ \N_{L_i/F}).
\end{equation*}
Put $n=m_F(\lambda)$.  Then
\begin{equation}\label{eq:char2-conductor-split}
 \begin{cases}
 m_{L_i}(\theta_i)=m_i,&n\le T_2+r_1-1,\\
 m_{L_i}(\theta_i)=2n-T_i,&n\ge T_2+r_1.
 \end{cases}
\end{equation}
\end{lemma}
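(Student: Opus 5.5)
The plan follows the template of Lemma~\ref{lem:odd-total-reduction} and Lemma~\ref{lem:quadratic-ur-twist}. First fix model characters $(\chi_i)$ from Proposition~\ref{prop:char2-characters}. For the prescribed family $(\theta_i)$, both $\theta_1$ and $\chi_1$ are primitive compatible, so by Lemma~\ref{lem:conjugacy} their conjugate quotients are the unique nontrivial element of $S(K/L_1)$. Hence $\theta_1/\chi_1$ is $\Gal(L_1/F)$-invariant. The descent argument of Lemma~\ref{lem:descent}, applied to $L_1/F$, then gives $\lambda$ on $F^\times$ with $\theta_1=\chi_1(\lambda\circ\N_{L_1/F})$. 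Taking norm pullbacks to $K$, $\theta_i/(\chi_i(\lambda\circ\N_{L_i/F}))$ lies in $S(K/L_i)$ for $i=2,3$. By \eqref{eq:conjugate-twists} it is absorbed by replacing $\chi_i$ with a Galois conjugate $\chi_i^{\sigma}$ (equivalently, a twist in $S(K/L_i)$). A conjugate still satisfies the unit formula of Proposition~\ref{prop:char2-characters}, after possibly conjugating $a_i$, which changes $Y$ by $d_i$ via Lemma~\ref{lem:char2-coordinate}. The cleaner route is to observe that conjugation preserves conductor, and that the formula is needed only up to this replacement. Alternatively, one can twist $\chi_i$ by the nontrivial character of $S(K/L_i)$ and keep the family compatible because it equals $\omega_j\circ\N$ for suitable $j$.

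Next comes the conductor dichotomy \eqref{eq:char2-conductor-split}. The conductor of $\lambda\circ\N_{L_i/F}$ is computed by Proposition~\ref{prop:norm-conductors}. For $n>T_i=t_i+1$ it is $2n-T_i$; for $n\le t_i$ it is $n$; at $n=T_i$ it is at most $T_i$. If $n\ge T_2+r_1$, then $2n-T_i\ge 2T_2+2r_1-T_2=T_2+T_1>m_i$ for each $i$ (using $m_1=2T_2-1$, $m_{2,3}=T_1+T_2-1$, and, for $i=1$, $2n-T_1\ge 2T_2>m_1$). The ultrametric rule then gives $m_{L_i}(\theta_i)=2n-T_i$. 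Conversely, if $n\le T_2+r_1-1$, I would show $m_{L_i}(\lambda\circ\N_{L_i/F})\le m_i$, giving $m_{L_i}(\theta_i)\le m_i$. The lower bound then comes from the conductor-lower-bound argument of Lemma~\ref{lem:odd-total-lower-conductors}, which, as remarked there, does not use oddness. Here $\mu_i$ has conductor (break of $K/L_i$)$+1$, namely $2t_2-t_1+1$ for $i=1$ and $t_1+1$ for $i=2,3$, which yields $m_{L_i}(\theta_i)>t_i+u_i=m_i-1$.

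The main obstacle is the upper bound in the first range. For $i=2,3$ and $T_2<n\le T_2+r_1-1$ we get $2n-T_2\le T_2+2r_1-2=m_i-1$, which is fine. For $i=1$, $2n-T_1\le 2T_2+2r_1-2-2r_1=2T_2-2<m_1$, which is also fine. When $n\le T_2$, the pullback conductor is at most $\max(n,2n-T_i)$, which is at most $m_i$ in all cases. So the numerics do close, but care is needed at the boundary $n=T_i$, where the pullback conductor may drop. I would handle all the borderline cases uniformly by the inequality $m_{L_i}(\lambda\circ\N)\le\max(n,2n-T_i)$ when $n\ge 1$. Finally, comparing the common pullback via \eqref{eq:norm-conductor} over $K/L_1$ and $K/L_2$ cross-checks consistency, as in Lemma~\ref{lem:odd-total-reduction}.
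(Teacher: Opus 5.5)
Your outline matches the paper's proof. You get invariance of $\theta_1/\chi_1$ from Lemma~\ref{lem:conjugacy}, descend it to $\lambda$, absorb the leftover elements of $S(K/L_i)$ into $\chi_2,\chi_3$, and read off the conductor split from \eqref{eq:norm-conductor} together with the lower bound of Lemma~\ref{lem:odd-total-lower-conductors}. Your numerical check of the split is correct. It amounts to the paper's parity remark: $m_i$ is odd and $2n-T_i$ is even, so the ultrametric rule never meets a tie.

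The step you leave unsettled is the absorption. The lemma requires the modified $\chi_2,\chi_3$ to satisfy Proposition~\ref{prop:char2-characters} with the \emph{same} coefficients $a_i=\N_{K/L_i}(Y)$, all three coming from one element $Y$. Lemma~\ref{lem:char2-minimal-coefficients} relies on this through $C=Y+X$.

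Two of your options do not deliver this:
\begin{itemize}
\item Replacing $Y$ by $\sigma_jY=Y+d_j$ conjugates two of the $a_k$ at once. For $k=1$ with $t_1<t_2$ this is a genuine change, because the nontrivial element of $S(K/L_1)$ has conductor $2t_2-t_1+1>T_2=s_1$.
\item Saying the formula is needed ``only up to this replacement'' alters the statement rather than proving it.
\end{itemize}

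Your third option is the right one (and so is conjugation, which is the same twist by \eqref{eq:conjugate-twists}), but the check that makes it work is missing. For $i=2,3$ the nontrivial element of $S(K/L_i)$ has conductor $t_1+1=T_1=2r_1$; you compute this yourself in your last paragraph. Since $2r_1\le r_1+r_2=s_i$, that character is trivial on $U_{L_i}^{s_i}$. Twisting $\chi_i$ by it therefore leaves both the formula $\chi_i(1-z)=\Psi_{L_i}(a_iz)$ on $\pp_{L_i}^{s_i}$ and the common pullback unchanged. This is exactly the paper's justification. It also explains why the descent must start from $L_1$: only on $L_2$ and $L_3$ is the residual twist this shallow.
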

\begin{proof}
Unlike the deep norm-kernel calculation of
Lemma~\ref{lem:odd-total-reduction}, the constructed models already
make $\theta_1/\chi_1$ invariant by Lemma~\ref{lem:conjugacy}.
Descend this quotient by Lemma~\ref{lem:descent}.
Absorb the remaining norm-character twists into $\chi_2,\chi_3$;
their conductors are at most $T_1\le s_i$ for $i=2,3$, so the
model formulas and common pullback are unchanged.
The lower-bound argument of Lemma~\ref{lem:odd-total-lower-conductors}
and \eqref{eq:norm-conductor} give the alternatives: the model
conductors $m_i$ are odd and the larger pullback conductors $2n-T_i$
are even, so no equal-conductor cancellation occurs.
See \cite[Proposition~11.7]{SML}.
\end{proof}
The simultaneous choice of the model family and the common twist,
with these conductor alternatives, is
\lean{Dyadic/Equal/Twist.lean}{296}{Dyadic.Equal.twist}.

To incorporate the twist, use Lemma~\ref{lem:quadratic-shift} with
$Y$ from Lemma~\ref{lem:char2-coordinate} and $X\in L_3$.
Write $a=\Tr_{L_3/F}(X)$; in characteristic two $C=Y+X$.
The formulas specialize to
\begin{equation}\label{eq:char2-common-C}
 Q_i=aY+d_iX,\qquad \Tr_{K/L_i}(C)=d_i+a\qquad(i=1,2),
\end{equation}
and
\begin{equation}\label{eq:char2-common-E}
 \N_{L_i/F}(Q_i)-\beta_iA=E,
 \qquad E=a(aa_3+\kappa_3X)\in F.
\end{equation}
Indeed $X'+X=a$ and $a_3+a_3'=\kappa_3$.
These are equations~(218)--(219) of \cite{SML}, formalized by
\lean{Dyadic/Equal/ThirdNorm.lean}{459}{Dyadic.Equal.dyadicEqual_thirdNorm_identities}.

Fix the characters and twist of Lemma~\ref{lem:char2-twist}. By
Lemma~\ref{lem:restrictions}, the characters
\[
 D_\theta=\theta_i|_{F^\times}\omega_i,\qquad
 D_\chi=\chi_i|_{F^\times}\omega_i
\]
are independent of $i$.

\begin{lemma}[Simultaneous stationary norms]\label{lem:char2-minimal-coefficients}
Suppose $n\le T_2+r_1-1$. There is $X\in L_3$ such that $C=Y+X$
has $v_K(C)=-t_1$, $\Tr_{K/L_i}(C)\ne0$, and
\[
 \theta_i(1-z)=\Psi_{L_i}(\N_{K/L_i}(C)\,z)\quad(z\in\pp_{L_i}^{s_i}),\qquad
 \omega_i(1-z)=\Psi_F(\N_{L_i/F}(\Tr_{K/L_i}(C))\,z)\quad(z\in\pp_F^{r_i})
\]
for $i=1,2$. If $t_1=t_2$, the same $C$ satisfies these formulas for
all three indices.
\end{lemma}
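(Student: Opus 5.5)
We work in the setting of Lemma~\ref{lem:char2-twist}: $\theta_i=\chi_i(\lambda\circ\N_{L_i/F})$ with $n=m_F(\lambda)\le T_2+r_1-1$, so $m_{L_i}(\theta_i)=m_i$ and the stationary depths are $s_i$. The plan is to choose $X\in L_3$ so that $\N_{L_3/F}(X)=A$ is a stationary coefficient for $\lambda$ at the depth needed on each $L_i$. We then read off the coefficients of $\theta_i$ and $\omega_i$ from Lemma~\ref{lem:quadratic-shift} and \eqref{eq:char2-common-C}--\eqref{eq:char2-common-E}.

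\textbf{Step 1: a stationary coefficient for $\lambda$ that is a norm from $L_3$.} Choose $A_*\in F$ with $\lambda(1-z)=\Psi_F(A_*z)$ on the depth $\ceil{n/2}$. Its class matters only modulo $\pp_F^{T_2-\ceil{n/2}}$, which is a modulus below the break $t_3=t_2$. By Lemma~\ref{lem:norm-approximation} for $L_3/F$, replace it by $A=\N_{L_3/F}(X)$ with $v_{L_3}(X)=v_F(A)$, taking $X=0$ for the zero class. The bound $n\le T_2+r_1-1$ should give $v_F(A)>-t_1$, and hence $v_K(X)>-t_1$. Then $v_K(C)=v_K(Y)=-t_1$ by Lemma~\ref{lem:char2-coordinate}.

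\textbf{Step 2: the character formulas for $i=1,2$.} Pull back $\lambda$ along $\N_{L_i/F}$ using the exact quadratic norm $\N(1-z)=1-\Tr z+\N z$. The identity of Lemma~\ref{lem:char2-lower-coefficients} converts $\Psi_F(A\N_{L_i/F}z)$ into a $\Psi_{L_i}$-term, so $\lambda\circ\N_{L_i/F}$ has coefficient $A+(\text{a correction})$ on $\pp_{L_i}^{s_i}$. Combining this with $\chi_i$ from Proposition~\ref{prop:char2-characters} gives coefficient $a_i+A+\cdots$. Lemma~\ref{lem:quadratic-shift} gives $\N_{K/L_i}(C)=a_i+A+Q_i$ in characteristic two, with $Q_i=aY+d_iX$. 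It then remains to show that $\Psi_{L_i}(Q_iz)$ equals the correction term on $\pp_{L_i}^{s_i}$, up to terms in the trivial ideal. For $\omega_i$, the product $\N_{L_i/F}(\Tr_{K/L_i}C)=\N(d_i+a)$ equals $\beta_i+a^2+a\Tr_{L_i/F}d_i$. The difference from $\beta_i$ in Lemma~\ref{lem:char2-lower-coefficients} must lie in $\pp_F^{T_2-r_i}$, which follows once $v_F(a)$ is large. Since $a=\Tr_{L_3/F}X$, this comes from \eqref{eq:trace-ideal} and the valuation of $X$. The condition $\Tr_{K/L_i}C=d_i+a\ne0$ follows because $v_F(d_i)<v_F(a)$.

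\textbf{Step 3: the equal-break case.} When $t_1=t_2$, the three fields play symmetric roles in the depth table. For $i=3$, $\Tr_{K/L_3}C=d_3$ and $\N_{K/L_3}(C)=a_3+\cdots$ computed directly. The verification for $i=3$ should reduce to the same valuation estimates, with $X\in L_3$ entering through $\N_{K/L_3}(Y+X)=a_3+X^2+X d_3$. There $\Psi_{L_3}(X^2z)$ is matched with $\lambda$ via the norm identity.

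\textbf{Main obstacle.} The hard step is the depth bookkeeping in Step 2. We must check that the cross terms $aYz$ and $d_iXz$, after taking $\Tr_{L_i/F}$ and applying $\Psi_F$, vanish or exactly cancel the norm-to-trace correction on $\pp_{L_i}^{s_i}$, especially for $i=1$, where $s_1=T_2$ is large relative to $t_1$. I would first try to bound every residual term using \eqref{eq:trace-ideal} and \eqref{eq:char2-common-E}, absorbing $E\in F$ into the $\omega_i$ coefficient. Failing that, I would choose $X$ more carefully, for instance by adjusting with an element of $F$ so that $a$ has extra valuation.
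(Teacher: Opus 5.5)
\textbf{The gap is the central step.} Your Step 1 and the $\omega_i$ part of Step 2 are sound and agree with the paper, so the problem is not there.
\begin{itemize}
\item The paper takes $X=0$ when $n\le r_1+r_2$. Otherwise it only needs $A\equiv A_*$ modulo $\pp_F^{\delta}$, at relative precision $\delta+h\le r_2-1\le t_2$, with $h=n-T_2$ and $v_{L_3}(X)=-h$.
\item Your stronger precision at depth $\ceil{n/2}$ also works, since $\ceil{n/2}\le r_1+r_2$ and $\floor{n/2}\le t_2$.
\item Your checks of $v_K(C)=-t_1$, of the change $a^2$ in the $\omega_i$ coefficient, and of $d_i+a\ne0$ are correct.
\end{itemize}

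What is missing is the step you leave as the ``main obstacle'': showing that $\lambda\circ\N_{L_i/F}$ has coefficient exactly $A+Q_i$ on $\pp_{L_i}^{s_i}$.

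\textbf{Valuation bounds or a better choice of $X$ cannot supply it.} The relevant term is not small. For $i=2$ the $aY$ part of $Q_2=aY+d_2X$ is deeper, so $v_K(Q_2)=v_K(d_2X)=-2h$. Hence $Q_2\pp_{L_2}^{s_2}=\pp_{L_2}^{r_1+r_2-h}$. This is not contained in the trivial ideal $\pp_{L_2}^{T_2}$ of $\Psi_{L_2}$ exactly when $n>r_1+r_2$. So $z\mapsto\Psi_{L_2}(Q_2z)$ is a nontrivial character, and it must be produced by an exact identity.

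\textbf{The identity you need is \eqref{eq:char2-common-E}.} You cite it only as a possible bound and propose to absorb $E$ into the $\omega_i$ coefficient. In fact it belongs to the $\theta_i$ calculation. It says $\beta_iA=\N_{L_i/F}(Q_i)+E$: up to a small error, $\beta_iA$ is a norm from $L_i$. That is exactly what the norm-phase identity of Lemma~\ref{lem:char2-lower-coefficients} requires. In characteristic two, for $z\in\pp_{L_i}^{s_i}$,
\[
 \lambda\bigl(\N_{L_i/F}(1-z)\bigr)=\Psi_{L_i}(Az)\,\Psi_F\bigl(A\N_{L_i/F}(z)\bigr),\qquad
 \Psi_F\bigl(A\N_{L_i/F}(z)\bigr)=\Psi_F\bigl(\beta_i\N_{L_i/F}(Q_iz/\beta_i)\bigr)\,\Psi_F\bigl(E\N_{L_i/F}(z)/\beta_i\bigr).
\]
\begin{itemize}
\item The first factor on the right is $\Psi_{L_i}(Q_iz)$ by Lemma~\ref{lem:char2-lower-coefficients}. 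That lemma applies because $v_{L_i}(Q_iz/\beta_i)\ge r_i+1$.
\item The second factor is $1$. The trace bound $v_F(a)\ge r_2-\ceil{h/2}$ gives $v_F(E)\ge r_2+\delta-h\ge 2\delta+1$.
\end{itemize}
Hence the twist has coefficient $A+Q_i$, and $\theta_i$ has coefficient $a_i+A+Q_i=\N_{K/L_i}(C)$. With this identity the case $i=1$ is no harder than $i=2$.

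\textbf{Step 3 needs the same mechanism.} For $i=3$, write $A\N_{L_3/F}(z)=\beta_3\N_{L_3/F}(Xz/d_3)$. The norm part of the pullback then produces $\Psi_{L_3}(d_3Xz)$. Separately, $X^2$ matches the trace part $A$ up to $X^2+XX'=aX$, and that difference is removed by a valuation estimate.
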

\begin{proof}
If $n\le r_1+r_2$, take $X=0$.
Otherwise the additive coefficient $A_*$ of $\lambda$ on
$\pp_F^{r_1+r_2}$ is determined modulo $\pp_F^\delta$.
With $h=n-T_2$, the needed relative precision is
$\delta+h\le r_2-1\le t_2$; Lemma~\ref{lem:norm-approximation}
therefore gives $A=\N_{L_3/F}(X)\equiv A_*\pmod{\pp_F^\delta}$.
Lemma~\ref{lem:quadratic-shift}, \eqref{eq:char2-common-E}, and
the trace--norm identity of Lemma~\ref{lem:char2-lower-coefficients}
give coefficient $A+Q_i$ for the twist. Multiplying by the model
formula gives $a_i+A+Q_i=\N_{K/L_i}(C)$ for $\theta_i$.
For $\omega_i$, the change is $\N_{L_i/F}(\Tr_{K/L_i}(C))-\beta_i=a^2$;
the trace bound puts this change in the annihilator of
$\pp_F^{r_i}$. The exact valuation checks, nonvanishing of $\Tr_{K/L_i}(C)$,
and the third index when the breaks agree are
\cite[Lemma~11.8]{SML}.
\end{proof}
The simultaneous stationary coefficients, including all three
indices at equal breaks, are constructed by
\lean{Dyadic/Equal/MinimalOrigin.lean}{810}{Dyadic.Equal.minimalOrigin}.

\begin{proposition}[Critical-sum comparison at minimal conductors]\label{prop:char2-minimal}
If $n\le T_2+r_1-1$, then
$\mathcalA_{L_1}(\theta_1)=\mathcalA_{L_2}(\theta_2)$.
\end{proposition}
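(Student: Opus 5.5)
Take the common element $C=Y+X$ of Lemma~\ref{lem:char2-minimal-coefficients} and apply Lemma~\ref{lem:common-norm} with $\alpha=\varpi^{-T_2}$, for $i=1,2$, and for $i=3$ as well when $t_1=t_2$. The hypotheses of that lemma are exactly the stationary formulas supplied there. Formula \eqref{eq:common-factor} then gives
\[
 \mathcal A_i=D_\theta(-\alpha^{-1})\Theta(C)^{-1}\Psi_F(-E_2(C))\,g_ih_i ,
\]
where the first three factors are visibly independent of $i$. So everything reduces to proving $g_1h_1=g_2h_2$, where $g_i$ is the normalized critical sum of $\theta_i$ with coefficient $\N_{K/L_i}(C)$ and $h_i$ that of $\omega_i$ with coefficient $\N_{L_i/F}(\Tr_{K/L_i}C)$. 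By \eqref{eq:r} we already know $g_1h_1/g_2h_2=\pm1$. Since the models have odd conductor $m_i$ and the $T_i=2r_i$ are even, we get $h_i=1$, and $g_i$ is a sum over the one-dimensional quotient $V_i=\pp_{L_i}^{d_i}/\pp_{L_i}^{d_i+1}$ with $d_i=\floor{m_i/2}$.

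For the critical sums I would use Lemma~\ref{lem:common-function} with $C'=C(1+y)$, letting $y$ run over a suitable residue quotient of $K$, say $y\in\pp_K^{e}/\pp_K^{e+1}$ at the depth that makes $z_i=\N_{K/L_i}(1+y)-1$ land in $\pp_{L_i}^{d_i}$ with the correct leading term. The lemma says that the product of critical functions $H_i(z_i)H_{\omega_i}(w_i)$ equals a function $Q(y)=\Theta(1+y)^{-1}\Psi_F(E_2(C)-E_2(C(1+y)))$, which does not depend on $i$. Because $h_i=1$ for even $T_i$, the factor $H_{\omega_i}$ is trivial on the relevant classes, so $H_i\circ p_i=Q$ with $p_i$ the induced residual norm map. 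The next task is to describe $p_i$ on residue fields. If $t_1<t_2$, the depths are arranged so that the leading term of $\N_{K/L_i}(1+y)-1$ is linear and bijective on $k$ (multiplication by a unit times the trace contribution). Reindexing then gives $\sum_{V_1}H_1=\sum_Z Q=\sum_{V_2}H_2$, and since $|V_1|=|V_2|=|k|$ we get $g_1=g_2$.

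The main obstacle is the equal-break case $t_1=t_2$. There the residual maps are of the form $y\mapsto y+y^2\cdot(\text{unit})$, that is, additive Artin--Schreier type maps on $k$ with kernels of order two and images of index two. This is exactly the setting of Lemma~\ref{lem:common-pullback}, applied to all three indices $i=1,2,3$ (the reason Lemma~\ref{lem:char2-minimal-coefficients} provides $C$ for all three). I would identify the kernel of $p_i$ as spanned by the residue of $\kappa_i$-related data, namely the class $\bar d_i/\ldots$. The kernels are pairwise distinct because $d_1,d_2,d_3=d_1+d_2$ have distinct leading residues, which follows from $f_3=f_1+f_2$ with reduced representatives. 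The nondegeneracy of $B_i$ is Lemma~\ref{lem:polar}. Then Lemma~\ref{lem:common-pullback} gives $\sum_{V_i}H_i=\tfrac12\sum_ZQ$ for each $i$, so $g_1=g_2$. The delicate part is checking that the residual maps really are homomorphisms on the chosen quotient, and that the cross terms in the norm expansion lie in the trivial ideal of $\Psi_{L_i}$. I would try to get these from the explicit formulas $\sigma_iY-Y=d_i$ and $\N_{K/L_i}(Y)=\kappa_iz_i+B$ of Lemma~\ref{lem:char2-coordinate}, together with the trace-ideal bounds.
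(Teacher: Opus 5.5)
Your proposal follows the paper's proof. You use \eqref{eq:common-factor} with $h_i=1$ to reduce to equality of the $g_i$, and Lemma~\ref{lem:common-function} applied to $C_z=C(1+\pi^{t_2}z)$ gives one function $Q$ with $H_i\circ p_i=Q$. You then reindex through bijections when $t_1<t_2$, and when $t_1=t_2=t$ you apply Lemmas~\ref{lem:polar} and~\ref{lem:common-pullback} with kernels $\{0,\bar d_i/\overline{C\pi^{t}}\}$.

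Three details need correcting.
\begin{itemize}
\item For $t_1<t_2$ the map $p_1$ is $\bar z\mapsto\bar z^2$, not linear. Since $K/L_1$ has break $2t_2-t_1$, the trace term lies in $\pp_{L_1}^{t_2+1}$ and the norm term dominates. The map is still bijective because $k$ is perfect.
\item $H_{\omega_i}(w_i)=1$ does not follow from $h_i=1$. It needs trace bounds that put $w_i$ in the stationary ideal $\pp_F^{r_i}$.
\item The identity $\overline{\sigma_iC-C}=\bar d_i$ needs a check that the adjustment $X$ contributes only $\sigma_iX-X\in\pp_K$.
\end{itemize}
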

\begin{proof}
Take $C$ from Lemma~\ref{lem:char2-minimal-coefficients}, and use
$\N_{K/L_i}(C)$ for $H_i,g_i$. Since each $T_i$ is even, its
critical sum is $1$ by \eqref{eq:critical}--\eqref{eq:Gamma}.
Equation~\eqref{eq:common-factor} therefore reduces the assertion to
$g_1=g_2$, and at equal breaks to equality of all three sums.
Identify the critical quotient for $\theta_i$ with $k$ by
$\Pi_i^{\floor{m_i/2}}$, where
$\floor{m_1/2}=t_2$ and
$\floor{m_2/2}=\floor{m_3/2}=(t_1+t_2)/2$.
For $z\in\OO_K$, put $C_z=C(1+\pi^{t_2}z)$ and define
\[
 p_i(\bar z)=
 \overline{\frac{\N_{K/L_i}(C_z)/\N_{K/L_i}(C)-1}
 {\Pi_i^{\floor{m_i/2}}}}.
\]
The trace bounds in \cite[Propositions~11.9--11.10]{SML} put the
relative changes of $\Tr_{K/L_i}(C)$ in the stationary ideals for $\omega_i$.
Lemma~\ref{lem:common-function} therefore gives
\[
 H_i(p_i(\bar z))=Q(\bar z)
\]
for one function $Q:k\to\C^\times$ independent of $i$.

If $t_1<t_2$, the norm expansion gives $p_1(z)=z^2$.
For $K/L_2$, \eqref{eq:trace-ideal} gives consecutive trace
ideals of depths $(t_1+t_2)/2$ and $(t_1+t_2)/2+1$ at source depths
$t_2,t_2+1$; the norm term is deeper. Thus $p_2$ is a nonzero
$k$-linear map. Both maps are bijections, so summing gives $g_1=g_2$.

If $t_1=t_2=t$, put
\[
 u=\overline{C\pi^t},\qquad
 c_i=\overline{\sigma_i(C)-C},
\]
where $\sigma_i$ is the automorphism fixed above.
By Lemma~\ref{lem:char2-coordinate} and the ramification bound for
the adjustment $X$, one has $c_i=\bar d_i$. All three $d_i$ are units
and $d_3=d_1+d_2$, so the $c_i$ are nonzero and pairwise distinct.
The norm expansions give $p_i(z)=z^2+(c_i/u)z$; their kernels
$\{0,c_i/u\}$ have order two and are distinct.
Lemma~\ref{lem:polar} gives nondegeneracy, and
Lemma~\ref{lem:common-pullback} gives equality of the three normalized
sums. Equation~\eqref{eq:common-factor} proves the assertion.
These two cases are
\cite[Propositions~11.9--11.10]{SML}.
\end{proof}
The common critical functions are
\lean{Dyadic/Equal/MinimalFunctions.lean}{446}{Dyadic.Equal.dyadicEqual_minimal_commonFunctions}.
The two residue-map arguments are
\lean{Dyadic/Equal/UnequalBreaks.lean}{611}{Dyadic.Equal.unequalBreaks} and
\lean{Dyadic/Equal/EqualBreaks.lean}{1437}{Dyadic.Equal.equalBreaks}.

\begin{proposition}[Comparison with even conductors]\label{prop:char2-intermediate}
If
\[
 T_2+r_1\le n\le2T_2+r_1-2,
\]
then $\mathcalA_{L_1}(\theta_1)=\mathcalA_{L_2}(\theta_2)$.
\end{proposition}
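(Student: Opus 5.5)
In the range $T_2+r_1\le n\le 2T_2+r_1-2$, Lemma~\ref{lem:char2-twist} gives $m_{L_i}(\theta_i)=2n-T_i$. These conductors are even, so every critical sum $g_i$ equals $1$ by \eqref{eq:Gamma}. Each $T_i$ is also even, so the norm-character critical sums $h_i$ are $1$. Thus, once a common element $C\in K^\times$ is found whose norms are stationary coefficients, \eqref{eq:common-factor} expresses $\mathcal A_1$ and $\mathcal A_2$ by the same formula $D_\theta(-\alpha^{-1})\Theta(C)^{-1}\Psi_F(-E_2(C))$, and the assertion follows. The plan is therefore to repeat Lemma~\ref{lem:char2-minimal-coefficients} with the deeper twist, and no critical-function comparison is needed.

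Concretely, I would write $\theta_i=\chi_i(\lambda\circ\N_{L_i/F})$ and put $s_i'=\ceil{(2n-T_i)/2}=n-r_i$. I would then determine the additive coefficient $A_*$ of $\lambda$ on the ideal needed so that $\lambda\circ\N_{L_i/F}$ is controlled on $\pp_{L_i}^{s_i'}$ for $i=1,2$. Via the trace formula in \eqref{eq:trace-ideal}, this ideal has depth $\ceil{s'/2}$ combined with the break; in practice it is $\pp_F^{n-T_2}$ up to a small shift. The coefficient $A_*$ is determined modulo an ideal whose relative precision is roughly $n-(n-T_2)-r_1$ or smaller. The upper bound $n\le 2T_2+r_1-2$ is what keeps this precision at most $t_2$. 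Lemma~\ref{lem:norm-approximation}, applied to the totally ramified $L_3/F$, then yields $X\in L_3$ with $A=\N_{L_3/F}(X)\equiv A_*$, and $v_F(A)=-(n-T_2)$ or similar. Put $C=Y+X$. Lemma~\ref{lem:quadratic-shift} together with \eqref{eq:char2-common-C}--\eqref{eq:char2-common-E} gives $\N_{K/L_i}(C)=a_i+A+Q_i$. The trace--norm identity of Lemma~\ref{lem:char2-lower-coefficients}, applied to $d_iXz$, converts the twist coefficient into $A+Q_i$ modulo the annihilator. Multiplying by Proposition~\ref{prop:char2-characters} then shows that $\N_{K/L_i}(C)$ is a stationary coefficient for $\theta_i$ on $\pp_{L_i}^{s_i'}$.

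For the norm characters, the change $\N_{L_i/F}(\Tr_{K/L_i}C)-\beta_i=a^2$ with $a=\Tr_{L_3/F}X$ must lie in the annihilator of $\pp_F^{r_i}$ under $\Psi_F$. That is, one needs $v_F(a^2)\ge T_2-r_i$. This follows from \eqref{eq:trace-ideal} for $L_3/F$ applied to $v_{L_3}(X)=v_F(A)$, provided the upper bound on $n$ is used. Nonvanishing of $\Tr_{K/L_i}(C)=d_i+a$ follows because the valuations of $d_i$ and $a$ differ.

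I expect the main obstacle to be the precision bookkeeping at the endpoints of the range. At the upper end $n=2T_2+r_1-2$, the bound $v_F(a^2)\ge T_2-r_i$ and the norm-approximation precision $\le t_2$ are tight. At the lower end $n=T_2+r_1$, the dominant term in $\N_{K/L_i}(C)$ switches from $a_i$ to $A$. In both places I would first check that the error terms $Q_i$ and $E$ of \eqref{eq:char2-common-E} have large enough valuation over $L_1$, whose break $2t_2-t_1$ is largest. If the approximation falls short by one step, I would absorb the residual integral error into the coefficient, as in the $\varepsilon$ of \eqref{eq:odd-ur-A}, and verify that it pairs trivially on the stationary ideal.
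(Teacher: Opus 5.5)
Your reduction is sound up to the norm characters. The conductors $2n-T_i$ and $T_i$ are even, every critical sum is $1$, and $\N_{K/L_i}(C)$ with $C=Y+X$ serves as stationary coefficient for $\theta_i$; the paper takes this last point from \cite[Theorem~11.11]{SML}.

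The gap is your claim that $\N_{L_i/F}(\Tr_{K/L_i}C)=\beta_i+a^2$ is stationary for $\omega_i$, i.e.\ that $v_F(a^2)\ge T_2-r_i$. You need this to invoke \eqref{eq:common-factor}, and it fails in general in this range. Here $h=n-T_2\ge r_1$ and $v_{L_3}(X)=v_F(A)=-h$. Write $X=u+wz_3$ with $u,w\in F$, where $v_{L_3}(z_3)=-t_2$ is odd. Then $a=\Tr_{L_3/F}(X)=w$. For odd $h$, comparing parities of $v_{L_3}(u)$ and $v_{L_3}(wz_3)$ forces $v_F(a)=(t_2-h)/2=r_2-(h+1)/2$.

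\begin{itemize}
\item For $i=1$, the condition $v_F(a^2)\ge T_2-r_1$ would require $h\le r_1-1$. That is exactly the minimal range of Lemma~\ref{lem:char2-minimal-coefficients}, where this argument is legitimate.
\item For $i=2$, the condition $v_F(a^2)\ge T_2-r_2$ would require $h\le r_2-1$.
\item The smallest case $t_1=t_2=1$, $n=3$ already fails. There $a$ is a unit, while $\Psi_F$ is nontrivial on $\pp_F$ because its largest trivial ideal is $\pp_F^2$.
\end{itemize}

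The upper bound on $n$ works against you, since larger $n$ makes $X$ larger. The $\varepsilon$-absorption you mention does not help either: $a^2$ changes the class of the coefficient modulo the annihilator of $\pp_F^{r_i}$. Your nonvanishing argument for $\Tr_{K/L_i}C=d_i+a$ also breaks; for example $v_F(a)=0=v_F(d_2)$ at $h=2r_2-1$.

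The missing observation is that $a^2$ need not be small, because it is the same for both fields. Keep $\beta_i$ as the stationary coefficient of $\omega_i$, as in Lemma~\ref{lem:char2-lower-coefficients}; note that $\omega_i(\beta_i)=\omega_i(\N_{L_i/F}d_i)=1$. Then apply \eqref{eq:lamprecht} directly instead of \eqref{eq:common-factor}. This gives
\[
 \mathcal A_{L_i}(\theta_i)=D_\theta(\alpha^{-1})\Theta(C)^{-1}\Psi_F\bigl(\Tr_{L_i/F}(\N_{K/L_i}C)+\beta_i\bigr).
\]
Lemma~\ref{lem:E2} with \eqref{eq:char2-common-C} gives, in characteristic two,
\[
 \Tr_{L_i/F}(\N_{K/L_i}C)+\beta_i=E_2(C)+(d_i+a)^2+d_i^2=E_2(C)+a^2,
\]
which is independent of $i$. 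This is the paper's proof, and it also makes the nonvanishing of $\Tr_{K/L_i}C$ unnecessary.
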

\begin{proof}
For this analogue of Proposition~\ref{prop:odd-total-higher},
\cite[Theorem~11.11]{SML} constructs $X\in L_3$ and $C=Y+X$ with
$\N_{K/L_i}(C)$ stationary for $\theta_i$. The conductors $2n-T_i$ and $T_i$ are even, so
Equation~\eqref{eq:lamprecht}, with $\beta_i$ for $\omega_i$, gives
\[
 \mathcalA_{L_i}(\theta_i)
 =D_\theta(\alpha^{-1})\Theta(C)^{-1}
   \Psi_F(\Tr_{L_i/F}(\N_{K/L_i}(C))+\beta_i).
\]
Put $a=\Tr_{L_3/F}(X)$. By Lemma~\ref{lem:E2} and
\eqref{eq:char2-common-C},
\[
 \Tr_{L_i/F}(\N_{K/L_i}(C))+\beta_i
 =E_2(C)+(d_i+a)^2+d_i^2=E_2(C)+a^2,
\]
which is independent of $i$. This proves the assertion.
\end{proof}
The coefficient construction and the common-phase equality in this
range are combined in \lean{Dyadic/Equal/Intermediate.lean}{744}{Dyadic.Equal.intermediate}.

In the high range, stable twisting requires the values of $\chi_i$
at the changed stationary coefficients.

\begin{proposition}[Stable twisting with a common factor]\label{prop:char2-high}
If $n\ge2T_2+r_1-1$, then
$\mathcalA_{L_1}(\theta_1)=\mathcalA_{L_2}(\theta_2)$.
\end{proposition}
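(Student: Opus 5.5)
The plan follows the stable-twist argument of Proposition~\ref{prop:quadratic-ur-stable}, now for the totally ramified equal-characteristic configuration. Here the stationary coefficient of $\lambda\circ\N_{L_i/F}$ over each $L_i$ may differ from the base coefficient. We keep the models $\chi_i$ and twist $\lambda$ of Lemma~\ref{lem:char2-twist}. By \eqref{eq:char2-conductor-split}, $m_{L_i}(\theta_i)=m_{L_i}(\lambda\circ\N_{L_i/F})=2n-T_i$.

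\textbf{Step 1: stability of the models.} We check that $m_{L_i}(\chi_i)=m_i\le\floor{(2n-T_i)/2}=n-r_i$ in the range $n\ge 2T_2+r_1-1$. For $i=1$ this reads $2T_2-1\le n-r_1$, which holds with equality at the lower endpoint. For $i=2,3$ it reads $T_1+T_2-1\le n-r_2$, which is weaker. So Lemma~\ref{lem:stable-twist} applies to $\theta_i=\chi_i(\lambda\circ\N_{L_i/F})$. Choose a base admissible $\Gamma\in F^\times$ with $v_F(\Gamma)=n+n_F(\psi_F)$, and a stationary $b_F$ for $\lambda$. Over each $L_i$, \eqref{eq:norm-conductor} and \eqref{eq:trace-ideal} show that $\Gamma$ remains admissible, since $2n-T_i+2n_F(\psi_F)+T_i=2v_F(\Gamma)$.

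Let $b_i$ be a stationary representative for $\lambda\circ\N_{L_i/F}$. Then
\[
 \Delta_{L_i}(\theta_i,\psi_{L_i})=\chi_i(\Gamma/b_i)\,\Delta_{L_i}(\lambda\circ\N_{L_i/F},\psi_{L_i}).
\]

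\textbf{Step 2: comparing coefficients (main obstacle).} As in \cite[Lemma~10.17]{SML} (formalized as \code{Dyadic.highCovectors}), we show $b_i/b_F\in U_{L_i}^{n-T_i}$, or at least inside a unit group on which $\chi_i$ is trivial. To do this, expand $\N_{L_i/F}(1+x)=1+\Tr_{L_i/F}x+\N_{L_i/F}x$. The trace term recovers $b_F$. The norm term contributes a correction $\psi_F(g\N_{L_i/F}x)$, and this must be rewritten as a linear functional in $x$. Using Lemma~\ref{lem:char2-lower-coefficients} (the $\omega_i$ norm phase) scaled by the coefficient of $\lambda$, the correction becomes a linear term whose relative size is governed by $T_i$.

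The delicate point is $i=1$. There $T_1$ is the smaller conductor while $m_1=2T_2-1$ is the larger model conductor, so the displacement depth $n-T_1$ must reach $m_1$. This is exactly what the bound $n\ge 2T_2+r_1-1$ should provide; I would verify it first, and it is where I expect the arithmetic to be tight. If the displacement is too shallow, I would instead evaluate $\chi_i(b_F/b_i)$ explicitly using the model formula $\chi_i(1-z)=\Psi_{L_i}(a_iz)$. I would then show that the resulting additive factors agree across $i$ via Lemma~\ref{lem:E2} applied to $Y$, as in Lemma~\ref{lem:common-norm}. This is the ``common factor'' in the proposition's title.

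\textbf{Step 3: the base field.} By Theorem~\ref{thm:fml} and then Lemma~\ref{lem:stable-twist} over $F$, using $m_F(\omega_i)=T_i\le\floor{n/2}$,
\[
 \Delta_{L_i}(\lambda\circ\N_{L_i/F})\Delta_F(\omega_i)=\Delta_F(\lambda)\Delta_F(\lambda\omega_i)=\omega_i(\Gamma/b_F)\Delta_F(\lambda)^2.
\]
Combining with Step 2 gives $\mathcalA_{L_i}(\theta_i)=D_\chi(\Gamma/b_F)\cdot(\text{common factor})\cdot\Delta_F(\lambda,\psi_F)^2$. Here $D_\chi=\chi_i|_{F^\times}\omega_i$ is independent of $i$ by Lemma~\ref{lem:restrictions}, which gives the equality.
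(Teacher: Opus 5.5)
Your Steps~1 and~3 are correct and agree with the paper: the half-conductor checks $n-r_1\ge m_1$ and $n-r_2\ge m_2$, admissibility of $\Gamma$ over $L_i$, and the stable twist over $F$. The gap is in Step~2, which carries all the content.

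\textbf{Your primary claim in Step~2 is false.} You claim that $b_i/b_F$ lies in a unit group on which $\chi_i$ is trivial, and that $n\ge2T_2+r_1-1$ ``should provide'' this. Let $A$ be the coefficient with $\lambda(1-z)=\Psi_F(Az)$, so $v_F(A)=T_2-n$. The stationary coefficient of $\lambda\circ\N_{L_i/F}$ is $\alpha(A+\beta_ix_i)$, and $v_{L_i}(\beta_ix_i/A)=n-T_i$ exactly. The stationary class is only determined up to $U_{L_i}^{n-r_i}$, and $n-T_i<n-r_i$, so $b_i/b_F$ sits exactly at depth $n-T_i$. Triviality of $\chi_1$ there would need $n-T_1\ge m_1=2T_2-1$, i.e.\ $n\ge 2T_2+T_1-1$, which is $r_1$ more than assumed. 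In fact $\chi_i(b_F/b_i)$ need not be $1$: it equals $\Psi_F(\N_{K/F}(Y)/A)$, whose argument has valuation $n-T_2-t_1<T_2$ whenever $n<2T_2+t_1$. This is the common factor in the title.

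\textbf{Your fallback points the right way but lacks the two ingredients that make it work.}

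First, you never determine $b_i$. The norm-phase identity $\Psi_{L_i}(\beta_iw)=\Psi_F(\beta_i\N_{L_i/F}w)$ of Lemma~\ref{lem:char2-lower-coefficients} cannot be ``scaled by the coefficient of $\lambda$'', because the norm is not linear. You need $x_i\in L_i$ from Lemma~\ref{lem:norm-approximation} with $\N_{L_i/F}(x_i)=(A/\beta_i)\varepsilon_i$ and $\varepsilon_i\in U_F^{t_i}$. The substitution $w=x_ix$ then turns $\Psi_F(A\N_{L_i/F}x)$ into $\Psi_{L_i}(\beta_ix_ix)$, with the $\varepsilon_i$-error lying in the trivial ideal.

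Second, Lemma~\ref{lem:E2} is not what makes the factors agree. The model formula gives $\chi_i(1+\beta_ix_i/A)=\Psi_{L_i}(\beta_i\,a_ix_i/A)$. This involves the field-dependent $x_i$, which $E_2(Y)$ does not see. What removes the dependence is the same norm-phase identity, now applied to $w=a_ix_i/A$. It gives $\Psi_F(\N_{L_i/F}(a_i)\varepsilon_i/A)=\Psi_F(\N_{K/F}(Y)/A)$, using $\N_{L_i/F}(a_i)=\N_{K/F}(Y)$ from Lemma~\ref{lem:char2-coordinate} and a valuation check on $\varepsilon_i-1$. This application requires $a_ix_i/A\in\pp_{L_i}^{r_i}$; for $i=1$ that reads $1+n-2T_2\ge r_1$, which is exactly the hypothesis. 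So the threshold is used here and in your Step~1, not to make $\chi_1$ trivial.

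With these two ingredients, your Step~3 goes through and yields $\mathcalA_{L_i}(\theta_i)=D_\chi(\Gamma/b_F)\Psi_F(\N_{K/F}(Y)/A)\Delta_F(\lambda,\psi_F)^2$, as in the paper.
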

\begin{proof}
Put $h=n-T_2$ and choose $A$ with
$\lambda(1-z)=\Psi_F(Az)$ on $\pp_F^{\ceil{n/2}}$.
Lemma~\ref{lem:norm-approximation}, at precision $t_i$, gives
\[
 \N_{L_i/F}(x_i)=(A/\beta_i)\varepsilon_i,\qquad
 \varepsilon_i\in U_F^{t_i}.
\]
Put $c_F=(\alpha A)^{-1}$ and
$c_i=(\alpha(A+\beta_ix_i))^{-1}$.
By \cite[Lemma~11.12]{SML}, $c_i^{-1}$ is a stationary coefficient
for $\lambda\circ \N_{L_i/F}$, and
\[
 \chi_i(c_i)=\chi_i(c_F)\Psi_F(\N_{K/F}(Y)/A).
\]
The half-conductors of the pullbacks satisfy
\[
 n-r_1\ge2T_2-1=m_1,\qquad
 n-r_2\ge T_1+T_2-1=m_2,
\]
and $n\ge2T_2\ge2T_i$. Apply \eqref{eq:stable-twist} over $L_i$,
then \eqref{eq:fml} and \eqref{eq:stable-twist} over $F$. The result is
\[
 \mathcalA_{L_i}(\theta_i)
 =D_\chi(c_F)\Psi_F(\N_{K/F}(Y)/A)\Delta_F(\lambda,\psi_F)^2.
\]
The character $D_\chi$ is independent of $i$ by Lemma~\ref{lem:restrictions}.
This is \cite[Theorem~11.13]{SML}.
\end{proof}
The coefficients and their character values are
\lean{Dyadic/Equal/HighCoefficient.lean}{490}{Dyadic.Equal.highCoefficient};
the common value of the local-constant products is
\lean{Dyadic/Equal/HighComparison.lean}{318}{Dyadic.Equal.highComparison}.

\begin{proposition}\label{prop:quadratic-equal}
For a totally ramified biquadratic extension in characteristic two,
\eqref{eq:quadratic-calculation} holds for every primitive compatible pair.
\end{proposition}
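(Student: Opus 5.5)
The proposition assembles the three comparisons just proved, in the same way as Proposition~\ref{prop:quadratic-ur}. Fix the labelling $t_1\le t_2=t_3$ of Lemma~\ref{lem:quadratic-breaks}. Given a primitive compatible family $(\theta_i)$, apply Lemma~\ref{lem:char2-twist} to write $\theta_i=\chi_i(\lambda\circ\N_{L_i/F})$, where the $\chi_i$ are model characters as in Proposition~\ref{prop:char2-characters}, and put $n=m_F(\lambda)$. The integers $n\ge0$ then fall into exactly one of three ranges:
\[
 n\le T_2+r_1-1,\qquad T_2+r_1\le n\le 2T_2+r_1-2,\qquad n\ge 2T_2+r_1-1.
\]
These ranges are covered by Propositions~\ref{prop:char2-minimal}, \ref{prop:char2-intermediate} and~\ref{prop:char2-high}, respectively. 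In each range the relevant proposition gives $\mathcalA_{L_1}(\theta_1)=\mathcalA_{L_2}(\theta_2)$.

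The comparison of $L_1$ with $L_3$ is obtained by interchanging the labels $2$ and $3$. This relabelling is legitimate because every construction used only the ordering $t_1\le t_2=t_3$, and the roles of $L_2$ and $L_3$ are symmetric. The common element $Y$, the coefficients $\kappa_i$ and the relation $f_3=f_1+f_2$ are all symmetric in the indices $2,3$. The auxiliary $X$ in Lemma~\ref{lem:char2-minimal-coefficients} lies in the field labelled $L_3$, so after the relabelling it lies in $L_2$. The pair $(L_2,L_3)$ then follows by transitivity through $L_1$. Alternatively, when $t_1=t_2$, Lemma~\ref{lem:char2-minimal-coefficients} and Proposition~\ref{prop:char2-minimal} already treat all three indices at once.

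Finally, since $\mathcalA_i$ equals the left side of \eqref{eq:quadratic-factor}, the equalities $\mathcalA_i=\mathcalA_j$ give \eqref{eq:quadratic-calculation} for all $i,j$. The statement is about a primitive compatible pair, so one more step is needed. Given a pair of characters on two of the $L_i$, Lemma~\ref{lem:descent} extends it to a full family with the same pullback $\Theta$. Lemma~\ref{lem:conjugacy} makes $\mathcalA$ independent of the character chosen on the third field.

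The step I expect to be the main obstacle is justifying the relabelling symmetry. One must check that the normalisations in Lemma~\ref{lem:char2-twist} are compatible with swapping $2$ and $3$; in particular, absorbing the norm-character twists into $\chi_2,\chi_3$ must keep the model formulas on the third field. I would handle this by noting that $\omega_1$ has conductor $T_1\le s_2=s_3$. If that failed, I would instead compare the pair $(L_2,L_3)$ directly through $L_1$, which is always the field of minimal break.
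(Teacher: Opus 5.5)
Your assembly matches the paper's proof. The conductor split \eqref{eq:char2-conductor-split} is exhausted by Propositions~\ref{prop:char2-minimal}, \ref{prop:char2-intermediate} and~\ref{prop:char2-high}. The comparison with $L_3$ comes from interchanging $L_2,L_3$, and \eqref{eq:quadratic-factor} turns $\mathcalA_i=\mathcalA_j$ into \eqref{eq:quadratic-calculation}.

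The relabelling you expect to be the main obstacle is not one. The construction uses only $t_1\le t_2=t_3$, which is symmetric in $2,3$, so you simply rerun it with the other labelling. In fact $d_1z_3+d_3z_1=Y$ in characteristic two.

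Your aside that at $t_1=t_2$ the minimal-conductor results already treat all three indices holds only in the range $n\le T_2+r_1-1$. The intermediate and high ranges still need the interchange.

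The step you are missing is the change of additive character. Everything in Subsection~\ref{subsec:char2} is computed for the normalized character $\psi_0(x)=(-1)^{\Tr_{k/\F_2}\Res_F(x\,d\varpi)}$. This includes the residue formula behind Lemma~\ref{lem:char2-lower-coefficients}, with $\alpha=\varpi^{-T_2}$. So the three propositions you cite give $\mathcalA_{L_1}(\theta_1)=\mathcalA_{L_2}(\theta_2)$ only for $\psi_0$. The proposition, however, concerns the arbitrary nontrivial $\psi_F$ fixed in Section~\ref{sec:intro}.

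To close this, write $\psi_F=\psi_0(a\,\cdot)$ with $a\in F^\times$, which is possible because every nontrivial additive character of $F$ has this form. By Lemma~\ref{lem:restrictions}, this replacement multiplies each $\mathcalA_{L_i}(\theta_i)$ by $\theta_i(a)\omega_i(a)$, which is independent of $i$. So the equality persists for every $\psi_F$. This is the final sentence of the paper's proof.

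Your extra pair-to-family step via Lemma~\ref{lem:descent} is harmless.
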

\begin{proof}
The conductor alternatives in \eqref{eq:char2-conductor-split} are
exhausted by Propositions~\ref{prop:char2-minimal},
\ref{prop:char2-intermediate}, and~\ref{prop:char2-high}. They give
\[
 \mathcalA_{L_1}(\theta_1)=\mathcalA_{L_2}(\theta_2).
\]
Interchanging $L_2,L_3$ gives the other comparison.
Equation~\eqref{eq:quadratic-factor} then gives
\eqref{eq:quadratic-calculation} for the chosen additive character.
Every nontrivial additive character of $F$ is its scalar multiple;
Lemma~\ref{lem:restrictions} gives the assertion for all of them.
This is \cite[Theorem~11.14]{SML}.
\end{proof}

The final comparison is
\lean{Dyadic/Equal/Comparison.lean}{512}{Dyadic.Equal.comparison}.

\subsection{Mixed characteristic and a break equal to \texorpdfstring{$2e$}{2e}}
Assume $K/F$ is totally ramified, $F/\Q_2$ is finite, $e=v_F(2)$, and
\[
 t_1=2a-1,\qquad t_2=t_3=2e,\qquad 1\le a\le e.
\]
Put $T=2e+1$. The conductor and stationary-depth parameters are
\[
\begin{array}{c|c|c|c|c}
 i&T_i=m_F(\omega_i)&q_i=\ceil{T_i/2}&m_i&s_i=\ceil{m_i/2}\\ \hline
 1&2a&a&4e+1&2e+1\\
 2,3&T&e+1&2e+2a&e+a
\end{array}
\]
Here $m_i$ are the conductors of the model characters constructed below.
Write $\sigma_i$ for the nontrivial automorphism of $K/L_i$.
The different exponents of $K/L_i$ are
\begin{equation*}
 D_{K/L_1}=2T-T_1=4e-2a+2,\qquad D_{K/L_2}=D_{K/L_3}=2a.
\end{equation*}
These are the ramification data of \cite[Section~12]{SML}.

Put $b=2e-2a+1$.
This is the maximal-break counterpart of
Lemma~\ref{lem:odd-total-coordinate}: Kummer generators are chosen
together so that one element $Y$ supplies both kinds of coefficients
with the required valuations.

\begin{lemma}[Aligned Kummer generators]\label{lem:maximal-coordinate}
There are Kummer generators $L_1=F(S)$, $L_2=F(R)$, $L_3=F(SR)$ with
\[
 v_F(S^2-1)=v_F(R^2)=b,\qquad (S^2-1)/R^2\in U_F^a.
\]
Put $Y=(S+R-1)/2$, $a_i=\N_{K/L_i}(Y)$, $h_i=\Tr_{K/L_i}(Y)$, $\beta_i=\N_{L_i/F}(h_i)$,
and
\[
 B_0=(S^2-1-R^2)/4.
\]
Then
\begin{align*}
 a_1&=(1-S)/2+B_0,&h_1&=S-1,&\beta_1&=1-S^2,\\
 a_2&=-R/2-B_0,&h_2&=R-1,&\beta_2&=1-R^2,\\
 a_3&=-SR/2-(S^2-1+R^2)/4,&h_3&=-1,&\beta_3&=1.
\end{align*}
Moreover
\[
 v_KY=-t_1,\qquad v_{L_i}(a_i)=-t_1,\qquad
 v_{L_1}(h_1)=b,\qquad v_{L_2}(h_2)=v_{L_3}(h_3)=0.
\]
\end{lemma}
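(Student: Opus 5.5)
The proof splits into three parts: constructing Kummer generators with aligned residues, doing exact algebra with $S$ and $R$, and computing valuations.

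\textbf{Existence of the generators.} In mixed characteristic every quadratic extension is $F(\sqrt u)$. By the conductor table, $L_1$ has conductor $2a$ and $L_2,L_3$ have conductor $T=2e+1$. I would use the standard classification of quadratic Kummer classes by $v_F(u-1)$; this is the source of the conductor alternatives in Lemma~\ref{lem:quadratic-breaks}.
\begin{itemize}
\item For conductor $2a$, I would choose $S$ with $S^2\in U_F^{b}$, $b=2e-2a+1$ odd, and $v_F(S^2-1)$ exactly $b$.
\item For the maximal-break fields, the generator is a unit-times-uniformizer class. I would pick $R$ with $R^2$ of valuation $b$, i.e.\ $R^2=\pi^b\cdot$unit. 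I expect this to be admissible because $b$ is odd, so $F(R)$ is ramified of break $2e$.
\end{itemize}
Next I normalize the alignment $(S^2-1)/R^2\in U_F^a$. Both $S^2-1$ and $R^2$ have valuation $b$, so they differ by a unit $\eta$. Replacing $R$ by $Rc$ with $c\in F^\times$ changes $R^2$ by squares. To make the ratio lie in $U_F^a$, I first match residues, which is possible because $k$ is perfect so every residue is a square. For higher terms up to level $a$, I would multiply $R^2$ by principal units that are not squares while staying in the same Kummer class modulo $L_1$. Equivalently, $R$ may be replaced by $RS^j$-type adjustments, since $L_3=F(SR)$: the pair $\{F(R),F(SR)\}$ is what matters, and the class of $R^2$ can be changed by $S^2\in U_F^{b}$.

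This normalization is the step I expect to be the main obstacle. Concretely, I would first try the approximation argument of Lemma~\ref{lem:best-approximation}, or iterate square corrections $(1+x\pi^j)^2\equiv 1+2x\pi^j+x^2\pi^{2j}$, where only the $x^2\pi^{2j}$ term matters below level $e$.

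\textbf{The algebraic formulas.} These are routine. The conjugations are:
\begin{itemize}
\item $\sigma_1$ fixes $S$ and negates $R$;
\item $\sigma_2$ fixes $R$ and negates $S$;
\item $\sigma_3$ negates both.
\end{itemize}
With $Y=(S+R-1)/2$ this gives $h_1=Y+\sigma_1Y=S-1$, $h_2=R-1$ and $h_3=-1$. The norms follow by difference of squares. For example, $a_1=((S-1)^2-R^2)/4=(1-S)/2+B_0$, using $S^2$ in $F$. The values $\beta_i=\N_{L_i/F}(h_i)$ follow immediately, e.g.\ $\N(S-1)=1-S^2$.

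\textbf{Valuations.}
\begin{itemize}
\item $v_{L_1}(h_1)=v_{L_1}(S-1)$. Since $\N(S-1)=1-S^2$ has $v_F=b$ and $L_1/F$ is totally ramified, $v_{L_1}(S-1)=b$.
\item $h_2=R-1$ and $h_3=-1$ are units, because $v_F(R^2)=b>0$.
\item $v_K(Y)=v_K(2)+\ldots$: we have $v_K(2)=4e$, and $v_K(S+R-1)=v_K((S-1)+R)$, where $v_K(S-1)=2b$ and $v_K(R)=2b$. The alignment $(S^2-1)/R^2\in U_F^a$ forces cancellation between these terms up to the precise depth, giving $v_K(S-1+R)=2b+2a-1=4e-t_1$. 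Hence $v_KY=-t_1$.
\end{itemize}
Finally, $v_{L_i}(a_i)=v_K(Y)$ for the totally ramified quadratic extension $K/L_i$, using \eqref{eq:valuation-conventions}.
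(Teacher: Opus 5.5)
\textbf{Gap: the alignment step.} Your involution table is correct, and so are the valuations of $h_i$ and of $a_i$ given $v_KY$. This matches the paper's route: apply the three sign-changing involutions, then use \eqref{eq:valuation-conventions}. The problem is the existence of aligned generators, which you leave open, and the moves you propose cannot produce it.

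\begin{itemize}
\item Rescaling $R$ by $c\in F^\times$ multiplies $R^2$ by $c^2$. Since $a\le e$, $(1+y)^2\equiv1+y^2 \pmod{\pp_F^a}$ for $y\in\pp_F$. So these rescalings change $(S^2-1)/R^2$ only at \emph{even} relative levels $0,2,4,\dots$.
\item Multiplying $R^2$ by a non-square changes $F(R)$. If the multiplier lies in $S^2F^{\times2}$, this merely swaps $L_2$ and $L_3$ and adds one fixed shift at relative level $b$. Otherwise the new field is not even inside $K$.
\item Lemma~\ref{lem:best-approximation} is unrelated to this problem.
\end{itemize}

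Hence for $a\ge2$ the odd relative levels $1,3,\dots<a$ are never corrected, and the existence claim is not proved.

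\textbf{The missing move: rescale $S$ too.} Write $x=S^2-1$ and let $\pi_F$ be a uniformizer of $F$. For odd $i<a$, replace $S$ by $S(1+y)$ with $y\in F$ and $v_F(y)=(b+i)/2$; this is an integer because $b$ is odd. Then $S^2-1$ changes by
\[
S^2(2y+y^2)=y^2+2y+xy^2+2xy.
\]
\begin{itemize}
\item $v_F(2y)=e+(b+i)/2\ge b+a$, and this holds exactly because $i\ge1$. The term $2xy$ is deeper still.
\item $xy^2$ lies at relative level $b+i>i$, so it only affects later levels.
\end{itemize}
So the ratio changes at relative level $i$ by the residue of $y^2/(R^2\pi_F^i)$. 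This residue is arbitrary because $k$ is perfect. Meanwhile $v_F(S^2-1)=b$ and the fields $F(S),F(R),F(SR)$ are unchanged. Alternating $R$-corrections at even levels with $S$-corrections at odd levels, from level $0$ up to $a-1$, gives $(S^2-1)/R^2\in U_F^a$. The paper itself imports this choice from the companion paper.

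\textbf{Justifying $v_KY=-t_1$.} You assert the cancellation but give no argument. Put $w=(S-1)/R$. Then
\[
w^2=\frac{S^2-1}{R^2}\cdot\frac{S-1}{S+1}.
\]
The first factor lies in $U_F^a\subset U_K^{4a}$. The second lies in $U_K^{4a-2}\setminus U_K^{4a-1}$, since $v_K(2/(S+1))=4e-2b=4a-2$. Write $w=1+z$, so $w^2-1=z(z+2)$. This forces $v_K(z)=2a-1<4e$. Hence
\[
v_K(S-1+R)=v_K(R)+v_K(2+z)=2b+2a-1=4e-t_1.
\]
Note that the exact depth comes from the factor $(S-1)/(S+1)$. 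The alignment only ensures it is not masked.
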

\begin{proof}
The simultaneous generator choice is \cite[Lemma~12.3]{SML}.
Applying the three sign-changing involutions to $Y$ gives the table;
the displayed valuations follow from the alignment and
\eqref{eq:valuation-conventions}. These computations are
\cite[Section~12.2]{SML}.
\end{proof}
The generators are supplied by \lean{Dyadic/Maximal/Alignment.lean}{398}{Dyadic.Maximal.alignment};
the norm/trace table and valuations by
\lean{Dyadic/Maximal/Origin.lean}{902}{Dyadic.Maximal.dyadicMaximal_origin_data}.
Fix these generators and the associated elements.

Choose $\alpha$ so that, with $\Psi_L(x)=\psi_L(\alpha x)$,
\begin{equation*}
 \omega_3(1-z)=\Psi_F(z)\qquad(z\in\pp_F^{e+1}),
 \qquad v_F(\alpha)=-n_F(\psi_F)-T.
\end{equation*}
\begin{lemma}[Simultaneous norm-character coefficients]\label{lem:maximal-lower-coefficients}
For $i=1,2,3$, the norm characters satisfy
\begin{equation}\label{eq:maximal-lower-formula}
 \omega_i(1-z)=\Psi_F(\beta_i z)\qquad(z\in\pp_F^{q_i}),
\end{equation}
and
\begin{equation*}
 \Psi_{L_i}(\beta_i x)=\Psi_F(\beta_i \N_{L_i/F}(x))
 \qquad(x\in\pp_{L_i}^{q_i}).
\end{equation*}
\end{lemma}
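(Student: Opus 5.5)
The plan is to derive the formula for $\omega_3$ directly from the choice of $\alpha$, and then the formulas for $\omega_1,\omega_2$ from the Kummer data of Lemma~\ref{lem:maximal-coordinate}. The second (norm-phase) assertion then follows from the first, exactly as \eqref{eq:quadratic-ur-normphase} followed from \eqref{eq:quadratic-ur-tau}.

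For $i=3$ we have $\beta_3=1$ and $q_3=e+1$, so \eqref{eq:maximal-lower-formula} is the defining property of $\alpha$.

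For $i=2$ I would compare $\omega_2$ with $\omega_3$ through $\omega_1=\omega_2\omega_3$, or more directly compute $\omega_2$ from the Kummer description $L_2=F(R)$ using the Hilbert symbol $(R^2,\cdot)$. The cleanest route I would try is the following. On $\pp_F^{e+1}$, a unit $1-z$ with $v_F(z)\ge e+1$ is a norm from $F(\sqrt{R^2})$ precisely when the standard dyadic criterion holds. Alternatively, since $1-R^2=\N_{L_2/F}(R-1)$ up to sign, I would use norm triviality $\omega_2(\N_{L_2/F}(1-y))=1$. Together with the exact quadratic identity $\N(1-y)=1-\Tr y+\N y$ and the surjectivity onto the relevant quotient (Proposition~\ref{prop:norm-conductors} and Lemma~\ref{lem:norm-approximation}), this pins down the stationary coefficient of $\omega_2$ modulo $\pp_F^{T-q_2}$. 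The key check is that $\omega_2\omega_3^{-1}$ evaluated on $1-z$, $z\in\pp_F^{e+1}$, equals $\Psi_F(-R^2z)$. Since $v_F(R^2z)\ge b+e+1$ and $\Psi_F$ is trivial on $\pp_F^{T}$, this is automatic when $b+e+1\ge T$, i.e.\ $a\le e$ with $b\ge e$. When $b<e$ it must come from the $\omega_1$ computation below.

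For $i=1$, with $q_1=a$ and $\beta_1=1-S^2$ of valuation $b$, the coefficient has valuation $b-n-T=-n-T_1$, which is the right exact conductor $2a$. I would produce it by the same norm-triviality argument applied to $L_1/F$. The candidate coefficient $\beta_1=\N_{L_1/F}(h_1)$ is the norm of the trace of $Y$, and Lemma~\ref{lem:E2} expresses $E_2(Y)$ via either $L_1$ or $L_3$. Comparing the $i=1$ and $i=3$ decompositions yields the relation $\omega_1(\N_{L_1/F}\cdot)$ versus $\omega_3$, with the discrepancy controlled by the alignment $(S^2-1)/R^2\in U_F^a$. Concretely, I would take $y\in\pp_{L_1}^{a}$ and expand $\N_{L_1/F}(1-y)$ exactly. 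I would then show that $\Psi_F(\beta_1(\Tr y-\N y))=1$ using the trace ideal \eqref{eq:trace-ideal} with $D_{L_1/F}=2a$ and the valuation of $\beta_1$. Finally, I would identify $\Psi_F(\beta_1\cdot)$ as the unique character on $U_F^a/U_F^{2a}$ killed by norms, using duality and the fact that this norm subgroup has index $2$ in that quotient.

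Once \eqref{eq:maximal-lower-formula} holds, the second display follows by the argument of Lemma~\ref{lem:quadratic-ur-normphase}. Given $x\in\pp_{L_i}^{q_i}$, one writes $\N(1-x)=1-\Tr x+\N x$. The trace lies in $\pp_F^{\floor{(q_i+T_i)/2}}\subset\pp_F^{q_i}$ and the norm lies in $\pp_F^{q_i}$, so norm triviality gives $\Psi_F(\beta_i\Tr x)=\Psi_F(\beta_i\N x)$, which is the claim.

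The main obstacle is $i=1$ (and the $b<e$ subcase of $i=2$). There the coefficient is not forced by depth alone: the alignment condition $(S^2-1)/R^2\in U_F^a$ must be converted into the statement that $\Psi_F(\beta_1\cdot)$ is exactly the norm character rather than another character with the same conductor. I would first attempt this via the product $\omega_1=\omega_2\omega_3$, reducing it to checking that $\Psi_F((\beta_1-\beta_2-\beta_3)z)=\Psi_F((1-S^2+R^2-2)z)$ behaves correctly on $\pp_F^a$, with the alignment killing the leading term.
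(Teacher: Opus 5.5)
Your treatment of $i=3$, your reduction of $i=2$ to $i=1,3$ through $\omega_2=\omega_1\omega_3$ on $\pp_F^{e+1}$, and your deduction of the second display from the first are fine. On $\pp_F^{e+1}$ the alignment gives $\Psi_F(\beta_1z)=\Psi_F(-R^2z)$, and the last deduction is exactly the paper's argument.

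The gap is the case $i=1$, which carries all the content; the paper does not derive it but imports it from \cite[Lemma~12.4]{SML}. Your uniqueness reduction on $U_F^a/U_F^{2a}$ is sound. It needs $\Psi_F\bigl(\beta_1(\Tr_{L_1/F}y-\N_{L_1/F}y)\bigr)=1$ for $y\in\pp_{L_1}^a$, and this does not follow from \eqref{eq:trace-ideal} and $v_F(\beta_1)=b$. The character $\Psi_F$ is trivial only on $\pp_F^{2e+1}$, while $v_F(\beta_1\N y)\ge 2e-a+1$ and $v_F(\beta_1\Tr y)\ge 2e+1-\lceil a/2\rceil$ are in general $\le 2e$.

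In fact no depth count can work. The same estimates hold verbatim for $\beta_1\zeta$ with any unit $\zeta\notin U_F^a$, and for such $\zeta$ the formula is false, since $\Psi_F(\beta_1\zeta\,\cdot)\ne\Psi_F(\beta_1\,\cdot)$ on $\pp_F^a$. The assertion is a genuine relation between $\omega_1$ and the character $\omega_3$ that normalized $\alpha$, and the alignment $(S^2-1)/R^2\in U_F^a$ must enter.

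Your fallback through $\omega_1=\omega_2\omega_3$ cannot supply it. The formulas for $\omega_2,\omega_3$ hold only on $\pp_F^{e+1}$; at depth $a\le e$ the map $1-z\mapsto\Psi_F(z)$ is not even multiplicative. So the product determines $\omega_1$ only on $U_F^{e+1}$. For $b<e$ it is also circular, since you wanted $\omega_2$ from $\omega_1$.

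The missing step can be done directly, through $\omega_3$:
\begin{itemize}
\item Since $\beta_1\pp_F^a\subset\pp_F^{2e-a+1}\subset\pp_F^{e+1}$, the normalization of $\alpha$ gives $\Psi_F(\beta_1w)=\omega_3(1+cw)$ for $w\in\pp_F^a$, where $c=S^2-1$.
\item Write $y=u+vS$ with $u,v\in F$ and put $s=u+v$. Since $v_{L_1}(S-1)=b$ is odd, $y\in\pp_{L_1}^a$ gives $2v_F(s)\ge a$ and $2v_F(v)+b\ge a$.
\item Expanding the trace and norm gives
\[
 1+c(\Tr_{L_1/F}y-\N_{L_1/F}y)=(1+cu)^2-cS^2(u^2-v^2).
\]
\item Write $c=\epsilon R^2$ with $\epsilon\in U_F^a$. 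Then the right side differs from the unit
\[
 \N_{L_3/F}(1+cu+sSR)=(1+cu)^2-S^2R^2s^2
\]
by $-(\epsilon-1)R^2S^2s^2+2\epsilon R^2S^2sv$.
\item This difference lies in $\pp_F^{2e+1}$. The first term has valuation at least $2a+b=2e+1$. The second has valuation at least $e+b+(2a-b)/2=2e+\tfrac12$.
\end{itemize}
As $\omega_3$ has conductor $2e+1$, the left side of the displayed identity lies in $\ker\omega_3$. This is exactly the vanishing your uniqueness argument requires.
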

\begin{proof}
The simultaneous coefficients are \cite[Lemma~12.4]{SML}.
The second formula follows from the first by the norm-triviality
argument proving \eqref{eq:quadratic-ur-normphase}; the trace and
norm lie in $\pp_F^{q_i}$ by \eqref{eq:trace-ideal} and
\eqref{eq:valuation-conventions}.
\end{proof}
The common normalization and the three norm-character formulas are
\lean{Dyadic/Maximal/LowerCharacters.lean}{829}{Dyadic.Maximal.lowerCharacters}.

\begin{proposition}[Model characters at minimal conductors]\label{prop:maximal-characters}
There are primitive compatible characters $\chi_i$ of conductors $m_i$
with
\begin{equation}\label{eq:maximal-core-chart}
 \chi_i(1-z)=\Psi_{L_i}(a_i z)\qquad(z\in\pp_{L_i}^{s_i}).
\end{equation}
\end{proposition}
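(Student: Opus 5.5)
I will follow the extension-and-descent pattern of Propositions~\ref{prop:char2-characters} and~\ref{prop:odd-total-characters}.

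The first step is to show that the three unit prescriptions \eqref{eq:maximal-core-chart} define characters on $U_{L_i}^{s_i}$. Since $2s_i\ge m_i$, additivity of $z\mapsto\Psi_{L_i}(a_iz)$ on $\pp_{L_i}^{s_i}/\pp_{L_i}^{m_i}$ needs only that $a_i zz'$ lies in the trivial ideal. By \eqref{eq:trace-ideal} the largest trivial ideal of $\Psi_{L_i}$ has exponent $2T-T_i+\dots$; concretely $4e-2a+2$ for $i=1$ and $2e+2a$ for $i=2,3$, after subtracting $D_{K/L_i}$-type shifts. Subtracting $v_{L_i}(a_i)=-t_1=1-2a$ (Lemma~\ref{lem:maximal-coordinate}) gives the exact conductors $m_i=4e+1$ and $2e+2a$, provided the prescribed additive character is nontrivial on $\pp_{L_i}^{m_i-1}$. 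This uses that $a_i$ has exactly the stated valuation.

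The second step, and the main one, is principal-unit compatibility. I need
\[
 \Psi_{L_1}(a_1 z_1)=\Psi_{L_2}(a_2 z_2)=\Psi_{L_3}(a_3z_3),
 \qquad 1-z_i=\N_{K/L_i}(u),
\]
for $u\in U_K^{s}$ at a common sufficiently deep level $s$. Writing $u=1-w$, the norm is $\N_{K/L_i}(1-w)=1-\Tr_{K/L_i}w+\N_{K/L_i}w$, so each side expands as
\[
 \Psi_F\bigl(\Tr_{L_i/F}(a_i\Tr_{K/L_i}w)\bigr)\,
 \Psi_F\bigl(-\Tr_{L_i/F}(a_i\N_{K/L_i}w)\bigr).
\]
The first factor equals $\Psi_F(\Tr_{K/F}(a_iw))$. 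Since $a_i=Y\sigma_iY$, the differences $a_i-a_j$ are governed by the conjugates of $Y$.

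I will pair this with the norm-character coefficients of Lemma~\ref{lem:maximal-lower-coefficients}. By that lemma, $\Psi_{L_i}(\beta_i x)=\Psi_F(\beta_i\N_{L_i/F}x)$, and Lemma~\ref{lem:E2}, applied to $Y(1-w)$ versus $Y$, converts differences of traces of $\N_{K/L_i}$ into differences of $\N_{L_i/F}\Tr_{K/L_i}$. This is exactly the mechanism of Lemma~\ref{lem:common-function}, in which the $i$-independent quantity $E_2$ absorbs everything. Concretely, $\theta_i$-side factors times $\omega_i$-side factors should be independent of $i$. The $\omega_i$-side factors are trivial on norms, because the relevant arguments $\N_{L_i/F}(\Tr_{K/L_i}(Y(1-w)))/\beta_i$ are norms from $L_i$. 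The genuine obstacle is the depth bookkeeping. I must verify that the linearization at depth $s_i$ is valid, namely that the quadratic error terms and the terms $\N_{K/L_i}(w)$ fall into the trivial ideals, using $D_{K/L_1}=4e-2a+2$ and $D_{K/L_{2,3}}=2a$. I must also verify that $v_{L_i}(h_i)$ is as in Lemma~\ref{lem:maximal-coordinate}, so that the trace-derived coefficients land in $\pp_F^{q_i}$. I expect the $i=1$ index, where $h_1=S-1$ has positive valuation $b$, to need the alignment condition $(S^2-1)/R^2\in U_F^a$ in an essential way, and I would first check this case by explicit expansion in $S,R$.

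Third, once the norm pullbacks agree on a deep unit group of $K$, I extend as in Proposition~\ref{prop:odd-total-characters}. I extend $\chi_2$ from $U_{L_2}^{s_2}$ to $L_2^\times$ so that its pullback $\Theta$ is $\Gal(K/F)$-invariant; for this I choose the value on a uniformizer and on $\OO^\times$ compatibly, using the invariance of the unit prescription under $\sigma_1$ up to the unit formula. I then descend to $\chi_1,\chi_3$ by Lemma~\ref{lem:descent}, and adjust by $S(K/L_i)$ so that the unit formulas hold, which is possible since these twists have conductor at most $T_i\le s_i$.

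Finally, primitivity follows because a nontrivial conjugate quotient is forced: $\chi_1^{\sigma}/\chi_1$ is computed on $U^{s_1}$ through $\sigma(a_1)-a_1$, which is nonzero modulo the trivial ideal. Hence by Lemma~\ref{lem:conjugacy} the family is not a base-field pullback.
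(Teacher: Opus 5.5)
Your architecture matches the paper's outline: start from $L_2$, prove compatibility of the three unit prescriptions with Lemma~\ref{lem:E2} and the norm-triviality of $\omega_i$, descend by Lemma~\ref{lem:descent}, and read the conductors off the coefficient valuations. The compatibility mechanism is sound. Write $\chi_i^0$ for the prescribed character on $U_{L_i}^{s_i}$. For $u\in U_K^{2e+1}$ one gets exactly $\chi_i^0(\N_{K/L_i}u)=\Psi_F(E_2(Y)-E_2(Yu))$, because \eqref{eq:trace-ideal} puts $\N_{L_i/F}(\Tr_{K/L_i}(Yu)/h_i)$ in $U_F^{q_i}$; no linearization is needed.

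The gap is in Step~3: there is nothing to choose. The extension $L_2/F$ has break $2e\ge s_2=e+a$. Hence every norm-one element $\rho(y)/y$, with $\rho$ generating $\Gal(L_2/F)$, lies in $U_{L_2}^{2e}\subseteq U_{L_2}^{s_2}$. So \emph{every} extension $\chi_2$ of $\chi_2^0$ has the same conjugate quotient $y\mapsto\chi_2^0(\rho(y)/y)$, whatever its values on a uniformizer or on $\OO_{L_2}^\times$. The $\rho$-invariance of $\chi_2^0$ on $U_{L_2}^{s_2}$ that you invoke is not enough. The nontrivial element of $S(K/L_2)$ has conductor $2a\le s_2$, so it is invisible on $U_{L_2}^{s_2}$, and the constraint lives on shallow elements. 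What must be proved is that this forced quotient kills $\N_{K/L_2}(K^\times)$, and that follows from compatibility itself. For $y=\N_{K/L_2}(v)$ one has $\rho(y)/y=\N_{K/L_2}(\sigma_1v/v)$, with $\sigma_1v/v\in\ker\N_{K/L_1}\subseteq U_K^{t'}$ and $t'=4e-2a+1\ge2e+1$. Compatibility then gives $\chi_2^0(\rho(y)/y)=\chi_1^0(1)=1$. This is why compatibility must be proved on exactly $U_K^{2e+1}$, not at an unspecified ``sufficiently deep level''.

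Your final adjustment is argued backwards. A twist trivial on $U_{L_i}^{s_i}$ cannot repair a formula there, and $T_i$ is the conductor of $\omega_i$ on $F$, not of an element of $S(K/L_i)$. The missing step is the norm-image check.
\begin{itemize}
\item For $L_3$: $s_3=e+a>t_1$ and $\psi_{K/L_3}(s_3)=2e+1$, so $\N_{K/L_3}(U_K^{2e+1})=U_{L_3}^{s_3}$. Compatibility gives the formula with no adjustment.
\item For $L_1$: $s_1=2e+1\le t'$, so the norm is injective on graded pieces below the break and $\N_{K/L_1}^{-1}(U_{L_1}^{s_1})=U_K^{2e+1}$. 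Compatibility gives $\chi_1=\chi_1^0$ only on the index-two subgroup $U_{L_1}^{s_1}\cap\N_{K/L_1}(K^\times)$. One then twists by the nontrivial element of $S(K/L_1)$, whose conductor $t'+1=4e-2a+2$ \emph{exceeds} $s_1$.
\end{itemize}
With these repairs, your primitivity argument via $\sigma(a_1)-a_1=S$ is correct.

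One smaller slip in Step~1: the largest trivial ideal of $\Psi_{L_i}$ is $\pp_{L_i}^{2T-T_i}$, where the shift comes from $D_{L_i/F}=T_i$, not $D_{K/L_i}$. Its exponent is $4e-2a+2$ for $i=1$ but $2e+1$, not $2e+2a$, for $i=2,3$. Subtracting $v_{L_i}(a_i)=1-2a$ then gives $m_i$.
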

\begin{proof}
As in Proposition~\ref{prop:odd-total-characters}, extend a unit
prescription and descend its invariant norm pullback. Here one starts
with $L_2$; the quadratic compatibility and norm-image checks are
\cite[Lemma~12.5 and Theorem~12.6]{SML}.
Lemma~\ref{lem:maximal-coordinate} and \eqref{eq:trace-ideal} give the
exact conductors $m_i$ from the coefficient valuations.
\end{proof}
The compatible family with the prescribed principal-unit formulas
is constructed by \lean{Dyadic/Maximal/Models.lean}{1531}{Dyadic.Maximal.models}.

\begin{lemma}[Common twist up to conjugation]\label{lem:maximal-twist}
For every primitive compatible family, there are characters $(\chi_i)$
as in Proposition~\ref{prop:maximal-characters} and a character
$\lambda$ of $F^\times$ such that, after conjugating $\theta_1$ if
necessary,
\begin{equation*}
 \theta_i=\chi_i(\lambda\circ \N_{L_i/F})\qquad(i=1,2).
\end{equation*}
If $n=m_F(\lambda)$, then
\begin{equation}\label{eq:maximal-conductor-split}
 \begin{cases}
 m_{L_i}(\theta_i)=m_i,&n\le2e+a,\\
 m_{L_1}(\theta_1)=2n-2a,\quad m_{L_2}(\theta_2)=2n-T,&n\ge2e+a+1.
 \end{cases}
\end{equation}
\end{lemma}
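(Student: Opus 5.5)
We follow the pattern of Lemma~\ref{lem:char2-twist} and Proposition~\ref{lem:odd-ur-twist}. Start from the models $(\chi_i)$ of Proposition~\ref{prop:maximal-characters} and a prescribed primitive compatible family $(\theta_i)$.

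\textbf{Step 1: the conjugate quotients.} By Lemma~\ref{lem:conjugacy}, $\theta_2^{\sigma}/\theta_2$ and $\chi_2^{\sigma}/\chi_2$ are both the unique nontrivial character of $S(K/L_2)$, where $\sigma$ generates $\Gal(L_2/F)$. Hence the quotient $\theta_2/\chi_2$ is $\Gal(L_2/F)$-invariant.

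\textbf{Step 2: descent and normalization.} The descent argument of Lemma~\ref{lem:descent} (Hilbert~90 plus character extension) gives $\lambda$ on $F^\times$ with $\theta_2=\chi_2(\lambda\circ\N_{L_2/F})$. Both $(\theta_i)$ and $(\chi_i(\lambda\circ\N_{L_i/F}))$ then have common pullback $\Theta$. So, for each $i$, the quotient $\theta_i/(\chi_i(\lambda\circ\N_{L_i/F}))$ lies in $S(K/L_i)$, and by \eqref{eq:conjugate-twists} $\theta_i$ equals a conjugate of $\chi_i(\lambda\circ\N_{L_i/F})$ or of $\theta_i$ itself. For $i=1$ this is precisely the allowed conjugation of $\theta_1$. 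Conjugation preserves $\mathcal A_1$ by Lemma~\ref{lem:elementary-delta}, and it also preserves compatibility.

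The model property of $\chi_1$ is unaffected, because only $\theta_1$ is changed. Alternatively, one can twist $\chi_1$ by the nontrivial element of $S(K/L_1)$, which has conductor $2e-2a+2\le s_1=2e+1$, so the formula \eqref{eq:maximal-core-chart} survives. Either normalization is available.

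\textbf{Step 3: the conductor alternatives.} This is where the bookkeeping lies.

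\emph{Pullback conductors of $\lambda$.} For $n=m_F(\lambda)$, Proposition~\ref{prop:norm-conductors} with breaks $t_1=2a-1$ and $t_2=2e$ gives:
\begin{itemize}
\item for $n>T_1=2a$: $m_{L_1}(\lambda\circ\N)=2n-2a$;
\item for $n>T=2e+1$: $m_{L_2}(\lambda\circ\N)=2n-T$;
\item for $n\le T_i$: the conductor is at most $n$, hence at most $m_i$.
\end{itemize}

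\emph{Comparison with the model conductors.} The model conductors are $m_1=4e+1$ (odd) and $m_2=2e+2a$ (even). Note that
\begin{itemize}
\item $2n-2a>4e+1$ iff $n\ge2e+a+1$;
\item $2n-T>2e+2a$ iff $n\ge2e+a+1$.
\end{itemize}
So the threshold $2e+a+1$ is the same for both fields.

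\emph{Above the threshold.} When $n\ge2e+a+1$, the product $\chi_i(\lambda\circ\N)$ has the larger conductor, which gives the second line of \eqref{eq:maximal-conductor-split}.

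\emph{At or below the threshold.} When $n\le2e+a$, we get $m_{L_i}(\theta_i)\le m_i$. The reverse inequality comes from the lower-bound argument of Lemma~\ref{lem:odd-total-lower-conductors}, which does not use odd $p$. It gives $m_{L_i}(\theta_i)>t_i+u_i$ with $u_1=2t_2-t_1$ and $u_2=t_1$, so $m_1\ge 4e+1$ and $m_2\ge 2e+2a$. This yields equality.

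\textbf{Main obstacle.} The delicate point is the boundary $n=2e+a$ over $L_2$, or more generally any case where the conductors of $\chi_i$ and of $\lambda\circ\N$ coincide, so that cancellation could lower the conductor. The lower bound just described excludes this. Alternatively, over $L_1$ a parity argument works: there $m_1$ is odd while $2n-2a$ is even. Over $L_2$ both are even, so the lower bound must be used. We also verify that Proposition~\ref{prop:norm-conductors} applies above the break, which needs $n\ge2e+a+1>T$ since $a\ge1$.
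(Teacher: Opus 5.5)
Your main argument is correct, and your descent step is the paper's. Over $L_2$ both conjugate quotients are the nontrivial element of $S(K/L_2)$, so $\theta_2/\chi_2$ descends to $\lambda$. The leftover element of $S(K/L_1)$ is then removed by conjugating $\theta_1$.

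The conductor alternatives are where your route differs. The paper argues by parity alone. On $L_1$, $m_1=4e+1$ is odd while $2n-2a$ is even; on $L_2$, $m_2=2e+2a$ is even while $2n-T$ is odd. So the model conductor and the higher pullback conductor never coincide, and $m_{L_i}(\theta_i)$ is simply the larger one. You instead use strict inequality above $n=2e+a+1$. Below it, you combine $m_{L_i}(\theta_i)\le m_i$ with the ramification lower bound of Lemma~\ref{lem:odd-total-lower-conductors}, using the breaks $4e-2a+1$ and $2a-1$ of $K/L_1$ and $K/L_2$. That route is also valid and does not depend on the parity coincidence, but it needs an extra lemma the paper does without.

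Two of your side remarks are false and should be deleted.

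First, $2n-T=2n-2e-1$ is odd, so over $L_2$ the two conductors also have opposite parity. There is no delicate boundary: at $n=2e+a$ one has $m_{L_2}(\lambda\circ\N_{L_2/F})=2e+2a-1<m_2$.

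Second, absorbing the twist into $\chi_1$ does not work. The nontrivial character of $S(K/L_1)$ has conductor $D_{K/L_1}=4e-2a+2$, not $2e-2a+2$. Since $a\le e$, this exceeds $s_1=2e+1$, so the twist is nontrivial on $U_{L_1}^{s_1}$ and destroys \eqref{eq:maximal-core-chart}. This is exactly why the lemma is stated up to conjugation of $\theta_1$. Compare Lemma~\ref{lem:char2-twist}, where the leftover twists on $L_2,L_3$ have conductor $T_1\le s_i$ and can be absorbed into the models.

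With the claim that ``either normalization is available'' removed, your proof stands.
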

\begin{proof}
Use the descent argument of Lemma~\ref{lem:char2-twist}, starting with
$\theta_2/\chi_2$. The remaining twist on $L_1$ is removed by
conjugating $\theta_1$, as in Proposition~\ref{prop:odd-total-characters};
this preserves the common pullback and local constant by
\eqref{eq:conjugate-twists} and Lemma~\ref{lem:elementary-delta}.
For each field, the model conductor and the higher norm-pullback
conductor in \eqref{eq:norm-conductor} have opposite parity.
The resulting alternatives are \cite[Lemma~12.7]{SML}.
\end{proof}
The model characters and twist for the prescribed family, including
the possible conjugation and the conductor alternatives, are
\lean{Dyadic/Maximal/Twist.lean}{466}{Dyadic.Maximal.twist}.
Use the characters and twist of Lemma~\ref{lem:maximal-twist}, and put
\[
 D_\theta:=\theta_i|_{F^\times}\omega_i,
\]
which is independent of $i$ by Lemma~\ref{lem:restrictions}.

An additive coefficient need not itself be a norm from $L_3$.
As in the coefficient adjustments of
Proposition~\ref{prop:odd-total-higher}, one changes the normalization
without changing the character formulas.

\begin{lemma}[Norm choice by rescaling]\label{lem:maximal-norm-choice}
For every $A_*\in F^\times$, there are $u\in U_F^{2e}$ and
$X\in L_3^\times$ such that
\begin{equation*}
 A:=A_*/u=\N_{L_3/F}(X),\qquad (\alpha u)A=\alpha A_*.
\end{equation*}
The lower formulas \eqref{eq:maximal-lower-formula} and
\eqref{eq:maximal-core-chart} are unchanged by
$\alpha\mapsto\alpha u$.
\end{lemma}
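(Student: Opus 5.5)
The plan is to take the two assertions in turn: first the rescaling, then the invariance of the formulas. Both rest on the exact conductor of $\omega_3$ and on valuation bookkeeping.

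\emph{Existence of $u$ and $X$.} Since $t_3=2e$, Proposition~\ref{prop:norm-conductors} gives $m_F(\omega_3)=T=2e+1$. Hence $\omega_3$ is trivial on $U_F^{2e+1}$ but not on $U_F^{2e}$, so $U_F^{2e}\not\subset\N_{L_3/F}(L_3^\times)$. This norm subgroup has index two, being the kernel of $\omega_3$. Consequently
\[
 \N_{L_3/F}(L_3^\times)\,U_F^{2e}=F^\times .
\]
Given $A_*$, I write $A_*=\N_{L_3/F}(X)\,u$ with $u\in U_F^{2e}$ and $X\in L_3^\times$. Then $A:=A_*/u=\N_{L_3/F}(X)$, and the identity $(\alpha u)A=\alpha A_*$ is immediate.

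\emph{Invariance of the formulas.} Replacing $\alpha$ by $\alpha u$ multiplies $\Psi_L(y)$ by $\psi_L(\alpha(u-1)y)$. So it suffices to check that this factor is $1$ on every argument appearing in \eqref{eq:maximal-lower-formula} and \eqref{eq:maximal-core-chart}. Put $n=n_F(\psi_F)$. From the choice of $\alpha$ and $u$ we have $v_F(\alpha(u-1))\ge-n-2e-1+2e=-n-1$. Thus it suffices that the relevant $F$-valued quantities lie in $\pp_F$, because then the product lies in $\pp_F^{-n}$, on which $\psi_F$ is trivial.

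For \eqref{eq:maximal-lower-formula}, the argument is $\beta_iz$ with $v_F(\beta_i)\ge0$ (Lemma~\ref{lem:maximal-coordinate}) and $z\in\pp_F^{q_i}$, where $q_i\ge a\ge1$. For \eqref{eq:maximal-core-chart}, write $\psi_{L_i}=\psi_F\circ\Tr_{L_i/F}$, so the argument is $\Tr_{L_i/F}(a_iz)$. Here $v_{L_i}(a_iz)\ge s_i-t_1$ and $D_{L_i/F}=T_i$, so \eqref{eq:trace-ideal} places the trace in $\pp_F^{c_i}$ with
\[
 c_1=\floor{(2e+1-(2a-1)+2a)/2}=e+1,\qquad
 c_{2}=c_3=\floor{(e+a-2a+1+2e+1)/2}\ge e+1,
\]
using $a\le e$. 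Both exponents are at least $1$, which finishes the verification.

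The only step needing real care is the second one: checking that the depths $q_i,s_i$ and the valuations of $a_i,\beta_i$ place every argument deep enough. This is where the table of $T_i,q_i,m_i,s_i$ and the trace-ideal formula are used. The first step is a formal consequence of the index-two norm subgroup together with the exact conductor $2e+1$.
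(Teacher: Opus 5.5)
Your proof is correct and follows the paper's argument. The exact conductor $2e+1$ of $\omega_3$ makes it nontrivial on $U_F^{2e}$, so $u$ can be chosen with $\omega_3(u)=\omega_3(A_*)$, and $A_*/u$ is then a norm from $L_3$. The paper defers the preservation of the two formulas to the companion paper's valuation checks; your explicit verification, via $v_F(\alpha(u-1))\ge -n_F(\psi_F)-1$ and the trace-ideal formula, supplies exactly those checks.
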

\begin{proof}
Since $\omega_3$ has conductor $2e+1$, its restriction to $U_F^{2e}$
is nontrivial. Choose $u$ with $\omega_3(u)=\omega_3(A_*)$.
Then $A_*/u$ is a norm from $L_3$.
The coefficient valuation checks that preserve
\eqref{eq:maximal-lower-formula} and \eqref{eq:maximal-core-chart}
are \cite[Lemma~\SMLnum{D:MX:freedom}]{SML}.
The identity $(\alpha u)(A_*/u)=\alpha A_*$ preserves the formula
for the twisting character.
\end{proof}
Preservation of the displayed formulas and the exact norm choice are
\lean{Dyadic/Maximal/Normalization.lean}{225}{Dyadic.Maximal.normalization}.
The group-theoretic choice of the unit is the general lemma
\lean{Stationary/NormalizationFreedom.lean}{99}{Stationary.normalizationFreedom}.

\begin{lemma}[Valuations after a quadratic shift]\label{lem:maximal-common-element}
For $X\in L_3$, use $A,C,Q_i$ of Lemma~\ref{lem:quadratic-shift}
with $Y$ from Lemma~\ref{lem:maximal-coordinate}, and put
\[
 p_X=\Tr_{L_3/F}(X),\qquad
 \mathcal E=(X'-X)(a_3X'-a_3'X),\qquad
 \Delta=p_X^2+2p_X.
\]
The last two differences in Lemma~\ref{lem:quadratic-shift} are
$\mathcal E$ and $\Delta$, respectively.
If $v_{L_3}(X)=-h$, then
\[
 v_Fp_X\ge e-\floor{h/2},\qquad
 v_F\mathcal E\ge T-a-h,\qquad
 v_{L_1}Q_1\ge b-h,\qquad v_{L_2}Q_2\ge-h.
\]
\end{lemma}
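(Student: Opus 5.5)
The plan is to derive both identifications directly from Lemma~\ref{lem:quadratic-shift}, using the table of Lemma~\ref{lem:maximal-coordinate}, and then to obtain the four valuation bounds from the decomposition $Q_i=Y(X'-X)+h_iX$ together with the break $t_3=2e$ of $L_3/F$.

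\emph{The two differences.} Lemma~\ref{lem:quadratic-shift} already gives
\[
 \N_{L_i/F}(Q_i)-\beta_iA=(X'-X)(a_3X'-a_3'X)=\mathcal E .
\]
For the other difference it gives
\[
 \N_{L_i/F}(\Tr_{K/L_i}(C))-\beta_i=p_X^2-p_X\Tr_{K/F}(Y).
\]
By Lemma~\ref{lem:maximal-coordinate}, $\Tr_{K/F}(Y)=\Tr_{L_3/F}(h_3)=\Tr_{L_3/F}(-1)=-2$. Hence this difference is $p_X^2+2p_X=\Delta$.

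\emph{The bound on $p_X$.} This follows from \eqref{eq:trace-ideal} for $L_3/F$, where $D_{L_3/F}=t_3+1=2e+1$ by \eqref{eq:herbrand}:
\[
 v_F(p_X)\ge\floor{(2e+1-h)/2}=e-\floor{h/2}.
\]
Checking $h$ even and $h$ odd separately confirms the last equality.

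\emph{The bound on $\mathcal E$.} Since $L_3/F$ has lower break $2e$, one has $v_{L_3}(X'-X)\ge -h+2e$. Write
\[
 a_3X'-a_3'X=a_3(X'-X)+(a_3-a_3')X .
\]
From the table, $a_3-a_3'=-SR$, because conjugation over $F$ negates $SR$ and fixes $S^2,R^2$. Since $(SR)^2$ has $F$-valuation $b$, we get $v_{L_3}(SR)=b$. Also $v_{L_3}(a_3)=-t_1=1-2a$. Both terms of the decomposition therefore have valuation at least $b-h$. Hence
\[
 v_{L_3}(\mathcal E)\ge 2e-h+b-h=4e-2a+1-2h .
\]
Since $\mathcal E\in F$ and $L_3/F$ is totally ramified of degree $2$, this gives $v_F(\mathcal E)\ge\ceil{(4e-2a+1-2h)/2}=T-a-h$.

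\emph{The bounds on $Q_1,Q_2$.} For $i=1,2$, $\sigma_i$ restricts to the nontrivial conjugation of $L_3$, so $Q_i=YX'+\sigma_i(Y)X=Y(X'-X)+h_iX$. In $v_K$ the first term has valuation at least
\[
 (1-2a)+2(2e-h)=4e-2a+1-2h .
\]
For $i=1$, $v_K(h_1X)=2b-2h=4e-4a+2-2h$. This is the smaller of the two bounds since $a\ge1$, so $v_{L_1}(Q_1)\ge b-h$. For $i=2$, $h_2$ is a unit, so $v_K(h_2X)=-2h$, and the first term is deeper. Hence $v_{L_2}(Q_2)\ge-h$.

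The main point needing care is the ramification estimate $v_{L_3}(X'-X)\ge v_{L_3}(X)+2e$. It holds for every $X$, not only uniformizer powers, since $U^{t}$-type behaviour of $\sigma(x)/x$ gives $v(\sigma x-x)\ge v(x)+t$ for all $x\ne0$. Beyond that, the valuation bookkeeping for $a_3-a_3'=-SR$ must be done in $L_3$ rather than in $K$. In Lean, I would expect these to be the fiddly steps; the rest is substitution into Lemma~\ref{lem:quadratic-shift}.
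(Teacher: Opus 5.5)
Your proof is correct and follows the paper's route: the two identifications come from Lemma~\ref{lem:quadratic-shift} with $\Tr_{K/F}(Y)=\Tr_{L_3/F}(h_3)=-2$, and the bound on $p_X$ comes from \eqref{eq:trace-ideal} with $D_{L_3/F}=2e+1$. For the other three bounds the paper only cites \cite[Lemma~12.9]{SML}; your direct computation from $v_{L_3}(X'-X)\ge 2e-h$, $a_3-a_3'=-SR$ with $v_{L_3}(SR)=b$, and $Q_i=Y(X'-X)+h_iX$ is a valid self-contained replacement for that citation.
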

\begin{proof}
Lemma~\ref{lem:maximal-coordinate} gives $\Tr_{K/F}(Y)=-2$,
so Lemma~\ref{lem:quadratic-shift} gives $\Delta$.
The trace bound is \eqref{eq:trace-ideal}; the other bounds are
\cite[Lemma~12.9]{SML}.
\end{proof}
The specialized identities and their valuation bounds are
\lean{Dyadic/Maximal/CommonError.lean}{960}{Dyadic.Maximal.commonError}.

\begin{proposition}[Matching the two critical sums]\label{prop:maximal-minimal}
If $n\le2e+a$, then
\begin{equation}\label{eq:maximal-minimal-result}
 \mathcalA_{L_1}(\theta_1)=\mathcalA_{L_2}(\theta_2).
\end{equation}
\end{proposition}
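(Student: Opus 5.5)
The plan mirrors Proposition~\ref{prop:char2-minimal}, with the Kummer data of Lemma~\ref{lem:maximal-coordinate} in place of the Artin--Schreier data. Choose a single element $C=Y-X\in K$, with $X\in L_3$, whose norms $\N_{K/L_i}(C)$ are stationary coefficients for $\theta_i$ on $\pp_{L_i}^{s_i}$. Choose it so that the norms of its traces, $\N_{L_i/F}(\Tr_{K/L_i}(C))$, are stationary coefficients for $\omega_i$ on $\pp_F^{q_i}$. Lemma~\ref{lem:common-norm} then expresses each $\mathcal A_i$ as the common factor $D_\theta(-\alpha^{-1})\Theta(C)^{-1}\Psi_F(-E_2(C))$ times $g_ih_i$. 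The assertion is therefore reduced to $g_1h_1=g_2h_2$.

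\textbf{Choice of $C$.} If $n$ is small enough that $\lambda$ is trivial at the stationary depths, take $X=0$. Otherwise, take the additive coefficient $A_*$ of $\lambda$ at depth $e+a$ (the smaller stationary depth $s_2$). By Lemma~\ref{lem:maximal-norm-choice}, rescale $\alpha$ by a unit $u\in U_F^{2e}$ so that the coefficient becomes an exact norm $A=\N_{L_3/F}(X)$. Then Lemma~\ref{lem:quadratic-shift} together with the trace--norm identity of Lemma~\ref{lem:maximal-lower-coefficients} gives coefficient $A-Q_i$ for $\lambda\circ\N_{L_i/F}$ on the relevant ideal. Combined with \eqref{eq:maximal-core-chart}, this gives $a_i+A-Q_i=\N_{K/L_i}(C)$.

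For $\omega_i$, the coefficient changes from $\beta_i$ to $\N_{L_i/F}(\Tr_{K/L_i}C)$. By Lemma~\ref{lem:maximal-common-element} this change is $\Delta=p_X^2+2p_X$. The bound $v_F(p_X)\ge e-\floor{h/2}$, with $h\le 2e+a-T$ forced by $n\le 2e+a$, should put $\Delta$ in the annihilator of $\pp_F^{q_i}$. The bounds on $\mathcal E$ and $Q_i$ are used in the same way to show that the error term $\mathcal E$ and the $Q_i$ do not disturb the stationary formulas. They also guarantee $\Tr_{K/L_i}(C)\ne0$.

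\textbf{Critical sums.} $T_1=2a$ is even, so $h_1=1$. The conductor $m_2=2e+2a$ is even, so $g_2=1$. The goal is therefore $g_1=h_2$, where $g_1$ lives on $\pp_{L_1}^{2e}/\pp_{L_1}^{2e+1}$ and $h_2$ lives on $\pp_F^e/\pp_F^{e+1}$, both identified with $k$. To compare them, perturb $C$ to $C_z=C(1+\pi^{c}z)$ for a suitable exponent $c$. Then apply Lemma~\ref{lem:common-function} with $C'=C_z$. Since the $\theta_2$-factor is trivial at even conductor, the pointwise identity becomes $H_1(p_1(\bar z))\,H_{\omega_2}(r_2(\bar z))^{-1}\cdots=Q(\bar z)$; that is, the product of the critical functions for $(\theta_1,\omega_1)$ equals the product for $(\theta_2,\omega_2)$. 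This is a common function $Q$. Here $p_1(\bar z)$ is the residue of the relative change in $\N_{K/L_1}$, and $r_2(\bar z)$ is the residue of the relative change in $\N_{L_2/F}(\Tr_{K/L_2})$.

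The remaining task is to compute the residue maps. I expect $p_1(\bar z)$ to be either $\bar z^2$ or a Frobenius-linear map, by the norm expansion across the break of $K/L_1$. I expect $r_2$ to be additive and bijective onto $k$, since $h_2$ is a unit and $\Tr_{K/L_2}(C\pi^cz)$ lands at depth $e$. Given this, both sums are reindexed by $k$ and $g_1=h_2$ follows. If instead the maps have order-two kernels with distinct images (for instance when $a=e$, so that $t_1=2e-1$ is adjacent to $2e$), the plan falls back to Lemma~\ref{lem:common-pullback}.

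\textbf{Main obstacle.} The main obstacle is the choice of the perturbation depth $c$ and the verification that the other relative changes are negligible modulo the trivial ideals. Specifically, one must show that the change in $\N_{K/L_2}(C)$ lies in the stationary ideal of $\theta_2$, and that the change in $\Tr_{K/L_1}(C)$ lies in that of $\omega_1$. Both checks require the valuations $v_{L_1}(h_1)=b$ and $v_{L_2}(h_2)=0$ together with the different exponents $D_{K/L_1}=4e-2a+2$ and $D_{K/L_2}=2a$. I would first try $c$ with $v_K(\pi^c)=2e-a$ less the valuation of $C$, and adjust after computing the norm expansions.
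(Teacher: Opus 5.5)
Your plan is essentially the paper's proof. It uses the same choice of $C=Y-X$ (with $X=0$ for $n\le e+a$, otherwise the rescaled norm choice at depth $e+a$), the same reduction through \eqref{eq:common-factor} to $g_{\theta_1}=g_{\omega_2}$, and the same pointwise identity from Lemma~\ref{lem:common-function} summed through two bijective residue maps.

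The one parameter you left open is the perturbation depth, and it is forced to be $c=2e$, i.e.\ $C_z=C(1+\pi^{2e}z)$ with $v_K(C\pi^{2e}z)=b$. A smaller $c$ leaves the critical ideal of $\theta_1$, and a larger $c$ makes the $L_1$-map zero. With $c=2e$:
\begin{itemize}
\item the $L_1$-map is $\bar z\mapsto\bar z^2$;
\item the $\omega_2$-map is bijective, because $\Tr_{K/L_2}$ carries $\pp_K^b$ and $\pp_K^{b+1}$ onto $\pp_{L_2}^e$ and $\pp_{L_2}^{e+1}$;
\item since $t_1=2a-1<2e$ in this case, the fallback to Lemma~\ref{lem:common-pullback} is never needed.
\end{itemize}
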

\begin{proof} For $n\le e+a$, take $X=0$ and $C=Y$.
Otherwise choose $A_*$ from
$\lambda(1-z)=\Psi_F(A_*z)$ on $\pp_F^{e+a}$.
By Lemma~\ref{lem:maximal-norm-choice}, choose
$A=A_*/u=\N_{L_3/F}(X)$, replace $\alpha$ by $\alpha u$, and
redefine $\Psi_L=\psi_L(\alpha\,\cdot)$ with this adjusted $\alpha$.
Then $\lambda(1-z)=\Psi_F(Az)$ on $\pp_F^{e+a}$, and
$a-e\le h=n-T\le a-1$.
Use the notation of Lemma~\ref{lem:maximal-common-element}.
By \cite[Lemma~12.10]{SML}, $\N_{K/L_i}(C)$ and $\N_{L_i/F}(\Tr_{K/L_i}(C))$ are stationary
coefficients for $\theta_i$ and $\omega_i$, respectively, with $\Tr_{K/L_i}(C)\ne0$.
Use these coefficients for the critical functions and sums of
$\theta_i$ and $\omega_i$. Only $\theta_1$ and $\omega_2$ have odd
conductors; all other critical sums are $1$ by
\eqref{eq:critical}--\eqref{eq:Gamma}.
Equation~\eqref{eq:common-factor} therefore reduces the assertion to
$g_{\theta_1}=g_{\omega_2}$.
Choose a uniformizer $\pi$ of $K$, put $\Pi_1=\N_{K/L_1}(\pi)$ and
$\varpi=\N_{L_1/F}(\Pi_1)$, and set $C_z=C(1+\pi^{2e}z)$ for $z\in\OO_K$.
The two critical maps on the common residue field $k$ are
\[
 p(\bar z)=\overline{\frac{\N_{K/L_1}(C_z)/\N_{K/L_1}(C)-1}{\Pi_1^{2e}}},\qquad
 q(\bar z)=\overline{\frac{\N_{L_2/F}(\Tr_{K/L_2}(C_z))/\N_{L_2/F}(\Tr_{K/L_2}(C))-1}{\varpi^e}}.
\]
They are bijective by the trace and norm estimates in
\cite[Theorem~12.11]{SML}. With these identifications of the critical
quotients, Lemma~\ref{lem:common-function} gives
\[
 H_{\theta_1}(p(z))=H_{\omega_2}(q(z))\qquad(z\in k).
\]
Summing through the two bijections and dividing by $|k|^{1/2}$ gives
$g_{\theta_1}=g_{\omega_2}$.
Equation~\eqref{eq:common-factor} now gives \eqref{eq:maximal-minimal-result}.
This is \cite[Theorem~12.11]{SML}.
\end{proof}
The simultaneous stationary coefficients are
\lean{Dyadic/Maximal/MinimalOrigin.lean}{491}{Dyadic.Maximal.minimalOrigin};
the critical-sum comparison and the resulting equality are
\lean{Dyadic/Maximal/MinimalComparison.lean}{753}{Dyadic.Maximal.minimalComparison}.

\begin{proposition}[Reciprocal critical sums]\label{prop:maximal-high}
If $n\ge2e+a+1$, then
\begin{equation}\label{eq:maximal-high-result}
 \mathcalA_{L_1}(\theta_1)=\mathcalA_{L_2}(\theta_2).
\end{equation}
\end{proposition}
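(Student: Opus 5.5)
The plan is to mirror Propositions~\ref{prop:char2-intermediate}--\ref{prop:char2-high} and \ref{prop:quadratic-ur-critical}. In this range the conductors $2n-2a$ and $2n-T$ of $\theta_1,\theta_2$ are even, so their critical sums are $1$. The norm character $\omega_1$ also has even conductor $2a$. Only $\omega_2$, of conductor $T=2e+1$, keeps a nontrivial critical sum $h_2$.

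\textbf{Step 1: choose stationary coefficients.} Set $h=n-T\ge a$. Take $A_*$ with $\lambda(1-z)=\Psi_F(A_*z)$ on $\pp_F^{\ceil{n/2}}$. Use Lemma~\ref{lem:maximal-norm-choice} to write $A=A_*/u=\N_{L_3/F}(X)$, rescaling $\alpha$. Then put $C=Y-X$. Lemma~\ref{lem:quadratic-shift} gives $\N_{K/L_i}(C)=a_i+A-Q_i$, which is the coefficient of $\chi_i$ times that of the twist. The valuation bounds of Lemma~\ref{lem:maximal-common-element} should show that these norms are stationary for $\theta_i$ on $\pp_{L_i}^{n-a}$ and $\pp_{L_2}^{n-e}$. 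Here the error terms $\mathcal E$ and $Q_i$ are absorbed at those depths.

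\textbf{Step 2: handle $\omega_2$.} $\N_{L_2/F}(\Tr_{K/L_2}C)$ is a stationary coefficient for $\omega_2$ only up to $\Delta=p_X^2+2p_X$. Since $v_F(p_X)\ge e-\floor{h/2}$ may be negative, I would not require it to be stationary. Instead I keep $\omega_i$ with its original coefficient $\beta_i$ from \eqref{eq:maximal-lower-formula}. Then Lemma~\ref{lem:common-norm}, with Lemma~\ref{lem:E2} adjusted, gives
\[
 \mathcal A_i=D_\theta(-\alpha^{-1})\Theta(C)^{-1}\Psi_F\bigl(-E_2(C)-\Delta_i\bigr)h_i,
\]
where $\Delta_i=\N_{L_i/F}(\Tr_{K/L_i}C)-\beta_i$. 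By Lemma~\ref{lem:quadratic-shift} this difference is the same $\Delta$ for $i=1,2$. So $\mathcal A_1/\mathcal A_2=h_1/h_2=1/h_2$, using the common $\beta_i$-normalisation. This cannot be right as stated, since $h_2\ne1$ in general.

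The title ``reciprocal critical sums'' suggests a different comparison, as in Proposition~\ref{prop:quadratic-ur-critical}. For $\theta_2$ one should use a non-stationary but nearly stationary coefficient $\N_{K/L_2}(C)$. Then $\theta_2$ acquires, through Lemma~\ref{lem:lamprecht}-type reduction, a critical function on the odd-conductor quotient $\pp_F^e/\pp_F^{e+1}$ of $\omega_2$ that is inverse to $H_{\omega_2}$.

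\textbf{Step 3: pointwise comparison (main obstacle).} I would exploit the exact factorisation $\omega_2(1+\N_{L_2/F}w)=\Psi_{L_2}(\beta_2 w)$ at depth $w\in\pp_{L_2}^{e}$, as in \cite[Lemma~10.14]{SML}. This reaches one step below the domain of Lemma~\ref{lem:maximal-lower-coefficients}. Substituting $w=xv$ for a suitable $x\in L_2$ with $\N_{L_2/F}(x)\approx A/\beta_2$, obtained from Lemma~\ref{lem:norm-approximation}, should yield
\[
 H_{\theta_2\text{-corr}}(v)=H_{\omega_2}\bigl(\N_{L_2/F}(xv)\bigr)^{-1}.
\]
Multiplication by $x$ and the norm are then bijections on the critical quotients, since the depth $e$ lies below the break $2e$. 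Reindexing gives the correction factor $\overline{h_2}$, and $|h_2|=1$ gives $h_2\overline{h_2}=1$. The remaining scalar phases, namely $\Psi_F$ of trace differences and the value $\theta_1(C)$ versus $\theta_2$, agree by compatibility and Lemma~\ref{lem:restrictions}.

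The hard part is the depth bookkeeping. I must show that the correction in Step 3 exactly produces the inverse of $H_{\omega_2}$, including the $2p_X$ term in $\Delta$. I must also show that the mixed-characteristic terms divisible by $2$ land in the trivial ideal $\pp_F^{T}$ of $\Psi_F$. I would check this first at $h=a$, the smallest value, where the estimates are tightest.
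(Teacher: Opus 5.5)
There is a genuine gap, and it starts with a parity error. Since $T=2e+1$ is odd, $m_{L_2}(\theta_2)=2n-T$ is odd, not even. Only $\theta_1$ (conductor $2n-2a$) and $\omega_1$ (conductor $2a$) have trivial critical sums. Apart from that, your Step~2 bookkeeping is sound. The term $\Delta$ is common to $i=1,2$ and cancels, so your later worry about the $2p_X$ term is unfounded. What Step~2 actually gives is $\mathcal A_1/\mathcal A_2=(g_{\theta_2}g_{\omega_2})^{-1}$. Here $g_{\omega_2}$ is your $h_2$, and $g_{\theta_2}$ is the genuine critical sum of $\theta_2$ on $\pp_{L_2}^{e+h}/\pp_{L_2}^{e+h+1}$ for the coefficient $\N_{K/L_2}(C)$, which is stationary on $\pp_{L_2}^{e+1+h}$. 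The target is therefore the exact identity $g_{\theta_2}g_{\omega_2}=1$. No ``nearly stationary'' coefficient is involved, and none would help. Lamprecht's formula produces a critical function only from a stationary coefficient, and for $p=2$ there is no completion-of-the-square correction like the one in Lemma~\ref{lem:odd-change-coefficient}.

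Your Step~3 then fails at the choice of $x$. Lemma~\ref{lem:quadratic-shift} gives $\N_{K/L_2}(C)=a_2+A-Q_2$. Using the chart of $\chi_2$ on $\pp_{L_2}^{e+a}$ and that of $\lambda$ on $\pp_F^{\ceil{n/2}}$, one gets $H_{\theta_2}(v)=\Psi_{L_2}(Q_2v)\Psi_F(A\N_{L_2/F}(v))$ for $v\in\pp_{L_2}^{e+h}$. The depth-$e$ factorization gives $H_{\omega_2}(\N_{L_2/F}(xv))^{-1}=\Psi_{L_2}(\beta_2xv)\Psi_F(\beta_2\N_{L_2/F}(xv))$. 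Matching the two needs $\beta_2x\equiv Q_2\pmod{\pp_{L_2}^{e+1-h}}$, not merely $\N_{L_2/F}(x)\approx A/\beta_2$. An $x$ supplied by Lemma~\ref{lem:norm-approximation} has no relation to $Q_2$. The leftover additive character $v\mapsto\Psi_{L_2}((Q_2-\beta_2x)v)$ then changes the sum, by Lemma~\ref{lem:translation}.

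The missing idea is to take $x=Q_2/\beta_2$. Lemmas~\ref{lem:quadratic-shift} and~\ref{lem:maximal-common-element} give $\N_{L_2/F}(Q_2)=\beta_2A+\mathcal E$ with $v_F(\mathcal E)\ge T-a-h$. Hence $v_{L_2}(x)=-h$. The discrepancy $\Psi_F(\mathcal E\,\N_{L_2/F}(v)/\beta_2)$ is trivial, because its argument has valuation at least $T+e-a\ge T$. With this repair your route does prove $g_{\theta_2}=\overline{g_{\omega_2}}$; it is a direct analogue of Proposition~\ref{prop:quadratic-ur-critical} and stays inside $L_2$. The paper argues differently. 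It perturbs $C$ to $C+V$ with $V\in\pp_K^{2e-2a+1}$. It uses Lemma~\ref{lem:E2} and the triviality of the $\theta_1,\omega_1$ factors to get $H_{\theta_2}(p(v))H_{\omega_2}(q(v))=1$. It then proves $p$ and $q$ bijective as composites of graded trace and norm isomorphisms.
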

\begin{proof}
Put $h=n-T\ge a$ and choose $A_*$ from the stationary formula
for $\lambda$ on $\pp_F^{\ceil{n/2}}$.
Use Lemma~\ref{lem:maximal-norm-choice} to choose
$A=A_*/u=\N_{L_3/F}(X)$ with $v_{L_3}(X)=-h$.
Replace $\alpha$ by $\alpha u$ and set
$\Psi_L=\psi_L(\alpha\,\cdot)$ and $c_0=-\alpha^{-1}$ using the
adjusted $\alpha$. Then $\lambda(1-z)=\Psi_F(Az)$ on
$\pp_F^{\ceil{n/2}}$. Use Lemma~\ref{lem:maximal-common-element}.
The conductors in \eqref{eq:maximal-conductor-split} have stationary depths
\[
 b_1=T+h-a,\qquad b_2=e+1+h.
\]
The same $C=Y-X$ satisfies
\begin{equation*}
 \theta_i(1-z)=\Psi_{L_i}(\N_{K/L_i}(C)\,z)
 \qquad(z\in\pp_{L_i}^{b_i}),
\end{equation*}
by \cite[Lemma~12.12]{SML}. Since
$v_K(X)=-2h<-t_1=v_K(Y)$, one has $v_K(C)=-2h$, so $C\ne0$.
Use $\N_{K/L_i}(C)$ for $H_{\theta_i},g_{\theta_i}$ and
$\beta_i$ from \eqref{eq:maximal-lower-formula} for
$H_{\omega_i},g_{\omega_i}$.
Lemma~\ref{lem:E2} and $\N_{L_i/F}(\Tr_{K/L_i}(C))=\beta_i+\Delta$ give
\[
 -\Tr_{L_i/F}(\N_{K/L_i}(C))-\beta_i=-E_2(C)+\Delta.
\]
Equation~\eqref{eq:lamprecht} and Lemma~\ref{lem:restrictions} therefore give
\begin{equation*}
 \mathcalA_{L_i}(\theta_i)
 =D_\theta(c_0)\Theta(C)^{-1}
   \Psi_F(-E_2(C)+\Delta)g_{\theta_i}g_{\omega_i}.
\end{equation*}
Since the conductors of $\theta_1,\omega_1$ are even,
$g_{\theta_1}=g_{\omega_1}=1$. Instead of the fourth-power conclusion
in Proposition~\ref{prop:odd-total-higher}, we must prove the exact
identity $g_{\theta_2}g_{\omega_2}=1$.
Write $\operatorname{gr}_j L=\pp_L^j/\pp_L^{j+1}$.
For a norm $N$, $\operatorname{gr}N$ is the map induced by
$x\mapsto N(1+x)-1$ on the indicated quotients.
Use the two residue maps
\begin{equation}\label{eq:upper-graded-composition}
 \operatorname{gr}_{b}K
 \xrightarrow{\,/C\,}\operatorname{gr}_{b+2h}K
 \xrightarrow{\,\operatorname{gr}\N_{K/L_2}\,}
 \operatorname{gr}_{e+h}L_2
\end{equation}
and
\begin{equation}\label{eq:lower-graded-composition}
 \operatorname{gr}_{b}K
 \xrightarrow{\,\Tr_{K/L_2}\,}\operatorname{gr}_eL_2
 \xrightarrow{\,/h_2\,}\operatorname{gr}_eL_2
 \xrightarrow{\,\operatorname{gr}\N_{L_2/F}\,}
 \operatorname{gr}_eF,
\end{equation}
Denote these maps by $p$ and $q$. In the first, division by $C$ is an
isomorphism since $v_K(C)=-2h$; the norm map is an isomorphism since
$b+2h=2(e+h)-(2a-1)$ and $e+h>2a-1$.
In the second, \eqref{eq:trace-ideal} gives trace-ideal depths
$e,e+1$ at source depths $b,b+1$, so the induced trace is an isomorphism.
The element $h_2$ is a unit, and the final norm map is an isomorphism at
depth $e<2e$. Both norm assertions are \cite[Theorem~3.7]{FML}.

For $V\in\pp_K^b$, put $C'=C+V$ and $Y'=Y+V$.
The quadratic norm expansion and Lemma~\ref{lem:E2} give
\[
 \Tr_{L_i/F}(\N_{K/L_i}(C')-\N_{K/L_i}(C))+\N_{L_i/F}(\Tr_{K/L_i}(Y'))-\beta_i
 =E_2(C')-E_2(C)+p_X\Tr_{K/F}V,
\]
independent of $i$. The multiplicative values are the common
$\Theta(C'/C)$ and the norm-character value $1$.
For $\theta_1$, the norm expansion and \eqref{eq:trace-ideal}
put $\N_{K/L_1}(C')/\N_{K/L_1}(C)-1$ in $\pp_{L_1}^{b_1}$: the trace and norm terms
have depths at least $3e-2a+1+h\ge b_1$ and $b+2h\ge b_1$.
Also $\Tr_{K/L_1}(V)/h_1\in\pp_{L_1}^{e}$, so the corresponding argument for
$\omega_1$ lies in $\pp_F^a$. Their critical functions are therefore
$1$ on these arguments, by their stationary formulas and even conductors.
Consequently
\[
 H_{\theta_2}(p(v))H_{\omega_2}(q(v))=1
 \qquad(v\in\operatorname{gr}_b K).
\]
The critical functions have values of modulus one. Reindexing by
the two bijections therefore gives
$g_{\theta_2}=\overline{g_{\omega_2}}$; \eqref{eq:lamprecht}
gives $|g_{\omega_2}|=1$, hence $g_{\theta_2}g_{\omega_2}=1$.
Substitution gives \eqref{eq:maximal-high-result}.
This is \cite[Theorem~12.13]{SML}.
\end{proof}
The higher-conductor coefficient formulas are
\lean{Dyadic/Maximal/HigherCoefficients.lean}{437}{Dyadic.Maximal.higherCoefficients}.
The reciprocal product of critical sums and the full comparison are
combined in \lean{Dyadic/Maximal/HigherComparison.lean}{895}{Dyadic.Maximal.higherComparison}.
The original proof expanded both residue maps and checked directly
that their leading terms were nonzero. In
\eqref{eq:upper-graded-composition}--\eqref{eq:lower-graded-composition},
the same maps are compositions of graded trace and norm isomorphisms
and multiplication by nonzero elements, so bijectivity follows without
those expansions \cite[Remark~\SMLnum{D:MX:lean-residue-maps}]{SML}.
This factorization is formalized by
\lean{Dyadic/Maximal/HigherComparison.lean}{407}{Dyadic.Maximal.higherResidueBijections}.

\begin{proposition}\label{prop:quadratic-maximal}
For the break pattern \eqref{eq:maximal-breaks},
\eqref{eq:quadratic-calculation} holds for every primitive compatible family.
\end{proposition}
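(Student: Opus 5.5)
The proof assembles the preceding results of this subsection, following the pattern of Proposition~\ref{prop:quadratic-equal}. Fix a primitive compatible family $(\theta_i)$ for the break pattern \eqref{eq:maximal-breaks}. By Lemma~\ref{lem:maximal-twist}, there are model characters $(\chi_i)$ as in Proposition~\ref{prop:maximal-characters} and a character $\lambda$ of $F^\times$ such that, after possibly conjugating $\theta_1$, we have $\theta_i=\chi_i(\lambda\circ\N_{L_i/F})$ for $i=1,2$. Conjugating $\theta_1$ does not change $\mathcalA_{L_1}(\theta_1)$, by Lemma~\ref{lem:conjugacy}. It also does not change the common pullback $\Theta$, by \eqref{eq:conjugate-twists}. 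So we may assume the twisted form holds exactly.

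Put $n=m_F(\lambda)$. The conductor alternatives \eqref{eq:maximal-conductor-split} split into $n\le2e+a$ and $n\ge2e+a+1$. These two ranges are covered by Propositions~\ref{prop:maximal-minimal} and~\ref{prop:maximal-high}, respectively. In either case we get $\mathcalA_{L_1}(\theta_1)=\mathcalA_{L_2}(\theta_2)$.

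It remains to compare with $L_3$. The labels $L_2,L_3$ play symmetric roles: both have break $2e$, and the roles of $R$ and $SR$ are interchangeable in Lemma~\ref{lem:maximal-coordinate}. The point to check is that the whole construction can be rerun with $L_2$ and $L_3$ interchanged. I expect this to be the main obstacle. The cleanest route is to rechoose the Kummer generators so that $L_3$ plays the role of $L_2$, and then apply the same two propositions to the pair $(\theta_1,\theta_3)$. If the alignment of Lemma~\ref{lem:maximal-coordinate} is not manifestly symmetric, I would instead compare through $L_1$. Lemma~\ref{lem:descent} lets us extend the pair $(\theta_1,\theta_3)$ to a compatible family with the same $\Theta$, and Lemma~\ref{lem:conjugacy} shows that $\mathcalA_{L_1}$ does not depend on the choice of character over $L_1$. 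These two facts reduce the comparison to the $(L_1,L_3)$ instance of the previous paragraph.

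Finally, \eqref{eq:quadratic-factor} shows that the equalities $\mathcalA_{L_i}(\theta_i)=\mathcalA_{L_j}(\theta_j)$ are equivalent to \eqref{eq:quadratic-calculation}. No dependence on the additive character arises. The normalization $\Psi_L=\psi_L(\alpha\,\cdot)$ is used only inside the proofs of Propositions~\ref{prop:maximal-minimal} and~\ref{prop:maximal-high}, and those propositions state their conclusions for the given $\psi_F$. If needed, Lemma~\ref{lem:restrictions} transfers the identity to every nontrivial additive character.
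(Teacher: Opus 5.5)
Your proposal is correct and matches the paper's proof: split by the conductor alternatives \eqref{eq:maximal-conductor-split}, apply Propositions~\ref{prop:maximal-minimal} and~\ref{prop:maximal-high} to $(L_1,L_2)$, then rerun the whole construction with $L_3$ in place of $L_2$, rechoosing the aligned Kummer generators and $\alpha$, which is legitimate because the break pattern does not distinguish $L_2$ from $L_3$. Your fallback through Lemma~\ref{lem:descent} is circular, since it still presupposes the $(L_1,L_3)$ comparison, but it is not needed; and the literal substitution $R\mapsto SR$ need not preserve the alignment $(S^2-1)/R^2\in U_F^a$ when $b<a$, so the re-choice of generators, not a direct symmetry of that lemma, is the right justification.
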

\begin{proof}
Equation~\eqref{eq:maximal-conductor-split} gives the two consecutive conductor ranges.
Propositions~\ref{prop:maximal-minimal} and~\ref{prop:maximal-high}
prove the comparison in those ranges.
Repeating the same construction with $L_3$ in place of $L_2$ gives
\[
 \mathcalA_{L_1}(\theta_1)=\mathcalA_{L_3}(\theta_3),
\]
so
\[
 \mathcalA_{L_2}(\theta_2)=\mathcalA_{L_3}(\theta_3).
\]
\end{proof}
This is \cite[Theorem~12.1]{SML}.
The Lean theorem is
\lean{Dyadic/Maximal/Comparison.lean}{924}{Dyadic.Maximal.comparison}.

\subsection{Mixed characteristic and breaks less than \texorpdfstring{$2e$}{2e}}
Assume $K/F$ is totally ramified, $F/\Q_2$ is finite, and
\[
 t_1=2a-1,\qquad t_2=t_3=2r-1,\qquad 1\le a\le r\le e=v_F(2).
\]
Put $\delta=r-a$ and $T=2r$. The conductor and stationary-depth
parameters are
\[
\begin{array}{c|c|c|c|c}
 i&T_i=m_F(\omega_i)&q_i=\ceil{T_i/2}&m_i&s_i=\ceil{m_i/2}\\ \hline
 1&2a&a&4r-1&2r\\
 2,3&T&r&2a+2r-1&a+r
\end{array}
\]
Here $m_i$ are the conductors of the model characters constructed below.
Write $\sigma_i$ for the nontrivial automorphism of $K/L_i$.
The different exponents of $K/L_i$ are
\begin{equation*}
 D_{K/L_1}=4r-2a,\qquad D_{K/L_2}=D_{K/L_3}=2a.
\end{equation*}
These are the ramification data of \cite[Section~13]{SML}.

\begin{lemma}[Aligned quadratic generators]\label{lem:nonmaximal-coordinate}
There are generators $L_1=F(x)$ and $L_2=F(y)$ with
\[
 x^2+x=f,\qquad y^2+y=g,\qquad
 v_F(f)=1-2a,\qquad v_F(g)=1-2r,
\]
and $d\in F$ such that
\[
 v_F(d)=\delta,\qquad E:=f+d^2g\in\pp_F^{1-a},\qquad
 k_0:=1+d\in\OO_F^\times.
\]
Put
\[
 z=x+y+2xy\in L_3,\quad Y=x+dy,\quad
 a_i=\N_{K/L_i}(Y),\quad h_i=\Tr_{K/L_i}(Y),\quad\beta_i=\N_{L_i/F}(h_i).
\]
Then
\begin{align*}
 z^2+z&=f+g+4fg,\\
 h_1&=2x-d,&h_2&=2dy-1,&h_3&=-k_0,\\
 \beta_1&=d^2+2d-4f,&
 \beta_2&=1+2d-4d^2g,&\beta_3&=k_0^2,\\
 a_1&=2f-E-k_0x,&a_2&=E-2f-dk_0y,&a_3&=-E-dz.
\end{align*}
Moreover
\[
 v_KY=v_{L_i}(a_i)=-t_1,\qquad
 v_{L_1}(h_1)=2\delta,\qquad
 v_{L_2}(h_2)=v_{L_3}(h_3)=0.
\]
\end{lemma}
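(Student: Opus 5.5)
The proof splits into an existence part and an algebraic part; all the content is in the existence part.

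\emph{Existence of aligned generators.} Here $t_1=2a-1$ and $t_2=t_3=2r-1$ with $r\le e$. For such breaks every quadratic extension of $F$ has an Artin--Schreier-type generator $x^2+x=f$ with $f$ of odd negative valuation $1-2a$ and break $-v_F(f)$. I would first choose generators $L_1=F(x)$ and $L_2=F(y)$ of this kind.

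The alignment condition $E=f+d^2g\in\pp_F^{1-a}$ with $v_F(d)=\delta$ asks that the leading terms of $f$ and $-d^2g$ cancel. Since $v_F(d^2g)=2\delta+1-2r=1-2a=v_F(f)$, I would take $d=\pi^\delta u$, where $\pi$ is a uniformizer of $F$ and $u$ is a unit. The unit $u$ is chosen so that $\bar u^2$ equals minus the ratio of the leading residues of $f$ and $\pi^{2\delta}g$. This is possible because the residue field is perfect of characteristic two.

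If $\delta=0$, the unit $k_0=1+d$ must also be a unit. Here I expect the main obstacle. The residue of $d$ could be $1$, and I would need the freedom to modify $g$ by $c^2+c$, or to replace $L_2$'s generator by the one for $L_3$, to avoid $\bar d=1$. In fact $\bar d=1$ would force the leading terms of $f$ and $g$ to agree. That would drop the break of the composite $L_3$, contradicting $t_3=t_2$ (or $t_1$ minimal). I would cite the generator choice of \cite[Lemma~13.3]{SML}, as in Lemma~\ref{lem:maximal-coordinate}.

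\emph{Algebraic identities.} Given the generators, the rest is direct computation with the three involutions of $K$: $\sigma_1$ fixes $x$ and sends $y\mapsto -1-y$; $\sigma_2$ sends $x\mapsto-1-x$ and fixes $y$; $\sigma_3$ changes both. I would first verify that $z=x+y+2xy$ satisfies $z^2+z=f+g+4fg$ by expanding with $x^2=f-x$ and $y^2=g-y$. One also checks $\sigma_3 z=z$, so $z\in L_3$.

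Then $h_i=Y+\sigma_iY$, and $a_i=Y\sigma_iY$ is expanded using the quadratic relations. For example, $h_1=2x+d(y+(-1-y))=2x-d$ and $a_1=(x+dy)(x-d-dy)=x^2-dx-d^2(y^2+y)=f-x-dx-d^2g$, which rewrites as $2f-E-k_0x$. The $\beta_i$ are norms of the $h_i$, for instance $\N_{L_1/F}(2x-d)=(2x-d)(-2-2x-d)=d^2+2d-4f$. The case $i=3$ is rewritten in terms of $z$.

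\emph{Valuations.} These use $v_F(2)=e\ge r$ and $v_{L_i}=2v_F$ on $F$. In $a_1$ the term $k_0x$ has $v_{L_1}=1-2a$, while $2f$ and $E$ are deeper. For $a_2$ the dominant term is $dk_0y$, of valuation $2\delta+1-2r=1-2a$. For $a_3$ it is $dz$, with $v_{L_3}(z)=1-2r$. In each case the result is $-t_1$, and $v_KY=-t_1$ follows from $\N_{K/L_i}$. For the traces, $v_{L_1}(2x-d)=\min(2e+1-2a,\,2\delta)=2\delta$ since $e\ge r$; and $h_2$, $h_3$ are units because $2dy$ has positive valuation $2e+2\delta+1-2r>0$ and $k_0$ is a unit.
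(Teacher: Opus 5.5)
Your expansions of $z^2+z$, $h_i$, $\beta_i$, $a_i$ under the three involutions and your valuation bookkeeping are correct and match the paper. So is your contradiction argument excluding $\bar d=1$ when $\delta=0$. The gap is in the existence of the alignment.

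\textbf{Leading-term cancellation is not enough.} You read $E=f+d^2g\in\pp_F^{1-a}$ as cancellation of leading terms, and you prescribe only the residue of $u$ in $d=\pi^\delta u$. That gives $v_F(E)\ge 2-2a$, which equals $1-a$ only when $a=1$. For $a\ge2$ the condition requires cancellation at every valuation $1-2a,\dots,-a$. This precision is exactly what your valuation step uses: $v_{L_i}(E)>1-2a=-t_1$ holds if and only if $v_F(E)\ge 1-a$. If only $v_F(E)=2-2a$, then $v_{L_1}(a_1)=v_{L_1}(2f-E)=4-4a\ne-t_1$, and all your norm valuations fail.

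\textbf{Choosing more digits of $d$ does not fix this.} Arbitrary generators ``of this kind'' can admit no valid $d$ at all. Take $2\le a<r\le e$, $f=\pi^{1-2a}(1+\pi)$ and $g=\pi^{1-2r}$; this is a legitimate configuration. Then $f+d^2g\in\pp_F^{1-a}$ would force $1+\pi+D^2\in\pp_F^2$ for the unit $D=d\pi^{-\delta}$. Reduction gives $\bar D=1$, hence $D^2\equiv1\pmod{\pp_F^2}$ since $e\ge2$. That would put $2+\pi$ in $\pp_F^2$, which is false. So the generators themselves must be normalized.

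\textbf{The repair.} In mixed characteristic the admissible change of generator is $2x'+1=(2x+1)(1+2c)$ with $c\in F$, that is, $f'=f+(c^2+c)(1+4f)$. Replacing $y$ by $y+c$, as your remark about modifying $g$ suggests, does not give an equation of the required form. Iterating the correct change removes the even-valuation digits of $f$ below $1-a$ and of $g$ below $1+a-2r$. The new terms $c$ and $4fc$ lie in the permitted ideal, and $4fc^2$ is strictly deeper than the digit removed. Next write $d=\pi^\delta\sum_j\tau_j\pi^j$ with Teichm\"uller digits. Then $d^2\equiv\pi^{2\delta}\sum_j\tau_j^2\pi^{2j}$ modulo $2\pi^{2\delta}\OO_F$, and $2\pi^{2\delta}g\in\pp_F^{e+1-2a}\subseteq\pp_F^{1-a}$. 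So $f+d^2g$ has only odd-valuation digits below $1-a$. The $\tau_j$ are then found successively by taking square roots in the perfect residue field.

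This is the content the paper imports from the companion's simultaneous choice, \cite[Lemma~13.2]{SML}. The Lemma~13.3 you cite is the norm-character coefficient lemma, not the generator choice.
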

\begin{proof}
The simultaneous choice is \cite[Lemma~13.2]{SML}.
The involutions $x\mapsto-1-x$ and $y\mapsto-1-y$ give the norm
and trace table by expansion. The valuations follow from the alignment
and \eqref{eq:valuation-conventions}; see \cite[Section~13.2]{SML}.
\end{proof}
The aligned generators are constructed by
\lean{Dyadic/Nonmaximal/Alignment.lean}{868}{Dyadic.Nonmaximal.alignment};
the complete table and its valuations are
\lean{Dyadic/Nonmaximal/Origin.lean}{877}{Dyadic.Nonmaximal.dyadicNonmaximal_origin_data}.
Fix these generators and the associated elements.

\begin{lemma}[Simultaneous norm-character coefficients]\label{lem:nonmaximal-lower-coefficients}
There is $\alpha\in F^\times$ with $v_F(\alpha)=-n_F(\psi_F)-T$
such that, writing $\Psi_L(u)=\psi_L(\alpha u)$, one has
\begin{equation}\label{eq:nonmaximal-lower-formula}
 \omega_i(1-u)=\Psi_F(\beta_i u)\qquad(u\in\pp_F^{q_i}),
\end{equation}
and
\begin{equation*}
 \Psi_{L_i}(\beta_i v)=\Psi_F(\beta_i \N_{L_i/F}(v))
 \qquad(v\in\pp_{L_i}^{q_i}).
\end{equation*}
\end{lemma}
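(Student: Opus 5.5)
The plan is to construct $\alpha$ from one norm character, say $\omega_3$, whose coefficient is $\beta_3=k_0^2$, a unit square. We then check that the same $\alpha$ works for $\omega_1$ and $\omega_2$ with the coefficients $\beta_1,\beta_2$ from Lemma~\ref{lem:nonmaximal-coordinate}. The second displayed formula will follow from the first by the norm-triviality argument of Lemma~\ref{lem:quadratic-ur-normphase}.

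First, since $m_F(\omega_3)=T=2r>1$, Theorem~\ref{thm:lamprecht} gives $\alpha_0$ with $v_F(\alpha_0)=-n_F(\psi_F)-T$ and $\omega_3(1-u)=\psi_F(\alpha_0u)$ on $\pp_F^{r}$. Set $\alpha=\alpha_0/k_0^2$; this does not change the valuation, since $k_0$ is a unit, and it gives \eqref{eq:nonmaximal-lower-formula} for $i=3$.

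Second, we handle $i=2$ by using $\omega_1\omega_2=\omega_3$ and a direct computation. Write $\omega_2$ as the norm character of $F(y)/F$ with $y^2+y=g$. Over $\pp_F^r$ both $\omega_2$ and $\omega_3$ are additive characters, because $2r\ge T$, and $\omega_1$ is trivial on $U_F^{2a}\supset U_F^{r}$ whenever $r\ge 2a$. In general $\omega_1$ is \emph{not} trivial there, so instead we compare coefficients through a common element. The stationary coefficient of a quadratic norm character is determined modulo $\pp_F^{-n-q_i}$. We will derive each coefficient from Lemma~\ref{lem:common-norm}-type reasoning: for any $C\in K$, norm triviality gives $\omega_i(\N_{L_i/F}(\Tr_{K/L_i}C'))/\omega_i(\N_{L_i/F}(\Tr_{K/L_i}C))=1$. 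The cleanest concrete route is the quadratic identity $\N(1-w)=1-\Tr w+\N w$ for $L_i/F$. For $w\in\pp_{L_i}^{q_i}$ it gives $\omega_i$ trivial on $1-\Tr_{L_i/F}(w)+\N_{L_i/F}(w)$. If we knew the coefficient $\gamma_i$ of $\omega_i$, this would give $\Psi_F(\gamma_i\Tr w)=\Psi_F(\gamma_i\N w)$. Conversely, to \emph{identify} $\gamma_i$ we use the explicit Artin--Schreier/Hilbert-symbol description. For $\omega_2$, the element $h_2=2dy-1$ satisfies $\N_{L_2/F}(h_2)=\beta_2$, and it is natural to test the identity $\Psi_F(\beta_2\Tr w)=\Psi_F(\beta_2\N w)$ on $L_2$. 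Taking $w=\lambda h_2^{-1}\cdot(\text{suitable elements})$, we show that this identity holds for $\beta_2$. The coefficient is then pinned down by uniqueness in the perfect pairing: two coefficients satisfying it for all $w$ differ by an element of $\pp_F^{-n-r}$ after checking on $\Tr(\pp_{L_2}^{r})=\pp_F^{r}$.

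Third, $\omega_1=\omega_2\omega_3$ on $\pp_F^{a}$ is not directly available, since $q_1=a<r$. Here we use that $\omega_1$ has conductor $2a$. We write its coefficient $\gamma_1$ from Theorem~\ref{thm:lamprecht} and match it against $\beta_1=d^2+2d-4f$ modulo $\pp_F^{-n_F(\psi_F)-a}$ after scaling by $\alpha$. On the deeper ideal $\pp_F^{r}$ we do have $\omega_1=\omega_2\omega_3^{-1}$. This gives $\gamma_1\equiv\alpha(\beta_2-\beta_3)$ there, and $\beta_2-\beta_3=-d^2-4d^2g=-d^2(1+4g)$; up to signs and the deep ideal this agrees with the leading term of $\beta_1$ using $E=f+d^2g\in\pp_F^{1-a}$. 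To extend the identification to the shallower ideal $\pp_F^{a}$, we use the squaring map $1-u\mapsto(1-u)^2$ together with $\omega_1^2=1$ and Artin--Schreier invariance. Concretely, $\omega_1$ is trivial on norms $1-\Tr w+\N w$ from $L_1=F(x)$, and we test with $w=ux\cdot(\cdot)$ to read off $\gamma_1$ from $\Tr x=-1$ and $\N x=-f$.

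The main obstacle is this last step: determining the coefficient of $\omega_1$ at the shallow depth $q_1=a$ in mixed characteristic, where the valuation of $2$ interferes. We will first try to make the uniqueness argument work with test elements $w=cx$ for $c\in\pp_F^{a}$. These give $\omega_1(1+c-c^2f)=1$ and hence force $\gamma_1c\equiv\gamma_1c^2f$-type relations that determine $\gamma_1$ modulo the required ideal. The final step, the second displayed formula, is routine once the first is known: apply norm triviality with the exact quadratic norm, using \eqref{eq:trace-ideal} and \eqref{eq:valuation-conventions} to keep $\Tr_{L_i/F}v$ and $\N_{L_i/F}v$ in $\pp_F^{q_i}$, exactly as in Lemma~\ref{lem:maximal-lower-coefficients}.
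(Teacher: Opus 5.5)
Your normalization of $\alpha$ through $\omega_3$ is fine, since $\beta_3=k_0^2$ is a unit. Your derivation of the second displayed formula from the first is exactly the paper's argument: the exact quadratic norm $\N_{L_i/F}(1-v)=1-\Tr_{L_i/F}v+\N_{L_i/F}v$, together with $\Tr_{L_i/F}(\pp_{L_i}^{q_i})=\pp_F^{\floor{3q_i/2}}\subseteq\pp_F^{q_i}$ and $v_F(\N_{L_i/F}v)=v_{L_i}(v)$.

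The gap is the first assertion, which is the whole content of the lemma; the paper does not prove it here but takes it from \cite[Lemma~13.3]{SML}. The formula for $\omega_2$ is not the difficulty. Once $\omega_1(1-u)=\Psi_F(\beta_1u)$ is known on $\pp_F^{a}$ for the $\alpha$ fixed by $\omega_3$, the formula for $\omega_2$ follows from $\omega_2=\omega_1\omega_3$ on $U_F^{r}$ and
$\beta_1+\beta_3-\beta_2=2dk_0-4E+8d^2g\in\pp_F^{r}$.
So what must be shown is that $\alpha\beta_1$ is the coefficient of $\omega_1$ modulo $\pp_F^{-n_F(\psi_F)-a}$ on the shallow ideal $\pp_F^{a}$. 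Your Steps~2--3 do not show this.
\begin{itemize}
\item Step~2 is a placeholder (``suitable elements'').
\item Its uniqueness claim uses $\Tr_{L_2/F}(\pp_{L_2}^{r})=\pp_F^{r}$, which is false for $r\ge2$; by \eqref{eq:trace-ideal} the image is $\pp_F^{\floor{3r/2}}$.
\item The identity $\Psi_F(\gamma\Tr w)=\Psi_F(\gamma\N w)$ also holds for $\gamma=0$. It identifies a coefficient only together with the exact valuation and the fact that $\N_{L_2/F}(U_{L_2}^{r})$ has index two in $U_F^{r}$.
\item Step~3 relies on the unproved Step~2. Even granting it, $\omega_1=\omega_2\omega_3$ on $U_F^{r}$ determines the coefficient of $\omega_1$ only modulo $\pp_F^{-n_F(\psi_F)-r}$. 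When $a<r$ this is strictly coarser than the required $\pp_F^{-n_F(\psi_F)-a}$.
\end{itemize}

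The tools you suggest for reaching the shallow ideal do not supply the missing link. Artin--Schreier invariance \eqref{eq:AS-invariance} is a residue identity in characteristic two and is not available for $F/\Q_2$. Test elements $w=cx$ with $c\in F$, or squares $(1-u)^2$, give relations such as $\omega_1(1+c-c^2f)=1$. These involve only $\omega_1$ and $f$, never $\psi_F(\alpha\,\cdot)$, so they cannot show that the coefficient equals $\alpha\beta_1$ for the $\alpha$ defined through $\omega_3$.

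A route in the spirit of your uniqueness idea: since $v_F(\alpha\beta_1)=-n_F(\psi_F)-2a$ and $\N_{L_1/F}(U_{L_1}^{a})$ has index two in $U_F^{a}$, it suffices to prove
$\psi_F\bigl(\alpha\beta_1(\Tr_{L_1/F}w-\N_{L_1/F}w)\bigr)=1$ for all $w\in\pp_{L_1}^{a}$.
Here $\psi_F(\alpha\,\cdot)$ on $\pp_F^{r}$ is controlled only through $\omega_3$, for instance by
$\psi_F\bigl(\alpha k_0^2(\Tr_{L_3/F}w'-\N_{L_3/F}w')\bigr)=1$ for $w'\in\pp_{L_3}^{r}$.
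Carrying this out requires two things:
\begin{itemize}
\item transporting norms between $L_1$ and $L_3$ (or $L_2$) using $d$ and the alignment $E=f+d^2g\in\pp_F^{1-a}$;
\item keeping the lower-order terms of the $\beta_i$ exactly. For instance, $v_F(4f)=2e+1-2a$ lies below the needed precision $2r-a$ for $\beta_1$ whenever $2e+1<2r+a$, so your comparison ``up to signs and the deep ideal'' is not enough.
\end{itemize}
You flag this as the main obstacle but leave it unresolved. As written, the proposal establishes \eqref{eq:nonmaximal-lower-formula} only for $i=3$, and the second displayed formula only for that index.
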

\begin{proof}
The simultaneous coefficients are \cite[Lemma~13.3]{SML}.
The second identity follows by the norm-triviality argument proving
\eqref{eq:quadratic-ur-normphase}, applied to
\eqref{eq:nonmaximal-lower-formula}; the trace and norm are in
$\pp_F^{q_i}$.
\end{proof}
The simultaneous normalization and the norm-character formulas are
\lean{Dyadic/Nonmaximal/LowerCharacters.lean}{214}{Dyadic.Nonmaximal.lowerCharacters}.
Fix $\alpha$ as in Lemma~\ref{lem:nonmaximal-lower-coefficients}.
\begin{proposition}[Model characters at minimal conductors]\label{prop:nonmaximal-characters}
There are primitive compatible characters $\chi_i$ of conductors $m_i$ with
\begin{equation}\label{eq:nonmaximal-core-chart}
 \chi_i(1-v)=\Psi_{L_i}(a_i v)\qquad(v\in\pp_{L_i}^{s_i}).
\end{equation}
\end{proposition}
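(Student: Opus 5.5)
We follow the extension-and-descent pattern of Propositions~\ref{prop:odd-total-characters}, \ref{prop:char2-characters} and~\ref{prop:maximal-characters}. First we define a principal-unit character on a single field. We take $L_2$, as in the maximal case: on $U_{L_2}^{s_2}$ we set $R_2(1-v)=\Psi_{L_2}(a_2v)$. This is a character because $2s_2\ge m_2$. The product term $vv'$ lies in $\pp_{L_2}^{2a+2r}$, and $v_{L_2}(a_2)=-t_1$. By \eqref{eq:trace-ideal}, the largest trivial ideal of $\Psi_{L_2}$ has exponent $D_{L_2/F}+2v_F(\alpha)+2n_F(\psi_F)+\dots$. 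Concretely, it is $\pp_{L_2}^{2T-T+\dots}$, which works out to $\pp_{L_2}^{2r}$ after using $v_F(\alpha)=-n_F(\psi_F)-T$ and $D_{L_2/F}=T$. So $\Psi_{L_2}(a_2vv')=1$ once $v_{L_2}(vv')\ge 2r+t_1$, which holds. The same computation gives $R_2$ trivial on $U_{L_2}^{m_2}$ and nontrivial on $U_{L_2}^{m_2-1}$, since $-t_1+(m_2-1)=2r-1$.

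Second, we check compatibility on deep units. For $u=1-w$ with $w\in\pp_K^{N}$ deep, we compare $R_1(\N_{K/L_1}u)$ with $R_2(\N_{K/L_2}u)$. Here $R_1$ denotes the analogous prescription $\Psi_{L_1}(a_1\cdot)$ on $U_{L_1}^{s_1}$. The exact quadratic norm $\N_{K/L_i}(1-w)=1-\Tr_{K/L_i}w+\N_{K/L_i}w$ together with Lemma~\ref{lem:E2} reduces the comparison. Indeed, $\Tr_{L_i/F}(a_i\Tr_{K/L_i}w)=\Tr_{K/F}(Y\sigma_i(w))+\dots$. This reduces it to an identity between $\Psi_F(\Tr_{L_i/F}(a_i\N_{K/L_i}w))$-type terms and the norm-character factors $\Psi_F(\beta_i\N_{L_i/F}(\cdot))$ of Lemma~\ref{lem:nonmaximal-lower-coefficients}. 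This is where the table of Lemma~\ref{lem:nonmaximal-coordinate} enters: $h_i=\Tr_{K/L_i}Y$ and $\beta_i=\N_{L_i/F}(h_i)$. We then show that the quotient character on $U_K$ is trivial by the downward induction on the filtration used in Lemma~\ref{lem:nonexceptional-trace}. At each level it is additive in $w$, and we expand $w$ in the basis $1,x,y,xy$ over $F$ and check each monomial with \eqref{eq:trace-ideal}. This gives a character of $U_K^{N}$ that is the common pullback of $R_1$ and $R_2$ and is $\Gal(K/F)$-invariant there.

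Third, we extend and descend. We extend $R_2$ to a quasi-character $\chi_2$ of $L_2^\times$ with conjugate quotient the nontrivial element of $S(K/L_2)$. We do this by prescribing values on a uniformizer and on the residual part, and we fix the quotient using $\omega$-values as in Proposition~\ref{prop:quadratic-ur-characters}. The key requirement is that $\Theta=\chi_2\circ\N_{K/L_2}$ is $\Gal(K/F)$-invariant. Then Lemma~\ref{lem:descent} produces $\chi_1,\chi_3$, and the compatibility computation above identifies their principal-unit formulas with $\Psi_{L_i}(a_i\cdot)$ on $\pp_{L_i}^{s_i}$, after conjugating if necessary. Primitivity follows as in Proposition~\ref{prop:odd-ur-characters}, from $\chi_2^{\sigma}\ne\chi_2$. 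Exact conductors $m_i$ follow from $v_{L_i}(a_i)=-t_1$ and the trivial-ideal exponents of $\Psi_{L_i}$, namely $4r-2a$, $2r$ and $2r$; alternatively, they follow from Lemma~\ref{lem:odd-total-lower-conductors} and \eqref{eq:norm-conductor}.

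The main obstacle is the $\Gal(K/F)$-invariance of the norm pullback, which amounts to a conjugate-quotient computation. We must show that $\Theta^{\tau}/\Theta$ is trivial for $\tau$ generating $\Gal(K/L_2)$'s complement, which reduces to a commutator-type identity on units. The analogue in characteristic two used \eqref{eq:rational-residue}. Here we would instead try to verify it using the exact identity $z^2+z=f+g+4fg$ and the small element $E=f+d^2g\in\pp_F^{1-a}$. These should control the cross terms $2xy$, with factors of $2$ estimated via $v_F(2)=e\ge r$.
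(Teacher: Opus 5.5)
Your outline matches the paper's: extend a principal-unit prescription from $L_2$, descend the norm pullback by Lemma~\ref{lem:descent}, absorb the $S(K/L_1)$-ambiguity by conjugation, and read off the conductors from the trivial-ideal exponents $4r-2a,2r,2r$ and $v_{L_i}(a_i)=-t_1$. The conductor computation is correct. But the proposal does not supply the one substantive input, and it misdescribes it.

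Let $\sigma$ be the nontrivial element of $\Gal(L_2/F)$ and $\nu_2$ the nontrivial character in $S(K/L_2)$, of conductor $t_1+1=2a$. Invariance of $\Theta=\chi_2\circ\N_{K/L_2}$ together with primitivity is equivalent to $\chi_2^\sigma/\chi_2=\nu_2$. This is \emph{not} something you can arrange by choosing values on a uniformizer or on the residual part. For $\xi\in U_{L_2}^1$ one has $\sigma^{-1}(\xi)/\xi\in U_{L_2}^{1+t_2}=U_{L_2}^{2r}\subseteq U_{L_2}^{s_2}$. For $a<r$ also $\sigma^{-1}(\pi_2)/\pi_2\in U_{L_2}^{2r-1}\subseteq U_{L_2}^{s_2}$. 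Hence $(\chi_2^\sigma/\chi_2)(\xi)=\Psi_{L_2}\bigl(a_2(1-\sigma^{-1}(\xi)/\xi)\bigr)$ is already forced by the prescription; only for $a=r$ is the value at a uniformizer free. You must \emph{prove} that this function is $\nu_2$.

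Compatibility on deep units cannot give this. It only sees $\chi_2^\sigma/\chi_2$ on $\N_{K/L_2}(U_K^{2r})=U_{L_2}^{a+r}\subseteq U_{L_2}^{2a}$, where both characters are already trivial. The identity depends on the explicit coefficient $a_2=E-2f-dk_0y$ and on the alignment in Lemma~\ref{lem:nonmaximal-coordinate}. It is the conjugate-quotient check the paper takes from \cite[Lemmas~13.4--13.5]{SML}, the analogue of where \eqref{eq:rational-residue} enters in characteristic two. Your last paragraph only hopes that $z^2+z=f+g+4fg$ and $E$ will suffice, so the proof has a gap exactly at this step.

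The compatibility step can be repaired, but not as you plan it. The filtration induction of Lemma~\ref{lem:nonexceptional-trace} works because $p\nmid t$ gives the terms of $\sum_j x_j\Delta^j$ distinct valuations modulo $p$. For the $F$-basis $1,x,y,xy$ one has $v_K(x)=2-4a\equiv 2-4r=v_K(y)$ and $v_K(1)\equiv v_K(xy)\pmod 4$. So $F$-multiples of these monomials can cancel, and they cannot be tested one at a time.

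No induction is needed. Apply Lemma~\ref{lem:E2} to $Y$ and to $Yu$, and use the exact quadratic norm, Lemma~\ref{lem:nonmaximal-lower-coefficients}, and $\omega_i\circ\N_{L_i/F}=1$. This gives $R_i(\N_{K/L_i}u)=\Psi_F\bigl(E_2(Y)-E_2(Yu)\bigr)$ for all $u\in U_K^{2r}$ and $i=1,2,3$. The depth checks use $D_{K/L_1}=4r-2a$, $D_{K/L_2}=D_{K/L_3}=2a$, and $v_{L_1}(h_1)=2\delta$.

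This depth is what your descent needs. For $j=2,3$ one has $\N_{K/L_j}(U_K^{2r})=U_{L_j}^{a+r}$. Also $\N_{K/L_1}(U_K^{2r})$ has index at most $2$ in $U_{L_1}^{2r}$, and that index is exactly what conjugating $\chi_1$ absorbs.
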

\begin{proof}
Repeat the extension-and-descent construction of
Proposition~\ref{prop:char2-characters}. Its mixed-characteristic
conjugate-quotient and compatibility checks are
\cite[Lemmas~13.4--13.5 and Theorem~13.6]{SML}.
The coefficient valuations in Lemma~\ref{lem:nonmaximal-coordinate}
and \eqref{eq:trace-ideal} give the exact conductors as before.
\end{proof}
The compatible character construction with these exact conductors
is \lean{Dyadic/Nonmaximal/Models.lean}{349}{Dyadic.Nonmaximal.models}.

\begin{lemma}[Common twist and conductor alternatives]\label{lem:nonmaximal-twist}
For every primitive compatible family $(\theta_i)$, there are primitive
compatible characters $(\chi_i)$ as in
Proposition~\ref{prop:nonmaximal-characters} and a character $\lambda$
of $F^\times$ such that
\begin{equation*}
 \theta_i=\chi_i(\lambda\circ \N_{L_i/F})
\end{equation*}
If $n=m_F(\lambda)$, then
\begin{equation}\label{eq:nonmaximal-conductor-split}
 \begin{cases}
 m_{L_i}(\theta_i)=m_i,&n\le2r+a-1,\\
 m_{L_i}(\theta_i)=2n-T_i,&n\ge2r+a.
 \end{cases}
\end{equation}
\end{lemma}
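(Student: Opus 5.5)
We follow the descent argument of Lemmas~\ref{lem:char2-twist} and~\ref{lem:maximal-twist}, starting with the model family $(\chi_i)$ of Proposition~\ref{prop:nonmaximal-characters}. Both $\theta_1$ and $\chi_1$ belong to primitive compatible families. By Lemma~\ref{lem:conjugacy}, their conjugate quotients under the generator of $\Gal(L_1/F)$ are the unique nontrivial character of $S(K/L_1)$. Hence $\theta_1/\chi_1$ is $\Gal(L_1/F)$-invariant, and the descent in the proof of Lemma~\ref{lem:descent} gives $\lambda$ with $\theta_1=\chi_1(\lambda\circ\N_{L_1/F})$. Since $\N_{L_i/F}\circ\N_{K/L_i}=\N_{K/F}$, the characters $\theta_i/(\chi_i(\lambda\circ\N_{L_i/F}))$ for $i=2,3$ have trivial pullback to $K$, so they lie in $S(K/L_i)$. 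These are restrictions of the $\omega_j\circ\N_{L_i/F}$. By Proposition~\ref{prop:norm-conductors} they have conductor at most the lower break of $K/L_i$ plus one, namely $t_1+1=2a\le s_i=a+r$. Twisting $\chi_2,\chi_3$ by them therefore leaves \eqref{eq:nonmaximal-core-chart} and the common pullback unchanged. We absorb them there, exactly as in Lemma~\ref{lem:char2-twist}. Primitivity of the modified family is inherited from $(\theta_i)$, since its pullback differs from $\Theta$ by a norm pullback from $F$.

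For the conductor alternatives, put $n=m_F(\lambda)$. By \eqref{eq:norm-conductor}, for $n>T_i$ the conductor of $\lambda\circ\N_{L_i/F}$ is $2n-T_i$, which is even. The model conductors $m_1=4r-1$ and $m_2=m_3=2a+2r-1$ are odd. The conductor of the product $\theta_i=\chi_i(\lambda\circ\N_{L_i/F})$ is therefore the maximum of the two conductors, with no cancellation. For $i=1$ this gives $m_{L_1}(\theta_1)=\max(4r-1,\,2n-2a)$, which switches at $n=2r+a$. For $i=2,3$ it gives $\max(2a+2r-1,\,2n-2r)$, which switches at the same value $n=2r+a$. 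This matches the threshold in \eqref{eq:nonmaximal-conductor-split}.

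When $n\le T_i$, the conductor of the pullback is at most $T_i$ by Proposition~\ref{prop:norm-conductors}, and $T_i<m_i$. In that range $m_{L_i}(\theta_i)=m_i$ directly. The lower bound of Lemma~\ref{lem:odd-total-lower-conductors}, whose argument holds for $p=2$, gives a consistency check in the first range.

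The main obstacle is the parity bookkeeping at the boundary $n=2r+a$. For $n$ just above the switch, $2n-T_i$ exceeds $m_i$ by exactly one, and we must confirm that this holds for both values of $T_i$. Writing $2n-2a\ge 4r$ and $2n-2r\ge 2a+2r$ when $n\ge 2r+a$ settles it. The reverse inequalities, $2n-T_i\le m_i-1$ for $n\le 2r+a-1$, follow the same way. We also need the norm-character twists absorbed into $\chi_2,\chi_3$ to have conductor at most $s_i$. This is the inequality $2a\le a+r$, which holds because $a\le r$.
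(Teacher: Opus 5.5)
Your proposal is correct and follows the paper's route. You descend the $\Gal(L_1/F)$-invariant quotient $\theta_1/\chi_1$ to $\lambda$, and you absorb the remaining $S(K/L_i)$ twists, of conductor $2a\le a+r=s_i$, into $\chi_2,\chi_3$. You then read off the threshold $n=2r+a$ from \eqref{eq:norm-conductor}, since the model conductors are odd and the pullback conductors $2n-T_i$ are even. Your separate treatment of $n\le T_i$ is a harmless addition. So is your remark that the exact model conductors make the lower-bound argument unnecessary.
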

\begin{proof}
Repeat the descent-and-twist argument of Lemma~\ref{lem:char2-twist}.
The twists absorbed into $\chi_2,\chi_3$ have conductors at most
$2a\le a+r=s_i$, so their principal-unit formulas are unchanged.
As there, odd model conductors and even higher pullback conductors
exclude equal-conductor cancellation; \eqref{eq:norm-conductor} gives
the threshold $n=2r+a$. See \cite[Lemma~13.7]{SML}.
\end{proof}
The simultaneous model family and base twist, with these conductor
alternatives, are constructed by
\lean{Dyadic/Nonmaximal/Twist.lean}{206}{Dyadic.Nonmaximal.twist}.
Use the twist of Lemma~\ref{lem:nonmaximal-twist}, and put
\[
 D_\theta:=\theta_i|_{F^\times}\omega_i,
\]
which is independent of $i$ by Lemma~\ref{lem:restrictions}.

\begin{lemma}[Norm choice by rescaling]\label{lem:nonmaximal-norm-choice}
For every $A_*\in F^\times$, there are $u\in U_F^{T-1}$ and
$X\in L_3^\times$ such that
\begin{equation*}
 A:=A_*/u=\N_{L_3/F}(X),\qquad (\alpha u)A=\alpha A_*.
\end{equation*}
The formulas \eqref{eq:nonmaximal-lower-formula} and
\eqref{eq:nonmaximal-core-chart} are unchanged by
$\alpha\mapsto\alpha u$.
\end{lemma}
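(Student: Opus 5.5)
The plan is to copy the proof of Lemma~\ref{lem:maximal-norm-choice}, replacing the conductor $2e+1$ by $T=2r$. By Lemma~\ref{lem:nonmaximal-lower-coefficients} and Proposition~\ref{prop:norm-conductors}, the norm character $\omega_3$ of $L_3/F$ has conductor $T_3=T=2r$. Hence $\omega_3$ is nontrivial on $U_F^{T-1}$. Since $\omega_3$ takes only the values $\pm1$, there is $u\in U_F^{T-1}$ with $\omega_3(u)=\omega_3(A_*)$. Then $\omega_3(A_*/u)=1$. By the definition of $\omega_3$ as the nontrivial character of $F^\times/\N_{L_3/F}(L_3^\times)$, the element $A=A_*/u$ is a norm $\N_{L_3/F}(X)$ with $X\in L_3^\times$. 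The identity $(\alpha u)A=\alpha A_*$ is immediate, so the twisting character's formula $\lambda(1-z)=\Psi_F(A_*z)$ becomes $\psi_F(\alpha u\,Az)$ and is unchanged.

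The remaining point is preservation of \eqref{eq:nonmaximal-lower-formula} and \eqref{eq:nonmaximal-core-chart} when $\alpha$ is replaced by $\alpha u$. Write $u=1+\varepsilon$ with $v_F(\varepsilon)\ge T-1$. The change in \eqref{eq:nonmaximal-lower-formula} is the factor $\psi_F(\alpha\varepsilon\beta_i z)$ for $z\in\pp_F^{q_i}$. Its argument has valuation at least $-n_F(\psi_F)-T+(T-1)+v_F(\beta_i)+q_i$. Lemma~\ref{lem:nonmaximal-coordinate} gives $v_F(\beta_1)=2\delta$, $q_1=a$, and $v_F(\beta_i)=0$, $q_i=r$ for $i=2,3$. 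Thus the exponent is at least $-n_F(\psi_F)-1+2\delta+a\ge -n_F(\psi_F)$ and $-n_F(\psi_F)-1+r\ge-n_F(\psi_F)$, since $a,r\ge1$. So this factor is $1$.

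For \eqref{eq:nonmaximal-core-chart}, the change is $\psi_{L_i}(\alpha\varepsilon a_iv)=\psi_F(\alpha\varepsilon\Tr_{L_i/F}(a_iv))$ for $v\in\pp_{L_i}^{s_i}$. Here I would use \eqref{eq:trace-ideal} with $D_{L_i/F}=T_i$. Since $v_{L_i}(a_iv)\ge s_i-t_1$, the trace lies in $\pp_F^{\floor{(s_i-t_1+T_i)/2}}$. For $i=1$ this exponent is $\floor{(2r-2a+1+2a)/2}=r$. For $i=2,3$ it is $\floor{(a+r-2a+1+2r)/2}=\floor{(3r-a+1)/2}\ge r$, using $a\le r$. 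In both cases the argument lies in $\pp_F^{-n_F(\psi_F)-T+T-1+r}\subset\pp_F^{-n_F(\psi_F)}$, so the factor is trivial.

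The step I expect to need the most care is the bookkeeping of these valuations, in particular the $L_1$ trace bound at depth $s_1$, where the margin is small. A secondary check is that $\omega_3$ really has conductor exactly $T$ rather than smaller, which is what makes the unit group $U_F^{T-1}$ large enough to contain $u$. This is the group-theoretic statement of \lean{Stationary/NormalizationFreedom.lean}{99}{Stationary.normalizationFreedom} applied to $\omega_3$.
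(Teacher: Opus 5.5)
Your proposal is correct and follows the paper's proof. As there, you use that $\omega_3$ has conductor $T$, choose $u\in U_F^{T-1}$ with $\omega_3(u)=\omega_3(A_*)$, and conclude that $A_*/u$ lies in $\ker\omega_3=\N_{L_3/F}(L_3^\times)$. The paper delegates the preservation of \eqref{eq:nonmaximal-lower-formula} and \eqref{eq:nonmaximal-core-chart} to the companion paper's valuation lemma; your explicit checks are right. The extra factor is $\psi_F(\alpha\varepsilon\,\cdot)$ with $v_F(\alpha\varepsilon)\ge -n_F(\psi_F)-1$, and its arguments have $F$-valuation at least $1$, by $v_F(\beta_i)+q_i\ge1$ and by the trace-ideal bound $\ge r$.
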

\begin{proof}
As in Lemma~\ref{lem:maximal-norm-choice}, choose $u$ with
$\omega_3(u)=\omega_3(A_*)$, now using nontriviality on $U_F^{T-1}$.
Then $A_*/u$ is a norm. The valuation checks preserving both character
formulas are \cite[Lemma~\SMLnum{D:NM:freedom}]{SML}, and
$(\alpha u)(A_*/u)=\alpha A_*$ preserves the twisting coefficient.
\end{proof}
Preservation of the character formulas and the exact norm choice are
\lean{Dyadic/Nonmaximal/Normalization.lean}{176}{Dyadic.Nonmaximal.normalization}.
It applies the general unit choice
\lean{Stationary/NormalizationFreedom.lean}{99}{Stationary.normalizationFreedom}.

\begin{lemma}[Valuations after a quadratic shift]\label{lem:nonmaximal-common-element}
For $X\in L_3$, use $A,C,Q_i$ of Lemma~\ref{lem:quadratic-shift}
with $Y$ from Lemma~\ref{lem:nonmaximal-coordinate}, and put
$p_X=\Tr_{L_3/F}(X)$.
The last two differences in Lemma~\ref{lem:quadratic-shift} are
\begin{align}
 \mathcal E&=(X'-X)(a_3X'-a_3'X),\label{eq:nonmaximal-common-C}\\
 \Delta&=p_X^2+2k_0p_X.\label{eq:nonmaximal-common-Delta}
\end{align}
If $v_{L_3}(X)=-h$, then
\[
 v_Fp_X\ge r-\ceil{h/2},\qquad
 v_F\mathcal E\ge T-a-h,\qquad
 v_{L_1}Q_1\ge2\delta-h,\qquad
 v_{L_2}Q_2\ge-h.
\]
\end{lemma}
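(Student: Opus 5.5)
The plan is to specialize Lemma~\ref{lem:quadratic-shift} to the generators of Lemma~\ref{lem:nonmaximal-coordinate} and then read off the valuations. The identity for $\mathcal E$ is exactly the third displayed formula of Lemma~\ref{lem:quadratic-shift}, and needs no new work.

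For $\Delta$, the last formula there gives
\[
 p_X^2-p_X\Tr_{K/F}(Y).
\]
So it suffices to compute $\Tr_{K/F}(Y)$. Using the table, $\Tr_{K/F}(Y)=\Tr_{L_3/F}(h_3)=2h_3=-2k_0$, which gives $\Delta=p_X^2+2k_0p_X$. As a consistency check, one can also use $\Tr_{L_1/F}(h_1)=2(2x-d)\cdot$-type expansions with $\Tr(x)=-1$, $\Tr(y)=-1$: this gives $\Tr_{L_1/F}(2x-d)=-2-2d=-2k_0$.

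For the valuation bounds, I would proceed term by term.

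\emph{Trace bound.} Apply \eqref{eq:trace-ideal} to $L_3/F$, which has break $2r-1$ and $D_{L_3/F}=T=2r$. With $v_{L_3}(X)=-h$ this gives
\[
 v_F(p_X)\ge\floor{(2r-h)/2}=r-\ceil{h/2}.
\]

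\emph{Bound on $\mathcal E$.} Write $X'-X=(\sigma-1)X$ for the generator $\sigma$ of $\Gal(L_3/F)$. The ramification bound gives $v_{L_3}(X'-X)\ge -h+t_3=2r-1-h$. Then rewrite
\[
 a_3X'-a_3'X=a_3(X'-X)-(a_3'-a_3)X.
\]
From the table, $a_3=-E-dz$, so $a_3'-a_3=-d(z'-z)$. Here $z'-z=-1-2z$ has valuation $v_{L_3}(1+2z)$; since $v_{L_3}(z)=1-2r$ and $v(2)=2e\ge 2r$, this is $0$, so $v_{L_3}(a_3'-a_3)=2\delta$. Also $v_{L_3}(a_3)=-t_1=1-2a$. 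Hence the second factor has $L_3$-valuation at least
\[
 \min\bigl(1-2a+2r-1-h,\;2\delta-h\bigr)=2\delta-h,
\]
and the product has $L_3$-valuation at least $2\delta+2r-1-2h$. Since $\mathcal E\in F$ and the extension is ramified, $v_F(\mathcal E)\ge\ceil{(2r-2a+2r-1-2h)/2}$. I expect this to come out as $2r-a-h=T-a-h$ up to the parity rounding. The margin here is exactly $1/2$, so this is the step to double-check: one may need the sharper fact that $X'-X$ has odd valuation in $L_3$ (or the exact structure of $a_3X'-a_3'X$) to get the ceiling right.

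\emph{Bounds on $Q_1$ and $Q_2$.} Use $Q_i=Y\sigma_iX+\sigma_iY\cdot X$, where $\sigma_iX=X'$. In $K$ this gives $v_K(Q_i)\ge v_KY+v_KX=-t_1-2h$, which is too weak for $Q_1$. So instead write
\[
 Q_i=\Tr_{K/L_i}(Y)X'+\sigma_iY\,(X-X').
\]
For $i=1$: $h_1X'$ has $v_{L_1}$ at least $2\delta-h$. The term $\sigma_1Y(X-X')$ has $K$-valuation at least $(1-2a)+2(2r-1-h)$, hence $L_1$-valuation at least $\ceil{(4r-2a-1-2h)/2}\ge2\delta-h$. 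Combining gives $v_{L_1}Q_1\ge 2\delta-h$.

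For $i=2$ the same decomposition, now with $h_2$ a unit, gives $v_{L_2}Q_2\ge-h$.

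The main obstacle is the $\mathcal E$ bound, where parity and rounding are tight; I would first try to see whether the bound follows from the exact valuation of $X'-X$, before falling back on a direct expansion in $z$.
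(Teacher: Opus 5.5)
Your proof is correct, and your worry about the $\mathcal E$ bound is unfounded.

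The $\Delta$ identity (via $\Tr_{K/F}(Y)=\Tr_{L_3/F}(h_3)=-2k_0$) and the trace bound from \eqref{eq:trace-ideal} are exactly what the paper does. For the bounds on $\mathcal E$ and $Q_i$, the paper gives no argument and simply cites \cite[Lemma~13.9]{SML}. Your derivation is a valid self-contained replacement for that citation. It uses the lower-break estimate $v_{L_3}(X'-X)\ge v_{L_3}(X)+t_3$ together with the decompositions $a_3X'-a_3'X=a_3(X'-X)-(a_3'-a_3)X$ and $Q_i=h_iX'+\sigma_i(Y)(X-X')$.

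On the step you flagged: you already noted that $\mathcal E$ is fixed by the conjugation of $L_3/F$, so $\mathcal E\in F$. Hence $v_F(\mathcal E)=v_{L_3}(\mathcal E)/2\ge 2r-a-h-\tfrac12$, and integrality gives $v_F(\mathcal E)\ge 2r-a-h=T-a-h$ exactly. Your own ceiling $\lceil(4r-2a-1-2h)/2\rceil$ already equals $T-a-h$, so no finer information about the valuation of $X'-X$ is needed.

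One small imprecision in the $Q_1$ bound: you round $v_{L_1}$ of the summands $h_1X'$ and $\sigma_1(Y)(X-X')$ individually, but these do not lie in $L_1$, so termwise ceilings are not justified. They are also unnecessary. Since $a\ge1$, one has $4r-2a-1-2h\ge2(2\delta-h)$ before any rounding, so $v_K(Q_1)\ge2(2\delta-h)$. Then $Q_1\in L_1$ gives $v_{L_1}(Q_1)\ge2\delta-h$. The same remark applies to $Q_2$.
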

\begin{proof}
Lemma~\ref{lem:nonmaximal-coordinate} gives $\Tr_{K/F}(Y)=-2k_0$,
so Lemma~\ref{lem:quadratic-shift} gives \eqref{eq:nonmaximal-common-Delta}.
The trace bound is \eqref{eq:trace-ideal}; the bounds for $Q_i$ and
$\mathcal E$ in \eqref{eq:nonmaximal-common-C} are \cite[Lemma~13.9]{SML}.
\end{proof}
The specialized identities and their valuation bounds are
\lean{Dyadic/Nonmaximal/CommonError.lean}{516}{Dyadic.Nonmaximal.commonError}.

\begin{lemma}[Simultaneous stationary norms]\label{lem:nonmaximal-minimal-coefficients}
Suppose $n\le2r+a-1$. After a normalization change allowed by
Lemma~\ref{lem:nonmaximal-norm-choice}, with $\alpha,\Psi_L$ denoting
the adjusted data, there is $X\in L_3$ such that $C=Y-X$ has
$v_K(C)=-t_1$, $\Tr_{K/L_i}(C)\ne0$, and
\[
 \theta_i(1-v)=\Psi_{L_i}(\N_{K/L_i}(C)\,v)\quad(v\in\pp_{L_i}^{s_i}),\qquad
 \omega_i(1-u)=\Psi_F(\N_{L_i/F}(\Tr_{K/L_i}(C))\,u)\quad(u\in\pp_F^{q_i})
\]
for $i=1,2$. If $a=r$, the same $C$ works for all three indices.
\end{lemma}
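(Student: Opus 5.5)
The proof follows the pattern of Lemma~\ref{lem:char2-minimal-coefficients} and of the first step of Proposition~\ref{prop:maximal-minimal}. It splits according to the size of $n=m_F(\lambda)$ relative to the depths $s_i$. Write $\theta_i=\chi_i(\lambda\circ\N_{L_i/F})$ as in Lemma~\ref{lem:nonmaximal-twist}.

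\emph{Small $n$.} If $n\le a+r$, then $\lambda\circ\N_{L_i/F}$ is trivial on the relevant depths, and I take $X=0$, so $C=Y$. For $i=1,2$, the conductor of $\lambda\circ\N_{L_i/F}$ is at most $s_i$; this is Proposition~\ref{prop:norm-conductors} for $n\le T_i$, and \eqref{eq:norm-conductor} gives $2n-T_i\le s_i$ otherwise. Hence $\theta_i=\chi_i$ on $U_{L_i}^{s_i}$. The formulas then follow from \eqref{eq:nonmaximal-core-chart} and \eqref{eq:nonmaximal-lower-formula}, since $\N_{K/L_i}(Y)=a_i$ and $\N_{L_i/F}(h_i)=\beta_i$. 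The valuation $v_K(Y)=-t_1$ and $h_i\ne0$ come from Lemma~\ref{lem:nonmaximal-coordinate}.

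\emph{Remaining range} $a+r<n\le 2r+a-1$. Put $h=n-T$, so $a-r<h\le a-1$. Choose $A_*$ with $\lambda(1-u)=\Psi_F(A_*u)$ on $\pp_F^{a+r}$; it is determined modulo $\pp_F^{T-a-r}$ and has valuation $-h$. By Lemma~\ref{lem:nonmaximal-norm-choice}, rescale $\alpha$ by $u\in U_F^{T-1}$ so that $A=\N_{L_3/F}(X)$ with $v_{L_3}(X)=-h$. This leaves \eqref{eq:nonmaximal-lower-formula} and \eqref{eq:nonmaximal-core-chart} unchanged. Next apply the norm--trace argument of Lemma~\ref{lem:quadratic-ur-coefficients}, using the norm-phase identity for $\omega_3$ from Lemma~\ref{lem:nonmaximal-lower-coefficients} on the element $Xv$. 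This should give
\[
 \lambda(\N_{L_i/F}(1-v))=\Psi_{L_i}\bigl((A-Q_i)v\bigr)\qquad(v\in\pp_{L_i}^{s_i}).
\]
The corrections between $\N_{L_i/F}(Q_i)$ and $\beta_iA$ are controlled by $\mathcal E$ and Lemma~\ref{lem:nonmaximal-common-element}. Multiplying by the model formula gives coefficient $a_i+A-Q_i=\N_{K/L_i}(C)$, by Lemma~\ref{lem:quadratic-shift}.

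For $\omega_i$, Lemma~\ref{lem:quadratic-shift} gives
\[
 \N_{L_i/F}(\Tr_{K/L_i}C)=\beta_i+\Delta,\qquad \Delta=p_X^2+2k_0p_X.
\]
The bound $v_F(p_X)\ge r-\ceil{h/2}$ together with $v_F(2)=e\ge r$ should give $v_F(\Delta)\ge T-q_i$. Then $\Psi_F(\Delta u)=1$ on $\pp_F^{q_i}$, so the new coefficient still represents $\omega_i$. For the valuations, $v_K(X)=-2h>-t_1=v_K(Y)$ because $2h\le 2a-2<2a-1$. Hence $v_K(C)=-t_1$. Finally, $\Tr_{K/L_i}(C)=h_i-p_X$ is nonzero because $h_2$ is a unit while $v(p_X)>0$, and $v_{L_1}(h_1)=2\delta$ is smaller than $v_{L_1}(p_X)$ when $\delta$ is small. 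The case where these valuations are close needs a closer check.

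\emph{Main obstacle.} The hard part is the precision bookkeeping. First, the norm correction must lie in the trivial ideal at the stationary depths $s_1=2r$ and $s_2=a+r$. This requires $v_{L_1}(Q_1)\ge 2\delta-h$ and the bound on $\mathcal E$, and it is the counterpart of \cite[Lemma~13.10]{SML}. Second, the nonvanishing of $\Tr_{K/L_1}(C)$ must be checked when $2\delta$ is near $2v_F(p_X)$. I would first try to show directly that $v_{L_1}(h_1-p_X)=2\delta$, using that $h_1=2x-d$ has a leading term $-d$ with $v_F(d)=\delta$.

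\emph{Equal breaks.} When $a=r$, we have $\delta=0$, and the third index should follow by symmetry. Here $a_3=-E-dz$, and Lemma~\ref{lem:quadratic-shift} applied with the roles of the fields permuted gives the identities for $i=3$. The $\omega_3$ coefficient becomes $k_0^2+\Delta$ up to a term in $\pp_F^{T-r}$.
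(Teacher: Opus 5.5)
Your outline matches the paper's. For $n\le a+r$ you take $X=0$. Otherwise you take a coefficient $A_*$ on $\pp_F^{a+r}$, rescale by Lemma~\ref{lem:nonmaximal-norm-choice}, and use Lemma~\ref{lem:quadratic-shift} with the bounds of Lemma~\ref{lem:nonmaximal-common-element}. Your small-$n$ case, the bound $v_F(\Delta)\ge T-q_i$, and $v_K(C)=-t_1$ are fine. However, two steps fail as written.

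\emph{The twist coefficient for $i=1,2$.} You cannot convert $\Psi_F(A\,\N_{L_i/F}(v))$ using the $\omega_3$ trace--norm identity on $Xv$. That identity concerns elements of $L_3$, and $Xv\notin L_3$ when $v\in L_i\setminus F$. The $\mathcal E$-correction you mention enters through the $\omega_i$ identity of Lemma~\ref{lem:nonmaximal-lower-coefficients}. Since $\N_{L_i/F}(Q_i)=\beta_iA+\mathcal E$,
\[
 A\,\N_{L_i/F}(v)=\beta_i\N_{L_i/F}(Q_iv/\beta_i)-\mathcal E\,\N_{L_i/F}(v)/\beta_i .
\]
Apply that identity to $Q_iv/\beta_i$, checking its hypothesis:
\begin{itemize}
\item since $v_{L_1}(\beta_1)=4\delta$, one has $v_{L_1}(Q_1v/\beta_1)\ge 2a-h\ge a$;
\item $v_{L_2}(Q_2v/\beta_2)\ge a+r-h\ge r$;
\item the error term has $v_F\ge 2r+a-h$, respectively $3r-h$, which is at least $T$ because $h\le a-1$.
\end{itemize}
This gives $\lambda(\N_{L_i/F}(1-v))=\Psi_{L_i}((A-Q_i)v)$.

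\emph{The third index at $a=r$.} It does not follow by symmetry. $X$ is fixed in $L_3$, so Lemma~\ref{lem:quadratic-shift} cannot be permuted without changing $C$; index $3$ needs its own computation.
\begin{itemize}
\item From $h_3=-k_0$ one gets $\N_{K/L_3}(C)=a_3+k_0X+X^2$.
\item Here your $\omega_3$ identity, applied to $Xv/k_0\in\pp_{L_3}^{r}$, is the right tool. It shows the twist contributes $A-k_0X$.
\item The discrepancy $X(X-X'+2k_0)$ has $v_{L_3}\ge 2r-1-2h\ge 1$. This uses the ramification bound $v_{L_3}(X'-X)\ge -h+t_3$ and $v_{L_3}(2)\ge 2r$. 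So $\Psi_{L_3}$, which is trivial on $\pp_{L_3}^{2r}$, kills it on $\pp_{L_3}^{s_3}$.
\item For $\omega_3$, one has $\N_{L_3/F}(k_0+2X)=k_0^2+2k_0p_X+4A\equiv\beta_3\pmod{\pp_F^{r}}$.
\end{itemize}

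Finally, the nonvanishing of $\Tr_{K/L_1}(C)$ needs no closer check, because $v_{L_1}(p_X)\ge 2r-2\ceil{h/2}\ge 2r-a>2\delta=v_{L_1}(h_1)$.
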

\begin{proof} If $n\le r+a$, keep $\alpha$ unchanged and take $X=0$, $C=Y$.
Otherwise choose a coefficient $A_*$ for $\lambda$ on $\pp_F^{r+a}$.
By Lemma~\ref{lem:nonmaximal-norm-choice}, choose
$A=A_*/u=\N_{L_3/F}(X)$, replace $\alpha$ by $\alpha u$, and
redefine $\Psi_L=\psi_L(\alpha\,\cdot)$ with this adjusted $\alpha$.
Then $\lambda(1-z)=\Psi_F(Az)$ on $\pp_F^{r+a}$ and $h=n-T\le a-1$.
Lemma~\ref{lem:quadratic-shift} and the estimates in
Lemma~\ref{lem:nonmaximal-common-element} give the coefficients
$\N_{K/L_i}(C)$ and $\N_{L_i/F}(\Tr_{K/L_i}(C))$ on the indicated ideals.
The required error bounds, nonvanishing of the traces, and the third
index when $a=r$ are \cite[Lemma~13.10]{SML}.
\end{proof}
The simultaneous stationary coefficients and the third-index
assertion at equal breaks are
\lean{Dyadic/Nonmaximal/MinimalOrigin.lean}{592}{Dyadic.Nonmaximal.minimalOrigin}.

\begin{lemma}[Common pullback of critical functions]\label{lem:nonmaximal-critical-functions}
Use $C$ and the adjusted $\alpha,\Psi_L$ from
Lemma~\ref{lem:nonmaximal-minimal-coefficients}, and put $c_0=-\alpha^{-1}$. Let $H_i,g_i$ be the critical function and
normalized sum for $\theta_i$ with coefficient $\N_{K/L_i}(C)$.
Then
\begin{equation*}
 \mathcalA_{L_i}(\theta_i)
 =D_\theta(c_0)\Theta(C)^{-1}\Psi_F(-E_2(C))g_i,
\end{equation*}
Choose a uniformizer $\pi$ of $K$ and put $\Pi_i=\N_{K/L_i}(\pi)$.
For $v\in\OO_K$, set $C_v=C(1+\pi^{t_2}v)$ and
\[
 p_i(\bar v)=\overline{
  \frac{\N_{K/L_i}(C_v)/\N_{K/L_i}(C)-1}{\Pi_i^{\floor{m_i/2}}}}.
\]
After identifying each critical quotient with the common residue field
$k$ by $\Pi_i^{\floor{m_i/2}}$, there is one function
$Q:k\to\C^\times$ such that
\[
 H_i(p_i(\bar v))=Q(\bar v).
\]
These assertions hold for $i=1,2$, and for all three indices if $a=r$.
\end{lemma}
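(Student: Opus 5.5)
The first assertion should come from Lemma~\ref{lem:common-norm}, applied with the adjusted $\alpha$ and the element $C$ of Lemma~\ref{lem:nonmaximal-minimal-coefficients}. That lemma supplies exactly the hypotheses needed: $\theta_i(1-v)=\Psi_{L_i}(\N_{K/L_i}(C)v)$ on $\pp_{L_i}^{s_i}$ and $\omega_i(1-u)=\Psi_F(\N_{L_i/F}(\Tr_{K/L_i}(C))u)$ on $\pp_F^{q_i}$, with $\Tr_{K/L_i}(C)\ne0$. Note that $s_i=\ceil{m_i/2}$ and $q_i=\ceil{T_i/2}$, and that $T_i>1$ since $a\ge1$. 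Formula \eqref{eq:common-factor} then gives $\mathcalA_{L_i}(\theta_i)=D_\theta(-\alpha^{-1})\Theta(C)^{-1}\Psi_F(-E_2(C))g_ih_i$. Both $T_1=2a$ and $T_2=T_3=2r$ are even, so each norm-character critical sum satisfies $h_i=1$ by \eqref{eq:Gamma}, and $-\alpha^{-1}=c_0$. This proves the displayed formula for $i=1,2$. When $a=r$, the same $C$ works for $i=3$ as well.

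For the critical functions, apply Lemma~\ref{lem:common-function} with $C'=C_v=C(1+\pi^{t_2}v)$. It gives, pointwise,
\[
 \Psi_{L_i}(-Z_iz_i)\theta_i(1+z_i)^{-1}\Psi_F(-W_iw_i)\omega_i(1+w_i)^{-1}
 =\Theta(1+\pi^{t_2}v)^{-1}\Psi_F(-E_2(C_v)+E_2(C)).
\]
The right side depends only on $v$, not on $i$; it descends to $\bar v\in k$, and this is $Q$. To identify the left side with $H_i(p_i(\bar v))$, three depth estimates are needed.

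\emph{(i)} The relative norm change satisfies $z_i\in\pp_{L_i}^{\floor{m_i/2}}$, and its class modulo $\pp_{L_i}^{\floor{m_i/2}+1}$ depends only on $\bar v$. This follows from expanding $\N_{K/L_i}(1+\pi^{t_2}v)=1+\Tr_{K/L_i}(\pi^{t_2}v)+\N_{K/L_i}(\pi^{t_2}v)$ together with \eqref{eq:trace-ideal}, using $D_{K/L_1}=4r-2a$, $D_{K/L_2}=2a$, $\floor{m_1/2}=2r-1=t_2$ and $\floor{m_2/2}=a+r-1$.

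\emph{(ii)} The relative change $w_i$ of $\N_{L_i/F}(\Tr_{K/L_i}(C))$ lies in the stationary ideal $\pp_F^{q_i}$. This uses $v_{L_i}(\Tr_{K/L_i}(C))$, coming from the $h_i$ values in Lemma~\ref{lem:nonmaximal-coordinate} corrected by $p_X$, and the trace bound for $\Tr_{K/L_i}(C\pi^{t_2}v)$. Given this, the stationary formula makes $\Psi_F(-W_iw_i)\omega_i(1+w_i)^{-1}=1$, because $T_i$ is even and the even-conductor critical function is trivial.

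\emph{(iii)} The value of $H_i$ is computed with the stationary coefficient $Z_i=\N_{K/L_i}(C)$.

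With these, the left side equals $H_i(p_i(\bar v))$ under the identification of $V_{L_i}$ with $k$ by $\Pi_i^{\floor{m_i/2}}$. The remaining task is to check that the factor $\theta_i(1+z_i)$ is correctly read on the critical quotient. The well-definedness computation after Definition~\ref{def:stationary} shows that $H_i$ depends only on the class of $z_i$.

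I expect the main obstacle to be estimate (ii), together with (i) for $i=1$. For $i=1$ one has $v_{L_1}(h_1)=2\delta$, so a perturbation of size $\pi^{t_2}$ changes $\Tr_{K/L_1}(C)$ at relative depth just at the edge of $\pp_F^{a}$. There I would first try the exact identity $\N_{L_1/F}(h_1+\tau)=\beta_1+\Tr_{L_1/F}(h_1'\tau)+\N_{L_1/F}(\tau)$ and bound each term using \eqref{eq:trace-ideal}, the table of Lemma~\ref{lem:nonmaximal-coordinate}, and the bounds on $p_X$ and $Q_i$ from Lemma~\ref{lem:nonmaximal-common-element}. If a boundary term survives, I would absorb it into the $i$-independent additive factor through Lemma~\ref{lem:E2}, as in the proof of Proposition~\ref{prop:maximal-high}.
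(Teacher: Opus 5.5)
Your proposal is correct and is essentially the paper's argument: \eqref{eq:common-factor} with the norm-character sums equal to $1$ because $T_1=2a$ and $T_2=T_3=2r$ are even, followed by Lemma~\ref{lem:common-function} with $C'=C_v$, where your depth estimates (i)--(ii) are exactly the bounds the paper imports from \cite[Propositions~13.11--13.12]{SML}. Estimate (ii) is easier than you expect. Since $v_K(C\pi^{t_2}v)\ge2\delta$ and $v_{L_i}(\Tr_{K/L_i}C)=2\delta,0,0$, \eqref{eq:trace-ideal} puts the relative change of $\Tr_{K/L_i}(C)$ in $\pp_{L_i}^{r}$ for each $i$. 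This gives $w_i\in\pp_F^{q_i}$ directly, and also $\Tr_{K/L_i}(C_v)\ne0$, which Lemma~\ref{lem:common-function} requires. So your fallback of absorbing a boundary term into the common factor is never needed. It would also not be available, since the assertion concerns $H_i$ alone.
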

\begin{proof}
Repeat the common-factor and pointwise arguments of
Proposition~\ref{prop:char2-minimal}, using \eqref{eq:common-factor}
and Lemma~\ref{lem:common-function}. The conductors $T_i$ are even,
so the norm-character critical sums are $1$.
Here the required stationary-depth bounds for the lower norm
changes are \cite[Propositions~13.11--13.12]{SML}.
\end{proof}
The factorization uses \eqref{eq:common-factor}, and its pointwise
part uses \lean{Stationary/CommonFunction.lean}{65}{Stationary.commonFunctionProduct_eq}.
The norm-coordinate and precision checks are supplied by
\lean{Dyadic/Nonmaximal/UnequalBreaks.lean}{275}{Dyadic.Nonmaximal.unequalBreaks_residualFactors}
for unequal breaks and
\lean{Dyadic/Nonmaximal/EqualBreaks.lean}{972}{Dyadic.Nonmaximal.equalBreaks_commonCoordinate}
for equal breaks. These results supply different parts of the displayed
comparison, rather than a single declaration with all its conclusions.

\begin{proposition}[Unequal breaks: bijective critical maps]\label{prop:nonmaximal-minimal-two}
If $n\le2r+a-1$ and $a<r$, then
\begin{equation}\label{eq:nonmaximal-minimal-two}
 \mathcalA_{L_1}(\theta_1)=\mathcalA_{L_2}(\theta_2).
\end{equation}
\end{proposition}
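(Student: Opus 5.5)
The argument follows the unequal-break half of Proposition~\ref{prop:char2-minimal}, now in mixed characteristic. By Lemma~\ref{lem:nonmaximal-critical-functions},
\[
 \mathcalA_{L_i}(\theta_i)=D_\theta(c_0)\Theta(C)^{-1}\Psi_F(-E_2(C))\,g_i
 \qquad(i=1,2).
\]
The prefactor is independent of $i$, so \eqref{eq:nonmaximal-minimal-two} is equivalent to $g_1=g_2$.

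Both model conductors are odd: $m_1=4r-1$ and $m_2=2a+2r-1$. The critical quotients are therefore one-dimensional over $k$, identified via $\Pi_1^{2r-1}$ and $\Pi_2^{a+r-1}$. The same lemma supplies a single function $Q:k\to\C^\times$ with $H_i(p_i(\bar v))=Q(\bar v)$, where $p_i$ comes from $C_v=C(1+\pi^{t_2}v)$. Once both $p_1,p_2:k\to k$ are shown to be bijections, summing gives
\[
 \sum_{x\in k}H_1(x)=\sum_{v\in k}Q(v)=\sum_{x\in k}H_2(x),
\]
and dividing by $|k|^{1/2}$ yields $g_1=g_2$.

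The main step is computing $p_1$ and $p_2$. Write
\[
 \frac{\N_{K/L_i}(C_v)}{\N_{K/L_i}(C)}
 =\N_{K/L_i}(1+\pi^{t_2}v)
 =1+\Tr_{K/L_i}(\pi^{t_2}v)+\N_{K/L_i}(\pi^{t_2}v).
\]
For $L_1$ the break of $K/L_1$ is $2t_2-t_1$, so $D_{K/L_1}=4r-2a$. By \eqref{eq:trace-ideal}, the trace of $\pp_K^{t_2}$ lies in
\[
 \pp_{L_1}^{\lfloor (2r-1+4r-2a)/2\rfloor}=\pp_{L_1}^{3r-a-1}.
\]
Since $a<r$, this exponent is at least $2r$, so the trace term vanishes at the critical depth $2r-1$. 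The norm term has valuation exactly $2r-1=\floor{m_1/2}$. Hence $p_1(\bar v)=u\bar v^2$ for a nonzero residue $u$, and $p_1$ is a bijection because squaring is bijective on the perfect field $k$.

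For $L_2$ one has $D_{K/L_2}=2a$. The trace of $\pp_K^{t_2}$ lands at depth $\lfloor(2r-1+2a)/2\rfloor=a+r-1=\floor{m_2/2}$. The norm term sits at depth $2r-1$, which exceeds $a+r-1$ exactly when $a<r$, so it is discarded. Consecutive source depths map onto consecutive trace-ideal depths, as in the char-two argument, so $p_2$ is a nonzero $k$-linear map of $k$, hence bijective.

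I expect the precision bookkeeping to be the obstacle: the contributions of the normalizing units $\N_{K/L_i}(C)^{-1}$ and $\Pi_i$ to the leading coefficients must be checked to be units, and the dependence on the chosen representative $v$ must disappear modulo the next ideal. I would take these from the estimates cited in Lemma~\ref{lem:nonmaximal-critical-functions}, via \lean{Dyadic/Nonmaximal/UnequalBreaks.lean}{275}{Dyadic.Nonmaximal.unequalBreaks_residualFactors}. In the unequal-break case, no missing-coset cancellation (Lemma~\ref{lem:common-pullback}) is needed.
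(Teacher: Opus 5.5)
Your proposal is correct and is the paper's argument. You reduce via Lemma~\ref{lem:nonmaximal-critical-functions} to $g_1=g_2$. The map $p_1$ is squaring, because the trace term over $L_1$ lies in $\pp_{L_1}^{3r-a-1}\subset\pp_{L_1}^{2r}$. The map $p_2$ is the nonzero linear map induced by the trace at depths $a+r-1,a+r$, since the norm term at depth $2r-1>a+r-1$ is deeper. So both maps are bijections and the sums agree.

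The precision concern in your last paragraph is mostly moot. The ratio is exactly $\N_{K/L_i}(1+\pi^{t_2}v)$, and with $\Pi_1=\N_{K/L_1}(\pi)$ your leading coefficient $u$ for $p_1$ equals $1$.
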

\begin{proof}
Use the unequal-break argument of Proposition~\ref{prop:char2-minimal}
with the common pullback in Lemma~\ref{lem:nonmaximal-critical-functions}.
Here $p_1$ is squaring because its trace term is deeper. For $p_2$,
\eqref{eq:trace-ideal} gives trace depths $a+r-1,a+r$ at source
depths $2r-1,2r$, and the norm term is deeper. Thus both maps are
bijective and $g_1=g_2$, proving \eqref{eq:nonmaximal-minimal-two}.
See \cite[Proposition~13.11]{SML}.
\end{proof}
The unequal-break comparison is
\lean{Dyadic/Nonmaximal/UnequalBreaks.lean}{419}{Dyadic.Nonmaximal.unequalBreaks}.

\begin{proposition}[Equal breaks: distinct kernels]\label{prop:nonmaximal-minimal-one}
If $n\le2r+a-1$ and $a=r$, then
\begin{equation}\label{eq:nonmaximal-minimal-one}
 \mathcalA_{L_1}(\theta_1)=\mathcalA_{L_2}(\theta_2)
                         =\mathcalA_{L_3}(\theta_3).
\end{equation}
\end{proposition}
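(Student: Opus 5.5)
We will follow the equal-break argument of Proposition~\ref{prop:char2-minimal}, now in the mixed-characteristic setting of Lemma~\ref{lem:nonmaximal-critical-functions}. Since $a=r$, Lemma~\ref{lem:nonmaximal-minimal-coefficients} supplies one element $C=Y-X$ that gives stationary coefficients $\N_{K/L_i}(C)$ for $\theta_i$ and $\N_{L_i/F}(\Tr_{K/L_i}(C))$ for $\omega_i$ simultaneously for all three indices. Lemma~\ref{lem:nonmaximal-critical-functions} then gives
\[
 \mathcalA_{L_i}(\theta_i)=D_\theta(c_0)\Theta(C)^{-1}\Psi_F(-E_2(C))g_i
\]
for $i=1,2,3$. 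The norm-character sums drop out because the conductors $T_i=2r$ are even. The prefactor does not depend on $i$, so the whole assertion reduces to the equality $g_1=g_2=g_3$.

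With $a=r$ we have $t_1=t_2=t_3=2r-1$, so $m_i=4r-1$ and $\floor{m_i/2}=2r-1=t_2$ for every $i$. Each critical quotient is therefore $k$, identified through $\Pi_i^{2r-1}$. Lemma~\ref{lem:nonmaximal-critical-functions} provides one function $Q:k\to\C^\times$ with $H_i\circ p_i=Q$, where
\[
 p_i(\bar v)=\overline{\bigl(\N_{K/L_i}(C_v)/\N_{K/L_i}(C)-1\bigr)/\Pi_i^{2r-1}},\qquad C_v=C(1+\pi^{t_2}v).
\]
I plan to compute $p_i$ by expanding the quadratic norm:
\[
 \N_{K/L_i}(C_v)/\N_{K/L_i}(C)=1+\Tr_{K/L_i}\Bigl(\frac{\sigma_i(C)}{\sigma_i(C)}\cdot\ldots\Bigr).
\]
More concretely, $(1+\pi^{t}v)(1+\sigma_i(\pi^{t}v))\cdot(\text{correction from }\sigma_iC/C)$ should give
\[
 p_i(z)=z^2+(c_i/u)z,\qquad c_i=\overline{(\sigma_iC-C)\pi^{t}}\ \text{(suitably normalized)},\quad u=\overline{C\pi^t}.
\]
This matches Proposition~\ref{prop:char2-minimal}. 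In mixed characteristic, the term $2$ in $(1+x)(1+\sigma x)$ and the binomial cross terms have to be shown deeper than $\Pi_i^{2r-1}$. For this I will use $v_F(2)=e\ge r$ together with the trace-ideal formula \eqref{eq:trace-ideal} applied to $D_{K/L_i}=2a=2r$.

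The table of Lemma~\ref{lem:nonmaximal-coordinate} gives the values $\sigma_iY-Y$: they are $-h_i+2Y$-type expressions, and their leading terms are $h_1=2x-d$, $h_2=2dy-1$, $h_3=-k_0$. When $a=r$ we have $\delta=0$, so $d$ is a unit and all three $h_i$ are units. The adjustment $X$ changes these differences only by deeper terms; this follows from the ramification bound and the estimate on $p_X$ in Lemma~\ref{lem:nonmaximal-common-element}. As a result, $c_i$ equals the residues $\bar d$, $-1$, $-\bar k_0=-(1+\bar d)$, up to a common normalization.

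In characteristic $2$ these three residues are nonzero and pairwise distinct. Indeed, $\bar d\ne0$; $\bar d\ne1$ because $k_0$ is a unit; and their sum vanishes, matching $h_3=h_1+h_2$ modulo $2$. Hence each $p_i$ is an additive map $k\to k$ with kernel $\{0,c_i/u\}$ of order two and image of index two, and the kernels are distinct. Lemma~\ref{lem:polar} supplies nondegeneracy of the polar pairings, and Lemma~\ref{lem:common-pullback}, with $Z=V_i=k$, gives equality of the three normalized sums. This proves \eqref{eq:nonmaximal-minimal-one}.

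The main obstacle is the precision bookkeeping in the computation of $p_i$. One must check that the $2$-divisible cross terms and the correction coming from $X$ do not reach depth $2r-1$ in $L_i$, so that $p_i$ has exactly the form $z^2+(c_i/u)z$ with the stated residues. I would first attempt this through the trace-ideal formula together with the valuation bounds $v_{L_i}Q_i\ge -h$ and $v_F\mathcal E\ge T-a-h$ for $h\le a-1$.
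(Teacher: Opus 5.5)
Your proposal is correct and is essentially the paper's proof. You reduce via Lemma~\ref{lem:nonmaximal-critical-functions} to $g_1=g_2=g_3$, obtain $p_i(v)=v^2+(c_i/u)v$ with $c_1=\bar d$, $c_2=1$, $c_3=1+\bar d$ nonzero and pairwise distinct, and conclude by Lemmas~\ref{lem:polar} and~\ref{lem:common-pullback}. One slip to fix: $c_i$ should be the residue of the unit $\sigma_i(C)-C$ itself, with no factor $\pi^t$. Also, there is no ``correction from $\sigma_iC/C$'', since $\N_{K/L_i}(C_v)/\N_{K/L_i}(C)=\N_{K/L_i}(1+\pi^t v)$ exactly. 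The coefficient $c_i/u$ arises because $\overline{\Tr_{K/L_i}(\pi^t)/\Pi_i^t}=\overline{(\sigma_i(C)/C-1)/\pi^t}$, which holds because $C\pi^t$ is a unit fixed by $\sigma_i$ modulo $\pp_K^{t+1}$.
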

\begin{proof}
Use the common element and maps of
Lemmas~\ref{lem:nonmaximal-minimal-coefficients}
and~\ref{lem:nonmaximal-critical-functions}. Put
\[
 u=\overline{C\pi^{t_1}}\in k^\times,\qquad
 c_i=\overline{\sigma_i(C)-C}\in k^\times.
\]
Here $c_1=\bar d$, $c_2=1$, and $c_3=1+\bar d$ are nonzero and
pairwise distinct, since $d$ and $1+d$ are units. The norm expansion gives
\[
 p_i(v)=v^2+(c_i/u)v,\qquad \ker p_i=\{0,c_i/u\}.
\]
Lemma~\ref{lem:nonmaximal-critical-functions} supplies the common
pullback. The equal-break argument of
Proposition~\ref{prop:char2-minimal}, using Lemmas~\ref{lem:polar}
and~\ref{lem:common-pullback}, therefore gives equality of all three
normalized sums. Substitution in
Lemma~\ref{lem:nonmaximal-critical-functions} proves
\eqref{eq:nonmaximal-minimal-one}; see \cite[Proposition~13.12]{SML}.
\end{proof}
The residual cancellation is
\lean{Dyadic/Nonmaximal/EqualBreaks.lean}{397}{Dyadic.Nonmaximal.equalBreaks_hasseComparison};
the full comparison for the prescribed characters is
\lean{Dyadic/Nonmaximal/EqualBreaks.lean}{1421}{Dyadic.Nonmaximal.equalBreaks}.

\begin{proposition}[Higher conductors: trivial critical sums]\label{prop:nonmaximal-high}
If $n\ge2r+a$, then
\begin{equation}\label{eq:nonmaximal-high-result}
 \mathcalA_{L_1}(\theta_1)=\mathcalA_{L_2}(\theta_2).
\end{equation}
\end{proposition}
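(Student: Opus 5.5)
This follows the higher-range argument of Proposition~\ref{prop:maximal-high}, adapted to the nonmaximal break pattern. By \eqref{eq:nonmaximal-conductor-split}, $m_{L_i}(\theta_i)=2n-T_i$ for all $i$, and these conductors are even. Every $T_i$ is also even: $T_1=2a$ and $T_2=T_3=T=2r$. By \eqref{eq:Gamma}, every critical sum $g_{\theta_i},g_{\omega_i}$ in this range is therefore $1$. So, in contrast with Proposition~\ref{prop:maximal-high}, no finite-sum comparison is needed. The whole proof reduces to showing that the scalar factors coming from \eqref{eq:lamprecht} agree, as in Proposition~\ref{prop:char2-intermediate}.

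\emph{Choosing the twist coefficient.} Put $h=n-T\ge a$. Choose $A_*$ with $\lambda(1-z)=\Psi_F(A_*z)$ on $\pp_F^{\ceil{n/2}}$. Lemma~\ref{lem:nonmaximal-norm-choice} gives $u\in U_F^{T-1}$ and $X\in L_3$ with $A=A_*/u=\N_{L_3/F}(X)$; I then replace $\alpha$ by $\alpha u$. This change does not affect \eqref{eq:nonmaximal-lower-formula} or \eqref{eq:nonmaximal-core-chart}. Since $v_F(A)=-h$, we get $v_{L_3}(X)=-h$ and $v_K(X)=-2h$. Because $2h\ge2a>t_1$, the element $C=Y-X$ has $v_K(C)=-2h$, so in particular $C\ne0$.

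\emph{Stationarity of the norms.} The main step is to show that $\N_{K/L_i}(C)$ is a stationary coefficient for $\theta_i$ on $\pp_{L_i}^{\ceil{m_{L_i}(\theta_i)/2}}$, for $i=1,2$. This is the analogue of \cite[Lemma~12.12]{SML}. Write $\theta_i=\chi_i\cdot(\lambda\circ\N_{L_i/F})$. On these ideals:
\begin{itemize}
\item the model part contributes $\Psi_{L_i}(a_iz)$, since the depth is at least $s_i$;
\item the twist contributes $\Psi_F(A\,\N_{L_i/F}(1-z)-A)$-type terms, which expand to $\Psi_{L_i}(Az)\Psi_F(-A\N_{L_i/F}z)$;
\item the term $\Psi_F(A\N_{L_i/F}z)$ is converted into $\Psi_{L_i}(Q_iz)$ up to the error $\mathcal E$.
\end{itemize}
For the last conversion I would use Lemma~\ref{lem:quadratic-shift}, the norm-phase identity of Lemma~\ref{lem:nonmaximal-lower-coefficients} (applied with $\beta_i$), and $\N_{L_i/F}(Q_i)=\beta_iA+\mathcal E$. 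The bounds of Lemma~\ref{lem:nonmaximal-common-element}, namely $v_F\mathcal E\ge T-a-h$, $v_{L_1}Q_1\ge2\delta-h$ and $v_{L_2}Q_2\ge-h$, should show that each error term lies in the trivial ideal at depth $\ceil{(2n-T_i)/2}$. I expect this precision check, especially for $L_1$ where $Q_1/\beta_1$ appears, to be the main obstacle. If it fails, my first fallback is to enlarge the admissible error by adjusting $X$ by a norm, using Lemma~\ref{lem:norm-approximation}.

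\emph{Assembling the scalar factors.} For $\omega_i$ I keep the coefficient $\beta_i$. With $c_0=-\alpha^{-1}$, Lamprecht's formula, compatibility $\theta_i(\N_{K/L_i}C)=\Theta(C)$, $\omega_i(\beta_i)=1$ (as $\beta_i$ is a norm), and Lemma~\ref{lem:restrictions} give
\[
 \mathcalA_{L_i}(\theta_i)=D_\theta(c_0)\Theta(C)^{-1}
 \Psi_F\bigl(-\Tr_{L_i/F}\N_{K/L_i}(C)-\beta_i\bigr).
\]
By Lemma~\ref{lem:E2} and Lemma~\ref{lem:quadratic-shift}, we have $\N_{L_i/F}(\Tr_{K/L_i}C)=\beta_i+\Delta$, with $\Delta=p_X^2+2k_0p_X$ independent of $i$ by \eqref{eq:nonmaximal-common-Delta}. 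Hence the exponent equals $-E_2(C)+\Delta$, which is independent of $i$. This proves \eqref{eq:nonmaximal-high-result}.
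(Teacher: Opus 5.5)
Your proposal is correct and follows the paper's proof step for step: the same rescaled norm choice $A=A_*/u=\N_{L_3/F}(X)$, the common element $C=Y-X$ whose norms are stationary (the paper simply cites this from \cite[Theorem~13.13]{SML}), all critical sums trivial because every conductor is even, and the common phase $\Psi_F(-E_2(C)+\Delta)$. The precision check you flag as the main obstacle goes through without any fallback: for $z\in\pp_{L_1}^{2r+h-a}$ one gets $v_{L_1}(Q_1z/\beta_1)\ge(2\delta-h)+(2r+h-a)-4\delta=a=q_1$ and $v_F\bigl(\mathcal E\,\N_{L_1/F}(z)/\beta_1\bigr)\ge(2r-a-h)+(2r+h-a)-2\delta=T$, and the $L_2$ bounds are easier. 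One small slip: the intermediate expression in your second bullet should be $\Psi_F\bigl(A-A\,\N_{L_i/F}(1-z)\bigr)$, although your expansion of it is right.
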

\begin{proof}
Put $h=n-T\ge a$ and
\begin{equation*}
 b_1=2r+h-a,\qquad b_2=r+h.
\end{equation*}
Equation~\eqref{eq:nonmaximal-conductor-split} gives
$m_{L_i}(\theta_i)=2n-T_i=2b_i$ for $i=1,2$.
Choose $A_*$ for $\lambda$ on $\pp_F^{\ceil{n/2}}$.
By Lemma~\ref{lem:nonmaximal-norm-choice}, choose
$A=A_*/u=\N_{L_3/F}(X)$ with $v_{L_3}(X)=-h$; replace $\alpha$ by
$\alpha u$ and set $\Psi_L=\psi_L(\alpha\,\cdot)$ and
$c_0=-\alpha^{-1}$ using the adjusted $\alpha$.
Then $\lambda(1-z)=\Psi_F(Az)$ on $\pp_F^{\ceil{n/2}}$.
Use Lemma~\ref{lem:nonmaximal-common-element}. The norm and trace estimates in
\cite[Theorem~13.13]{SML}, together with
\eqref{eq:nonmaximal-core-chart} and Lemma~\ref{lem:quadratic-shift},
show that $C=Y-X$ satisfies
\begin{equation*}
 \theta_i(1-v)=\Psi_{L_i}(\N_{K/L_i}(C)\,v)
 \qquad(v\in\pp_{L_i}^{b_i}).
\end{equation*}
Since $v_K(X)=-2h<-t_1=v_K(Y)$, one has $v_K(C)=-2h$, so $C\ne0$.
Unlike Proposition~\ref{prop:maximal-high}, all four conductors are
even, so every critical factor is $1$ by
\eqref{eq:critical}--\eqref{eq:Gamma}.
Repeat its Lamprecht factorization, with $\N_{K/L_i}(C)$ for
$\theta_i$ and $\beta_i$ from \eqref{eq:nonmaximal-lower-formula}
for $\omega_i$. Lemma~\ref{lem:E2} and
\eqref{eq:nonmaximal-common-Delta} give the additive factor
$\Psi_F(-E_2(C)+\Delta)$, hence
\begin{equation*}
 \mathcalA_{L_i}(\theta_i)
 =D_\theta(c_0)\Theta(C)^{-1}
   \Psi_F(-E_2(C)+\Delta),
\end{equation*}
which is independent of $i$ and proves \eqref{eq:nonmaximal-high-result}.
This is \cite[Theorem~13.13]{SML}.
\end{proof}
The higher-conductor stationary formulas and the common-phase
comparison are proved in \lean{Dyadic/Nonmaximal/Higher.lean}{608}{Dyadic.Nonmaximal.higher}.

\begin{proposition}\label{prop:quadratic-nonmaximal}
For the break pattern \eqref{eq:nonmaximal-breaks},
\eqref{eq:quadratic-calculation} holds for every primitive compatible family.
\end{proposition}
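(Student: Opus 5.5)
The argument assembles the preceding propositions, exactly as in the proof of Proposition~\ref{prop:quadratic-maximal}. First I apply Lemma~\ref{lem:nonmaximal-twist} to the given primitive compatible family. This writes $\theta_i=\chi_i(\lambda\circ\N_{L_i/F})$, and the conductor $n=m_F(\lambda)$ then falls into one of the two ranges of \eqref{eq:nonmaximal-conductor-split}.

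In the range $n\ge2r+a$, Proposition~\ref{prop:nonmaximal-high} gives $\mathcalA_{L_1}(\theta_1)=\mathcalA_{L_2}(\theta_2)$. In the range $n\le2r+a-1$ we split on the breaks: Proposition~\ref{prop:nonmaximal-minimal-two} handles $a<r$, and Proposition~\ref{prop:nonmaximal-minimal-one} handles $a=r$, where it already yields equality of all three quantities. Since the two ranges are complementary, these cases are exhaustive and give the comparison of $L_1$ with $L_2$ in every case.

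To reach $L_3$, I would repeat the construction with $L_3$ in place of $L_2$. The pattern \eqref{eq:nonmaximal-breaks} is symmetric in $L_2,L_3$ because $t_2=t_3$. The only asymmetric input is the aligned generators of Lemma~\ref{lem:nonmaximal-coordinate}, which single out $L_2=F(y)$ and use $L_3$ as the norm field in Lemma~\ref{lem:nonmaximal-norm-choice}. Relabelling the two fields of break $2r-1$ gives an equally valid choice, since the lemma only requires a generator of the larger break. The relabelled argument yields $\mathcalA_{L_1}(\theta_1)=\mathcalA_{L_3}(\theta_3)$, and hence $\mathcalA_{L_2}(\theta_2)=\mathcalA_{L_3}(\theta_3)$.

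Two small points need checking. The conjugations and twist adjustments in Lemma~\ref{lem:nonmaximal-twist} do not affect the comparison, because Lemma~\ref{lem:conjugacy} shows that $\mathcalA_{L_i}$ is independent of the choice of character with pullback $\Theta$. Also, the normalization changes $\alpha\mapsto\alpha u$ enter only through the auxiliary factors: the identity between the $\mathcalA$'s does not depend on $\alpha$.

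With all three pairwise equalities in hand, \eqref{eq:quadratic-factor} converts $\mathcalA_i=\mathcalA_j$ into \eqref{eq:quadratic-calculation}. The main obstacle is justifying the relabelling symmetry rigorously, that is, confirming that every lemma in this subsection uses only the roles "minimal break field", "second field" and "third field". If this fails, the fallback is to use Lemma~\ref{lem:descent} to pass through $L_1$, as in Proposition~\ref{prop:odd-total-comparison}.
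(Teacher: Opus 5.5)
Your proposal is correct and follows the paper's proof. The two conductor ranges of \eqref{eq:nonmaximal-conductor-split} are covered by Propositions~\ref{prop:nonmaximal-minimal-two}, \ref{prop:nonmaximal-minimal-one} and~\ref{prop:nonmaximal-high}; repeating the construction with $L_3$ in place of $L_2$ gives the remaining comparison; and \eqref{eq:quadratic-factor} turns the equalities of the $\mathcalA_{L_i}(\theta_i)$ into \eqref{eq:quadratic-calculation}.
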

\begin{proof}
The first conductor range in \eqref{eq:nonmaximal-conductor-split}
is covered by Propositions~\ref{prop:nonmaximal-minimal-two} and
\ref{prop:nonmaximal-minimal-one}; Proposition~\ref{prop:nonmaximal-high}
treats the second.
The same construction with $L_3$ in place of $L_2$ gives
$\mathcalA_{L_1}(\theta_1)=\mathcalA_{L_3}(\theta_3)$.
Together with the comparison of $L_1,L_2$, this gives every pair.
Use \eqref{eq:quadratic-factor} to obtain \eqref{eq:quadratic-calculation}.
\end{proof}
This is \cite[Theorem~13.1]{SML}.
The Lean theorem is
\lean{Dyadic/Nonmaximal/Comparison.lean}{473}{Dyadic.Nonmaximal.comparison}.

\section{Completion of the mathematical proof}\label{sec:completion}
It remains to check that the ramification cases exhaust all extensions
and that comparisons through a distinguished intermediate field imply
the comparison for every prescribed pair. No further local-constant
calculation is required.

\begin{proof}[Proof of Theorem~\ref{thm:sml}]
For $L_1=L_2$ there is nothing to prove.  Assume $L_1\ne L_2$.

If $\ell\ne p$, Lemma~\ref{lem:inertia} gives the unique unramified
$U$ of degree $\ell$. Lemma~\ref{lem:conjugacy} excludes conductor zero
for $\theta_U$, since such a character is Galois invariant.
Propositions~\ref{prop:tame-one} and~\ref{prop:tame-high} give
\[
 \mathcalA_U(\theta_U)=\mathcalA_E(\theta_E)
 \qquad([E:F]=\ell,\ E\ne U),
\]
for conductor $1$ and conductor $>1$, respectively.  Hence
\[
 \mathcalA_{L_1}(\theta_{L_1})
 =\mathcalA_U(\theta_U)
 =\mathcalA_{L_2}(\theta_{L_2}).
\]

If $\ell=p>2$, Corollary~\ref{cor:odd-sml} gives the required equality.

Suppose $\ell=p=2$. Lemmas~\ref{lem:inertia} and
\ref{lem:quadratic-breaks} give the four possibilities
\[
 \begin{array}{c|c}
 |I|=2 & \text{Proposition~\ref{prop:quadratic-ur}}\\
 |I|=4,\ \operatorname{char}F=2
      & \text{Proposition~\ref{prop:quadratic-equal}}\\
 (t_1,t_2,t_3)=(2a-1,2e,2e)
      & \text{Proposition~\ref{prop:quadratic-maximal}}\\
 (t_1,t_2,t_3)=(2a-1,2r-1,2r-1)
      & \text{Proposition~\ref{prop:quadratic-nonmaximal}}.
 \end{array}
\]
In each case \eqref{eq:quadratic-calculation} and
\eqref{eq:quadratic-factor} give
\[
 \mathcalA_{L_i}(\theta_i)=\mathcalA_{L_j}(\theta_j).
\]
The two mixed-characteristic break patterns are
\eqref{eq:maximal-breaks} and \eqref{eq:nonmaximal-breaks}.
Thus \eqref{eq:sml} holds in every case.
\end{proof}
The theorem \lean{Classification/Exhaustion.lean}{646}{Classification.exhaustion}
proves that the following seven possibilities exhaust the field data.
It also supplies the ramification hypotheses required by each
comparison theorem. This is the formal counterpart of
\cite[Lemma~\SMLnum{U:dispatch}]{SML}.

\begin{center}
\small
\begin{tabular}{@{}p{.48\textwidth}p{.47\textwidth}@{}}
\toprule
Hypotheses & Lean comparison theorem\\
\midrule
$\ell\ne p$ & \lean{Tame/Comparison.lean}{1472}{Tame.comparison}\\[3pt]
$\ell=p>2$, $|I|=p$ & \lean{Odd/UR/Comparison.lean}{428}{Odd.UR.comparison}\\[3pt]
$\ell=p>2$, $|I|=p^2$ & \lean{Odd/Total/Comparison.lean}{467}{Odd.Total.comparison}\\[3pt]
$\ell=p=2$, $|I|=2$ & \lean{Dyadic/UR/Comparison.lean}{235}{Dyadic.UR.comparison}\\[3pt]
$\ell=p=2$, $|I|=4$, $\operatorname{char}F=2$
 & \lean{Dyadic/Equal/Comparison.lean}{512}{Dyadic.Equal.comparison}\\[3pt]
Mixed characteristic, \eqref{eq:maximal-breaks}
 & \lean{Dyadic/Maximal/Comparison.lean}{924}{Dyadic.Maximal.comparison}\\[3pt]
Mixed characteristic, \eqref{eq:nonmaximal-breaks}
 & \lean{Dyadic/Nonmaximal/Comparison.lean}{473}{Dyadic.Nonmaximal.comparison}\\
\bottomrule
\end{tabular}
\end{center}

When a comparison is first proved through a distinguished field,
\lean{Characters/InducingChoice.lean}{148}{Characters.distinguishedComparisons_imply_family}
uses Lemmas~\ref{lem:descent} and~\ref{lem:conjugacy} to extend the
comparison to every prescribed $\theta_L:L^\times\to\C^\times$ with
$\theta_L\circ\N_{K/L}=\Theta$ and to every pair of intermediate fields.
The theorem \lean{Dispatch.lean}{368}{secondMainIdentity_of_actualData}
applies the seven comparison theorems. The exported
\lean{Main.lean}{37}{secondMainLemma} applies this result to the extension
and family in its statement.

\section{The exact Lean theorem}\label{sec:lean-statement}
We now identify the hypotheses and conclusion of Theorem~\ref{thm:sml}
with the actual Lean structures. This distinguishes the inputs to the
exported theorem from the coordinates, model characters, and conductor
ranges constructed within its proof.

\subsection{The fields and the Galois group}
The ambient assumptions give fields $F,K$, their valuation relations
and topologies, and an algebra structure expressing $F\to K$.
The typeclass \code{IsNonarchimedeanLocalField} expresses the local
field condition, \codeexpr{ValuativeExtension F K} expresses compatibility
of the valuations, and \codeexpr{Module.Finite F K} expresses finiteness
of the extension.

The remaining field hypotheses are collected in
\lean{Basic/SecondMainStatement.lean}{47}{Basic.PrimeSquareExtension}.
Its definition is:
\begin{lstlisting}
structure PrimeSquareExtension
    (F K : Type) [Field F] [Field K] [Algebra F K]
    [Module.Free F K] [Module.Finite F K] (ℓ : ℕ) : Prop where
  prime : ℓ.Prime
  isGalois : IsGalois F K
  galoisGroupEquiv : Nonempty
    (Gal(K/F) ≃* (Multiplicative (ZMod ℓ) ×
                  Multiplicative (ZMod ℓ)))
\end{lstlisting}
Here \codeexpr{Multiplicative (ZMod \(\ell\))} is the additive cyclic group
$\Z/\ell\Z$ written multiplicatively. Thus the last field of this
structure is the Galois-group condition in Section~\ref{sec:intro}.

An intermediate field is an element of \codeexpr{IntermediateField F K}.
The condition \codeexpr{Module.finrank F L = \(\ell\)} expresses $[L:F]=\ell$.
To apply the local-constant theorems to $L$, Lean must first supply
its local-field structure and both cyclic extensions in the tower.
The declarations
\lean{Basic/IntermediateValuation.lean}{45}{Basic.intermediateFieldValuativeRel} and
\lean{Basic/IntermediateCompleteness.lean}{42}{Basic.intermediateFieldTopology}
construct the restricted valuation and its topology on $L$.
The theorem \lean{Basic/Fields.lean}{47}{Basic.intermediateField_localField}
proves that $L$ is a nonarchimedean local field.
The theorem
\lean{Basic/Fields.lean}{157}{Basic.intermediateField_tower_compatible}
then supplies the compatible field tower and proves that $L/F$ and
$K/L$ are cyclic of degree $\ell$.

\subsection{The compatible family}
The type \codeexpr{ContinuousQuasiChar E} is the type of continuous
group homomorphisms $E^\times\to\C^\times$.
The structure
\lean{Basic/SecondMainStatement.lean}{81}{Basic.PrimitiveCompatibleFamily}
has the following fields:
\begin{center}
\small
\begin{tabular}{@{}p{.31\textwidth}p{.64\textwidth}@{}}
\toprule
Lean field & Mathematical content\\
\midrule
\code{topCharacter} & A quasi-character $\Theta$ of $K^\times$.\\[3pt]
\code{invariant} & $\Theta$ is invariant under $\Gal(K/F)$.\\[3pt]
\code{not_normPullback} & There is no quasi-character $\lambda$ of $F^\times$
 with $\lambda\circ\N_{K/F}=\Theta$.\\[3pt]
\code{inducingCharacter} & A quasi-character
 $\theta_L:L^\times\to\C^\times$ for every $F\subset L\subset K$
 with $[L:F]=\ell$.\\[3pt]
\code{compatible} & $\theta_L\circ\N_{K/L}=\Theta$ for every such $L$.\\
\bottomrule
\end{tabular}
\end{center}
The explicit field \code{invariant} is supplied by
Lemma~\ref{lem:descent}; it does not strengthen
Definition~\ref{def:compatible}.
The theorem
\lean{Characters/InducingChoice.lean}{123}{Characters.invariantCharacter_inducing_exists}
constructs such a family from an invariant character and extends a
prescribed compatible pair.

\subsection{The local constant and the exported statement}
The type \codeexpr{NormCharacter F L} is identified by
\lean{Basic/Characters.lean}{48}{Basic.normCharacter_eq_continuousHom}
with $S(L/F)$ as defined in \eqref{eq:norm-characters}.
The function
\lean{Basic/SecondMainStatement.lean}{128}{Basic.inductionExpression}
is
\[
 \Delta_L(\theta_L,\psi_F\circ\Tr_{L/F})
       \prod_{\nu\in S(L/F)}\Delta_F(\nu,\psi_F).
\]
Both occurrences of $\Delta$ are the imported function
\code{LanglandsFirstMainLemma.localConstant}.
The equality with the finite-sum definition
\eqref{eq:delta} is
\lean{Basic/LocalConstants.lean}{50}{Basic.localConstant_eq_finite}.
Thus \lean{Basic/SecondMainStatement.lean}{169}{Basic.SecondMainIdentity}
asserts equality of these expressions for every pair $L_1,L_2$ of
degree $\ell$ over $F$. In particular, the local constant in the
statement is the finite-sum constant of Section~\ref{subsec:delta},
not an abstract function assumed to satisfy the desired identities.

\noindent\begin{minipage}{\linewidth}
With these definitions, the exported theorem, in the namespace
\code{LanglandsSecondMainLemma} and with
\code{LanglandsFirstMainLemma} open, is:
\begin{lstlisting}
theorem secondMainLemma
    (F K : Type)
    [Field F] [ValuativeRel F] [TopologicalSpace F]
    [IsNonarchimedeanLocalField F]
    [Field K] [ValuativeRel K] [TopologicalSpace K]
    [IsNonarchimedeanLocalField K]
    [Algebra F K] [ValuativeExtension F K]
    [Module.Free F K] [Module.Finite F K]
    {ℓ : ℕ} (extension : Basic.PrimeSquareExtension F K ℓ)
    (family : Basic.PrimitiveCompatibleFamily F K extension)
    (ψF : ContinuousAddChar F) (hψF : ψF ≠ 1) :
    Basic.SecondMainIdentity F K extension family ψF hψF :=
  secondMainIdentity_of_actualData F K extension family ψF hψF
\end{lstlisting}
\end{minipage}

The declaration is
\lean{Main.lean}{37}{secondMainLemma}.
The equality \codeexpr{\(\psi\)F = 1} would mean that the additive character
is trivial, so \codeexpr{h\(\psi\)F} is the hypothesis imposed in
Section~\ref{sec:intro}.
With the preceding identifications, this is Theorem~\ref{thm:sml}.
There is no assumption on the characteristic of $F$ in this
statement. The characteristic, ramification, and conductor divisions
occur in its proof, as in Section~\ref{sec:completion}.

\subsection{Comparison with the companion proof}\label{subsec:proof-comparison}
The mathematical references in this paper are to the arXiv version of
\cite{SML}. It already incorporates the changes listed below. The
comparison is with the arguments before these Lean-suggested revisions,
not with the cited arXiv version. Each row records the original step and
its replacement.
The Lean declarations for each change are linked at the indicated
discussion; all refer to the fixed revision in Section~\ref{sec:artifact}.

\begin{longtable}{@{}>{\raggedright\arraybackslash}p{.13\textwidth}>{\raggedright\arraybackslash}p{.61\textwidth}>{\raggedright\arraybackslash}p{.20\textwidth}@{}}
\toprule
Remark in \cite{SML} & Change & Discussion here\\
\midrule
\endfirsthead
\toprule
Remark in \cite{SML} & Change & Discussion here\\
\midrule
\endhead
\SMLnum{U:lean-missing-coset} &
Explicit coefficient calculations on the unrepresented coset are
replaced by translation, using the common pullback, distinct kernels,
and nondegeneracy. &
Lemma~\ref{lem:common-pullback}\\[5pt]
\SMLnum{O:A:lean-best-approximation} &
Closedness of the subfield is replaced by an explicit bound on the
integral approximation valuations, which guarantees a maximum. &
Lemma~\ref{lem:best-approximation}\\[5pt]
\SMLnum{O:A:lean-nonexceptional} &
Three exponent cases with $v_K(x\Delta^j)\ge1+\delta$ are replaced by
one expansion with $x\in\pp_{L_2}$ for $j\ne p-2$. &
Proof of Lemma~\ref{lem:nonexceptional-trace}\\[5pt]
\SMLnum{O:M:lean-conductors} &
Both exact conductors were hypotheses; now only
$\theta_2|_{U_{L_2}^{m_2}}=1$ is assumed, and both equalities are derived. &
Proposition~\ref{prop:odd-total-characters}\\[5pt]
\SMLnum{D:EQ:lean-rational-identity} &
A two-term Cartier calculation is replaced by one substitution in
Artin--Schreier invariance. &
Equation~\eqref{eq:rational-residue}\\[5pt]
\SMLnum{D:MX:lean-residue-maps} &
Direct nonvanishing checks of leading terms are replaced by factoring
the same maps through graded trace and norm isomorphisms. &
Equations~\eqref{eq:upper-graded-composition}--\eqref{eq:lower-graded-composition}\\
\bottomrule
\end{longtable}

The changes concern the intermediate proofs and, in the conductor
statement, a weaker sufficient hypothesis. The statement of
Theorem~\ref{thm:sml} is unchanged.
The theorem names in Section~\ref{sec:completion} give the formal
comparisons for the original characters, after all auxiliary choices
have been made within their proofs.

\section{Source and reproducibility}\label{sec:artifact}
The fixed revisions below make the declaration links reproducible and
allow the exported theorem to be checked with the same dependencies.
The formalization \cite{SMLCode} used here is the repository
\url{https://github.com/fukubillueda-web/epsilon_SML} at commit
\begin{center}
\code{c2f3764588ceac54e49d20ed6492ac1a0c11acdb}.
\end{center}
It uses Lean \code{4.32.2}, Mathlib revision
\begin{center}
\code{905b95818eb32af7874a58b427f50c1711a5e96c},
\end{center}
and the First Main Lemma formalization \cite{FMLCode} at revision
\begin{center}
\code{b91ad93900a5e6d9207a4e6166ac94bb3bbcf3ed}.
\end{center}
The source is distributed under the Apache License 2.0.

With Git and the indicated Lean toolchain installed, a fresh checkout
can be built and checked by running
\begin{lstlisting}
git clone https://github.com/fukubillueda-web/epsilon_SML.git
cd epsilon_SML
git checkout c2f3764588ceac54e49d20ed6492ac1a0c11acdb
lake exe cache get
lake build
lake env leanchecker --fresh LanglandsSecondMainLemma.Main
\end{lstlisting}

The Lean source files contain no \code{sorry}, \code{admit}, or
additional \code{axiom} declarations. The transitive axiom
dependencies of the exported theorem can be inspected with
\begin{lstlisting}
cat > VerifySML.lean <<'EOF'
import LanglandsSecondMainLemma.Main
#print axioms LanglandsSecondMainLemma.secondMainLemma
EOF
lake env lean VerifySML.lean
\end{lstlisting}

The file \code{SOURCE_MAP.md} is a label-to-file index for the completed
formalization and can be used to locate the Lean source corresponding to
results in the mathematical paper.

\end{document}